\title{Absolute calculus and prismatic crystals on cyclotomic rings\footnotetext{\normalsize{\textit{Mathematics Subject Classification (MSC2020):} 14F30; 14F40}}\footnotetext{\normalsize{\textit{Keywords:} prismatic crystals; $q$-analogues of differential operators; twisted connections; frobenius structures; crystalline representations; de Rham cohomology.}}}
\author{Michel Gros, Bernard Le Stum \& Adolfo Quir\'os}
\date{}
\newtheorem{thm}{Theorem}[section]
\newtheorem{prop}[thm] {Proposition}
\newtheorem{cor}[thm] {Corollary}
\newtheorem{lem}[thm] {Lemma}
\theoremstyle{definition}
\newtheorem{dfn}[thm] {Definition}
\newtheorem{rmk}[thm] {Remark}
\newtheorem{rmks}[thm] {Remarks}
\newenvironment{xmps}[1][Examples.]{\begin{trivlist} \item[\hskip \labelsep {\bfseries #1}]}{\end{trivlist}}
\newcommand{\Addresses}{{% additional braces for segregating \footnotesize
 \bigskip
 \footnotesize

Michel Gros, \textsc{IRMAR, Universit\'e de Rennes,
Campus de Beaulieu, 35042 Rennes cedex, France}\par\nopagebreak
\texttt{michel.gros@univ-rennes.fr}

\medskip

Bernard Le Stum, \textsc{IRMAR, Universit\'e de Rennes,
Campus de Beaulieu, 35042 Rennes cedex, France}\par\nopagebreak
\texttt{bernard.le-stum@univ-rennes.fr}

\medskip

Adolfo Quir\'os, \textsc{Departamento de Mat\'ematicas, Facultad de Ciencias, Universidad Aut\'onoma de Madrid, E-28049 Madrid, Spain}\par\nopagebreak
\texttt{adolfo.quiros@uam.es}

}}
\begin{document}

\maketitle

\begin{abstract}

Let $p$ be a prime, $W$ the ring of Witt vectors of a perfect field $k$ of characteristic $p$ and $\zeta$ a primitive $p$th root of unity. We introduce a new notion of calculus over $W$ that we call absolute calculus. It may be seen as a singular version of the $q$-calculus used in previous work, in the sense that the role of the coordinate is now played by $q$ itself. We show that what we call a weakly nilpotent $\mathbb\Delta$-connection on a finite free module is equivalent to a prismatic vector bundle on $W[\zeta]$. As a corollary of a theorem of Bhatt and Scholze, we finally obtain that a $\mathbb\Delta$-connection with a frobenius structure on a finite free module is equivalent to a lattice in a crystalline representation. We also consider the case of de Rham prismatic crystals as well as Hodge-Tate prismatic crystals.

\end{abstract}

\tableofcontents

%%%%%%%%%%%%%%%%%%
%%%%%%%%%%%%%%%%%%
\section*{Introduction}
\addcontentsline{toc}{section}{Introduction}

Let $G_K := \mathrm{Gal}(\overline K/K)$ be the absolute Galois group of a complete discrete valuation field $K$ of characteristic $0$ with perfect residue field $k$ of characteristic $p>0$.
The study of $p$-adic representations of $G_K$, initiated by Jean-Marc Fontaine (see for example \cite{Fontaine94}), has recently known a major breakthrough with a theorem of Bhargav Bhatt and Peter Scholze.
Using the notion of prismatic site they have introduced in order to get a conceptual understanding of integral $p$-adic Hodge theory, they have proved (\cite{BhattScholze21}, theorem 5.6) that there is an equivalence
\[
\mathrm{Vect}^\mathrm{\phi}(\mathcal O_K, \mathcal O_{\mathbb\Delta}) \simeq \mathrm{Rep}^{\mathrm{crys}}_{\mathbb Z_p}(G_K)
\]
between the category of prismatic $F$-crystals on $\mathcal O_K$ and the category of lattices in crystalline representations of $G_K$.
Forgetting the frobenius structures, understanding the category $\mathrm{Vect}(\mathcal O_K, \mathcal O_{\mathbb\Delta})$ of (absolute) prismatic vector bundles on $\mathcal O_K$ becomes therefore a fundamental question.
Several partial results, in various settings, have already been obtained for Hodge-Tate prismatic crystals and de Rham prismatic crystals, for example in \cite{BhattLurie22}, \cite{BhattLurie22b}, \cite{GaoMinWang22}, \cite{AnschuetzHeuerLeBras22}, \cite{GaoMinWang23}.
Let us also mention the  very recent work of Zeyu Liu (\cite{Liu25}) in which he describes absolute prismatic crystals (in various settings) using the stacky approach of \cite{BhattLurie22}, \cite{BhattLurie22b}.

Our goal is to address this issue from a different point of view, using what we call \emph{absolute calculus}. This is similar to the (relative) $q$-calculus we have used before \cite{GrosLeStumQuiros23a, GrosLeStumQuiros22b}, but now the parameter $q$ coincides with the \'etale coordinate $x$. Informally, if $x$ is a variable, then the \emph{relative} $q$-derivation is defined by $\partial_q(x^k) = (k)_qx^{k-1}$ where $(k)_q := 1 + q + \ldots + q^{k-1}$ denotes the $q$-analog of $k$ (written $[k]_q$ in \cite{BhattScholze22}).
The \emph{absolute} $q$-derivation then corresponds to the case $x = q$: the condition reads $\partial_q(q^k) = (k)_qq^{k-1}$. It will actually be necessary to consider a variant $\partial_{\mathbb\Delta}$ that we call \emph{absolute $\mathbb\Delta$-derivation} which is given by
\begin{equation} \label{defqpder}
\partial_{\mathbb\Delta}(q^k) = (pk)_qq^{k-1}.
\end{equation}

From now on, we will assume that $\mathcal O_K = W[\zeta]$, where $W = W(k)$ is the ring of Witt vectors of $k$ and $\zeta$ is a primitive $p$th root of unity.
We define a \emph{$\mathbb\Delta$-derivation} on a $W[[q-1]]$-module $M$ to be a $W$-linear map $\partial_{M,\mathbb\Delta}:M \longrightarrow M$ that satisfies
\[
\forall s \in M, \quad \partial_{M,\mathbb\Delta}(qs) = (p)_q s+q^{p+1} \partial_{M,\mathbb\Delta}(s).
\]
Giving the datum of a $\mathbb\Delta$-derivation is equivalent to giving the datum of a \emph{$\mathbb\Delta$-connection} (see definition \ref{qconp}).
A fundamental example of $\mathbb\Delta$-connection/derivation is provided by $\partial_{\mathbb\Delta}$ on $M=W[[q-1]]$ defined by formula \eqref{defqpder}.
Note that, if we denote by $\gamma$ the endomorphism given by $\gamma(q) = q^{p+1}$, then \[ q\partial_{M,\mathbb\Delta} = \frac {\gamma - \mathrm{Id}}{q-1} \] is the usual operator appearing in the theory of Wach modules.

We use the terms \emph{absolute} and \emph{prismatic} as well as the subscript $\mathbb\Delta$ to emphasize that the objects we consider are closely related to the absolute prismatic site of ${\mathcal{O}}_K$.
More precisely, our main result is the following (when $p$ is odd (resp. when $p=2$), we call a $\mathbb\Delta$-connection \emph{weakly nilpotent} if $\partial_{M,\mathbb\Delta}$ (resp. $\partial^2_{M,\mathbb\Delta}- \partial_{M,\mathbb\Delta}$) is nilpotent modulo $(p,q-1)$):

\textbf{Theorem} (see theorem \ref{prisdR}). 
\textit{
There exists an equivalence
\[
\mathrm{Vect}(\mathcal O_K, \mathcal O_{\mathbb\Delta}) \simeq \nabla^{\mathrm{wn}}_{\mathbb\Delta}(W[[q-1]]), \quad E \leftrightarrow (M, \partial_{M,\mathbb\Delta})
\]
between the category of prismatic vector bundles on $\mathcal O_K$ and the category of finite free modules on $W[[q-1]]$ endowed with a weakly nilpotent $\mathbb\Delta$-connection.
Moreover, this equivalence induces an isomorphism
\[
\mathrm R\Gamma(\mathcal O_{K\mathbb\Delta}, E) \simeq \mathrm R\Gamma_{\mathrm{dR},\mathbb\Delta}(M) := [M \overset {\partial_{M,\mathbb\Delta}} \longrightarrow M]
\]
on cohomology.
}

As a basic example, the structural sheaf of the prismatic topos corresponds under this equivalence to $W[[q-1]]$ endowed with $\partial_{\mathbb\Delta}$ (note for the nilpotency requirement that $(p)_q \in (p, q-1)$). Several properties witness to the relevance of $\partial_{\mathbb\Delta}$: Griffiths transversality with respect to the Nygaard filtration (proposition \ref{Griff}.1 and remark \ref{Nyg}.2), preservation of the $(p)_q$-adic filtration (example 3 after proposition \ref{acevd}) and of the $(p^\bullet)_q$-filtration (remark \ref{thetarmks}.3), compatibility with the frobenius morphism $\phi(q)=q^p$, meaning $\partial_{\mathbb\Delta}\circ \phi = (p)_q q^{p-1}\phi \circ \partial_{\mathbb\Delta}$ (equation \eqref{frobtriv}), a formula whose shape attests to the logarithmic nature of $\partial_{\mathbb\Delta}$ and anticipates the appearance of ``monodromy'' operators associated to it, etc. We also discuss in the same explicit way other examples, such as the $\mathbb\Delta$-connection corresponding to the Breuil-Kisin twist on prismatic crystals.

Using the same strategy as in the proof of theorem \ref{prisdR} (that we recalled above), we obtain analogous descriptions for the category of prismatic de Rham crystals (in the style of \cite{BhattScholze21}, construction 7.16) in propositions \ref{Bdreq_qp}, as well as for the category of prismatic Hodge-Tate crystals (see \cite{BhattLurie22b}) in proposition and \ref{HTeq}. This makes completely transparent the relations between the various theories and diagram \eqref{bigdiag} summarizes our equivalences, all of them compatible with the natural cohomology functors on each side.

Frobenius structures can also be added to the picture and then  the category of prismatic F-crystals on $\mathcal O_K$ turns out to be  equivalent to the category of Wach modules (definition \ref{Wach} and proposition \ref{Wath}).
From this perspective, the aforementioned theorem of Bhatt and Schole (\cite{BhattScholze21}, theorem 5.6)  is equivalent to Berger's theorem (\cite{Berger04}, proposition III.4.2).
Using a slightly different approach than ours, Abhinandan (\cite{Abhinandan24}) has obtained the same result in the absolutely unramified case.

Our approach differs from others justifying, we believe, the introduction of our formalism: 
we adopt the classical path relating crystals and modules with connection, taking as starting point just our explicit description of prismatic envelopes inherited from \cite{GrosLeStumQuiros23a}, theorem 5.2, and developing methodically the corresponding twisted calculus.
This provides the naturalness (compare with \cite{GaoMinWang22}, remark 1.18) of all constructions mentioned above, in particular when ``specializing'' (sections 8 to 10).

Our approach also produces naturally a formal groupoid, denoted by $G$ in the introduction of section \ref{stratder}, that lifts (see remark \ref{HTrmks}.4) the formal group of \cite{BhattLurie22b}, proposition 9.5, and should play a role in the description of the prismatization of $\mathcal{O}_K$ by generalizing that of its Hodge-Tate locus in loc.~cit. We leave this investigation for future work. We also hope that the new insight presented here for $O_K=W[\zeta]$ will, beyond its own intrinsic interest and its value as an example, be useful for a better understanding of other situations. For example, some results concerning the case ${\mathcal{O}}_K={\mathbb{Z}}_p$ studied in detail in \cite{BhattLurie22}, should become accessible in a new way just by using Galois descent as suggested in loc.~cit., remark 4.8.4.

We give now a quick description of the content of the present article, that can be divided into two main parts: sections 1-6, devoted to the elaboration of absolute calculus, and sections 7-10, where our methods are applied to the description of various kinds of prismatic crystals.

Section \ref{twispow} is devoted to twisted powers on an arbitrary ring $R$. We recall various aspects of this notion from \cite{GrosLeStumQuiros22a} using the language of twisted Stirling numbers (definition \ref{defStirling}). Our main player will be a ring $R\langle \omega \rangle_{\mathbb\Delta}$ of \emph{$\mathbb\Delta$-divided powers} (definition \ref{defdtw}) that, when $p$ is invertible (proposition \ref{poweg2}) or when $(p)_q=0$ (proposition \ref{poweg}), has simpler incarnations in, respectively, a polynomial algebra or a (usual) divided polynomial algebra.

Section \ref{secabs} starts the development of absolute calculus on $R=W[[q-1]]$, where $W$ is an adic ring. We introduce and study the notions of \emph{$\mathbb\Delta$-derivation}, an example being $\partial_{\mathbb\Delta}$ above, and, more generally, of \emph{$\nabla_{\mathbb\Delta}$-module} (definition \ref{nabR} and proposition \ref{acevd}) and give several examples ($(p)_q$-adic filtration, Breuil-Kisin twist\dots) that will be looked at in the article.

Section \ref{higherd} is about Taylor maps and higher derivatives.
We develop a formalism of \emph{$\mathbb\Delta$-Taylor expansions $\theta^{(\infty)}_{n,\mathbb\Delta}(\alpha)$ of infinite level and finite order} $n \in {\mathbb{N}}$ for $\alpha \in R$ (definition \ref{Taylor}).
As in the untwisted case, there also exists a variant $\theta_{n,\mathbb\Delta}(\alpha)$ (definition \ref{finTayl}).
The coefficients of the images of $\alpha \in R$ by these different maps, expressed in natural $R$-bases, are respectively the \emph{higher $\mathbb\Delta$-derivatives} $\partial_{\mathbb\Delta}^{[k]}(\alpha)$ and $\partial_{\mathbb\Delta}^{\langle k \rangle}(\alpha)$.
Explicit formulas for their action on $R$ can be given (lemma \ref{tetaqn} and proposition \ref{tetaqn2}): for all $k,n \in {\mathbb{N}}$ 
\[
\partial_{\mathbb\Delta}^{[k]}(q^n) = (p)_q^k{n \choose k}_{q^p}q^{n-k},
\]
and the relation between infinite and finite level follows exactly the same pattern as in the ``classical'' case: $\partial_{\mathbb\Delta}^{\langle k \rangle} = (k)_{q^p}!\partial_{\mathbb\Delta}^{[k]}$. Note that $\partial_{\mathbb\Delta}^{\langle k \rangle}$ is far from being the $k$th iterate of $\partial_{\mathbb\Delta}$, explaining our choice of brackets for the notation.

Section \ref{padsit}, and everything thereafter, works over $R=W[[q-1]]$, where $p$ is a prime and $W$ is a $p$-adically complete ring (up to this point, $p$ could be any positive integer and $W$ any adic ring). We identify (theorem \ref{prismenv}) our now completed twisted ring $\widehat{R\langle \omega \rangle}_{\mathbb\Delta}$ with the prismatic envelope of the diagonal in order to rely on the universal property of the latter. This allows us to define a {\emph{flip map}} (proposition \ref{extflip}) and a {\emph{comultiplication map}} (proposition \ref{extcomult}) on $\widehat{R\langle \omega \rangle}_{\mathbb\Delta}$. 

Section \ref{stratder} uses the Taylor, flip and comultiplication maps to develop \emph{absolute hyper calculus} and to study the relation between what we shall call $\mathbb\Delta$-hyperstratifications and $\mathbb\Delta$-connections. We define (definitions \ref{hyperst} and \ref{TaylMod}) \emph{$\mathbb\Delta$-hyperstratifications} and \emph{$\mathbb\Delta$-Taylor maps} on $R$-modules and show (proposition \ref{eqHyperTayl}) that they are equivalent notions. Moreover, we prove (lemma \ref{locTayl}) that the datum of a $\mathbb\Delta$-hyperstratification on an $R$-module $M$ is equivalent to the datum of continuous endomorphisms $\partial_{M,\mathbb\Delta}^{\langle k \rangle}$ ($k \in {\mathbb{N}}$) of $M$ satisfying some natural compatibilities. After introducing (example 2 after proposition \ref{inher}) the linear derivation $L_{\mathbb\Delta} : \widehat{R\langle \omega \rangle}_{\mathbb\Delta} \rightarrow \widehat{R\langle \omega \rangle}_{\mathbb\Delta}$, we prove the {\emph{little Poincar\'e lemma}} (proposition \ref{little}): the sequence
\[
0 \rightarrow R \rightarrow \widehat{R\langle \omega \rangle}_{\mathbb\Delta} \stackrel{L_{\mathbb\Delta}}\longrightarrow \widehat{R\langle \omega \rangle}_{\mathbb\Delta} \rightarrow 0
\]
is split exact. 

Section \ref{diffop} is devoted to what we call $\mathbb\Delta$-differential operators.
We define (definition \ref{defdifo}) \emph{$\mathbb\Delta$-differential operators} between $R$-modules,
as well as their $R$-linearization and their composition, and we use them to prove (theorem \ref{invlem}) that, for a finite projective $R$-module, it is equivalent to endow it with a $\mathbb\Delta$-hyperstratification (a natural way to interpret crystals) or with a $\mathbb\Delta$-connection that is nilpotent modulo $(p,q-1)$ (what we call \emph{weakly nilpotent}).
Actually, when $p=2$, this nilpotence condition has to be refined a bit
(fortunately, our formalism finds again the known differences \cite{BhattLurie22b}, \S9, between the ramified and unramified cases).

We are now ready to apply our formalism to the study of various prismatic crystals.

Section \ref{prissec} establishes the link with prismatic crystals.
We assume from now on that $W$ is the ring of Witt vectors of a perfect field and let $\zeta$ be a primitive $p$th root of unity.
We start by showing (theorem \ref{eqTayl}) that the category of prismatic crystals on $W[\zeta]$ is equivalent to the category of complete $W[[q-1]]$-modules endowed with a $\mathbb\Delta$-hyperstratification. Using this, we prove our main theorem \ref{prisdR} described above.
The strategy is similar to that in \cite{GrosLeStumQuiros23a} or \cite{GrosLeStumQuiros22b} and relies on theorem \ref{invlem}. The isomorphism in cohomology follows from the prismatic Poincar\'e lemma \ref{poinc} as in \cite{GrosLeStumQuiros23b}, theorem 6.2.
We end this section by explaining in proposition \ref{nabGam} the relation to the theory of $\Gamma$-modules.

Section \ref{generic} studies what happens when one makes $p$ invertible and gives descriptions in the style of theorem \ref{prisdR} for \emph{de Rham prismatic crystals}.
In this case there is no real need for twisted derivations, since usual logarithmic derivatives do the job. In a way analogous to what we have done before, we develop a logarithmic calculus where the relevant operators are log-derivatives $\partial_{\log}^{\langle k \rangle} = (p)_q^k\partial^k$, with $\partial^k$ the $k$th iterate of the usual derivative with respect to $q$.
These log-derivatives give rise to log-connections, and we prove (proposition \ref{Bdreq}) that the category of de Rham prismatic crystals is equivalent to the category of finite free modules with a log-connection satisfying an appropriate nilpotency condition. Using, with $p$ invertible, the same approach as in section \ref{prissec}, we can also describe de Rham prismatic crystals in terms of $\mathbb\Delta$-connections (proposition \ref{Bdreq_qp}). Moreover, we give explicit formulas (proposition \ref{invfor}) relating $\partial_{\log}^{\langle k \rangle}$'s and $\partial_{\mathbb\Delta}^{\langle k \rangle}$'s in this situation. A similar problem is studied, in greater generality but maybe in a less explicit way, in \cite{BhattScholze21}, 7.15-7.17, in order to establish the relation with the classical Breuil-Kisin theory.

Section \ref{reduced} works modulo $(p)_q$ and describes \emph{Hodge-Tate prismatic crystals}.
As before, one has two points of view for their descriptions.
It is again not necessary to use twisted derivations, since we can obtain (proposition \ref{Sen}) a description of Hodge-Tate prismatic crystals in terms of monodromy/Sen operators, with a nilpotency condition, by redoing the theory of log-calculus on a PD-algebra. We recover in our situation a similar result to that of \cite{BhattLurie22b}.
As in section \ref{generic}, we get another description (proposition \ref{HTeq}) through a reduction modulo $(p)_q$ of the general theorem \ref{prisdR}, and we can again go from one point of view to the other through explicit formulas (proposition \ref{invforSen}). We finish with a review (diagram \eqref{bigdiag}) of the various equivalences.

Section \ref{frob} discusses frobenius structures on prismatic crystals. We define the notion of a frobenius structure on an $R$-module endowed with a $\mathbb\Delta$-connection and, as mentioned above, we obtain (corollary \ref{corBS}), using theorem 5.6 of \cite{BhattScholze21}, an equivalence between the category of lattices in crystalline Galois representations of the absolute Galois group of ${\rm{Frac}}(W[\zeta])$ and the category of finite free modules on $W[[q-1]]$ endowed with a $\mathbb\Delta$-connection that has a frobenius structure (weak nilpotency is automatic here, see lemma \ref{FrobWN}).
We also show that this last category is equivalent to the category of Wach modules relative to $W[\zeta]$ and essentially recover (corollary \ref{Berger}) a theorem of Laurent Berger (\cite{Berger04} building on work of Jean-Marc Fontaine and Nathalie Wach, completed by Pierre Colmez).
We finally briefly explain how our methods can be applied to the theory of $(\varphi,\Gamma)$-modules.

\subsection*{Acknowledgments}

We thank the referee for helping improve the presentation.

We are grateful to Hui Gao for very relevant remarks on the first version of this article.

M. G. thanks Charles Simonyi endowment and IAS (Princeton) for their generosity and hospitality during the final step of elaboration of this article. 

M. G. and B. L. S. benefit from the support of the French government program ANR-11-LABX-0020-01.

A. Q. was supported by grant PID2022-138916NB-I00, funded by MCIN/AEI/10.13039/501100011033 and by ERDF A way of making Europe, and by the Madrid Government (Comunidad de Madrid, Spain) under an agreement with UAM in the context of the V Regional Programme of Research and Technological Innovation.

\subsection*{Notations}

In contrast with the terminology we have used in previous work, we will write ``$\mathbb\Delta$-something'' instead of ``$q$-something of level $-1$ with respect to $p$'' and use $\mathbb\Delta$ instead of $q(-1)$ in the notation.

%%%%%%%%%%%%%%%%%%%%

\section{Twisted powers} \label{twispow}

We review some results about the notion of a twisted power introduced in \cite{GrosLeStumQuiros22a}  (see also section 1 of \cite{GrosLeStumQuiros22b}) that will be systematically used starting at section \ref{higherd}.
The reader is advised to jump directly to section \ref{secabs} and come back to this one when twisted powers enter the game.

We fix some $p \in \mathbb N$ that will usually be a prime, but the case of a power of a prime for example and even the case $p=1$, may also be of interest.

We fix a commutative ring $R$ and some $q \in R$.
At some point, there will also be an extra $x \in R$ but, starting at section \ref{secabs}, we shall focus on the case $x=q$.

Recall from definition 2.5 of \cite{LeStumQuiros15} for example, that the \emph{twisted binomial coefficients} can be defined recursively for $n,k \in \mathbb N$ by the \emph{twisted Pascal identities} 
\[
{n+1 \choose k}_{q} = {n \choose k-1}_{q} + q^k {n \choose k}_{q}
\]
with the initial conditions
\[
{0 \choose 0}_{q} = 1 \quad \mathrm{and} \quad {n \choose 0}_{q} = {0 \choose k}_{q} = 0 \ \mathrm{for}\ k,n > 0.
\]
We shall also consider, for $n \in \mathbb N$, the \emph{twisted integer} (called $q$-analog in the introduction)
\[
(n)_{q} := {n \choose 1}_{q} = 1 + q+q^2 + \ldots + q^{n-1}
\]
and the \emph{twisted factorial}
\[
(n)_{q}! := (n)_{q} (n-1)_{q} \cdots (2)_{q}(1)_{q}.
\]
We will use the prefix ``$q$-'' instead of the attribute \emph{twisted} when we want to specify the parameter.
We will also drop the epithet/prefix/subscript in the case $q=1$.

We now recall the following related notions (formulas 5.19 and 5.20 in \cite{Ernst12}):

%%%%%%%%
\begin{dfn} \label{defStirling}
The \emph{twisted Stirling numbers of the first kind} (resp.\ \emph{of the second kind}) are defined recursively for $n,k \in \mathbb N$ by
\begin{linenomath}
\begin{align*}
&s_{q}(n+1,k) = s_{q}(n,k-1) - (n)_{q} \ s_{q}(n,k)
\\
\textrm{(resp.} \quad &S_{q}(n+1,k) = S_{q}(n,k-1) + (k)_{q} \ S_{q}(n,k))
\end{align*}
\end{linenomath}
with the initial conditions
\begin{linenomath}
\begin{align*}
&s_{q}(0,0) = 1 \quad \mathrm{and} \quad s_{q}(n,0) = s_{q}(0,k) = 0 \ \mathrm{for}\  n,k > 0
\\
\textrm{(resp.} \quad &S_{q}(0,0) = 1 \quad \mathrm{and} \quad S_{q}(n,0) = S_{q}(0,k) = 0\ \mathrm{for}\  n,k > 0).
\end{align*} 
\end{linenomath}
\end{dfn}

We have the remarkable formulas for all $m,n \in \mathbb N$,
\[
\sum_{k} s_{q}(n,k)S_{q}(k,m) = \delta_{n,m} \quad \mathrm{and} \quad \sum_{k} S_{q}(n,k)s_{q}(k,m) = \delta_{n,m}
\]
where $\delta_{n,m}$ denotes the Kronecker symbol.
In other words, the infinite lower triangular matrices $[s_{q}(n,k)]$ and $[S_{q}(n,k)]$ are inverse to each other.
These identities can be directly proved by induction (see also theorem 5.2.7 of \cite{Ernst12}).

It may be worth noticing that, for all $n,k \in \mathbb N$,
\[
s_{q}(n,k) = (-1)^{n-k} \mathcal{S}_{n-k}((1)_{q}, (2)_{q}, \ldots, (n-1)_{q})
\]
where $\mathcal{S}_{k}$ denotes the $k$th elementary symmetric polynomial, and
\[
(q-1)^{n-k}s_{q}(n,k) = \sum_{j=k}^n (-1)^{n-j} q^{{n-j \choose 2}} {n \choose j}_{q} {j \choose k}.
\]
Again, both results are easily proved by induction.
The second one may also be derived from the quantum binomial formula (proposition 2.14 in \cite{LeStumQuiros15}).

Actually, twisted Stirling numbers are uniquely determined by the following fundamental property:

%%%%%%%%%%%%
\begin{prop} \label{FunStir}
Let $X, Y$ be two indeterminates and, for any $n \in \mathbb N$,
\[
X_n := \prod_{k=0}^{n-1}\left(X - (k)_{q}Y\right).
\]
Then, for all $n \in \mathbb N$, we have
\begin{equation}\label{StirId}
X_n = \sum_{k=0}^{n} s_{q}(n,k) X^{k}Y^{n-k}
\quad
\mathrm{and}
\quad
X^n= \sum_{k=0}^{n} S_{q}(n,k) X_kY^{n-k}.
\end{equation}
\end{prop}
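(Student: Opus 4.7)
My plan is to prove both identities by induction on $n$, directly from the recursive definitions of $s_q(n,k)$ and $S_q(n,k)$ together with the defining relation
\[
X_{n+1} = X_n \cdot (X - (n)_q Y).
\]
The base case $n=0$ is immediate: both sides reduce to $1$ thanks to the initial conditions on the Stirling numbers and the empty product convention $X_0 = 1$.

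For the first identity, I would assume $X_n = \sum_{k=0}^n s_q(n,k) X^k Y^{n-k}$, multiply by $X - (n)_q Y$, and collect coefficients of $X^k Y^{n+1-k}$. The coefficient is
\[
s_q(n,k-1) - (n)_q \, s_q(n,k),
\]
which is exactly $s_q(n+1,k)$ by the defining recurrence. The boundary terms match because $s_q(n,-1) = s_q(n,n+1) = 0$ by convention.

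For the second identity, the key observation is the auxiliary relation
\[
X \cdot X_k = X_{k+1} + (k)_q Y \cdot X_k,
\]
which is an immediate rewriting of the definition of $X_{k+1}$. Starting from the inductive hypothesis $X^n = \sum_k S_q(n,k) X_k Y^{n-k}$, multiplying by $X$ and applying this relation gives
\[
X^{n+1} = \sum_k S_q(n,k) X_{k+1} Y^{n-k} + \sum_k (k)_q S_q(n,k) X_k Y^{n+1-k}.
\]
Reindexing the first sum by $k \mapsto k-1$ and merging, the coefficient of $X_k Y^{n+1-k}$ becomes $S_q(n,k-1) + (k)_q S_q(n,k) = S_q(n+1,k)$, as desired.

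I do not anticipate a genuine obstacle; the proof is a bookkeeping exercise once one has identified the relation $X \cdot X_k = X_{k+1} + (k)_q Y X_k$, which is the natural bridge from the ``falling twisted factorial'' basis $(X_k)_k$ back to ordinary powers and is precisely the combinatorial content of the recurrence for $S_q$. An alternative route would be to deduce the second identity from the first via the matrix inversion noted after Definition \ref{defStirling}, but the direct inductive argument is cleaner and independently establishes both formulas, which conversely reproves that $[s_q(n,k)]$ and $[S_q(n,k)]$ are mutually inverse.
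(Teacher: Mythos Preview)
Your proof is correct and is precisely the induction the paper alludes to; the paper's own proof consists of the single sentence ``This is shown by induction or derived from definition 99 in \cite{Ernst12}, which is the case $Y=1$,'' and your argument spells out that induction in full detail.
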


\begin{proof}
This is shown by induction or derived from definition 99 in \cite{Ernst12}) which is the case $Y=1$.
\end{proof}

The fundamental notion for us is the following (recall that $q$ and $p$ are fixed):

%%%%%%%%%%%%
\begin{dfn} \label{twistom}
If $n \in \mathbb N$, then the \emph{$n$th $\mathbb\Delta$-power} of an indeterminate $\omega$ (with respect to $x \in R$) is
\[
\omega^{(n)_{\mathbb\Delta}}(x) := \prod_{k=0}^{n-1}\left(\omega - (k)_{q^p}(q-1)x\right) \in R[\omega].
\]
\end{dfn}

We shall simply write $\omega^{(n)_{\mathbb\Delta}}$ when $x$ is understood from the context.
Starting at section \ref{secabs}, we shall concentrate on the case $x = q$ so that $\omega^{(n)_{\mathbb\Delta}} = \omega^{(n)_{\mathbb\Delta}}(q)$.
In general, we have
\[
\omega^{(0)_{\mathbb\Delta}} = 1, \quad \omega^{(1)_{\mathbb\Delta}} = \omega, \quad \omega^{(2)_{\mathbb\Delta}} = \omega^2-(q-1)x\omega, \quad \ldots
\]

%%%%%%%%%%%%%%%%%%%%%%%
\begin{rmks} \phantomsection \label{twpow}
\begin{enumerate}
\item
The notion of $n$th $\mathbb\Delta$-power is a special instance of
\[
\xi^{(n)_{q,y}} := \prod_{k=0}^{n-1}(\xi + (k)_qy) \in R[\xi]
\]
from \cite{GrosLeStumQuiros22a} where we replace $\xi$, $q$ and $y$ by $\omega$, $q^p$ and $(1-q)x$ respectively.
\item In the particular case $p=1$ (recall that we do not require $p$ to be prime at this point), we will simply say \emph{$q$-power}.
More generally, in this situation, we will systematically remove $p$ from the notations and write ``$q$-'' instead of ``$\mathbb\Delta$-''.
We shall then use $\xi$ and not $\omega$ for the indeterminate so that the $n$th $q$-power of $\xi$ (with respect to $x$) is
\[
\xi^{(n)_{q}} (x) := \prod_{k=0}^{n-1}\left(\xi - q^kx +x\right) \in R[\xi].
\]
This is consistent with \cite{GrosLeStumQuiros22a} when we set $y=(1-q)x$.
This is essentially the same thing as a \emph{generalized Pochammer symbol} of \cite{AnschuetzLeBras19a}: in their notation, we have $\xi^{(n)_{q}}(x) = (\xi + x, -x;q)_n$ and  conversely $(x,y;q)_n = (x+y)^{(n)_q}(-x)$.
\item Applying the previous case (where $p$ does not appear) with $q$ replaced with $q^p$, we get the $n$th $q^p$-power of $\xi$:
\[
\xi^{(n)_{q^p}}(x) := \prod_{k=0}^{n-1}\left(\xi - q^{pk}x + x\right) \in R[\xi].
\]
This should not be confused with the $n$th $\mathbb\Delta$-power (and using two different indeterminates $\xi$ and $\omega$ should help).
\item
$q^p$-powers and $\mathbb\Delta$-powers are not unrelated: it follows from lemma 1.4 in \cite{GrosLeStumQuiros22b} that blowing up
\[
R[\xi] \to R[\omega], \quad \xi \mapsto (p)_q\omega
\]
maps twisted powers as follows:
 \[
\xi^{(n)_{q^p}} \mapsto (p)_q^n\omega^{(n)_{\mathbb\Delta}}.
\]
\end{enumerate}
\end{rmks}

Twisted powers form an alternative basis for $R[\omega]$ as an $R$-module and there exists explicit formulas involving twisted Stirling numbers:

%%%%%%%%%%
\begin{prop} \label{ernst}
For all $n > 0$, we have
\[
\omega^{(n)_{\mathbb\Delta}} = \sum_{k=1}^{n} s_{q^p}(n,k) (q -1)^{n-k}x^{n-k}\omega^{k}
\]
and
\[
\omega^{n}= \sum_{k=1}^{n} S_{q^p}(n,k) (q -1)^{n-k}x^{n-k}\omega^{(k)_{\mathbb\Delta}}.
\]
\end{prop}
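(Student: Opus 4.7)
The plan is to deduce this as a direct specialization of Proposition \ref{FunStir}. That proposition asserts, for formal indeterminates $X$ and $Y$, the two identities
\[
X_n = \sum_{k=0}^{n} s_{q}(n,k)\, X^{k}Y^{n-k}, \qquad X^n = \sum_{k=0}^{n} S_{q}(n,k)\, X_k Y^{n-k},
\]
where $X_n = \prod_{k=0}^{n-1}(X-(k)_q Y)$, and these hold for \emph{any} choice of parameter $q$ (the statement being purely combinatorial in the ring $\mathbb{Z}[X,Y,q]$).

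I would apply it with $q$ replaced by $q^p$, and then specialize $X\mapsto \omega$ and $Y\mapsto (q-1)x$ inside $R[\omega]$. Under this substitution the left-hand side of the first identity becomes
\[
\prod_{k=0}^{n-1}\bigl(\omega - (k)_{q^p}(q-1)x\bigr),
\]
which is precisely $\omega^{(n)_{\mathbb\Delta}}$ by definition \ref{twistom}, while the right-hand side becomes $\sum_{k=0}^{n} s_{q^p}(n,k)(q-1)^{n-k}x^{n-k}\omega^{k}$. For $n>0$, the initial condition $s_{q^p}(n,0)=0$ kills the $k=0$ term, so the sum may be started at $k=1$, giving the first formula of the proposition.

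The second formula is obtained by the same substitution applied to the identity $X^n=\sum_k S_q(n,k) X_k Y^{n-k}$: the left-hand side becomes $\omega^n$, each $X_k$ becomes $\omega^{(k)_{\mathbb\Delta}}$, and the vanishing $S_{q^p}(n,0)=0$ for $n>0$ again allows one to start the sum at $k=1$.

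There is no real obstacle here; the only thing to check is that the substitution is legitimate, which is immediate since Proposition \ref{FunStir} is an identity of polynomials in $X$ and $Y$ with coefficients in $\mathbb{Z}[q]$, so one may specialize $q$, $X$, $Y$ to any elements of any commutative ring (in our case $R[\omega]$) without difficulty.
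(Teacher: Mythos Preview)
Your proposal is correct and is exactly the approach taken in the paper: the paper's proof consists of the single sentence ``Follows from the identities in proposition \ref{FunStir},'' and your write-up simply spells out the substitution $q\mapsto q^p$, $X\mapsto\omega$, $Y\mapsto(q-1)x$ and the vanishing of the $k=0$ term that this entails.
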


\begin{proof}
Follows from the identities in proposition \ref{FunStir}.
\end{proof}

One may always consider the twisted completion
\[
R[[\omega]]_{\mathbb\Delta} := \varprojlim R[\omega]/\omega^{(n+1)_{\mathbb\Delta}}
\]
but this will not be necessary in practice thanks to following:

%%%%%%%%%%
\begin{lem} \label{qlim}
If $R$ is complete for the $(q-1)$-adic topology, then
\[
R[[\omega]] := \varprojlim R[\omega]/\omega^{n+1} \simeq \varprojlim R[\omega]/\omega^{(n+1)_{\mathbb\Delta}} =: R[[\omega]]_{\mathbb\Delta}.
\]
\end{lem}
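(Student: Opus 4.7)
The plan is to construct mutually inverse $R$-linear maps between the two completions directly from the change-of-basis identities in Proposition \ref{ernst}, which relate the standard basis $\{\omega^n\}_n$ and the $\mathbb\Delta$-basis $\{\omega^{(n)_{\mathbb\Delta}}\}_n$ of $R[\omega]$ through the twisted Stirling numbers. Since $\omega^{(n+1)_{\mathbb\Delta}}$ is monic of degree $n+1$ in $\omega$, the quotient $R[\omega]/\omega^{(n+1)_{\mathbb\Delta}}$ is a free $R$-module of rank $n+1$ with either of the bases $\{\omega^k\}_{0\le k\le n}$ or $\{\omega^{(k)_{\mathbb\Delta}}\}_{0\le k\le n}$. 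Consequently elements of $R[[\omega]]_{\mathbb\Delta}$ are exactly the formal sums $\sum_n b_n\omega^{(n)_{\mathbb\Delta}}$ with $b_n\in R$, whereas elements of $R[[\omega]]$ are the ordinary formal sums $\sum_n a_n\omega^n$.

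Starting from $\sum_n a_n\omega^n$ and substituting the identity $\omega^n=\sum_{k=0}^n S_{q^p}(n,k)(q-1)^{n-k}x^{n-k}\omega^{(k)_{\mathbb\Delta}}$ from Proposition \ref{ernst}, the candidate image is $\sum_n b_n\omega^{(n)_{\mathbb\Delta}}$ with
\[
b_n := \sum_{k\geq n}a_k\, S_{q^p}(k,n)(q-1)^{k-n}x^{k-n}.
\]
For fixed $n$ the $k$-th summand belongs to $(q-1)^{k-n}R$, so the series converges in $R$ by the $(q-1)$-adic completeness assumption; this yields an $R$-linear map $\alpha\colon R[[\omega]]\to R[[\omega]]_{\mathbb\Delta}$. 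The symmetric formula $\omega^{(n)_{\mathbb\Delta}}=\sum_{k=0}^n s_{q^p}(n,k)(q-1)^{n-k}x^{n-k}\omega^k$ likewise defines, by the same convergence argument, a map $\beta\colon R[[\omega]]_{\mathbb\Delta}\to R[[\omega]]$.

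To verify that $\beta\circ\alpha=\mathrm{id}$, I would substitute the two formulas into each other and exchange the order of summation, reducing the $m$-th coefficient of $\beta(\alpha(\sum a_n\omega^n))$ to
\[
\sum_{j\geq m} a_j(q-1)^{j-m}x^{j-m}\sum_{k=m}^{j}S_{q^p}(j,k)\,s_{q^p}(k,m) = a_m,
\]
where the inner sum collapses to $\delta_{j,m}$ by the orthogonality relation recalled right after Definition \ref{defStirling}. The composition $\alpha\circ\beta$ is handled identically with the roles of $s_{q^p}$ and $S_{q^p}$ swapped. The one step that deserves attention is the interchange of summation in the composition, but this is unproblematic: for each fixed $\ell=j-m$ only the finitely many pairs $(j,k)$ with $m\le k\le j=m+\ell$ contribute to $(q-1)^\ell R$, so the rearrangement is justified term-by-term in the $(q-1)$-adic topology on $R$, and this is ultimately the only place where the hypothesis on $R$ is used.
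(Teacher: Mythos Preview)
Your argument is correct. You identify both completions with the product $\prod_{n\ge 0} R$ via the coefficients in the respective bases $\{\omega^n\}$ and $\{\omega^{(n)_{\mathbb\Delta}}\}$, and then write down explicit mutually inverse maps using the change-of-basis formulas of Proposition~\ref{ernst} together with the Stirling orthogonality relations. The convergence of the infinite sums defining $b_n$ (and their counterparts for $\beta$) is exactly where $(q-1)$-adic completeness enters, and your justification of the interchange of summation is sound.

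The paper's proof takes a different, more topological route. Rather than constructing the isomorphism explicitly, it extracts from Proposition~\ref{ernst} only the crude estimates
\[
\omega^{(2n)_{\mathbb\Delta}} \in (\omega^n) + ((q-1)^n) \quad \text{and} \quad \omega^{2n} \in (\omega^{(n)_{\mathbb\Delta}}) + ((q-1)^n),
\]
which say that the two filtrations by $(\omega^n)$ and by $(\omega^{(n)_{\mathbb\Delta}})$ are interleaved modulo powers of $(q-1)$. Since each quotient $R[\omega]/\omega^{n+1}$ and $R[\omega]/\omega^{(n+1)_{\mathbb\Delta}}$ is finite free over $R$ and hence already $(q-1)$-adically complete, this cofinality immediately yields the isomorphism of completions. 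The paper's argument is shorter and avoids the Stirling orthogonality entirely; yours is more constructive and gives the isomorphism by explicit formula. Both rest on Proposition~\ref{ernst}, but you use the full identities whereas the paper only needs the degree-versus-$(q-1)$-weight trade-off visible in them.
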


\begin{proof}
It follows from the formulas in proposition \ref{ernst} that
\[
\forall n \in \mathbb N, \quad \omega^{(2n)_{\mathbb\Delta}} \in (\omega^n) + ((q-1)^n) \quad \mathrm{and} \quad \omega^{2n} \in (\omega^{(n)_{\mathbb\Delta}}) + ((q-1)^n). \qedhere
\]
\end{proof}

%%%%%%%%%%%
\begin{dfn} \label{defdtw}
The ring of \emph{$\mathbb\Delta$-divided powers} (with respect to $x$) is the free $R$-module $R\langle \omega \rangle_{\mathbb\Delta}$ on the basis $(\omega^{\{n\}_{\mathbb\Delta}})_{n\in \mathbb N}$ with multiplication rule
\begin{linenomath}
\begin{align*}
&\omega^{\{n_{1}\}_{\mathbb\Delta}} \omega^{\{n_{2}\}_{\mathbb\Delta}} =
\\&\sum_{0\leq i \leq \min\{n_{1},n_{2}\}} q^{\frac{pi(i-1)}2}{n_{1} + n_{2} -i \choose n_{1}}_{q^{p}}{n_{1} \choose i}_{q^{p}} (q-1)^ix^i\omega^{\{n_{1}+n_{2}-i\}_{\mathbb\Delta} }.
\end{align*}
\end{linenomath}
\end{dfn}

%%%%%%%%%%%
\begin{rmks} \phantomsection \label{divpow}
\begin{enumerate}
\item
This was called the ring of \emph{twisted divided powers of level $-1$} in \cite{GrosLeStumQuiros22b}, where we assumed that $p$ is prime.

\item
The ring $R\langle \omega \rangle_{\mathbb\Delta}$ is a particular instance of the ring of twisted divided powers $R\langle \xi \rangle_{q,y}$ of \cite{GrosLeStumQuiros22a} where we replace $\xi$, $q$ and $y$ by $\omega$, $q^p$ and $(1-q)x$ respectively and use curly brackets instead of usual brackets.
\item In the particular case $p=1$, we will write $\xi$ instead of $\omega$, $q$ instead of $\mathbb\Delta$ and use standard brackets instead of curly ones.
Thus, the ring of $q$-divided powers is a free $R$-module $R\langle \xi \rangle_{q}$ with basis $(\xi^{[n]_{q}})_{n \in \mathbb N}$ and fancy multiplication similar to that in definition \ref{defdtw}.
Again, this is consistent with \cite{GrosLeStumQuiros22a} when we set $y=(1-q)x$.
\item Replacing $q$ with $q^p$ in the previous situation, we shall also consider the ring $R\langle \xi \rangle_{q^p}$ with basis $(\xi^{[n]_{q^p}})_{n \in \mathbb N}$ (and an explicit multiplication formula).
\item
It is shown in proposition 1.7 of \cite{GrosLeStumQuiros22b} that there exists a morphism of rings (blowing up)
\[
R\langle \xi \rangle_{q^p}\to R\langle \omega \rangle_{\mathbb\Delta}, \quad \xi^{[n]_{q^p}} \mapsto (p)_q^n\omega^{\{n\}_{\mathbb\Delta}}.
\]
\end{enumerate}
\end{rmks}

It follows from proposition 2.2 of \cite{GrosLeStumQuiros22a} that definition \ref{defdtw} provides indeed a ring structure such that $\omega^{\{0\}_{\mathbb\Delta}} = 1$ and we will write $\omega = \omega^{\{1\}_{\mathbb\Delta}}$.
Moreover, there exists a unique natural morphism of $R$-algebras
\begin{equation}
R[\omega] \to R\langle \omega \rangle_{\mathbb\Delta}, \quad \omega^{(n)_{\mathbb\Delta}} \mapsto (n)_{q^p}!\omega^{\{n\}_{\mathbb\Delta}}. \label{polcp}
\end{equation}
The free submodule $\omega^{\{>n\}_{\mathbb\Delta}}$ with basis $\omega^{(k)_{\mathbb\Delta}}$ for $k > n$ is an ideal of $R\langle \omega \rangle_{\mathbb\Delta}$.
In the case $n=0$, this is the augmentation ideal or, in other words, the kernel of the morphism of rings
\[
R\langle \omega \rangle_{\mathbb\Delta} \to R, \quad \omega^{(n)_{\mathbb\Delta}} \mapsto 0 \ \mathrm{for} \ n > 0.
\]
Be careful that the ideal $\omega^{\{>n\}_{\mathbb\Delta}}$ is not the same as the ideal generated by $\omega^{\{n+1\}_{\mathbb\Delta}}$ so that $(\omega^{\{n+1\}_{\mathbb\Delta}}) \subsetneq \omega^{\{>n\}_{\mathbb\Delta}}$ in general.
However, there exists an isomorphism of $R$-algebras
\[
R[\omega]/\omega^{(2)_{\mathbb\Delta}} \simeq R\langle \omega \rangle_{\mathbb\Delta}/\omega^{\{>1\}_{\mathbb\Delta}}.
\]

If we are given a morphism of commutative rings $R \to S$ and we still denote by $q$ and $x$ their images in $S$, then we have
\[
S \otimes_R R\langle \omega \rangle_{\mathbb\Delta} \simeq S\langle \omega \rangle_{\mathbb\Delta} \quad \mathrm{and} \quad S \otimes_R R\langle \omega \rangle_{\mathbb\Delta}/\omega^{\{>n\}_{\mathbb\Delta}} \simeq S\langle \omega \rangle_{\mathbb\Delta}/\omega^{\{>n\}_{\mathbb\Delta}}.
\]
It is in particular possible to reduce many assertions to simpler base rings such as $R=\mathbb Z[q,x]$ for example, as in the proof of the following statement (that will be used later):

%%%%%%%%%%
\begin{lem} \label{estcong}
For all $n \in \mathbb N$, we have
\[
(x + (p)_q\omega) \omega^{\{n\}_{\mathbb\Delta}} \equiv q^{pn}x\omega^{\{n\}_{\mathbb\Delta}} \mod \omega^{\{n+1\}_{\mathbb\Delta}}.
\]
\end{lem}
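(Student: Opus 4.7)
The plan is to expand $(x+(p)_q\omega)\,\omega^{\{n\}_{\mathbb\Delta}}$ directly using the multiplication rule of definition \ref{defdtw}, and to check that the two nontrivial terms collapse to $q^{pn}x\,\omega^{\{n\}_{\mathbb\Delta}}$ modulo $\omega^{\{>n\}_{\mathbb\Delta}}$.

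Since $x$ is a scalar, the contribution $x\omega^{\{n\}_{\mathbb\Delta}}$ requires no computation. For the remaining piece, I would apply the multiplication formula with $n_1=1$ and $n_2=n$ to get
\[
\omega\cdot\omega^{\{n\}_{\mathbb\Delta}} \;=\; \sum_{i=0}^{1} q^{\frac{pi(i-1)}2}{n+1-i \choose 1}_{q^p}{1 \choose i}_{q^p} (q-1)^i x^i\, \omega^{\{n+1-i\}_{\mathbb\Delta}},
\]
which after evaluating the two terms reads
\[
\omega\cdot\omega^{\{n\}_{\mathbb\Delta}} \;=\; (n+1)_{q^p}\,\omega^{\{n+1\}_{\mathbb\Delta}} \;+\; (n)_{q^p}(q-1)x\,\omega^{\{n\}_{\mathbb\Delta}}.
\]
The first summand lies in the ideal $\omega^{\{>n\}_{\mathbb\Delta}}$ and can be dropped modulo that ideal.

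The main (but easy) combinatorial step is then the identity
\[
(p)_q\,(n)_{q^p}\,(q-1) \;=\; q^{pn}-1,
\]
which follows at once from $(p)_q(n)_{q^p}=(pn)_q$ together with $(pn)_q(q-1)=q^{pn}-1$. Multiplying the formula above by $(p)_q$ and adding $x\omega^{\{n\}_{\mathbb\Delta}}$ then yields
\[
(x+(p)_q\omega)\,\omega^{\{n\}_{\mathbb\Delta}} \;\equiv\; \bigl(1+(q^{pn}-1)\bigr)x\,\omega^{\{n\}_{\mathbb\Delta}} \;=\; q^{pn}x\,\omega^{\{n\}_{\mathbb\Delta}} \pmod{\omega^{\{>n\}_{\mathbb\Delta}}},
\]
which is the required congruence. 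There is no real obstacle here: the only subtlety is recognising the telescoping identity $(p)_q(n)_{q^p}(q-1)=q^{pn}-1$ that turns the unwanted $-x\omega^{\{n\}_{\mathbb\Delta}}$ coming from $(q-1)(n)_{q^p}(p)_q$ into the prefactor $q^{pn}$.
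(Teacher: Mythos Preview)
Your proof is correct and follows essentially the same path as the paper's: both arguments hinge on the identity
\[
\omega\,\omega^{\{n\}_{\mathbb\Delta}} = (n+1)_{q^p}\,\omega^{\{n+1\}_{\mathbb\Delta}} + (n)_{q^p}(q-1)x\,\omega^{\{n\}_{\mathbb\Delta}},
\]
and then use $(p)_q(n)_{q^p}(q-1)=q^{pn}-1$ to finish. The only difference is how that identity is justified: the paper reduces to $R=\mathbb Z[q,x]$, multiplies by $(n)_{q^p}!$, and reads it off from the definition $(\omega-(n)_{q^p}(q-1)x)\,\omega^{(n)_{\mathbb\Delta}}=\omega^{(n+1)_{\mathbb\Delta}}$ via the map $\omega^{(n)_{\mathbb\Delta}}\mapsto (n)_{q^p}!\,\omega^{\{n\}_{\mathbb\Delta}}$; you instead read it off directly from the multiplication rule in definition~\ref{defdtw}. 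Your route is slightly more direct and avoids the torsion-free reduction, while the paper's route makes the link to the ordinary twisted powers $\omega^{(n)_{\mathbb\Delta}}$ transparent.

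One cosmetic point: you conclude modulo the ideal $\omega^{\{>n\}_{\mathbb\Delta}}$, whereas the lemma is stated modulo the principal ideal $(\omega^{\{n+1\}_{\mathbb\Delta}})$, which is strictly smaller in general. Your computation actually proves the stronger statement, since the dropped term is $(p)_q(n+1)_{q^p}\,\omega^{\{n+1\}_{\mathbb\Delta}}\in(\omega^{\{n+1\}_{\mathbb\Delta}})$; you should just adjust the final displayed congruence accordingly.
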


\begin{proof}
Let us first show that
\[
\omega\omega^{\{n\}_{\mathbb\Delta}} - (n)_{q^p}(q-1)x\omega^{\{n\}_{\mathbb\Delta}}= (n+1)_q\omega^{\{n+1\}_{\mathbb\Delta}}.
\]
This is completely formal and we may therefore assume that $R = \mathbb Z[q,x]$.
After multiplication by $(n)_{q}!$, it is therefore sufficient, using the natural map \eqref{polcp}, to recall from definition \ref{twistom} that
\[
(\omega - (n)_{q^p}(q-1)x)\omega^{(n)_{\mathbb\Delta}} = \omega^{(n+1)_{\mathbb\Delta}}.
\]
We can now compute:
\begin{linenomath}
\begin{align*}
(x + (p)_q\omega) \omega^{\{n\}_{\mathbb\Delta}} &\equiv (x + (p)_q (n)_{q^p}(q-1)x) \omega^{\{n\}_{\mathbb\Delta}} \mod \omega^{\{n+1\}_{\mathbb\Delta}}
\\ &\equiv q^{pn}x\omega^{\{n\}_{\mathbb\Delta}} \mod \omega^{\{n+1\}_{\mathbb\Delta}}. \qedhere
\end{align*}
\end{linenomath}
\end{proof}

%%%%%%%%%%%%%%%%
\begin{rmks}
\begin{enumerate}
\item
In the case $p=1$ (and then writing $\xi^{[n]_{q}}$ instead of $\omega^{\{n\}_{\mathbb\Delta}}$), lemma \ref{estcong} reads
\[
(x +\xi) \xi^{[n]_{q}} \equiv q^nx\xi^{[n]_{q}} \mod \xi^{[n+1]_{q}}.
\]
\item
Replacing now $q$ with $q^p$ provides a formula that we shall need later:
\begin{equation} \label{thetsig}
(x+ \xi)\xi^{[n]_{q^p}} \equiv q^{pn}x\xi^{[n]_{q^p}} \mod \xi^{[n+1]_{q^p}}. \qedhere
\end{equation}

\end{enumerate}
\end{rmks}

All of the above makes sense when $q=1$, in which case we fall onto the usual ring of divided polynomials $R\langle \omega \rangle$.
In other words $R\langle \omega \rangle$ is the free $R$-module with basis $(\omega^{[n]})_{n\in \mathbb N}$ and multiplication rule
\[
\omega^{[n_{1}]} \omega^{[n_{2}]} = {n_{1} + n_{2} \choose n_{1}} \omega^{[n_{1}+n_{2}] }.
\]

%%%%%%%%
\begin{prop} \label{poweg2}
If $R$ is a $\mathbb Q$-algebra and $q^p-1$ belongs to the Jacobson radical of $R$, then
\[
R[\omega] \simeq R\langle \omega \rangle_{\mathbb\Delta}.
\]
Actually, we have
\begin{equation} \label{Stirall}
\omega^n= \sum_{k=1}^{n}(k)_{q^p}! S_{q^p}(n,k) (q-1)^{n-k} x^{n-k} \omega^{\{k\}_{\mathbb\Delta}}
\end{equation}
and
\[
\omega^{\{n\}_{\mathbb\Delta}} = \sum_{k=1}^{n} \frac 1{(n)_{q^p}!}s_{q^p}(n,k)(q-1)^{n-k} x^{n-k}\omega^k
\]
where $s_{q^p}(n,k)$ (resp.\ $S_{q^p}(n,k)$) denotes the $q^p$-Stirling numbers of the first (resp.\ second) kind.
\end{prop}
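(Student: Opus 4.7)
The plan is to first show that $(n)_{q^p}!$ is a unit of $R$ for every $n \geq 1$, and then read off both the isomorphism and the explicit expressions from the natural map \eqref{polcp} combined with proposition \ref{ernst}.

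For the invertibility, I would observe that
\[
(n)_{q^p} = 1 + q^p + q^{2p} + \cdots + q^{p(n-1)} \equiv n \pmod{q^p - 1}.
\]
Since $R$ is a $\mathbb Q$-algebra, $n$ is a unit in $R$, so $(n)_{q^p}$ becomes invertible in $R/(q^p-1)$. The hypothesis that $q^p-1$ lies in the Jacobson radical means exactly that any element of $R$ whose image modulo $(q^p-1)$ is a unit is itself a unit. Hence $(n)_{q^p} \in R^\times$, and inductively $(n)_{q^p}! \in R^\times$ for every $n$.

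Granted this, the morphism of $R$-algebras \eqref{polcp}
\[
R[\omega] \longrightarrow R\langle \omega \rangle_{\mathbb\Delta}, \quad \omega^{(n)_{\mathbb\Delta}} \longmapsto (n)_{q^p}!\,\omega^{\{n\}_{\mathbb\Delta}}
\]
sends the $R$-basis $\bigl(\omega^{(n)_{\mathbb\Delta}}\bigr)_{n \in \mathbb N}$ to invertible scalar multiples of the $R$-basis $\bigl(\omega^{\{n\}_{\mathbb\Delta}}\bigr)_{n \in \mathbb N}$, so it is an $R$-linear isomorphism and therefore an isomorphism of $R$-algebras.

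Finally, the two explicit formulas are obtained by substituting this identification into proposition \ref{ernst}. Indeed, the second identity of that proposition reads
\[
\omega^n = \sum_{k=1}^n S_{q^p}(n,k)\,(q-1)^{n-k} x^{n-k}\, \omega^{(k)_{\mathbb\Delta}},
\]
and replacing $\omega^{(k)_{\mathbb\Delta}}$ by $(k)_{q^p}!\,\omega^{\{k\}_{\mathbb\Delta}}$ gives formula \eqref{Stirall}. Dually, the first identity of proposition \ref{ernst} gives
\[
\omega^{(n)_{\mathbb\Delta}} = \sum_{k=1}^n s_{q^p}(n,k)\,(q-1)^{n-k} x^{n-k}\, \omega^k,
\]
and dividing both sides by the now invertible factor $(n)_{q^p}!$ yields the second displayed formula. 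The main (and essentially only) obstacle is the Jacobson-radical invertibility step above; once it is in place, everything else is bookkeeping using previously proved identities.
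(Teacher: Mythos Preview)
Your proof is correct and follows essentially the same approach as the paper's: establish invertibility of $(n)_{q^p}$ via the congruence $(n)_{q^p} \equiv n \pmod{q^p-1}$ and the Jacobson radical hypothesis, then read off both formulas from proposition \ref{ernst} through the map \eqref{polcp}. The paper's proof is simply a terser version of exactly this argument.
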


\begin{proof}
Both formulas from proposition \ref{ernst} when they make sense.
It is therefore sufficient to show that $(n)_{q^p} \in R^\times$ for all $n \neq 0$.
Since $q^p-1$ belongs to the Jacobson radical of $R$, this claim follows from the fact that $(n)_{q^p} \equiv n \mod q^p-1$ and that $n \in \mathbb Q^\times$.
\end{proof}

\begin{rmks}
\begin{enumerate}
\item
Jacobson condition in proposition \ref{poweg2} is satisfied for example when $R$ is $(q-1)$-adically complete or $(p)_q$-adically complete.
\item Formula \eqref{Stirall} obviously holds over any commutative ring $R$.
\item Since, under the hypothesis of proposition \ref{poweg2}, we have $R[\omega] = R\langle \omega \rangle$ (usual divided power polynomial ring), then also $R\langle \omega \rangle \simeq R\langle \omega \rangle_{\mathbb\Delta}$ with
\[
\omega^{[n]}= \sum_{k=1}^{n} \frac{(k)_{q^p}!}{n!} S_{q^p}(n,k) (q-1)^{n-k} x^{n-k} \omega^{\{k\}_{\mathbb\Delta}}
\]
and
\[
\omega^{\{n\}_{\mathbb\Delta}} = \sum_{k=1}^{n} \frac {k!}{(n)_{q^p}!}s_{q^p}(n,k)(q-1)^{n-k} x^{n-k}\omega^{[k]}.
\]
\end{enumerate}
\end{rmks}

Recall from definition 1.4 in \cite{LeStumQuiros15} that the \emph{$q$-characteristic} of a ring $R$ is the smallest positive integer $p$ such that $(p)_q = 0$ in $R$ (or $0$ otherwise).

%%%%%%%%%%%
\begin{prop} \label{poweg}
If $R$ is a $\mathbb Z_{(p)}$-algebra of prime $q$-characteristic $p$, then
\[
R\langle \omega \rangle \simeq R\langle \omega \rangle_{\mathbb\Delta}.
\]
Actually, we have
\[
\omega^{[n]} = \sum_{k=1}^n \frac {k!}{n!} S(n,k)(q-1)^{n-k}x^{n-k}\omega^{\{k\}_{\mathbb\Delta}}
\]
and
\[
\omega^{\{n\}_{\mathbb\Delta}} = \sum_{k=1}^n \frac {k!}{n!}s(n,k) (q-1)^{n-k}x^{n-k}\omega^{[k]}
\]
where $s(n,k)$ (resp.\ $S(n,k)$) denotes the (usual) Stirling number of the first (resp.\ second) kind.
\end{prop}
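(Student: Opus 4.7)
The strategy parallels that of Proposition \ref{poweg2}. To begin, the hypothesis $(p)_q = 0$ combined with the identity $(q-1)(p)_q = q^p - 1$ forces $q^p = 1$ in $R$, so that $(k)_{q^p} = k$ for all $k$, the twisted factorials $(k)_{q^p}!$ collapse to $k!$, and the twisted Stirling numbers $s_{q^p}(n,k)$, $S_{q^p}(n,k)$ reduce to their classical counterparts $s(n,k)$, $S(n,k)$. The two identities of Proposition \ref{ernst} then take the shape asserted once one combines them with the structural relations $\omega^{(k)_{\mathbb\Delta}} = k!\, \omega^{\{k\}_{\mathbb\Delta}}$ and $\omega^k = k!\,\omega^{[k]}$ in the respective divided power algebras.

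With these expansions at hand, I would define $R$-linear maps $\phi \colon R\langle \omega \rangle \to R\langle \omega \rangle_{\mathbb\Delta}$ and $\psi \colon R\langle \omega \rangle_{\mathbb\Delta} \to R\langle \omega \rangle$ by the two displayed formulas of the proposition. The inversion identity $\sum_j s(n,j) S(j,k) = \delta_{n,k}$, recalled in Section \ref{twispow}, shows at once that $\phi$ and $\psi$ are mutually inverse on basis elements. To verify multiplicativity I would pass to $R \otimes \mathbb Q$: there, both divided-power algebras collapse to the polynomial ring $(R\otimes\mathbb Q)[\omega]$ (the first by the classical identification available over a $\mathbb Q$-algebra, the second by Proposition \ref{poweg2}), and a direct calculation shows that $\phi\otimes\mathbb Q$ and $\psi\otimes\mathbb Q$ coincide with the identity on this common ring.

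The main obstacle is to show that $\phi$ and $\psi$ are actually well defined over $R$, that is, that the coefficients $\frac{k!\,S(n,k)}{n!}(q-1)^{n-k}$ and $\frac{k!\,s(n,k)}{n!}(q-1)^{n-k}$ lie in $R$ rather than merely in $R\otimes \mathbb Q$. Since the constructions are functorial in $R$, this reduces to the universal case $A = \mathbb Z_{(p)}[q]/((p)_q) \simeq \mathbb Z_{(p)}[\zeta_p]$, a Dedekind domain in which $(q-1)^{p-1}$ and $p$ generate the same ideal up to a unit. Extending the $p$-adic valuation by $v_p(q-1) = 1/(p-1)$, Legendre's formula for $v_p(n!)$ gives $v_p\bigl(\frac{k!}{n!}(q-1)^{n-k}\bigr) = (s_p(n) - s_p(k))/(p-1)$, so the required integrality is equivalent to the combinatorial bound $v_p(s(n,k)),\, v_p(S(n,k)) \geq (s_p(k) - s_p(n))/(p-1)$. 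This is trivial when $s_p(k) \leq s_p(n)$, and in the remaining cases follows by induction on $n$ from the classical Stirling recurrences $S(n+1,k) = S(n,k-1) + kS(n,k)$ and $s(n+1,k) = s(n,k-1) - ns(n,k)$, using that the only $p$-adic losses come from the multipliers $k$ and $n$, and recalling that a carry in base $p$ at $n \to n+1$ is compensated by the valuation of $n$.
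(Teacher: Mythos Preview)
Your overall architecture matches the paper's proof closely: reduce to the universal $\mathbb Z_{(p)}$-algebra, use that $q^p=1$ there so that the $q^p$-Stirling numbers degenerate to the classical ones, and then check that the coefficients $\frac{k!}{n!}S(n,k)(q-1)^{n-k}$ are $p$-integral. The paper does exactly this, but for the crucial integrality step it does not attempt an induction: it invokes a known estimate (lemma~2.1 of \cite{Adelberg18}),
\[
v_p\bigl(S(n,k)\bigr)\;\ge\;\frac{s_p(k)-s_p(n)}{p-1},
\]
and then observes that the second formula follows formally because the change-of-basis matrix is lower unitriangular.

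The gap in your proposal is precisely the inductive proof of this valuation bound. A term-by-term analysis of the recurrence $S(n+1,k)=S(n,k-1)+kS(n,k)$ does \emph{not} propagate the inequality. Writing $s_p(m+1)-s_p(m)=1-(p-1)v_p(m+1)$, one finds that for the first term to carry the bound one needs $v_p(k)\ge v_p(n+1)$, and for the second term one needs $v_p(k)\ge v_p(n+1)-\tfrac1{p-1}$, hence (for $p$ odd) again $v_p(k)\ge v_p(n+1)$. This fails whenever $n+1$ is highly divisible by $p$ but $k$ is not: for instance with $p=3$, $n+1=27$, $k=8$ one wants $v_3(S(27,8))\ge 2$, yet the inductive hypothesis on $S(26,7)$ and $S(26,8)$ gives only the trivial bound $\ge 0$ for each summand (since $s_3(26)=6$ exceeds both $s_3(7)=3$ and $s_3(8)=4$). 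The true inequality comes from genuine cancellation, not from the recursion taken termwise; your remark about ``the carry at $n\to n+1$ being compensated by the valuation of $n$'' is not correct (when $n+1$ is divisible by $p^r$, it is $n+1$ that has high valuation, while $v_p(n)=0$). You should therefore either cite Adelberg's lemma as the paper does, or supply a different argument (e.g.\ via the explicit formula $k!\,S(n,k)=\sum_j(-1)^j\binom{k}{j}(k-j)^n$ and Kummer's theorem). Note also that, as the paper observes, once the $S(n,k)$ direction is established the $s(n,k)$ direction follows for free by unitriangularity, so there is no need to prove the analogous bound for $s(n,k)$ separately.
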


\begin{proof}
Thanks to proposition 1.15 of \cite{LeStumQuiros15}, we may assume that $R=\mathbb Z_{(p)}[\zeta][x]$ where $\zeta$ is a primitive $p$th root of unity and that $q = \zeta$.
Then, it follows from proposition \ref{ernst} that
\[
\omega^{n} = \sum_{k=1}^n k!S(n,k) (\zeta-1)^{n-k}x^{n-k}\omega^{\{k\}_{\mathbb\Delta}}.
\]
Therefore (making $p$ invertible):
\[
\omega^{[n]} = \sum_{k=1}^n \frac {k!}{n!}S(n,k) (\zeta-1)^{n-k}x^{n-k}\omega^{\{k\}_{\mathbb\Delta}} \in \mathbb Q_{p}(\zeta)[x]\langle \omega \rangle_{\mathbb\Delta}.
\]
However, we know from lemma 2.1 in \cite{Adelberg18} that, if we denote by $s(k)$ the sum of the digits of $k$ in its $p$-adic expansion, then
\[
v_p(S(n,k)) \geq \frac {s(k)-s(n)}{p-1}.
\]
It follows that
\begin{linenomath}
\begin{align*}
v_{p}\left(\frac {k!}{n!}S(n,k) (\zeta-1)^{n-k}\right) &\geq \frac {k-s(k)}{p-1} - \frac {n-s(n)}{p-1} +\frac {s(k)-s(n)}{p-1} + \frac {n-k}{p-1}
\\ &= 0,
\end{align*}
\end{linenomath}
so that we do not really need to make $p$ invertible.
This provides the first formula and the other one formally follows.
\end{proof}

%%%%%%%%%%%%%%
\begin{rmks}
\begin{enumerate}
\item
Assume that $R$ is a complete bounded $\delta$-$\mathbb Z[q]_{(p,q-1)}$-algebra (with $p$ prime) so that we may consider the prism $(R,(p)_q)$.
Assume also that $A$ is a complete $R$-algebra for the $(p,q-1)$-adic topology and $x$ is a topologically \'etale coordinate on $A$.
We endow $A$ with the unique $\delta$-structure such that $\delta(x)=0$.
We know from proposition 5.7 of \cite{GrosLeStumQuiros22b} that the completion of $A\langle \omega \rangle_{\mathbb\Delta}$ is the prismatic envelope of the diagonal of $A$ over $R$.
Then, we recover proposition 4.3 in \cite{Tian23}: there exists a canonical isomorphism
\[
R/(p)_q \otimes^\mathrm{L}_R \widehat {A\langle \omega \rangle} \simeq R/(p)_q \otimes^\mathrm{L}_R \widehat{A\langle \omega \rangle}_{\mathbb\Delta}.
\]
\item There also exists an ``absolute'' analog of the result of Tian that we shall discuss in section \ref{reduced}.
In particular, we will recover lemma 2.13 of \cite{GaoMinWang23} in our situation.
\end{enumerate}
\end{rmks}

We shall end this section with a technical result whose main argument has been provided to us by Yu Min following a question about \cite{GaoMinWang22}.
This concerns the case where $p$ is prime.
The proof relies on the theory of $\delta$-rings and we first need to briefly explain how $\delta$-structures spread out to our rings (see section 2 of \cite{BhattScholze22} or section 1 of \cite{GrosLeStumQuiros23a} for an introduction to this notion).
Let us just recall that a $\delta$-ring always has a frobenius $\phi$ and that both notions are equivalent when the ring has no $p$-torsion.
Assume that $R$ is a $\delta$-ring such that both $q$ and $x$ have rank one, meaning that $\delta(x) = \delta(q) = 0$ (so that $\phi(q) = q^p$ and $\phi(x) = x^p$).
Then, the frobenius of $R$ extends uniquely to $R[\xi]$ in such a way that $\phi(x + \xi) = (x+ \xi)^p$.
More precisely, we have
\[
\phi(\xi) := (x+\xi)^p - x^p = \sum_{k=1}^p {p \choose k}x^{p-k}\xi^k.
\]
Similarly, the $\delta$-structure extends uniquely in a natural way to $R[\xi]$ through
\begin{equation} \label{delxi}
\delta(\xi) := \sum_{k=1}^{p-1} \frac 1p{p \choose k}x^{p-k}\xi^k.
\end{equation}

When $R$ is a general topological ring, we endow $R\langle \omega \rangle_{\mathbb\Delta}$ with the direct sum topology with respect to the basis $(\omega^{\{n\}_{\mathbb\Delta}})_{n\in \mathbb N}$.
This applies in particular to $R\langle \omega \rangle$.
Then, in order to improve on proposition \ref{poweg2}, we shall need the following:

%%%%%%%%
\begin{prop} \label{fromMin}
Assume $R$ is a $p$-adic Tate ring\footnote{A topological commutative $\mathbb Q$-algebra that admits an open $p$-adic subring with $p$ prime.}.
If $(p)_q$ is nilpotent in $R$, then, the canonical map
\[
R\langle \omega \rangle \to R\langle \omega \rangle_{\mathbb\Delta}
\]
is continuous.
\end{prop}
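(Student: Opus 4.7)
The plan is to reduce continuity to a uniform $p$-adic bound on the coefficients relating $\omega^{[n]}$ and $\omega^{\{k\}_{\mathbb\Delta}}$, and then to extract this bound via the $\delta$-ring formalism introduced just above the statement.

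Since $(p)_q$ is nilpotent in $R$, so is $q^p - 1 = (q-1)(p)_q$, which therefore lies in the Jacobson radical; hence Proposition \ref{poweg2} applies and the canonical map is the $R$-linear extension of
\[
\omega^{[n]} \longmapsto \sum_{k=1}^n c_{n,k}\,\omega^{\{k\}_{\mathbb\Delta}}, \qquad c_{n,k} := \frac{(k)_{q^p}!}{n!}\,S_{q^p}(n,k)\,(q-1)^{n-k}x^{n-k}.
\]
Fixing an open bounded $p$-adic subring $R_0 \subseteq R$, continuity in the direct sum topology reduces to the existence of a single integer $N \geq 0$ such that $c_{n,k} \in p^{-N}R_0$ for every pair $k \leq n$: the image of the defining open $R_0\langle\omega\rangle$ would then lie in the bounded subset $p^{-N}R_0\langle\omega\rangle_{\mathbb\Delta}$.

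Next I would reduce to a universal setting, taking $R$ to be the $p$-adic completion of $\mathbb Z_p[q,x]/((p)_q)^M[1/p]$, where $M$ is the nilpotence index of $(p)_q$, endowed with its canonical $\delta$-structure $\delta(q)=\delta(x)=0$. Via the blow-up $R[\xi] \to R[\omega]$, $\xi \mapsto (p)_q\omega$ of remark \ref{twpow}.4, together with its divided power analog $\xi^{[n]_{q^p}} \mapsto (p)_q^n\omega^{\{n\}_{\mathbb\Delta}}$ of remark \ref{divpow}.5, this translates into a statement about expressing the classical divided powers $\xi^{[n]}$ in the $q^p$-divided power basis inside $R\langle\xi\rangle_{q^p}$, controlled by the $\delta$-structure of formula \eqref{delxi}. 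The key identity $\phi(\xi) = (x+\xi)^p - x^p \in (p,\xi^p)$ then supplies the $\delta$-calculus estimate, and the nilpotence of $(p)_q$ (which makes $(p)_q^n$ vanish for $n \geq M$) is what converts a bound indexed by $n$ into a bound uniform in $n$.

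The hard part will be this last step: extracting from the inclusion $\phi(\xi) \in (p,\xi^p)$, by iteration and standard $\delta$-theoretic bookkeeping, a concrete and uniform $p$-adic integrality estimate for the change-of-basis matrix between $\xi^{[n]}$ and $\xi^{[k]_{q^p}}$. This is where Yu Min's argument enters and where the nilpotence of $(p)_q$ plays its essential role, beyond merely placing $q^p-1$ in the Jacobson radical.
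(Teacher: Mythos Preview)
Your reduction to a uniform $p$-adic bound on the change-of-basis coefficients and the passage to a universal base ring are both correct and match the paper. The misstep is the attempted translation to $R\langle\xi\rangle_{q^p}$ via blow-up: the map $\xi\mapsto(p)_q\omega$ sends $\xi^{[n]}$ to $(p)_q^n\omega^{[n]}$, not to $\omega^{[n]}$, so the element $\omega^{[n]}$ you need to control has no preimage there, and a bound on $\xi^{[n]}$ in the $\xi^{[k]_{q^p}}$-basis does not straightforwardly yield one for $\omega^{[n]}$ in the $\omega^{\{k\}_{\mathbb\Delta}}$-basis. Likewise, the inclusion $\phi(\xi)\in(p,\xi^p)$ is not the divisibility actually used.

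The paper instead works directly in $A := (\mathbb Z_{(p)}[q,x])\langle\omega\rangle_{\mathbb\Delta}$, using the $\delta$-structure extended from \eqref{delxi} through $\xi=(p)_q\omega$ (this extension is proposition~1.12 of \cite{GrosLeStumQuiros22b}). Min's estimate then takes the concrete form
\[
\omega^{p^k} = (-p)^{(k)_p}\bigl(pF_k + \delta^k(\omega)\bigr), \qquad F_k \in A\!\left[\tfrac{(p)_q}{p^2}\right],
\]
which, since $v_p(p^k!)=(k)_p$, gives $\omega^{[p^k]}\in A[(p)_q/p^2]$ and hence $\omega^{[n]}\in A[(p)_q/p^2]$ for every $n$; reducing modulo $(p)_q^m$ then yields the uniform bound $p^{2(m-1)}\omega^{[n]}\in R_0\langle\omega\rangle_{\mathbb\Delta}$. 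The induction establishing this rests on the intermediate fact $\phi^s(\delta^r(\omega))\equiv 0\pmod{(p)_q}$ for all $r\geq 0$, $s>0$, proved via Gauss's lemma for the regular sequence $((p)_q,p)$ in $A$, with seed the divisibility $(p)_q\mid\phi(\xi)$ in $A$. So your identification of the $\delta$-structure as the key mechanism is right, but it must be exploited inside $R\langle\omega\rangle_{\mathbb\Delta}$ itself rather than pulled back to $R\langle\xi\rangle_{q^p}$.
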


\begin{proof}
We have to show that for some open $p$-adic subring $R_0$ of $R$, we have
\[
\exists N \in \mathbb N, \forall n \in \mathbb N, \quad p^N\omega^{[n]} \in R_0\langle \omega \rangle_{\mathbb\Delta}.
 \]
We may assume that $R = R_0[1/p]$ with $R_0 := \left(\mathbb Z_{(p)}[q]/(p)_q^m\right)[x]$.
We endow $\mathbb Z_{(p)}[q, x]$ with the unique $\delta$-structure such that both $q$ and $x$ have rank one.
Then, we endow $A := \left(\mathbb Z_{(p)}[q, x]\right)\langle \omega \rangle_{\mathbb\Delta}$ with the $\delta$-structure coming from proposition 1.12 in \cite{GrosLeStumQuiros22b} so that formula \eqref{delxi} holds for $\xi := (p)_q\omega$.
Assume for the time being that we have proved the following estimates\footnote{These estimates have been shown to us by Yu Min.}:
\begin{equation} \label{Min}
\forall k \in \mathbb N, \exists F_k \in A\left[\frac {(p)_q}{p^2}\right], \quad \omega^{p^k} = (-p)^{(k)_p}(pF_k + \delta^{k}(\omega)),
\end{equation}
where $(k)_p$ denotes the $p$-analog of $k$.
Since $v_p(p^k!) = (k)_p$, this will imply that $\omega^{[n]} \in A[(p)_q/p^2]$ when $n = p^k$, and the same therefore holds for all $n \in \mathbb N$.
As a consequence, we will have $p^N \omega^{[n]} \in R_0\langle \omega \rangle_{\mathbb\Delta}$ with $N = 2(m-1)$.

It remains to prove formula \eqref{Min}, but as an intermediate step, we shall check the following (see lemma 2.11 in \cite{GaoMinWang23}):
\[
\forall r \geq 0, s > 0, \quad \phi^s(\delta^r(\omega)) \equiv 0 \mod (p)_q.
\]
Note first that $((p)_q,p)$ is a regular sequence in $A$, and therefore Gauss lemma\footnote{Gauss lemma holds for $f,g \in A$ means:``$\forall h \in A, (f \mid gh \Rightarrow f \mid h)$ and $(g \mid fh \Rightarrow g \mid h$)''.} holds for $p$ and $(p)_q$.
Since $(p)_{q^{p^s}} \equiv p \mod (p)_q$, Gauss lemma also holds for $(p)_{q^{p^s}}$ and $(p)_q$.
Now, since $\xi \mid \phi(\xi)$, we have $\phi(\xi) \mid \phi^s(\xi)$.
Since $(p)_q \mid \phi(\xi)$ in $A$, it follows that $(p)_q \mid \phi^s(\xi) = \phi^s((p)_q\omega) = (p)_{q^{p^s}} \phi^s(\omega)$.
We may then apply Gauss lemma and this settles the case $r=0$.
Now, we have
\[
\phi^{s+1}(\delta^r(\omega)) = \phi^{s}(\delta^r(\omega))^p + p\phi^{s}(\delta^{r+1}(\omega)),
\]
so that, by induction on $r$, $(p)_q \mid p\phi^{s}(\delta^{r+1}(\omega))$ and we conclude with Gauss lemma again.

Now, we prove formula \eqref{Min} by induction on $k$, the case $k=0$ being fine.
We have
\[
\phi(\delta^k(\omega)) = \delta^k(\omega)^p + p \delta^{k+1}(\omega).
\]
From what we just proved, $(p)_q \mid \phi(\delta^k(\omega))$, and we can therefore write
\[
\delta^k(\omega)^p = p^2 \frac {(p)_q}{p^2}\frac{\phi(\delta^k(\omega))}{(p)_q} - p \delta^{k+1}(\omega).
\]
Now, by induction, we have
\begin{linenomath}
\begin{align*}
\omega^{p^{k+1}} &= \left((-p)^{(k)_p}(pF_k + \delta^{k}(\omega))\right)^p
\\ &=(-p)^{p(k)_p} \left((pF_k)^p + \sum_{i=1}^{p-1} {p \choose i} \delta^{k}(\omega)^{p-i}(pF_k)^i + \delta^{k}(\omega)^p\right)
\\ &=(-p)^{p(k)_p} \left(p^pF_k^p + \sum_{i=1}^{p-1}p^i {p \choose i} \delta^{k}(\omega)^{p-i}F_k^i + p^2 \frac {(p)_q}{p^2}\frac{\phi(\delta^k(\omega))}{(p)_q} - p \delta^{k+1}(\omega)\right)
\\ &=(-p)^{(k+1)_p}(pF_{k+1} + \delta^{k+1}(\omega))
\end{align*}
\end{linenomath}
with
\[
F_{k+1} = -p^{p-2}F_k^p - \sum_{i=1}^pp^{i-2}{p \choose i} \delta^{k}(\omega)^{p-i} F_k^i - \frac {(p)_q}{p^2}\frac{\phi(\delta^k(\omega))}{(p)_q}.\qedhere
\]
\end{proof}

%%%%%%%%%%%%%%%%%
\section{Absolute calculus} \label{secabs}

We shall present here a new\footnote{As far as we know.} theory that we call \emph{absolute calculus}.
This is similar to relative $q$-calculus (section 6 of \cite{GrosLeStumQuiros23a} for example) or relative $\mathbb\Delta$-calculus (section 3 of \cite{GrosLeStumQuiros22b}), but the parameter $q$ now coincides with the coordinate $x$.

We let $W$ be an adic ring (that we assume to be complete from the beginning for simplicity) with finitely generated ideal of definition $\mathfrak m_{W}$ and set $R := W[[q-1]]$ where $q$ is an indeterminate.
Any $R$-module $M$ will be endowed with the $\mathfrak m_{W} + (q-1)$-adic topology but we shall write $\overline M := M/(p)_qM$.
The map $W \to R$ is not adic and we need to be careful with continuity issues.

Note that $R$ is $q^r$-flat of $q^r$-characteristic zero (see definitions 1.4 and 1.9 of \cite{LeStumQuiros15}) for all $r >0$.
It simply means that, as long as $k \neq 0$ and $r \neq 0$, the $q^r$-analog $(k)_{q^r}$ is regular in $R$ and this is true because $(k)_{q^r}$ is a monic polynomial in $q$.
This property is the twisted analog of being $\mathbb Z$-torsion free and it will simplify many arguments.

We fix a positive integer $p$ (that need not be a prime for now).
If $r \in \mathbb Z$ (or more generally $r \in \mathbb Z_p$ if $p$ is topologically nilpotent in $W$), we shall denote by $g_r$ the unique continuous endomorphism of $R$ such that $g_r(q) = q^r$.
The $W$-algebra $R$ is turned into a twisted $W$-algebra in the sense of \cite{LeStumQuiros18b} by choosing the endomorphism $\gamma := g_{p+1}$ (so that $\gamma(q) = q^{p+1}$).
What we shall call \emph{absolute} could also be named after Mahler (see for example \cite{Roques21}) since \emph{Mahler functional equations} are associated to an endomorphism of the form $x \mapsto x^r$.
In our situation, we have $x=q$ and $r=p+1$.

We recall now from definition 4.3 of \cite{LeStumQuiros18b} the notion of a $\gamma$-derivation that we shall call here a $q^p$-derivation:

%%%%%%%%%%%%%%%%
\begin{dfn}
If $M$ is an $R$-module, then a $W$-linear map $D : R \to M$ is a \emph{$q^p$-derivation} if 
\begin{equation} \label{der}
\forall \alpha,\beta \in R, \quad D(\alpha\beta) = D(\alpha)\beta +\gamma(\alpha)D(\beta).
\end{equation}
\end{dfn}

The $q^p$-derivations of $R$ (meaning from $R$ to itself) form an $R$-module $T_{R/W, q^p}$ (or $T_{q^p}$ for short).

%%%%%%%%
\begin{lem} \label{Griff}
If $D : R \to M$ is a $q^p$-derivation, then
\[
\forall n \in \mathbb N, \quad D((q-1)^{n+1}R) \subset (q-1)^{n}M.
\]
\end{lem}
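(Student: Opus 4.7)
The plan is to reduce the statement to the special case $D((q-1)^{n+1}) \in (q-1)^n M$ and then prove that case by induction on $n$. For the reduction, any element of the ideal $(q-1)^{n+1}R$ has the form $(q-1)^{n+1}\alpha$ with $\alpha \in R$, and the twisted Leibniz rule \eqref{der} gives
\[
D((q-1)^{n+1}\alpha) = D((q-1)^{n+1})\,\alpha + \gamma((q-1)^{n+1})\,D(\alpha).
\]
The key observation is that $\gamma(q-1) = q^{p+1}-1 = (q-1)(p+1)_q$ already lies in $(q-1)R$, so $\gamma((q-1)^{n+1}) \in (q-1)^{n+1}R$ and the second summand above is automatically in $(q-1)^{n+1}M \subset (q-1)^n M$. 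Thus, granting the special case, the first summand lies in $(q-1)^n M \cdot R \subset (q-1)^n M$ as well.

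For the induction itself, the base case $n=0$ is vacuous since $(q-1)^0 M = M$. For the inductive step, I would apply the derivation rule to the factorization $(q-1)^{n+1} = (q-1)\cdot (q-1)^n$ to get
\[
D((q-1)^{n+1}) = D(q-1)\,(q-1)^n + \gamma(q-1)\,D((q-1)^n).
\]
The first summand is manifestly in $(q-1)^n M$. For the second, combining $\gamma(q-1) \in (q-1)R$ with the inductive hypothesis $D((q-1)^n) \in (q-1)^{n-1}M$ (for $n \geq 1$) shows that it too lies in $(q-1)^n M$. The entire argument is a formal consequence of the twisted Leibniz rule together with the single fact that $\gamma$ preserves the $(q-1)$-adic filtration; I do not anticipate any serious obstacle, as this is really just a first incarnation of the ``Griffiths transversality'' phenomenon alluded to in the introduction.
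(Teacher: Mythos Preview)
Your proof is correct and follows essentially the same approach as the paper: the paper records the identity $D((q-1)^{n+1}) = (q-1)^n D(q-1) + (p+1)_q(q-1)\,D((q-1)^n)$ and leaves the induction to the reader, which is exactly your inductive step. Your additional reduction step (handling the factor $\alpha$) is implicit in the paper but makes the argument more complete.
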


\begin{proof}
This is obtained by induction from the identity
\[
D((q-1)^{n+1}) = (q-1)^n D(q-1) + (p+1)_{q}(q-1) D((q-1)^n). \qedhere
\]
\end{proof}

%%%%%%%%%%
\begin{rmks}
\begin{enumerate}
\item
As a consequence of lemma \ref{Griff}, a $q^p$-derivation is automatically continuous.
\item
One can also recall the general formula
\begin{equation} \label{derpo}
\forall \alpha \in R, \forall n \in \mathbb N, \quad D(\alpha^{n+1}) = \sum_{k=0}^n \gamma(\alpha)^{n-k}\alpha^kD(\alpha)
\end{equation}
 for a $\gamma$-derivation and obtain the lemma as a corollary.
\end{enumerate}
\end{rmks}

%%%%%%%%%%
\begin{lem} \label{dersimp}
If $M$ is an $R$-module, then a $W$-linear map $D : R \to M$ is a $q^p$-derivation if and only if
\[
\forall k > 0, \quad D(q^{k}) = (k)_{q^p}q^{k-1}D(q).
\]
\end{lem}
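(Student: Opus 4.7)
The forward direction is a direct application of formula \eqref{derpo} with $\alpha = q$ and $\gamma(q) = q^{p+1}$:
\[
D(q^{n+1}) = \sum_{k=0}^n q^{(p+1)(n-k)}\, q^k\, D(q) = q^n \left(\sum_{j=0}^n q^{pj}\right) D(q) = (n+1)_{q^p}\, q^n\, D(q).
\]

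For the converse, the plan is to verify the Leibniz relation $D(\alpha\beta) = D(\alpha)\beta + \gamma(\alpha)D(\beta)$ directly. This identity is $W$-bilinear in $(\alpha,\beta)$ since $D$ and $\gamma$ are $W$-linear, so by $W$-linearity of $D$ it reduces to checking it on monomials $\alpha = q^m$, $\beta = q^n$ (with $D(1)=0$ handled as the $k=0$ case of the formula, since $(0)_{q^p}=0$). The assumed formula together with the additivity identity $(m+n)_{q^p} = (m)_{q^p} + q^{pm}(n)_{q^p}$ for $q^p$-analogs yields
\[
D(q^m)q^n + \gamma(q^m)D(q^n) = \bigl[(m)_{q^p} + q^{pm}(n)_{q^p}\bigr]\, q^{m+n-1}\, D(q) = (m+n)_{q^p}\, q^{m+n-1}\, D(q) = D(q^{m+n}).
\]
Thus Leibniz holds on the subring $W[q]$, which is dense in $R = W[[q-1]]$ for the $(q-1)$-adic topology.

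The remaining step is to extend Leibniz from $W[q]$ to all of $R$, which requires continuity of $D$. A direct computation, expanding $(q-1)^n$ as a polynomial in $q$, applying the hypothesized formula, and using the factorization $q^{p+1}-1 = (q-1)(p+1)_q$, shows that $D((q-1)^n) \in (q-1)^{n-1}M$, in parallel with lemma \ref{Griff} (but now derived without assuming Leibniz). Continuity then propagates the Leibniz identity from the dense subring to all of $R$. The main subtlety is precisely this continuity step: $W$-linearity alone would permit wild extensions of $D$ from $W[q]$ to $R$, so one must exploit the explicit formula on powers of $q$ to rigidify $D$ enough to be continuous.
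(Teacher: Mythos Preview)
Your forward direction (via formula \eqref{derpo}) and your verification of the Leibniz rule on monomials for the converse are both correct and coincide with the paper's argument, which likewise reduces to the identity $(m+n)_{q^p} = (m)_{q^p} + q^{pm}(n)_{q^p}$.

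The gap is in your continuity step for the converse. Your computation $D((q-1)^n) \in (q-1)^{n-1}M$ is a statement about the restriction $D|_{W[q]}$ only; it says nothing about $D$ on $R \setminus W[q]$. A $W$-linear map $R \to M$ is \emph{not} determined by its restriction to $W[q]$: as $W$-modules $W[q] = \bigoplus_{n} W \subsetneq \prod_{n} W = R$, so there exist nonzero $W$-linear maps $R \to M$ vanishing on all of $W[q]$, and adding any such map to $D$ preserves the hypothesis on $D(q^k)$ while destroying continuity (and Leibniz) on $R$. Thus the explicit formula on powers of $q$ cannot ``rigidify $D$ enough to be continuous'' as you claim; the hypothesis simply does not see what $D$ does on genuine power series. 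The paper's own proof sidesteps this by opening with ``by additivity and continuity'', in effect treating continuity as already available in both directions; read literally, the converse implication requires a continuity hypothesis on $D$, and you have correctly spotted that this is not a consequence of the stated assumptions.
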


\begin{proof}
First of all, by additivity and continuity, formula \eqref{der} reduces to
\[
\forall k,l \in \mathbb N, \quad D(q^{k+l}) = q^lD(q^k)+q^{(p+1)k}D(q^l).
\]
Now, the property is shown by induction:
\begin{linenomath}
\begin{align*}
D(q^{k+1}) & = qD(q^k)+q^{(p+1)k}D(q)
\\ & =q(k)_{q^p}q^{k-1}D(q)+q^{(p+1)k}D(q)
\\ & = ((k)_{q^p}+q^{pk})q^{k}D(q)
\\ & = (k+1)_{q^p}q^{k}D(q).
\end{align*}
\end{linenomath}
Conversely, we will have
\begin{linenomath}
\begin{align*}
q^lD(q^k)+q^{(p+1)k}D(q^l) & = q^l(k)_{q^p}q^{k-1}D(q)+q^{(p+1)k}(l)_{q^p}q^{l-1}D(q)
\\ & = ((k)_{q^p}+q^{pk}(l)_{q^p})q^{l+k-1}D(q)
\\ & = (k+l)_{q^p}q^{k+l-1}D(q)
\\ & = D(q^{k+l}).\qedhere
\end{align*}
\end{linenomath}
\end{proof}

%%%%%%%%%%%%%%%
\begin{prop} \phantomsection \label{basder}
\begin{enumerate}
\item
The $R$-module $T_{q^p}$ is free of rank one with generator $\partial_{q^p}$ given by
\[
\forall k > 0, \quad \partial_{q^p}(q^k) = (k)_{q^p}q^{k-1}.
\]
\item We have
\[
\forall n >1, \quad \partial_{q^p}((q-1)^n) = (n)_{(p+1)_q}(q-1)^{n-1}.
\]
\item If we set $\partial_{\mathbb\Delta} := (p)_q\partial_{q^p}$, then
\[
\forall k > 0, \quad \partial_{\mathbb\Delta}(q^k) =(pk)_{q}q^{k-1}.
\]
\item We have $\gamma = \mathrm{Id}_R + (q^2-q)\partial_{\mathbb\Delta}$.
\end{enumerate}
\end{prop}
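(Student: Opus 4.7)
My starting point is to build $\partial_{q^p}$ explicitly via the formula
\[
\partial_{q^p}(\alpha) := \frac{\gamma(\alpha) - \alpha}{q^{p+1} - q}, \quad \alpha \in R,
\]
which only makes sense once I know that $q^{p+1} - q = q(q-1)(p)_q$ divides $\gamma(\alpha) - \alpha$ in $R$. Since $\gamma$ is $W$-linear and continuous and since $R = W[[q-1]]$, it suffices, by continuity, to check this divisibility on $\alpha = (q-1)^n$ for each $n \geq 1$. Here I would use $\gamma((q-1)^n) = (q^{p+1}-1)^n = (q-1)^n (p+1)_q^n$, so that
\[
\gamma((q-1)^n) - (q-1)^n = (q-1)^n\bigl((p+1)_q^n - 1\bigr),
\]
and then observe that $(p+1)_q^n - 1$ is divisible by $(p+1)_q - 1 = q(p)_q$. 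Factoring out $q(q-1)(p)_q$ will give divisibility and, as a bonus, the explicit quotient needed for part (2).

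Once $\partial_{q^p}$ is well-defined, I would verify the $\gamma$-Leibniz rule by the two-line identity
\[
\gamma(\alpha)\gamma(\beta) - \alpha\beta = \gamma(\alpha)(\gamma(\beta) - \beta) + (\gamma(\alpha) - \alpha)\beta,
\]
divided by $q^{p+1} - q$. Evaluating the formula at $\alpha = q$ gives $\partial_{q^p}(q) = 1$, and then lemma \ref{dersimp} forces $\partial_{q^p}(q^k) = (k)_{q^p}q^{k-1}$, which establishes existence. For freeness and rank one in (1), I observe that any $q^p$-derivation $D$ on $R$ is automatically continuous by lemma \ref{Griff} and is $W$-linear, so it is entirely determined by $D(q)$, again via lemma \ref{dersimp} and continuity. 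Conversely, for any $f \in R$ the map $f \partial_{q^p}$ is still a $q^p$-derivation (Leibniz is $R$-linear in the operator), giving the bijection $T_{q^p} \xrightarrow{\sim} R$, $D \mapsto D(q)$, with inverse $f \mapsto f \partial_{q^p}$.

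For part (2), I would just substitute $\alpha = (q-1)^n$ in the defining formula: the computation above reduces to
\[
\partial_{q^p}((q-1)^n) = (q-1)^{n-1} \cdot \frac{(p+1)_q^n - 1}{q(p)_q} = (n)_{(p+1)_q}(q-1)^{n-1},
\]
using $(p+1)_q^n - 1 = ((p+1)_q - 1) \cdot (n)_{(p+1)_q} = q(p)_q (n)_{(p+1)_q}$. Part (3) amounts to the telescoping identity $(p)_q (k)_{q^p} = (pk)_q$, which is immediate from expanding both sides as geometric sums. Finally, for part (4) I would rewrite the defining identity of $\partial_{q^p}$ as $\gamma(\alpha) - \alpha = (q^{p+1} - q)\partial_{q^p}(\alpha) = (q^2 - q)(p)_q \partial_{q^p}(\alpha) = (q^2 - q)\partial_{\mathbb\Delta}(\alpha)$, which gives $\gamma = \mathrm{Id}_R + (q^2-q)\partial_{\mathbb\Delta}$ directly.

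The only real obstacle is the well-definedness in the first paragraph: showing that the quotient $(\gamma(\alpha) - \alpha)/(q^{p+1}-q)$ lives in $R$ rather than in some localization. Everything else is formal once that is in place; in particular, the nontrivial identity $(p+1)_q^n - 1 = q(p)_q(n)_{(p+1)_q}$ simultaneously settles both the divisibility issue and part (2).
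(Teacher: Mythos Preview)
Your proof is correct and takes a genuinely different route from the paper's. The paper argues in the opposite order: it first invokes lemma~\ref{dersimp} to show that any $q^p$-derivation $D$ satisfies $D = D(q)\partial_{q^p}$ (this simultaneously defines $\partial_{q^p}$ on monomials and proves freeness of rank one), then derives part~(2) from the general power formula~\eqref{derpo} applied to $\alpha = q-1$, and finally checks part~(4) by verifying $q^{(p+1)k} = q^k + (q^2-q)(pk)_q q^{k-1}$ on each $q^k$. You instead \emph{define} $\partial_{q^p}$ as the difference quotient $(\gamma-\mathrm{Id})/(q^{p+1}-q)$, so that part~(4) becomes a tautology and part~(2) drops out of the well-definedness computation via the factorization $(p+1)_q^n - 1 = q(p)_q\,(n)_{(p+1)_q}$. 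Your approach is exactly the alternative the paper records in remark~\ref{rmksDqp}.1 immediately afterwards; what you gain is that (2) and (4) require no further work, at the cost of having to justify the divisibility of $\gamma(\alpha)-\alpha$ by $q^{p+1}-q$ in $R$ (which you do correctly by reducing to $(q-1)^n$ and observing the quotients $(q-1)^{n-1}(n)_{(p+1)_q}$ assemble into a convergent series). The paper's route avoids that divisibility check entirely by working with the Leibniz characterization from the start.
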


\begin{proof}
The first assertion follows immediately from lemma \ref{dersimp} which provides $D=D(q)\partial_{q^p}$.
The second one is deduced from formula \eqref{derpo}:
\begin{linenomath}
\begin{align*}
\partial_{q^p}((q-1)^n) & = \sum_{k=0}^{n-1} (q^{p+1}-1)^{k}(q-1)^{n-1-k}
\\ & = \sum_{k=0}^{n-1} (p+1)_q^{k}(q-1)^{n-1}
\\ & = (n)_{(p+1)_q}(q-1)^{n-1}. 
\end{align*}
\end{linenomath}
The third statement comes from $\partial_{\mathbb\Delta}(q^k) = (p)_q(k)_{q^p}q^{k-1} =(pk)_{q}q^{k-1}$.
For the last assertion, it is sufficient, by continuity, to notice that
\[
\forall k > 0, \quad q^{(p+1)k} = q^k + (q^2-q)(pk)_qq^{k-1}.\qedhere
\]

\end{proof}

%%%%%%%%%%%%%%

\begin{rmks} \phantomsection \label{rmksDqp}
\begin{enumerate}
\item The endomorphism $\partial_{\mathbb\Delta}$ is a kind of ``logarithmic version'' of $\partial_{q^p}$ (with a pole at $(p)_q$).
Il will be essential for the theory to focus on $\partial_{\mathbb\Delta}$ and consider $\partial_{q^p}$ as an intermediate tool.
Using the last formula of proposition \ref{basder}, we could as well directly set
\[
\partial_{\mathbb\Delta} := \frac{\gamma - \mathrm{Id}_R}{q^2-q}.
\]
Note also that, since $q$ is invertible, we would get an equivalent theory by using
\[
q\partial_{\mathbb\Delta} = \frac{\gamma - \mathrm{Id}_R}{q-1}
\]
(by adding an extra ``pole'' at $q$) instead of $\partial_{\mathbb\Delta}$.
Most formulas however would then be a little more complicated.
\item
Even if our absolute calculus has nothing to do with Buium's theory of arithmetic differential equations, it happens that our $q^p$-derivations fit into his general pattern.
More precisely, the endomorphism $\partial_{q^p}$ (resp.\ $\partial_{\mathbb\Delta}$) is a $\pi$-difference operator in his sense (see example (c) in \cite{Buium97}) with $\pi = (q^2-q)(p)_q$ (resp.\ $\pi = (q^2-q)$).
It means that we have the following symmetric formulas for $\alpha, \beta \in R$:
\[
\partial_{q^p}(\alpha\beta) = \partial_{q^p}(\alpha)\beta + \alpha \partial_{q^p}(\beta) + (q^2-q)(p)_q\partial_{q^p}(\alpha) \partial_{q^p}(\beta)
\]
and
\[
\partial_{\mathbb\Delta}(\alpha\beta) = \partial_{\mathbb\Delta}(\alpha)\beta + \alpha \partial_{\mathbb\Delta}(\beta) + (q^2-q)\partial_{\mathbb\Delta}(\alpha) \partial_{\mathbb\Delta}(\beta).
\]
\item For further use, note that
\begin{equation} \label{pqplun}
(p)_{q^{p+1}} = \lambda (p)_q \quad \mathrm{with} \quad \lambda = 1 + (q^2-q)\partial_{q^p}((p)_q) \in R^\times.
\end{equation}
This is easily checked: since $\gamma(q) = q^{p+1}$, we have
\[
(p)_{q^{p+1}} = \gamma((p)_q) = (p)_q + (q^2-q)\partial_{\mathbb\Delta}((p)_q) = (1 + (q^2-q)\partial_{q^p}((p)_q))(p)_q.
\]
\end{enumerate}
\end{rmks}

We now come to our main object of study which is a kind of logarithmic twisted derivation:

%%%%%%%%%%%
\begin{dfn} \label{qder1}
A \emph{$\mathbb\Delta$-derivation} on an $R$-module $M$ with respect to a $q^p$-derivation $D$ of $R$ is a $W$-linear map $D_M : M \to M$ such that
\[
\forall \alpha \in R, s \in M, \quad D_M(\alpha s) = (p)_qD(\alpha)s+\gamma(\alpha)D_M(s).
\]
\end{dfn}

There exists a variant without pole: a \emph{$q^p$-derivation} on an $R$-module $M$ with respect to a $q^p$-derivation $D$ is a $W$-linear map $D_M : M \to M$ such that
\[
\forall \alpha \in R, s \in M, \quad D_M(\alpha s) = D(\alpha)s+\gamma(\alpha)D_M(s).
\]
Actually, a $\mathbb\Delta$-derivation with respect to $D$ is the same thing as a $q^p$-derivation with respect to $(p)_qD$.
In particular, in definition \ref{qder1}, $D_M$ only depends on $(p)_qD$ (and not on $D$).
One also sees that $(p)_qD$ is uniquely determined by $D_M$ when $M$ is $R$-torsion free.

\begin{prop} \label{Griff}
\begin{enumerate}
\item 
A $\mathbb\Delta$-derivation $D_M$ satisfies \emph{Griffiths transversality} with respect to the $(q-1)$-adic filtration:
\[
\forall n \in \mathbb N, \quad  D_M((q-1)^{n+1}M) \subset (q-1)^{n}M.
\]
\item The $(p,q-1)$-adic filtration is stable under $D_M$:
\[
\forall n \in \mathbb N, \quad D_M((p,q-1)^{n}M) \subset (p,q-1)^{n}M.
\]
\end{enumerate}
\end{prop}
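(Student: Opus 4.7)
The plan is to reduce both parts to the Leibniz formula for $D_M$ in definition \ref{qder1}, combined with lemma \ref{Griff} (the earlier one, about $D((q-1)^{n+1})$) and two elementary observations about $\gamma$ and $(p)_q$.

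First I would record two preliminary facts. Since $\gamma(q-1) = q^{p+1}-1 = (p+1)_q(q-1)$ and $\gamma$ is $W$-linear, we have $\gamma((q-1)^m) = (p+1)_q^m(q-1)^m \in (q-1)^m R$, and $\gamma(p^a) = p^a$. Second, the identity $(p)_q = 1 + q + \cdots + q^{p-1} \equiv p \pmod{q-1}$ shows $(p)_q \in (p,q-1)$.

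For part (1), take $\alpha = (q-1)^{n+1}$ and $s \in M$. The Leibniz rule of definition \ref{qder1} gives
\[
D_M((q-1)^{n+1}s) = (p)_q D((q-1)^{n+1}) s + \gamma((q-1)^{n+1}) D_M(s).
\]
By lemma \ref{Griff} applied to the $q^p$-derivation $D$ on $R$, the first summand lies in $(q-1)^n M$, while $\gamma((q-1)^{n+1}) = (p+1)_q^{n+1}(q-1)^{n+1} \in (q-1)^{n+1}R \subset (q-1)^n R$ handles the second. Extending by $R$-linearity through a general element $\beta(q-1)^{n+1}s$ (and using $D(\beta(q-1)^{n+1}) = D(\beta)(q-1)^{n+1} + \gamma(\beta) D((q-1)^{n+1})$) gives the full inclusion.

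For part (2), it is enough to check that $D_M(\beta p^a(q-1)^b s) \in (p,q-1)^n M$ whenever $a+b = n$, $\beta \in R$, $s \in M$. Applying the Leibniz rule as above and using $D(p^a) = 0$, we obtain two contributions: $\gamma(\beta p^a(q-1)^b)D_M(s)$, which lies in $p^a(q-1)^b R \cdot M \subset (p,q-1)^n M$ by the first preliminary fact; and $(p)_q D(\beta p^a(q-1)^b) s$. Expanding $D(\beta p^a (q-1)^b) = p^a D(\beta)(q-1)^b + p^a\gamma(\beta) D((q-1)^b)$, the first piece multiplied by $(p)_q$ already lies in $p^a(q-1)^b R$. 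For the second, lemma \ref{Griff} gives $D((q-1)^b) \in (q-1)^{b-1}R$ (the case $b = 0$ being trivial since $D(p^a)=0$), so that $(p)_q \cdot p^a\gamma(\beta) D((q-1)^b) \in (p)_q \cdot p^a(q-1)^{b-1}R$; invoking the second preliminary fact $(p)_q \in (p,q-1)$ yields membership in $(p,q-1)\cdot (p,q-1)^{n-1} = (p,q-1)^n$, as required.

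The only conceptual point — and the one I would flag as most at risk of being sloppy — is the use of $(p)_q \in (p,q-1)$ in the step where the pole $(p)_q$ in the definition of a $\mathbb\Delta$-derivation conspires with the loss of one power of $(q-1)$ coming from Griffiths transversality of $D$ to recover exactly the right filtration degree. Everything else is a bookkeeping exercise with the Leibniz rule.
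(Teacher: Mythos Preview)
Your proof is correct and is essentially a fleshed-out version of the paper's one-line proof (``Both results follow from lemma \ref{Griff}''): you use exactly the ingredients the paper implicitly relies on, namely the Leibniz rule from definition \ref{qder1}, the earlier lemma \ref{Griff} for the $q^p$-derivation $D$, and the fact that $(p)_q \in (p,q-1)$. One small redundancy: in part (1) there is no need to treat $\beta(q-1)^{n+1}s$ separately, since $(q-1)^{n+1}M = \{(q-1)^{n+1}s : s \in M\}$ already (similarly in part (2) you may absorb $\beta$ into $s$), but your handling via the Leibniz rule for $D$ is not wrong.
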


\begin{proof}
Both results follow from lemma \ref{Griff}.
\end{proof}

As an immediate consequence, we see that a $\mathbb\Delta$-derivation is automatically continuous.

\begin{rmks} \label{Nyg}
\begin{enumerate}
\item
Let us fix some notations for the $(q-1)$-adic filtration and the corresponding graduation:
\[
\forall n \in \mathbb N, \quad \mathrm{Fil}^n_{(q-1)}M := (q-1)^{n}M \quad \mathrm{and} \quad \mathrm{Gr}_{(q-1)}^nM := (q-1)^{n}M/(q-1)^{n+1}M.
\]
Then, a $\mathbb\Delta$-derivation $D_M$ on $M$ induces for each $n \in \mathbb N$ a $W$-linear map
\[
D_M : \mathrm{Gr}_{(q-1)}^{n+1}M \to \mathrm{Gr}_{(q-1)}^{n}M.
\]
\item
In the case $M = R$, then the $(q-1)$-adic filtration may also be called the \emph{Nygaard filtration} and, for each $n \in \mathbb N$, multiplication by $(q-1)^n$ induces an isomorphism $W \simeq \mathrm {Gr}_{(q-1)}^nR$.
Under these identifications, the induced map $\partial_{\mathbb\Delta} : \mathrm {Gr}_{(q-1)}^{n+1}R \to \mathrm {Gr}_{(q-1)}^{n}R$ corresponds to multiplication by $(p+1)^{n+1}-1$.
\end{enumerate}
\end{rmks}

There exists a unique $\mathbb\Delta$-derivation on $R$ itself with respect to a given $q^p$-derivation $D$, and this is simply $(p)_qD$.
Therefore, the $\mathbb\Delta$-derivations of $R$ form a submodule $T_{\mathbb\Delta} := (p)_qT_{q^p}$ of $T_{q^p}$ (we may also write $T_{R,\mathbb\Delta}$ or $T_{R/W,\mathbb\Delta}$ in order to specify the data).
Of course $T_{\mathbb\Delta}$ is free of rank one on the generator $\partial_{\mathbb\Delta}$.
When we simply say that $\partial_{M,\mathbb\Delta}$ is a \emph{$\mathbb\Delta$-derivation} of $M$, we always mean ``with respect to $\partial_{q^p}$'' and the condition reads
\[
\forall \alpha \in R, s \in M, \quad \partial_{M,\mathbb\Delta}(\alpha s) = \partial_{\mathbb\Delta}(\alpha)s+\gamma(\alpha) \partial_{M,\mathbb\Delta}(s).
\]
We shall also need the following variant of this formula:
\[
\forall \alpha \in R^\times, \forall s \in M, \quad \partial_{M,\mathbb\Delta}\left(\frac s \alpha\right) = \frac {\alpha \partial_{M,\mathbb\Delta}(s) - \partial_{\mathbb\Delta}(\alpha)s}{\alpha \gamma(\alpha)}.
\]

The condition in definition \ref{qder1} simplifies as follows:

%%%%%%%%%%%%%%%
\begin{lem} \label{simpq}
If $M$ is an $R$-module, then a $W$-linear map $D_M : M \to M$ is a $\mathbb\Delta$-derivation with respect to a $q^p$-derivation $D$ of $R$ if and only if
\[
\forall s \in M, \quad D_M(qs) = (p)_qD(q)s+q^{p+1}D_M(s).
\]
\end{lem}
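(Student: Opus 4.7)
The "only if" direction is immediate: specializing the defining identity of a $\mathbb\Delta$-derivation to $\alpha = q$ gives exactly the stated formula, since $\gamma(q) = q^{p+1}$.

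For the converse, the plan is to bootstrap the hypothesis from $\alpha = q$ to all $\alpha \in R = W[[q-1]]$ by induction followed by continuity. First, I would prove by induction on $n \in \mathbb N$ that
\[
D_M((q-1)^n s) = (p)_q D((q-1)^n) s + \gamma((q-1)^n) D_M(s), \quad \forall s \in M.
\]
The inductive step applies the given hypothesis to $s' := (q-1)^n s$, writing $D_M((q-1) s') = D_M(q s') - D_M(s')$ by $W$-linearity; one then rearranges using the $q^p$-derivation identity $D((q-1)^{n+1}) = (q-1)^n D(q-1) + \gamma(q-1) D((q-1)^n)$ and the equality $\gamma(q-1) = q^{p+1} - 1 = (q-1)(p+1)_q$. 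By $W$-linearity of $D_M$, $D$, and $\gamma$, this extends the formula to every polynomial $\alpha \in W[q-1] = W[q]$.

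Next, I would extract continuity of $D_M$ from what has been established on polynomials. Griffiths transversality for $D$ (Lemma \ref{Griff}) gives $D((q-1)^{n+1}) \in (q-1)^n R$, and one has $\gamma((q-1)^{n+1}) \in (q-1)^{n+1} R$; the polynomial identity therefore yields $D_M((q-1)^{n+1} M) \subset (q-1)^n M$, which combined with $W$-linearity implies continuity of $D_M$ for the $\mathfrak m_W + (q-1)$-adic topology on $M$. Finally, for an arbitrary $\alpha \in R$, one approximates $\alpha$ by polynomials $\alpha_N$ with $\alpha - \alpha_N \in (q-1)^{N+1} R$; passing to the limit in the polynomial identity, using continuity of $D_M$, $D$, $\gamma$ and the module action, yields the full $\mathbb\Delta$-derivation identity.

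The main subtlety is this continuity step: a priori it might look circular, since continuity of a $\mathbb\Delta$-derivation is normally obtained as a consequence of the full definition (cf.\ the remark following Proposition \ref{Griff}). The point is that the partial Griffiths transversality already available for polynomial $\alpha$ suffices to establish continuity, so the bootstrap from polynomials to arbitrary power series goes through without circularity.
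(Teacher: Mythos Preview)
Your proof is correct and follows the same strategy as the paper's: induction (the paper uses powers of $q$ rather than $(q-1)$, a cosmetic difference) to establish the identity for polynomial $\alpha$, then passage to the limit for general $\alpha \in R$. Your careful treatment of the continuity step---deriving it from Griffiths transversality already available on polynomials---is more explicit than the paper's terse ``by additivity and continuity'', and correctly resolves the potential circularity you flagged.
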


\begin{proof}
By additivity and continuity, the condition of the definition is equivalent to
\[
\forall k \in \mathbb N, \forall s \in M, \quad D_M(q^ks) = (p)_qD(q^k)s+q^{(p+1)k}D_M(s).
\]
Now our assertion is then shown by induction:
\begin{linenomath}
\begin{align*}
D_M(q^{k+1}s) &= (p)_qD(q)q^ks+q^{p+1}D_M(q^ks)
\\ &= (p)_qD(q)q^ks+q^{p+1}((p)_qD(q^k)s+q^{(p+1)k}D_M(s))
\\ &= (p)_qD(q)q^ks+q^{p+1}((k)_{q^p}q^{k-1} (p)_qD(q)s+q^{(p+1)k}D_M(s))
\\ &= (1+q^p (k)_{q^p})q^kD(q)s+q^{(p+1)(k+1)}D_M(s)
\\ &= (k+1)_{q^p}q^k (p)_qD(q)s+q^{(p+1)(k+1)}D_M(s)
\\ & = (p)_qD(q^{k+1})s+q^{(p+1)(k+1)}D_M(s). \qedhere
\end{align*}
\end{linenomath}
\end{proof}

Actually, what we are really interested in is the following:

%%%%%%%%%%%%%
\begin{dfn} \label{nabR}
An \emph{action by $\mathbb\Delta$-derivations} on an $R$-module $M$ is an $R$-linear map $T_{q^p} \to \mathrm{End}_{W}(M)$ that sends $D \in T_{q^p}$ to a $\mathbb\Delta$-derivation $D_M$ with respect to $D$.
We will then call $M$ a \emph{$\nabla_{\mathbb\Delta}$-module} on $R$.
An $R$-linear map $M \to M'$ between $\nabla_{\mathbb\Delta}$-modules is said to be \emph{horizontal} if it is compatible with the actions.
\end{dfn}

We then obtain a category of $\nabla_{\mathbb\Delta}$-modules on $R$ and we shall simply denote by\footnote{Here and later, we shall simply write $\nabla$ instead of $\nabla_{\mathbb\Delta}$ when no confusion should arise.}  $\mathrm{Hom}_{\nabla}(M,M')$ the set of horizontal maps.

%%%%%%%%%%
\begin{rmks}
\begin{enumerate}
\item
We may notice that an action by $\mathbb\Delta$-derivations on $M$ is equivalent to an $R$-linear map $T_{\mathbb\Delta} \to \mathrm{End}_{W}(M)$ that sends $(p)_qD$ to a $q^p$-derivation $D_M$ with respect to $(p)_qD$.
\item
One can show that the category of $\nabla_{\mathbb\Delta}$-modules on $R$ has all limits and colimits and they commute with the functor that forgets the action.
\item We could also consider the more restrictive notion of a $q^p$-$\nabla$-module which corresponds to an action by all $q^p$-derivations.
\item We can also consider the notion of (relative) $\nabla$-module which corresponds to an action by usual continuous derivations relative to $W$.
\end{enumerate}
\end{rmks}

Definition \ref{nabR} simplifies a lot:

%%%%%%%%%%%
\begin{prop} \label{acevd}
The category of $\nabla_{\mathbb\Delta}$-modules on $R$ is isomorphic to the category of $R$-modules $M$ endowed with a $W$-linear map $\partial_{M,\mathbb\Delta} : M \to M$ such that
\[
\forall s \in M, \quad \partial_{M,\mathbb\Delta}(qs) = (p)_qs+q^{p+1}\partial_{M,\mathbb\Delta}(s).
\]
\end{prop}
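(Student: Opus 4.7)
The plan is to identify both categories through the free generator $\partial_{q^p}$ of $T_{q^p}$. By Proposition \ref{basder}(1), the module $T_{q^p}$ is free of rank one on $\partial_{q^p}$, so any $R$-linear map $T_{q^p} \to \mathrm{End}_W(M)$ is uniquely determined by where it sends $\partial_{q^p}$. I will therefore define the isomorphism of categories at the level of objects by assigning to an action by $\mathbb\Delta$-derivations the $W$-linear endomorphism $\partial_{M,\mathbb\Delta}$ that is the image of $\partial_{q^p}$, and conversely.

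For the forward direction, given such an action, $\partial_{M,\mathbb\Delta}$ is by definition a $\mathbb\Delta$-derivation with respect to $\partial_{q^p}$. Since $\partial_{q^p}(q)=1$ (from Proposition \ref{basder}(1)) and $\gamma(q)=q^{p+1}$, Lemma \ref{simpq} specializes to exactly the identity
\[
\partial_{M,\mathbb\Delta}(qs) = (p)_q s + q^{p+1}\partial_{M,\mathbb\Delta}(s).
\]
For the reverse direction, given $\partial_{M,\mathbb\Delta}$ satisfying this identity, Lemma \ref{simpq} (applied with $D=\partial_{q^p}$) shows it is a $\mathbb\Delta$-derivation with respect to $\partial_{q^p}$. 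I then extend $R$-linearly by sending $\alpha \partial_{q^p}$ to $\alpha \partial_{M,\mathbb\Delta}$; the verification that $\alpha \partial_{M,\mathbb\Delta}$ is a $\mathbb\Delta$-derivation with respect to $\alpha \partial_{q^p}$ is a direct one-line computation, since multiplying the Leibniz-type identity of Definition \ref{qder1} by $\alpha$ gives the analogous identity for $\alpha \partial_{q^p}$ and $\alpha \partial_{M,\mathbb\Delta}$. The two constructions are evidently inverse to each other, since extracting the value at $\partial_{q^p}$ of an $R$-linear map on a rank-one free module recovers the full map.

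Finally, for morphisms: an $R$-linear map $f\colon M\to M'$ is horizontal in the sense of Definition \ref{nabR} exactly when it commutes with the image of every $D\in T_{q^p}$, and by $R$-linearity this reduces to commuting with the image of the generator $\partial_{q^p}$, that is, with $\partial_{M,\mathbb\Delta}$. Thus horizontal maps on one side correspond bijectively to $R$-linear maps commuting with $\partial_{M,\mathbb\Delta}$ on the other, giving the asserted isomorphism of categories. No step is genuinely hard; the mild bookkeeping concerns checking that the passage between ``$\mathbb\Delta$-derivation with respect to $D$'' for varying $D$ and a single endomorphism $\partial_{M,\mathbb\Delta}$ is compatible with the $R$-module structure on $T_{q^p}$, which is immediate from the freeness in Proposition \ref{basder}(1).
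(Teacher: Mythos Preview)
Your proposal is correct and follows exactly the same approach as the paper, which simply invokes Proposition~\ref{basder}.1 and Lemma~\ref{simpq}. You have merely spelled out in detail what the paper leaves implicit: that freeness of $T_{q^p}$ on $\partial_{q^p}$ reduces everything to the single endomorphism $\partial_{M,\mathbb\Delta}$, and that Lemma~\ref{simpq} with $D=\partial_{q^p}$ gives precisely the displayed identity.
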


\begin{proof}
This is an immediate consequence of proposition \ref{basder}.1 and lemma \ref{simpq}.
\end{proof}

We define the \emph{cohomology} of a $\nabla_{\mathbb\Delta}$-module $M$ on $R$ as
\[
\mathrm R\Gamma_{\mathrm{dR},\mathbb\Delta}(M) := [M \overset {\partial_{M,\mathbb\Delta}} \longrightarrow M].
\]
Thus, we have $\mathrm H^i_{\mathrm{dR},\mathbb\Delta}(M) = 0$ for $i\neq 0,1$,
\[
\mathrm H^0_{\mathrm{dR},\mathbb\Delta}(M) = \ker \partial_{M,\mathbb\Delta} \quad \mathrm{and} \quad \mathrm H^1_{\mathrm{dR},\mathbb\Delta}(M) = \mathrm{coker} \ \partial_{M,\mathbb\Delta}.
\]

Cohomology is hard to compute in general.
Before running through some examples, let us show the following:

%%%%%%%%
\begin{lem} \label{submod}
Let $M$ be a complete $\nabla_{\mathbb\Delta}$-module on $R$.
Assume $M$ is $(q-1)$-torsion free and  $M/(q-1)M$ is $\mathbb Z$-torsion free.
Then, the canonical map
\[
H^0_{\mathrm{dR},\mathbb\Delta}(M) \to M/(q-1)M
\]
is injective.
\end{lem}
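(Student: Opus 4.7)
The plan is to show that the kernel of $H^0_{\mathrm{dR},\mathbb\Delta}(M) \to M/(q-1)M$ is trivial by proving, for any horizontal $s \in (q-1)M$, that $s$ lies in $(q-1)^n M$ for every $n \geq 1$, and then invoking separatedness of $M$. The underlying idea is the observation made in remark \ref{Nyg}.2: the operator $\partial_{M,\mathbb\Delta}$ shifts the $(q-1)$-adic filtration down by one degree, and on the $n$th graded piece it acts essentially as multiplication by the nonzero integer $(p+1)^n - 1$, which combined with the $\mathbb Z$-torsion-freeness of $M/(q-1)M$ will force the induction to propagate.

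Concretely, I fix $s \in M$ with $\partial_{M,\mathbb\Delta}(s) = 0$ and $s \in (q-1)M$, and I argue by induction on $n$. Writing $s = (q-1)^n u$ with $u \in M$ and applying the Leibniz rule together with the identities $\partial_{\mathbb\Delta}((q-1)^n) = (p)_q (n)_{(p+1)_q}(q-1)^{n-1}$ from proposition \ref{basder}.2 and $\gamma((q-1)^n) = ((p+1)_q)^n (q-1)^n$, the equation $\partial_{M,\mathbb\Delta}(s) = 0$ reads
\[
(p)_q (n)_{(p+1)_q}(q-1)^{n-1} u + ((p+1)_q)^n (q-1)^n \partial_{M,\mathbb\Delta}(u) = 0.
\]
Since $M$ is $(q-1)$-torsion free I may cancel the factor $(q-1)^{n-1}$ and reduce the resulting identity modulo $(q-1)M$. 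Using $(p)_q \equiv p$ and $(n)_{(p+1)_q} \equiv ((p+1)^n - 1)/p$ modulo $(q-1)$, this yields
\[
\bigl((p+1)^n - 1\bigr)\, \overline u \equiv 0 \pmod{(q-1)M},
\]
where $\overline u$ denotes the image of $u$ in $M/(q-1)M$. As $(p+1)^n - 1$ is a nonzero integer for every $n \geq 1$, and $M/(q-1)M$ is assumed $\mathbb Z$-torsion free, I conclude $\overline u = 0$, i.e.\ $u \in (q-1)M$ and therefore $s \in (q-1)^{n+1}M$, completing the inductive step.

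It then remains to note that $s \in \bigcap_n (q-1)^n M \subset \bigcap_n (\mathfrak m_W + (q-1))^n M$, which vanishes since $M$ is complete, hence separated, for the $(\mathfrak m_W+(q-1))$-adic topology. The only step that is not bookkeeping is ensuring the correct coefficient appears on the graded piece; this is precisely the content of the identity from proposition \ref{basder}.2 combined with $(p)_q \equiv p$ modulo $(q-1)$, and there is no real difficulty provided one carefully separates the contribution coming from $\partial_{\mathbb\Delta}((q-1)^n)$ (which has $(q-1)$-adic order $n-1$) from that of $\gamma((q-1)^n) \partial_{M,\mathbb\Delta}(u)$ (which has order at least $n$).
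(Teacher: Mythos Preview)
Your proof is correct and follows essentially the same approach as the paper: both argue by induction that any horizontal $s \in (q-1)M$ lies in $(q-1)^nM$ for all $n$, using the Leibniz rule together with proposition \ref{basder}.2 to see that the induced map on each $(q-1)$-adic graded piece is multiplication by the nonzero integer $(p+1)^n-1$, and then conclude via separatedness. The paper phrases the inductive step slightly more tersely (as injectivity of $\partial_{M,\mathbb\Delta}$ on $\mathrm{Gr}^n_{(q-1)}M$), but your explicit computation with $s=(q-1)^n u$ is exactly the content of that injectivity.
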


\begin{proof}
We have to show that conditions $\partial_{M,\mathbb\Delta}(s) = 0$ and $s \in (q-1)M$ imply $s = 0$.
Since $M$ is complete, the $(q-1)$-adic filtration is separated, and it is therefore sufficient to show by induction that $s \in (q-1)^nM$ for all $n \in \mathbb N$.
Thus, we are reduced to proving that the induced map
\[
\partial_{M,\mathbb\Delta} : \mathrm {Gr}_{(q-1)}^nM \to \mathrm {Gr}_{(q-1)}^{n-1}M
\]
is injective.
But it follows from the second assertion of proposition \ref{basder} that it is given by
\[
\partial_{M,\mathbb\Delta}((q-1)^ns) = ((p+1)^n -1)(q-1)^{n-1}s \mod (q-1)^n.
\]
Our assertion therefore follows from our hypothesis.
\end{proof}

%%%%%%%%%%%%%%%%
\begin{xmps}
\begin{enumerate}
\item \textit{Trivial.}
Unless otherwise specified, we always endow $R$ with $\partial_{\mathbb\Delta}$.
If $W$ is $\mathbb Z$-torsion free, then $\mathrm H^0_{\mathrm{dR},\mathbb\Delta}(R) = W$ (use lemma \ref{submod}) and we have a surjective map
\[
\mathrm H^1_{\mathrm{dR},\mathbb\Delta}(R) \twoheadrightarrow \overline R = R/(p)_q
\]
which is not an isomorphism.
There does not seem to exist an easy description of the kernel of this last map in general besides an identification with $H^1_{\mathrm{dR},q^p}(R)$.
\item \textit{Finite free of rank one.}
It is equivalent to give an action by $\mathbb\Delta$-derivations on a free $R$-module $F$ on one generator $s$ or an element $\alpha \in R$: they determine each other through $\partial_{F,\mathbb\Delta}(s) = \alpha s$.
As a specific example, we shall consider the action $\partial_{F_n,\mathbb\Delta}(s) = \alpha_n s$ for $n \in \mathbb N$ with
\[
\alpha_n := \sum_{k=1}^n {n \choose k} (q^2-q)^{k-1} \partial_{q^p}((p)_q)^k.
\]
We shall mainly be concerned with the case $n=1$, so that $\partial_{F_1,\mathbb\Delta}(s) = \partial_{q^p}((p)_q) s$, but also extend later to the case $n < 0$.
It is hard to compute $\mathrm H^0_{\mathrm{dR},\mathbb\Delta}(F)$, not to mention $\mathrm H^1_{\mathrm{dR},\mathbb\Delta}(F)$.
\item \textit{$(p)_q$-adic filtration.}
For $n \in \mathbb N$, the ideal $(p)_q^nR$ of $R$ is stable under the action by $\mathbb\Delta$-derivations (this will follow from the coming computations). 
It defines the \emph{$(p)_q$-adic filtration} on $R$.
Actually, with the notations of the previous example, we have $F_n \simeq (p)_q^nR$ with $s$ corresponding to $(p)_q^n$.
This is proved by induction.
One first notices that
\[
\alpha_{n+1} = \partial_{q^p}((p)_q) + \lambda \alpha_n,
\]
with $\lambda$ as in formula \eqref{pqplun} so that $(p)_{q^{p+1}} = \lambda (p)_q$.
It follows that
\begin{linenomath}
\begin{align*}
\partial_{\mathbb\Delta}((p)_q^{n+1}) &= (p)_q^n\partial_{\mathbb\Delta}((p)_q) +(p)_{q^{p+1}}\partial_{\mathbb\Delta}((p)_q^n)
\\ &= (p)_q^{n+1}\partial_{q^p}((p)_q) + \lambda (p)_q (p)_q^n\alpha_n 
\\ &= (p)_q^{n+1}\alpha_{n+1}.
\end{align*}
\end{linenomath}
\item \textit{Hodge-Tate.}
If $G$ is an $\overline R$-module (with $\overline R = R/(p)_q$), then an action by $\mathbb\Delta$-derivations on $G$ is simply given by an $\overline R$-linear map.
In the case $G = \overline R$ (with the trivial action), this is the zero map so that $\mathrm H^0_{\mathrm{dR},\mathbb\Delta}(\overline R) = \mathrm H^1_{\mathrm{dR},\mathbb\Delta}(\overline R) = \overline R$.
In order to give another interesting family of examples, we denote by $\zeta$ the class of $q$ in $\overline R$ and write
\[
(p)'_\zeta := 1 + 2\zeta + 3\zeta^2 + \cdots + (p-1)\zeta^{p-2} 
\]
(this will reappear in section \ref{generic}.
We may then consider the free $\overline R$-module $G_n$ on one generator $s$ with action given by $\partial_{G_n,\mathbb\Delta}(s) = a_n s$ where
\[
a_n := (n)_{p+1} (p)'_\zeta.
\]
Again, we shall be mainly concerned with the case $n=1$ in which case $\partial_{G_1,\mathbb\Delta}(s) = (p)'_\zeta s$.
\item \textit{$(p)_q$-adic graduation}.
The quotient $(p)_q^nR/(p)_q^{n+1}R$ is stable under the action by $\mathbb\Delta$-derivations.
With the above notations, we have
\[
G_n \simeq \overline F_n \simeq F_n/F_{n+1} \simeq (p)_q^nR/(p)_q^{n+1}R.
\]
Assume $W = W(k)$ is the ring of Witt vectors of a perfect field of positive characteristic $p$.
Then $\mathrm H^0_{\mathrm{dR},\mathbb\Delta}(G_n) = 0$ for all $n >0$ and $\mathrm H^1_{\mathrm{dR},\mathbb\Delta}(G_n) \simeq k[q]/(q-1)^{p-2}$ is a $k$-vector space of dimension $p-2$ when $p \nmid n$.
However, $\mathrm H^1_{\mathrm{dR},\mathbb\Delta}(G_2) \simeq W/4W$ and $\mathrm H^1_{\mathrm{dR},\mathbb\Delta}(G_2) \simeq  W/8W$ when $p=2$.
Also, $\mathrm H^1_{\mathrm{dR},\mathbb\Delta}(G_3) \simeq k^2 \oplus W/9W$ when $p=3$.
Everything becomes more complicated when either $p$ or $n$ grows.
\end{enumerate}
\end{xmps}

The fundamental example however is the \emph{Breuil-Kisin} action:

%%%%%%%%%
\begin{lem} \label{BKdef}
If $p$ belongs to the radical of $W$, then there exists a unique action by $\mathbb\Delta$-derivations on the free $R$-module $R\{1\}$ on one generator $e_R$ such that
\[
\partial_{R\{1\},\mathbb\Delta}((q-1)e_R) = \frac pq e_{R}.
\]
\end{lem}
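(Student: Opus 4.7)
The plan is to exploit the rank-one freeness of $R\{1\}$: by proposition \ref{acevd}, any $\mathbb\Delta$-connection on $R\{1\} = R \cdot e_R$ is determined by the single scalar $\alpha \in R$ for which $\partial_{R\{1\},\mathbb\Delta}(e_R) = \alpha e_R$, and conversely any choice of $\alpha$ defines a $\mathbb\Delta$-connection via the Leibniz rule of definition \ref{qder1} (formally: set $\partial_{R\{1\},\mathbb\Delta}(\beta e_R) := (\partial_{\mathbb\Delta}(\beta) + \alpha\gamma(\beta)) e_R$ and check the axiom). So existence and uniqueness of the desired action reduce to solving a single linear equation for $\alpha$.

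Applying the Leibniz identity to $(q-1) \cdot e_R$, together with $\partial_{\mathbb\Delta}(q-1) = (p)_q$ (which follows from proposition \ref{basder}.3 and the $W$-linearity of $\partial_{\mathbb\Delta}$) and $\gamma(q-1) = q^{p+1}-1$, the prescribed condition becomes
\[
(p)_q e_R + (q^{p+1}-1)\alpha e_R = \tfrac{p}{q} e_R,
\]
equivalently $q(q^{p+1}-1)\alpha = p - q(p)_q$.

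To solve, I would telescope both sides: the right-hand side is $p - q(p)_q = \sum_{k=1}^p (1-q^k) = -(q-1)\sum_{k=1}^p (k)_q$, while $q^{p+1}-1 = (q-1)(p+1)_q$ on the left. Since $R$ is $(q-1)$-torsion free, cancelling $q-1$ yields
\[
\alpha = -\frac{\sum_{k=1}^p (k)_q}{q \, (p+1)_q}.
\]

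The one remaining step, and essentially the only place the hypothesis $p \in \mathrm{rad}(W)$ is used, is to verify that this expression actually lies in $R$. The factor $q = 1 + (q-1)$ is a unit in $R = W[[q-1]]$. For $(p+1)_q = 1 + q + \cdots + q^p$, note that $(p+1)_q \equiv p+1 = 1+p \pmod{q-1}$; since $p$ lies in the Jacobson radical of $W$, $1+p$ is a unit in $W$, and since $R$ is $(q-1)$-adically complete with residue ring $W$, it follows that $(p+1)_q$ is a unit in $R$. Hence $\alpha$ is a well-defined element of $R$, and this is the unique scalar yielding the required action. The only genuine obstacle is the invertibility check at the end, and it is exactly what the hypothesis on $p$ is designed to ensure.
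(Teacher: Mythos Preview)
Your proof is correct and follows essentially the same route as the paper: reduce via the Leibniz rule to a single scalar equation for $\alpha = \partial_{R\{1\},\mathbb\Delta}(e_R)/e_R$, divide through by $q-1$, and conclude by checking that $q$ and $(p+1)_q$ are units in $R$ (the latter because $(p+1)_q \equiv p+1 \pmod{q-1}$ and $p \in \mathrm{rad}(W)$). The only cosmetic difference is that the paper rewrites $p - q(p)_q$ as $(p+1) - (p+1)_q$ to exhibit the divisibility by $q-1$, whereas you telescope it as $-(q-1)\sum_{k=1}^p (k)_q$; these are the same computation.
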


\begin{proof}
The condition reads
\[
(p)_qe_R + (q^{p+1}-1)\partial_{R\{1\},\mathbb\Delta}(e_R) = \frac pq e_{R}
\]
or, equivalently,
\[
(q-1)(p+1)_q\partial_{R\{1\},\mathbb\Delta}(e_R) = \frac {p+1 -(p+1)_q}q e_{R}
\]
It is therefore sufficient to notice that $(p+1)_q \equiv p+1 \mod q-1$ and that $(p+1)_q$ is invertible since we assumed that $p$ belongs to the radical of $W$.
\end{proof}

%%%%%%
\begin{rmks}
\begin{enumerate}
\item The following formula is worth mentioning:
\[
\partial_{R\{1\},\mathbb\Delta}(e_R) = \frac 1{q^2-q} \left(\frac {p+1}{(p+1)_q} -1 \right)e_R.
\]
\item In the notations of the examples above, we have $\overline {R\{1\}} \simeq \overline{(p)_qR} \simeq G_1$.
\item From a heuristic point of view, we have $e_R = \log(q)/(q-1)$.
\item One can also introduce the prismatic logarithm $\log_{\mathbb \Delta}(q^p) := (q-1)e_R$ so that
\[
\partial_{R\{1\},\mathbb\Delta}(\log_{\mathbb \Delta}(q^p)) = \frac p{q^2-q} \log_{\mathbb \Delta}(q^p)
\]
This corresponds to construction 3.2.2 of \cite{BhattLurie22} applied to $(R,(p)_q)$.
\end{enumerate}
\end{rmks}

Our terminology comes from the following compatibility with construction 2.2.11 in \cite{BhattLurie22}:

\begin{prop} \label{BKlim}
If $W$ is $p$-adically complete, then there exists an isomorphism of $\nabla_{\mathbb\Delta}$-modules
\[
R\{1\} \simeq \varprojlim_{/p}\ (p^r)_qR/(p^r)_q^2R.
\]
\end{prop}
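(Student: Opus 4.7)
The plan is to construct an explicit $R$-linear and $\mathbb\Delta$-equivariant map $\Phi \colon R\{1\} \to \varprojlim_{/p} (p^r)_q R/(p^r)_q^2 R$ sending the generator $e_R$ to the natural compatible family of generators, and then show it is a bijection. Write $M_r := (p^r)_q R/(p^r)_q^2 R$ and $\epsilon_r$ for the class of $(p^r)_q$ in $M_r$.

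First I would analyze each $M_r$. It is a cyclic $R$-module with $\mathrm{Ann}(\epsilon_r) = ((p^r)_q)$. Since $(p^r)_q R$ and $(p^r)_q^2 R$ are stable under $\partial_{\mathbb\Delta}$ (by the same argument as in example 3 after proposition \ref{acevd}, applied iteratively), $\partial_{\mathbb\Delta}$ descends to a $\mathbb\Delta$-derivation on $M_r$ of the form $\partial_{\mathbb\Delta}(\epsilon_r) = \beta_r\,\epsilon_r$. Using the Leibniz rule $\partial_{\mathbb\Delta}(fg) = \partial_{\mathbb\Delta}(f)g + \gamma(f)\partial_{\mathbb\Delta}(g)$ applied to $q^{p^r} - 1 = (q-1)(p^r)_q$, together with $\partial_{\mathbb\Delta}(q^{p^r}-1) = (p^{r+1})_q q^{p^r-1}$, one computes
\[
\beta_r = \frac{(p)_{q^{p^r}}q^{p^r-1} - (p)_q}{q^{p+1}-1}.
\]
Modulo $(p^r)_q$ we have $q^{p^r} \equiv 1$, hence $(p)_{q^{p^r}} \equiv p$ and $q^{p^r-1} \equiv q^{-1}$, which simplifies to $\beta_r \equiv \beta \mod (p^r)_q$, where $\beta = \bigl((p+1)-(p+1)_q\bigr)/\bigl(q(q-1)(p+1)_q\bigr)$. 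Crucially, this is exactly the coefficient $\tfrac{1}{q^2-q}\bigl(\tfrac{p+1}{(p+1)_q}-1\bigr)$ appearing in the remark after lemma \ref{BKdef}, so $\beta e_R = \partial_{R\{1\},\mathbb\Delta}(e_R)$.

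Next, the transition maps. The key identity $(p^{r+1})_q = (p)_{q^{p^r}}(p^r)_q$ and the congruence $(p)_{q^{p^r}} \equiv p \mod (p^r)_q$ give $(p^{r+1})_q \equiv p(p^r)_q \mod (p^r)_q^2 R$, so the natural inclusion $M_{r+1} \to M_r$ factors as multiplication by $p$ composed with the ``divide by $p$'' map $\iota_r \colon M_{r+1} \to M_r$ that sends $\epsilon_{r+1} \mapsto \epsilon_r$. This $\iota_r$ is well-defined and $R$-linear because $(p^{r+1})_q\epsilon_r = (p)_{q^{p^r}}(p^r)_q^2$ vanishes in $M_r$; it is $\mathbb\Delta$-equivariant since $\beta_{r+1} \equiv \beta_r \mod (p^r)_q$ and the analogous congruence $\gamma((p^r)_q)/(p^r)_q = (p+1)_{q^{p^r}}/(p+1)_q \equiv (p+1)/(p+1)_q \mod (p^r)_q$ holds. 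Interpreting $\varprojlim_{/p}$ as the inverse limit along these $\iota_r$, I define $\Phi(e_R) := (\epsilon_r)_r$, which is a valid element.

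Finally, I prove $\Phi$ is bijective. For injectivity, I need $\bigcap_r (p^r)_q R = 0$: using the expansion $(p^r)_q = \sum_{j\ge 0}\binom{p^r}{j+1}(q-1)^j$ and the Kummer formula $v_p\binom{p^r}{j+1} = r - v_p(j+1)$, one checks $(p^r)_q \in (p,q-1)^r$, so the intersection is contained in $\bigcap_r (p,q-1)^r = 0$ by $(p,q-1)$-completeness of $R$. For surjectivity, an element of $\varprojlim_{/p} M_r$ is a family $(a_r \epsilon_r)_r$ with $a_{r+1} \equiv a_r \mod (p^r)_q$; the sequence $(a_r)$ is Cauchy in $R$ for the $(p,q-1)$-adic topology (since its successive differences lie in $(p^r)_q R \subset (p,q-1)^r$), and its limit $a \in R$ satisfies $a \equiv a_r \mod (p^r)_q$ thanks to the closedness of the principal ideals $((p^r)_q)$ in the Noetherian ring $R$. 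Then $\Phi(a e_R) = (a\epsilon_r)_r$, and $\mathbb\Delta$-equivariance follows from the matching $\beta_r \equiv \beta \mod (p^r)_q$ computed in the first step.

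The main obstacle is the computation of $\beta_r$ and the verification that the ``divide by $p$'' maps $\iota_r$ are $\mathbb\Delta$-equivariant: both amount to tracking the image of $(p^r)_q$ under $\gamma$ and $\partial_{\mathbb\Delta}$ modulo powers of $(p^r)_q$, which is what produces the precise formula for $\beta$ that matches the Breuil--Kisin action of lemma \ref{BKdef}.
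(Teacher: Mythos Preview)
Your proof is correct and follows essentially the same approach as the paper: both establish the transition maps via the congruence $(p^{r+1})_q \equiv p\,(p^r)_q \bmod (p^r)_q^2$, verify $\mathbb\Delta$-compatibility by reducing the derivative of $(p^r)_q$ modulo $(p^r)_q^2$, and conclude bijectivity from $(p,q-1)$-adic completeness. The only cosmetic differences are that the paper checks compatibility on $(q-1)\epsilon_r$ (matching the defining formula $\partial_{\mathbb\Delta}((q-1)e_R)=\tfrac{p}{q}e_R$ of lemma~\ref{BKdef} directly) rather than on $\epsilon_r$, and compresses your Cauchy-sequence argument into the single line $R\{1\}\simeq \varprojlim R\{1\}/(p^r)_qR\{1\}$.
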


\begin{proof}
Recall that, by definition, $q^{p^r} - 1= (q-1)(p^r)_q$, wich implies that $q^{p^r} \equiv 1 \mod (p^r)_q$ and then, also, that $(p)_{q^{p^r}} \equiv p\mod (p^r)_q$.
Thus, we see that
\[
(p^{r+1})_q = (pp^r)_q = (p)_{q^{p^r}}(p^r)_q \equiv p(p^r)_q \mod (p^r)_q^2.
\]
It follows that the map
\[
\frac 1p : (p^{r+1})_qR/(p^{r+1})_q^2R \to (p^r)_qR/(p^r)_q^2R, \quad (p^{r+1})_q \mapsto (p^{r})_q
\]
is well defined.
Moreover, we have
\[
\partial_{\mathbb\Delta}((q-1)(p^r)_q) = (p^{r+1})_qq^{p^r-1} \equiv \frac pq (p^r)_q \mod (p^r)_q^2
\]
which shows the compatibility with the formula in lemma \ref{BKdef}.
Finally, since $W$ is assumed to be $p$-adically complete, $R$ is $(p,q-1)$-adically complete and therefore
\[
R\{1\} \simeq \varprojlim R\{1\}/(p^r)_qR\{1\} \simeq \varprojlim_{/p}\ (p^r)_qR/(p^r)_q^2R, \quad e_R \mapsto ((p^r)_q)_{r \in \mathbb N}.\qedhere
\]
\end{proof}

The general notion of a twisted $R$-module is discussed in section 2 of \cite{LeStumQuiros18b} but we shall only need the following:

%%%%%%%%%%
\begin{dfn}
A \emph{$\gamma$-module} is an $R$-module $M$ endowed with a continuous $\gamma$-linear endomorphism $\gamma_{M}$.
It is said to be \emph{inversive} if its linearization
\[
1 \ {}_{{}_{\gamma}\nwarrow}\!\!\otimes \gamma_M :  \gamma^*M := R\ {}_{{}_{\gamma}\nwarrow}\!\!\otimes_{R} M \to M
\]
is bijective.
\end{dfn}

The following will later help us make the link with Wach modules:

%%%%%%%%%%%%%
\begin{prop}
\begin{enumerate}
\item
If $M$ is a $\nabla_{\mathbb\Delta}$-module on $R$, then
\[
\gamma_M := \mathrm{Id} + (q^2-q)\partial_{M,\mathbb\Delta}
\]
is a continuous $\gamma$-linear endomorphism of $M$.
\item
This induces an isomorphism between the categories of $(q-1)$-torsion-free $\nabla_{\mathbb\Delta}$-modules and $(q-1)$-torsion-free $\gamma$-modules that become trivial modulo $q-1$.
\end{enumerate}
\label{gammeq}
\end{prop}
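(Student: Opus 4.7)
The plan is to exhibit the functors in both directions explicitly and check that they are mutually inverse by a direct calculation. Define one functor by $\gamma_M := \mathrm{Id} + (q^2-q)\partial_{M,\mathbb\Delta}$ as in the statement, and the candidate inverse by $\partial_{M,\mathbb\Delta} := (q^2-q)^{-1}(\gamma_M - \mathrm{Id})$ when $\gamma_M$ is trivial mod $(q-1)$ and $M$ is $(q-1)$-torsion-free.

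For the first assertion, I would verify $\gamma$-linearity directly. Given $\alpha \in R$ and $s \in M$, expanding using definition \ref{qder1} gives $\gamma_M(\alpha s) = \alpha s + (q^2-q)\partial_{\mathbb\Delta}(\alpha) s + (q^2-q)\gamma(\alpha)\partial_{M,\mathbb\Delta}(s)$; the first two terms combine to $\gamma(\alpha)s$ by proposition \ref{basder}.4, so the total is $\gamma(\alpha)\gamma_M(s)$. Continuity is automatic since $\partial_{M,\mathbb\Delta}$ was already shown to be continuous (as noted after proposition \ref{Griff}), and triviality modulo $q-1$ is obvious from $q^2-q = q(q-1)$.

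For the second part, the only subtle point is well-definedness of the inverse construction. The triviality hypothesis $\gamma_M \equiv \mathrm{Id} \bmod (q-1)$ gives $(\gamma_M - \mathrm{Id})(M) \subseteq (q-1)M$, and together with $(q-1)$-torsion-freeness of $M$ this yields a unique $W$-linear $\partial_{M,\mathbb\Delta}$ with $(q^2-q)\partial_{M,\mathbb\Delta} = \gamma_M - \mathrm{Id}$ (using also that $q$ is a unit in $R$). To check the $\mathbb\Delta$-derivation rule, I would reverse the computation of paragraph one: start from $(q^2-q)\partial_{M,\mathbb\Delta}(\alpha s) = \gamma(\alpha)\gamma_M(s) - \alpha s$, substitute $\gamma(\alpha) = \alpha + (q^2-q)\partial_{\mathbb\Delta}(\alpha)$ and $\gamma_M(s) = s + (q^2-q)\partial_{M,\mathbb\Delta}(s)$, and cancel the common factor $(q^2-q)$ using torsion-freeness.

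Finally, the two constructions are tautologically inverse to each other at the level of objects, and compatibility with morphisms is immediate: an $R$-linear map commutes with $\partial_{M,\mathbb\Delta}$ if and only if it commutes with $\gamma_M = \mathrm{Id} + (q^2-q)\partial_{M,\mathbb\Delta}$, so horizontal maps and $\gamma$-equivariant maps coincide. The main (mild) obstacle is the division-by-$(q-1)$ step in the reverse direction, which is precisely why both hypotheses on the target category ($(q-1)$-torsion-free and trivial mod $(q-1)$) are needed; this is also what forces the restriction to $(q-1)$-torsion-free objects on the source side, since otherwise $\partial_{M,\mathbb\Delta}$ would not be recoverable from $\gamma_M$.
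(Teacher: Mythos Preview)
Your proof is correct and follows the same approach as the paper: verify $\gamma$-linearity by the direct expansion using $\gamma = \mathrm{Id} + (q^2-q)\partial_{\mathbb\Delta}$, and exhibit the inverse functor via $\partial_{M,\mathbb\Delta} = (q^2-q)^{-1}(\gamma_M - \mathrm{Id})$. You have in fact supplied more detail than the paper does, particularly in justifying well-definedness of the inverse and the equivalence on morphisms.
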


\begin{proof}
If $\alpha \in R$ and $s \in M$, then we have
\begin{linenomath}
\begin{align*}
\gamma_M(\alpha s) &= \alpha s + (q^2-q)\partial_{M,q^p}(\alpha s)
\\ &= \alpha s + (q^2-q)\partial_{\mathbb\Delta}(\alpha)s+(q^2-q)\gamma(\alpha)\partial_{M,q^p}(s)
\\ &= \gamma(\alpha)s+(q^2-q)\gamma(\alpha)\partial_{M,q^p}(s)
\\ &= \gamma(\alpha)\gamma_M(s).
\end{align*}
\end{linenomath}
Moreover, $\gamma_M$ is clearly continuous.
Finally, the inverse to our functor is provided by the formula
\[
\partial_{M,\mathbb\Delta} = \frac{\gamma_M - \mathrm{Id}_M}{q^2-q}.\qedhere
\]
\end{proof}

%%%%%%%%%%%%%
\begin{xmps}
\begin{enumerate}
\item With the above $\nabla_{\mathbb\Delta}$-module $F_n$ corresponding to the $(p)_q$-adic filtration, we get
\[
\gamma_{F_n}(s) = \lambda^ns \quad \left(= \frac {(p)_{q^{p+1}}^n}{(p)_q^n} s \right)
\]
with $\lambda$ as in formula \eqref{pqplun}.
\item With the above $\nabla_{\mathbb\Delta}$-module $G_n$ corresponding to the $(p)_q$-adic graduation, we get
\[
\gamma_{G_n}(s) = (p+1)^ns.
\]
\item For the Breuil-Kisin action, we obtain
\[
\gamma_{R\{1\}}(e_R) = \frac {p+1}{(p+1)_q}e_R \quad \mathrm{and} \quad   \gamma_{R\{1\}}(\log_{\mathbb \Delta}(q^p)) = (p+1)\log_{\mathbb \Delta}(q^p) .
\]
\end{enumerate}
\end{xmps}

%%%%%%%%%%%%%
\begin{rmks}
\begin{enumerate}
\item The trivial action on $R$ is inversive because the linearization of $\gamma$ is the identiy.
\item More generally, if $M$ is finite projective, then any action by $\mathbb\Delta$-derivations is inversive.
Writing $M$ as a direct summand of a free $R$-module, we may actually assume that $M$ is free.
Given a basis $\underline s$, we can write $\gamma_M(\underline s) = (I + (q^2-q)A)\underline s$ for some matrix $A$ with coefficients in $R$ and the matrix $I + (q^2-q)A$ is invertible.
\item Actually, thanks to derived Nakayama lemma (\cite[\href{https://stacks.math.columbia.edu/tag/0G1U}{Tag 0G1U}]{stacks-project}), it is sufficient to assume that $M$ is derived complete and completely flat (since $\gamma_M$ reduces to the identity modulo $q-1$).
\end{enumerate}
\end{rmks}

%%%%%%%%%%%%%%%
\begin{lem}\label{tenhom}
Let $M$ and $N$ be two $\nabla_{\mathbb\Delta}$-modules on $R$.
Then:
\begin{enumerate}
\item The formula
\[
\forall s \in M, t \in N, \quad \partial_{M\otimes N,\mathbb\Delta}(s \otimes t) = \partial_{M,\mathbb\Delta}(s) \otimes t + \gamma_M(s) \otimes \partial_{N,\mathbb\Delta}(t)
\]
defines an action by $\mathbb\Delta$-derivations on $M\otimes_R N$.
\item If $M$ is inversive, then there exists a unique action by $\mathbb\Delta$-derivations on $\mathrm{Hom}_R(M,N)$ such that
\[
\forall f :M\to N, \quad \partial_{\mathrm{Hom}(M,N),\mathbb\Delta}(f) \circ \gamma_M = \partial_{N,\mathbb\Delta} \circ f -f \circ \partial_{M,\mathbb\Delta}.
\]
\end{enumerate}
\end{lem}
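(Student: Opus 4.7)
\textbf{Plan for the proof of lemma \ref{tenhom}.}

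For part (1), the strategy is the standard three-step check for a tensor-product connection. First I will treat the formula as the definition of a $W$-bilinear map $M \times N \to M \otimes_R N$ and verify that it is $R$-balanced, so that it descends to a $W$-linear endomorphism $\partial_{M \otimes N, \mathbb\Delta}$ of $M \otimes_R N$. Concretely, expanding $\partial_{M \otimes N, \mathbb\Delta}(\alpha s \otimes t) - \partial_{M \otimes N, \mathbb\Delta}(s \otimes \alpha t)$ with the $\mathbb\Delta$-derivation rules for $\partial_{M,\mathbb\Delta}$ and $\partial_{N,\mathbb\Delta}$ and the $\gamma$-linearity of $\gamma_M$ (proposition \ref{gammeq}), all terms involving $\partial_{\mathbb\Delta}(\alpha)\gamma_M(s) \otimes t$ and $(\gamma(\alpha)-\alpha)\partial_{M,\mathbb\Delta}(s)\otimes t$ cancel once one replaces $\gamma_M(s) = s + (q^2-q)\partial_{M,\mathbb\Delta}(s)$ and $\gamma(\alpha) = \alpha + (q^2-q)\partial_{\mathbb\Delta}(\alpha)$; this uses crucially that $(q^2-q)\partial_{\mathbb\Delta}(\alpha)\partial_{M,\mathbb\Delta}(s)\otimes t$ can be moved to either side of the tensor. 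Once well-definedness is established, I will verify the $\mathbb\Delta$-derivation identity of proposition \ref{acevd} directly on a simple tensor $s \otimes t$: a routine computation gives $\partial_{M\otimes N, \mathbb\Delta}(\alpha(s\otimes t)) = \partial_{\mathbb\Delta}(\alpha)(s\otimes t) + \gamma(\alpha)\partial_{M\otimes N,\mathbb\Delta}(s\otimes t)$, where the key algebraic step is the factorization $\gamma(\alpha)\partial_{M,\mathbb\Delta}(s)\otimes t + \gamma(\alpha)\gamma_M(s)\otimes\partial_{N,\mathbb\Delta}(t) = \gamma(\alpha)\bigl(\partial_{M,\mathbb\Delta}(s)\otimes t + \gamma_M(s)\otimes \partial_{N,\mathbb\Delta}(t)\bigr)$.

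For part (2), the plan is to first produce the would-be action by using inversiveness, and only then verify the Leibniz rule. Given an $R$-linear $f:M \to N$, consider the $W$-linear map $h_f := \partial_{N,\mathbb\Delta}\circ f - f \circ \partial_{M,\mathbb\Delta} : M \to N$. A direct computation using the $\mathbb\Delta$-derivation identities for $\partial_{M,\mathbb\Delta}$ and $\partial_{N,\mathbb\Delta}$ shows that the two occurrences of $\partial_{\mathbb\Delta}(\alpha)f(s)$ cancel, leaving $h_f(\alpha s) = \gamma(\alpha) h_f(s)$, so $h_f$ is $\gamma$-linear. Its $R$-linearization is an $R$-linear map $\widetilde{h_f} : \gamma^*M \to N$, and the inversiveness assumption on $M$ asserts that $\widetilde{\gamma_M} : \gamma^*M \to M$ is an $R$-linear isomorphism; I set
\[
\partial_{\mathrm{Hom}(M,N),\mathbb\Delta}(f) := \widetilde{h_f}\circ \widetilde{\gamma_M}^{-1}.
\]
This is $R$-linear by construction and satisfies $\partial_{\mathrm{Hom}(M,N),\mathbb\Delta}(f)\circ \gamma_M = h_f$, since $\gamma_M(s) = \widetilde{\gamma_M}(1\otimes s)$. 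Uniqueness is automatic: two $R$-linear maps $g_1,g_2:M\to N$ with $g_1\circ\gamma_M = g_2\circ\gamma_M$ have equal composites $g_i\circ\widetilde{\gamma_M}:\gamma^*M \to N$ by $R$-linearity, hence agree after inverting $\widetilde{\gamma_M}$.

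It remains to verify the $\mathbb\Delta$-derivation identity $\partial_{\mathrm{Hom}(M,N),\mathbb\Delta}(\alpha f) = \partial_{\mathbb\Delta}(\alpha) f + \gamma(\alpha)\partial_{\mathrm{Hom}(M,N),\mathbb\Delta}(f)$. By the uniqueness noted above, it suffices to check the identity after composing on the right with $\gamma_M$, and the defining formula reduces this to
\[
\partial_{N,\mathbb\Delta}\circ(\alpha f) - (\alpha f)\circ \partial_{M,\mathbb\Delta} \overset{?}{=} \partial_{\mathbb\Delta}(\alpha)\,f\circ\gamma_M + \gamma(\alpha)\bigl(\partial_{N,\mathbb\Delta}\circ f - f\circ\partial_{M,\mathbb\Delta}\bigr).
\]
Expanding the left-hand side with the $\mathbb\Delta$-derivation rule in $N$ and subtracting the right-hand side leaves exactly $\partial_{\mathbb\Delta}(\alpha)\, f\bigl(s - \gamma_M(s) + (q^2-q)\partial_{M,\mathbb\Delta}(s)\bigr)$ on a test vector $s$, which vanishes identically by the relation $\gamma_M = \mathrm{Id}_M + (q^2-q)\partial_{M,\mathbb\Delta}$ from proposition \ref{gammeq}. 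The only subtle point in this proof is the passage from $h_f$ to $\partial_{\mathrm{Hom}(M,N),\mathbb\Delta}(f)$, which is precisely where the inversiveness of $M$ is used; everything else is bookkeeping with the Leibniz rule and the identity $\gamma = \mathrm{Id} + (q^2-q)\partial_{\mathbb\Delta}$.
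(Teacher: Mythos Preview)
Your proof is correct and follows essentially the same approach as the paper's. For part (1) you both check balancedness and then the Leibniz rule; for part (2) you are somewhat more explicit than the paper about constructing $\partial_{\mathrm{Hom}(M,N),\mathbb\Delta}(f)$ via inversiveness, but the verification of the $\mathbb\Delta$-derivation identity proceeds identically, composing on the right with $\gamma_M$ and reducing to the relation $\gamma = \mathrm{Id} + (q^2-q)\partial_{\mathbb\Delta}$.
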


\begin{proof}
We will not mention the modules as indices in this proof.

For the first assertion, we start with the following computation for $\alpha \in R$:
\begin{linenomath}
\begin{align*}
\partial_{\mathbb\Delta}(s \otimes \alpha t) &= \partial_{\mathbb\Delta}(s) \otimes \alpha t + \gamma(s) \otimes \partial_{\mathbb\Delta}(\alpha t)
\\ &= \partial_{\mathbb\Delta}(s) \otimes \alpha t + \gamma(s) \otimes \partial_{\mathbb\Delta}(\alpha) t + \gamma(s) \otimes \gamma(\alpha) \partial_{\mathbb\Delta}(t)
\\ &= (\partial_{\mathbb\Delta}(\alpha) \gamma(s) + \alpha \partial_{\mathbb\Delta}(s)) \otimes t + \gamma(\alpha) \gamma(s) \otimes \partial_{\mathbb\Delta}(t)
\\ &= \partial_{\mathbb\Delta}(\alpha s) \otimes t + \gamma(\alpha s) \otimes \partial_{\mathbb\Delta}(t).
\end{align*}
\end{linenomath}
This shows that the formula is well defined and we now show that this is indeed a $\mathbb\Delta$-derivation:
\begin{linenomath}
\begin{align*}
\partial_{\mathbb\Delta}(\alpha s \otimes t) &= \partial_{\mathbb\Delta}(\alpha s) \otimes t + \gamma(\alpha s) \otimes \partial_{\mathbb\Delta}(t)
\\ &= (\partial_{\mathbb\Delta}(\alpha)s+\gamma(\alpha)\partial_{\mathbb\Delta}(s)) \otimes t + \gamma(\alpha) \gamma(s) \otimes \partial_{\mathbb\Delta}(t)
\\ &= \partial_{\mathbb\Delta}(\alpha)s\otimes t +\gamma(\alpha)(\partial_{\mathbb\Delta}(s) \otimes t + \gamma(s) \otimes \partial_{\mathbb\Delta}(t))
\\ &= \partial_{\mathbb\Delta}(\alpha)s\otimes t +\gamma(\alpha)\partial_{\mathbb\Delta}(s \otimes t).
\end{align*}
\end{linenomath}

The second assertion requires more care.
Uniqueness is clear since $M$ is assumed to be inversive and we want to show that
\[
\partial_{\mathbb\Delta}(\alpha f) = \partial_{\mathbb\Delta}(\alpha) f + \gamma(\alpha) \partial_{\mathbb\Delta}(f)
\]
or, after composing on the right with $\gamma$, that
\[
\partial_{\mathbb\Delta} \circ \alpha f -\alpha f \circ \partial_{\mathbb\Delta} = \partial_{\mathbb\Delta}(\alpha) f \circ \gamma + \gamma(\alpha)\partial_{\mathbb\Delta} \circ f - \gamma(\alpha)f \circ \partial_{\mathbb\Delta}.
\]
Since $\partial_{\mathbb\Delta} \circ \alpha = \partial_{\mathbb\Delta}(\alpha) + \gamma(\alpha)\partial_{\mathbb\Delta}$, this is equivalent to
\[
\partial_{\mathbb\Delta}(\alpha)f -\alpha f \circ \partial_{\mathbb\Delta} = \partial_{\mathbb\Delta}(\alpha) f \circ \gamma - \gamma(\alpha)f \circ \partial_{\mathbb\Delta}
\]
and it is therefore sufficient (after simplification by $f$ on the left) to check that
\[
\partial_{\mathbb\Delta}(\alpha) -\alpha \partial_{\mathbb\Delta} = \partial_{\mathbb\Delta}(\alpha) \gamma - \gamma(\alpha) \partial_{\mathbb\Delta}.
\]
This follows from the equality
\[
\partial_{\mathbb\Delta}(\alpha) (\gamma - \mathrm{Id}) = (\gamma - \mathrm{Id})(\alpha)\partial_{\mathbb\Delta}
\]
which is readily seen to hold since $\gamma - \mathrm{Id} = (q^2-q)\partial_{\mathbb\Delta}$.
\end{proof}

The tensor product and Hom structures are compatible in various ways and we shall need in particular the following:

%%%%%%%%%%%%%%%%%%
\begin{lem} \label{homtens}
If $M_1, M_2, M'_1, M'_2$ are four $\nabla_{\mathbb\Delta}$-modules on $R$ and $M_1, M_2$ are inversive, then the natural map
\[
\mathrm{Hom}_R(M_1, M'_1) \otimes_R \mathrm{Hom}_R(M_2, M'_2) \to \mathrm{Hom}(M_1 \otimes_R M_2, M'_1 \otimes_R M'_2)
\]
is horizontal.
\end{lem}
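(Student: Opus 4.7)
The strategy is the usual one: verify the horizontality on pure tensors $f_1 \otimes f_2$, evaluated on elements of $M_1 \otimes_R M_2$ of the form $\gamma_{M_1}(s_1) \otimes \gamma_{M_2}(s_2)$. Before doing so, I need to know that the codomain carries a well-defined $\mathbb\Delta$-derivation, which in view of lemma \ref{tenhom}.2 amounts to checking that $M_1 \otimes_R M_2$ is again inversive. The defining formula of lemma \ref{tenhom}.1, combined with $\gamma = \mathrm{Id} + (q^2-q)\partial_{\mathbb\Delta}$, yields on pure tensors $\gamma_{M_1 \otimes M_2}(s_1 \otimes s_2) = \gamma_{M_1}(s_1) \otimes \gamma_{M_2}(s_2)$. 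Under the canonical isomorphism $\gamma^*(M_1 \otimes_R M_2) \simeq \gamma^*M_1 \otimes_R \gamma^*M_2$, the linearization of $\gamma_{M_1 \otimes M_2}$ becomes the tensor product of the linearizations of $\gamma_{M_1}$ and $\gamma_{M_2}$, and is therefore bijective.

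The next step records the useful identity
\[
\gamma(f) \circ \gamma_M = \gamma_{M'} \circ f
\]
for any $f \in \mathrm{Hom}_R(M, M')$ with $M$ inversive; equivalently, $\gamma(f(s)) = \gamma(f)(\gamma(s))$ for all $s \in M$. This is a one-line check using $\gamma = \mathrm{Id} + (q^2-q)\partial_{\mathbb\Delta}$ together with the characterizing relation $\partial_{\mathbb\Delta}(f) \circ \gamma_M = \partial_{\mathbb\Delta} \circ f - f \circ \partial_{\mathbb\Delta}$ from lemma \ref{tenhom}.2.

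Now set $F := \phi(f_1 \otimes f_2)$, where $\phi$ denotes the natural map in question. Applying lemma \ref{tenhom}.1 in $\mathrm{Hom}_R(M_1, M'_1) \otimes_R \mathrm{Hom}_R(M_2, M'_2)$ and then evaluating at $\gamma(s_1) \otimes \gamma(s_2)$ gives
\[
\phi(\partial_{\mathbb\Delta}(f_1 \otimes f_2))(\gamma(s_1) \otimes \gamma(s_2)) = \partial_{\mathbb\Delta}(f_1)(\gamma(s_1)) \otimes f_2(\gamma(s_2)) + \gamma(f_1)(\gamma(s_1)) \otimes \partial_{\mathbb\Delta}(f_2)(\gamma(s_2)).
\]
On the other hand, by the defining property of $\partial_{\mathbb\Delta}(F)$ applied to $s_1 \otimes s_2$ and subsequent expansion of both $\partial_{\mathbb\Delta}(F(s_1 \otimes s_2))$ and $F(\partial_{\mathbb\Delta}(s_1 \otimes s_2))$ via lemma \ref{tenhom}.1 (in $M'_1 \otimes M'_2$ and in $M_1 \otimes M_2$ respectively), the value $\partial_{\mathbb\Delta}(F)(\gamma(s_1) \otimes \gamma(s_2))$ is a sum of four tensor terms. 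Group them into the pair whose first factor involves $\partial_{\mathbb\Delta}$ and the pair whose first factor involves $\gamma$: the first pair collapses, by the definition of $\partial_{\mathbb\Delta}(f_1)$, to $\partial_{\mathbb\Delta}(f_1)(\gamma(s_1)) \otimes f_2(\gamma(s_2))$; the second pair collapses, using the identity displayed above applied to $f_1$ together with the definition of $\partial_{\mathbb\Delta}(f_2)$, to $\gamma(f_1)(\gamma(s_1)) \otimes \partial_{\mathbb\Delta}(f_2)(\gamma(s_2))$. The two expressions agree; since the elements $\gamma(s_1) \otimes \gamma(s_2)$ generate $M_1 \otimes_R M_2$ as an $R$-module (by inversivity) and both maps in question are $R$-linear, this proves the desired equality on pure tensors, and then everywhere by $R$-bilinearity. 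The only real obstacle is the bookkeeping when splitting and regrouping the four terms, making sure the twist $\gamma$ lands exactly where it should.
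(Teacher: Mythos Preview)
Your overall strategy is sound, and the preliminary steps (inversivity of $M_1\otimes_R M_2$ and the identity $\gamma_{M'}(f(s))=\gamma(f)(\gamma_M(s))$) are correct and useful. However, the final grouping you describe is wrong. Writing the four terms of $\partial_{\mathbb\Delta}(F)(\gamma(s_1)\otimes\gamma(s_2))$ as
\begin{align*}
&\partial(f_1(s_1))\otimes f_2(s_2) + \gamma(f_1(s_1))\otimes\partial(f_2(s_2))\\
&\quad - f_1(\partial(s_1))\otimes f_2(s_2) - f_1(\gamma(s_1))\otimes f_2(\partial(s_2)),
\end{align*}
the pair whose first factor involves $\partial$ collapses to $\partial(f_1)(\gamma(s_1))\otimes f_2(s_2)$, with $f_2(s_2)$ and \emph{not} $f_2(\gamma(s_2))$ in the second slot. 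The remaining pair has first factors $\gamma(f_1)(\gamma(s_1))$ and $f_1(\gamma(s_1))$, which differ; they do not combine into a single tensor via the definition of $\partial(f_2)$ alone. The identity you need still holds, but it requires one more cancellation: after reorganizing, the discrepancy between the two sides reduces to
\[
\bigl[(\gamma(f_1)-f_1)(\gamma(s_1)) - (q^2-q)\,\partial(f_1)(\gamma(s_1))\bigr]\otimes f_2(\partial(s_2)),
\]
which vanishes because $\gamma(f_1)-f_1=(q^2-q)\,\partial(f_1)$. So your argument can be completed, but the ``two clean collapses'' description is not accurate.

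The paper sidesteps all of this by working with $\gamma$ rather than $\partial_{\mathbb\Delta}$: since $\gamma_{M\otimes N}=\gamma_M\otimes\gamma_N$ and $\gamma_{\mathrm{Hom}(M,N)}(f)\circ\gamma_M=\gamma_N\circ f$ (remark~\ref{tensor}.1), compatibility of the natural map with $\gamma$ is a one-line check. Your direct $\partial$-computation has the virtue of avoiding any implicit $(q-1)$-torsion-freeness needed to pass back from $\gamma$-compatibility to $\partial$-compatibility, at the cost of the extra bookkeeping above.
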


\begin{proof}
This is readily checked.
It is actually sufficient to show the analogous assertion for the action of $\gamma$ on the various $R$-modules.
This makes the computations easier (see remark \ref{tensor}.1 below).
\end{proof}

%%%%%%%%%%%%%%%%%
\begin{rmks} \phantomsection \label{tensor}
\begin{enumerate}
\item We will have
\[
\forall s \in M, t \in N, \quad \gamma_{M\otimes N}(s \otimes t) = \gamma_M(s) \otimes \gamma_{N}(t)
\]
\[
\left(\mathrm{resp}.\quad \forall f :M\to N, \quad \gamma_{\mathrm{Hom}(M,N)}(f) \circ \gamma_M = \gamma_{N} \circ f \right).
\]
\item
We can also consider for all $n \in \mathbb N$ the \emph{tensor power} $M^{\otimes n}$ of a $\nabla_{\mathbb\Delta}$-module $M$.
In the case of a free module $F$ of rank one with generator $s$ and action given by $\partial_{F,\mathbb\Delta}(s) = \alpha s$, one can check that
\[
\partial_{F^{\otimes n},\mathbb\Delta}(s^{\otimes n}) = \left( \sum_{k=1}^n {n \choose k} (q^2-q)^{k-1}\alpha^k\right)s^{\otimes n}
\]
and
\[
\gamma_{F^{\otimes n}}(s^{\otimes n}) = \beta^n s^{\otimes n} \quad \mathrm{with} \quad \beta := 1 + (q^2-q)\alpha.
\]
\item
We may also consider the \emph{dual} $\check{M}$ of $M$ when $M$ is an inversive $\nabla_{\mathbb\Delta}$-module.
With $F$ as before, we find
\[
\partial_{\check{F},\mathbb\Delta}(\check s) = - \frac \alpha{1 + (q^2-q)\alpha} \check s
\quad \mathrm{and} \quad
\gamma_{\check{F}}(\check s) = \frac 1{\beta}\check s.
\]
\item 
One can also generalize the definition of $M^{\otimes n}$ to all $n \in \mathbb Z$ by setting $M^{\otimes (-n)} := \check M^{\otimes n}$.
\item If $M$ and $M'$ are two $\nabla_{\mathbb\Delta}$-modules with $M$ inversive, then
\[
\mathrm H^0_{\mathrm{dR},\mathbb\Delta}(\mathrm{Hom}_{R}(M,M')) = \mathrm{Hom}_{\nabla}(M,M').
\]
\end{enumerate}
\end{rmks}

%%%%%%%%%%
\begin{xmps}
\begin{enumerate}
\item With $n \in \mathbb N$ and $F_n$ as above corresponding to the $(p)_q$-adic filtration, we have $F_n \simeq F_1^{\otimes n}$.
If we set $F_{-n} := \check{F_n}$, then, in the case $n=1$, we have
\[
\partial_{F_{-1},\mathbb\Delta}(\check s) = - \frac 1\lambda \partial_{q^p}(p)_q \check s \quad \left(= - \frac {\partial_{\mathbb\Delta}((p)_q)}{(p)_{q^{p+1}}} \check s\right)
\]
with $\lambda$ as in formula \eqref{pqplun}.
Note that $F_{-n} \simeq (p)_q^{-r}R := \mathrm{Hom}_R((p)_q^rR, R)$.
\item If, for $n \in \mathbb Z$, we set $R\{n\} := R\{1\}^{\otimes n}$, then we will have
\[
\partial_{R\{n\},\mathbb\Delta}(e_R^{\otimes n}) = \frac 1{q^2-q} \left(\frac {(p+1)^n}{(p+1)_q^n} -1 \right)e_R^{\otimes n}
\]
and
\[
\gamma_{R\{n\}}(e_R^{\otimes n}) = \frac {(p+1)^n}{(p+1)_q^n} e_R^{\otimes n}.
\]
\item Since, for all $n \in \mathbb N$, $G_n$ is nothing but $R\{n\}$ (or $(p)_q^nR$) modulo $(p)_q$, we see that, if we set $G_{-n} := \check{G_n}$, then for all $n \in \mathbb Z$,
\[
\partial_{G_n,\mathbb\Delta}(s^{\otimes n}) = (n)_{p+1} (p)'_\zeta s^{\otimes n}
\]
and
\[
\gamma_{G_n}(s^{\otimes n}) = (p+1)^n s^{\otimes n}.
\]
\item
Even if computing cohomology is hard in general, we can do it for $\overline {R\{n\}}/p\overline {R\{n\}}$ ($\simeq G_n/pG_n$), with $n \in \mathbb Z$, when $W$ is the ring of Witt vectors of a perfect field $k$ of positive characteristic $p$. We then have
\[
(n)_{p+1} \equiv 0 \mod p \Leftrightarrow p \mid n \quad \mathrm{	and} \quad (p)'_\zeta \equiv (\zeta -1)^{p-2} \mod p
\]
 (compare with construction 2.4 in \cite{BhattMathew23}),
\[
\dim_k \mathrm H^0_{\mathrm{dR},\mathbb\Delta}(\overline R\{n\}/p\overline R\{n\}) =\dim_k \mathrm H^1_{\mathrm{dR},\mathbb\Delta}(\overline R\{n\}/p\overline R\{n\}) = p-1
\]
when $p \mid n$ and
\[
\dim_k \mathrm H^0_{\mathrm{dR},\mathbb\Delta}(\overline R\{n\}/p\overline R\{n\}) = 1,\quad \dim_k \mathrm H^1_{\mathrm{dR},\mathbb\Delta}(\overline R\{n\}/p\overline R\{n\}) = p-2
\]
when $p \nmid n$.
\end{enumerate}
\end{xmps}

We can twist a $\nabla_{\mathbb\Delta}$-module in various ways:

%%%%%%%%
\begin{dfn}
If $M$ is is a $\nabla_{\mathbb\Delta}$-module on $R$ and $n \in \mathbb Z$, then
\begin{enumerate}
\item the $n$th \emph{$(p)_q$-adic twist} of $M$ is $(p)_q^nM$,
\item the $n$th \emph{Breuil-Kisin twist} of $M$ is $M\{n\} := M \otimes_R R\{n\}$.
\end{enumerate}
\end{dfn}

Note that they coincide modulo $(p)_q$.

We shall need later the following technical result:

%%%%%%%%
\begin{lem} \label{injmod}
If $M$ and $N$ are two $\nabla_{\mathbb\Delta}$-modules with $M$ inversive and $N$ complete, $(q-1)$-torsion free with $N/(q-1)N$ $\mathbb Z$-torsion free, then the canonical map
\[
\mathrm{Hom}_\nabla(M, N) \to \mathrm{Hom}_W(M/(q-1)M, N/(q-1)N)
\]
is injective.
\end{lem}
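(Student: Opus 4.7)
The plan is to reduce the statement to lemma \ref{submod} by passing from horizontal maps to global sections of the $\nabla_{\mathbb\Delta}$-module $H := \mathrm{Hom}_R(M, N)$. The inversive hypothesis on $M$ is precisely what allows us to endow $H$ with an action by $\mathbb\Delta$-derivations, via lemma \ref{tenhom}.2, and remark \ref{tensor}.5 then identifies $\mathrm{Hom}_\nabla(M,N) = \mathrm H^0_{\mathrm{dR},\mathbb\Delta}(H)$.

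I would then factor the canonical map of the statement as the composition
\[
\mathrm H^0_{\mathrm{dR},\mathbb\Delta}(H) \longrightarrow H/(q-1)H \longrightarrow \mathrm{Hom}_W(M/(q-1)M, N/(q-1)N)
\]
and show that both arrows are injective. The right-hand arrow is the more elementary one: if the reduction of an $R$-linear $f: M \to N$ vanishes on $M/(q-1)M$, then $f(M) \subset (q-1)N$, and $(q-1)$-torsion freeness of $N$ lets us divide uniquely to obtain $g \in H$ with $f = (q-1)g$, so $[f] = 0$ in $H/(q-1)H$. As a by-product this embeds $H/(q-1)H$ into $\mathrm{Hom}_W(M/(q-1)M, N/(q-1)N)$, which is $\mathbb Z$-torsion free by the hypothesis on $N$; hence $H/(q-1)H$ is itself $\mathbb Z$-torsion free.

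For the left-hand arrow, I would invoke lemma \ref{submod} applied to $H$. The three required hypotheses are then in hand: $(q-1)$-torsion freeness of $H$ is inherited directly from that of $N$; $\mathbb Z$-torsion freeness of $H/(q-1)H$ is the by-product just obtained; and $(q-1)$-adic separatedness of $H$ (which is all that the proof of lemma \ref{submod} actually uses, although its statement assumes completeness) follows from the $(q-1)$-adic separatedness of $N$ inherited from its completeness, via the pointwise implication $h \in \bigcap_n (q-1)^n H \Rightarrow h(M) \subset \bigcap_n (q-1)^n N = 0$.

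The point requiring care is the dance between the hypotheses: the $\mathbb Z$-torsion freeness of $H/(q-1)H$ cannot be asserted directly, and is obtained only through the embedding into $\mathrm{Hom}_W(M/(q-1)M, N/(q-1)N)$, which itself relies on the division-by-$(q-1)$ argument that makes essential use of $(q-1)$-torsion freeness of $N$. Once this is set up, the rest of the argument follows the template of lemma \ref{submod} verbatim: an element $f$ in the kernel of the left-hand arrow would lie in $\bigcap_n (q-1)^n H$ by iterating the graded-piece injectivity (governed by multiplication by $(p+1)^n - 1$, as in remark \ref{Nyg}.2), hence vanishes.
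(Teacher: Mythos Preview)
Your proof is correct and follows essentially the same approach as the paper, whose proof is the single line ``Apply lemma \ref{submod} to $\mathrm{Hom}(M, N)$.'' You have spelled out the verifications the paper leaves implicit --- in particular that $\mathrm{Hom}_R(M,N)$ inherits $(q-1)$-torsion freeness and separatedness from $N$, and that its reduction modulo $q-1$ is $\mathbb Z$-torsion free via the embedding into $\mathrm{Hom}_W(M/(q-1)M, N/(q-1)N)$ --- and correctly observed that separatedness (rather than completeness) is all that the proof of lemma \ref{submod} actually uses.
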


\begin{proof}
Apply lemma \ref{submod} to $\mathrm{Hom}(M, N)$.
\end{proof}

Our theory is clearly functorial in $W$ but there also exists some functoriality in $R$:
%%%%%%%%
\begin{prop} \label{foncr}
Let $M$ be a $\nabla_{\mathbb\Delta}$-module on $R$ and $M_r := g_r^*M$ its pullback under a morphism $g_r$ (given by $g_r(q) = q^r)$.
Then, there exists a unique action by $\mathbb\Delta$-derivations on $M_r$ such that
\[
\forall s \in M, \quad \partial_{M_r,\mathbb\Delta}(1 \otimes s) = (r)_qq^{r-1} \otimes \partial_{M,\mathbb\Delta}(s).
\]
\end{prop}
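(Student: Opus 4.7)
The plan is to exploit proposition \ref{acevd}, which reduces the datum of a $\mathbb\Delta$-derivation on $M_r$ to a single $W$-linear endomorphism satisfying the Leibniz-type rule with $\gamma$-twist. Since $M_r = R \otimes_{g_r, R} M$ is generated as an $R$-module by the elementary tensors $1 \otimes s$, the prescribed value $\partial_{M_r,\mathbb\Delta}(1 \otimes s) = (r)_q q^{r-1} \otimes \partial_{M,\mathbb\Delta}(s)$, combined with the Leibniz rule of definition \ref{qder1}, forces
\[
\partial_{M_r,\mathbb\Delta}(\alpha \otimes s) = \partial_{\mathbb\Delta}(\alpha) \otimes s + \gamma(\alpha)(r)_q q^{r-1} \otimes \partial_{M,\mathbb\Delta}(s),
\]
which gives uniqueness.

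For existence, I would define a $W$-bilinear map $\varphi : R \times M \to M_r$ by this same formula and show it descends to the balanced tensor product, that is, $\varphi(\alpha g_r(\beta), s) = \varphi(\alpha, \beta s)$ for all $\alpha, \beta \in R$ and $s \in M$. Expanding both sides and reordering, this rests on two ingredients: the commutation $\gamma \circ g_r = g_r \circ \gamma$, which is immediate since both endomorphisms send $q$ to $q^{r(p+1)}$, and the chain-rule identity $\partial_{\mathbb\Delta} \circ g_r = (r)_q q^{r-1} \cdot (g_r \circ \partial_{\mathbb\Delta})$. For the latter, using $\partial_{\mathbb\Delta} = (\gamma - \mathrm{Id}_R)/(q^2-q)$ from the fourth formula of proposition \ref{basder} together with the commutation just noted,
\[
\partial_{\mathbb\Delta} \circ g_r = \frac{g_r \circ \gamma - g_r}{q^2 - q} = \frac{g_r(q^2 - q)}{q^2 - q} \cdot (g_r \circ \partial_{\mathbb\Delta}),
\]
and a direct computation gives $g_r(q^2 - q)/(q^2 - q) = q^{r-1}(q^r-1)/(q-1) = q^{r-1}(r)_q$, as wanted.

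Once well-definedness is secured, the resulting $W$-linear endomorphism $\partial_{M_r,\mathbb\Delta}$ of $M_r$ is seen to satisfy the Leibniz rule of definition \ref{qder1} by the analogous expansion on a simple tensor $t = \beta \otimes s$, using the Leibniz rule for $\partial_{\mathbb\Delta}$ on the product $\alpha\beta$ and the same two identities above. The main (and essentially only nontrivial) obstacle is the descent to the balanced tensor product in the middle paragraph, which reduces to the chain-rule identity; everything else is formal.
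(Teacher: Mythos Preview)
Your proof is correct and follows essentially the same strategy as the paper: uniqueness is forced by the Leibniz rule, and existence reduces to checking that the candidate formula descends to the balanced tensor product $R \otimes_{g_r,R} M$. The only difference is in how the key identity is verified. The paper checks the single generating relation $q^r \otimes s = 1 \otimes qs$ by a direct computation on $q$, invoking the identity $(p)_q(r)_{q^p} = (pr)_q = (r)_q(p)_{q^r}$. You instead prove the full chain rule $\partial_{\mathbb\Delta} \circ g_r = (r)_q q^{r-1}\,(g_r \circ \partial_{\mathbb\Delta})$ in one stroke from the formula $\partial_{\mathbb\Delta} = (\gamma - \mathrm{Id}_R)/(q^2-q)$ of proposition~\ref{basder}(4) together with $\gamma \circ g_r = g_r \circ \gamma$. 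Your packaging is arguably cleaner, since it avoids the reduction to generators via continuity and additivity; the paper's version is more explicit about the twisted-integer identity at the heart of the computation. Both arrive at the same place with the same amount of work.
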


\begin{proof}
Again, we will not mention the modules as indices in the proof.
Uniqueness is clear and we will then have
\[
\forall s \in M, \quad \partial_{\mathbb\Delta}(r \otimes s) = \partial_{\mathbb\Delta}(r) \otimes s + \gamma(r)(r)_qq^{r-1} \otimes \partial_{\mathbb\Delta}(s).
\]
This is clearly a $\mathbb\Delta$-derivation if it is well defined: we will have on the one hand
\[
\partial_{\mathbb\Delta}(q^r \otimes s) = (p)_q(r)_{q^p}q^{r-1} \otimes s + q^{pr+r}(r)_qq^{r-1} \otimes \partial_{\mathbb\Delta}(s),
\]
and on the other hand
\begin{linenomath}
\begin{align*}
\partial_{\mathbb\Delta}(1\otimes q s)&=(r)_qq^{r-1} \otimes \partial_{\mathbb\Delta}(q s)
\\ &= (r)_qq^{r-1} \otimes ((p)_q s + q^{p+1} \partial_{\mathbb\Delta}(s))
\\ &= (r)_qq^{r-1}(p)_{q^r} \otimes s+ (r)_qq^{r-1} q^{pr+r} \otimes \partial_{\mathbb\Delta}(s).
\end{align*}
\end{linenomath}
The assertion therefore follows from the identity $ (p)_q(r)_{q^p} = (pr)_q = (r)_q(p)_{q^r}$.
\end{proof}

\begin{rmks}
\begin{enumerate}
\item The statement holds for any $r \in \mathbb Z$: the notion of $q$-analog of a negative integer makes sense whenever $q$ is invertible (see for example definition 1.1 in \cite{LeStumQuiros15}) and all usual formulas still hold.
\item Actually, when $p$ is topologically nilpotent in $W$, the statement extends to any $r \in \mathbb Z_p$.
\item
We will simply have $\gamma_{M_r}(1 \otimes s) = 1 \otimes \gamma_M(s)$.
\item
It makes sense to call a $g_r$-linear morphism $M \to M$ \emph{horizontal} whenever the corresponding $R$-linear morphism $M_r \to M$ is. 
\end{enumerate}
\end{rmks}

%%%%%%%%%%%%%%%%%
\section{Higher $\mathbb\Delta$-derivatives} \label{higherd}

In order to have a full theory, it will not be sufficient to consider higher powers of $\mathbb\Delta$-derivations, and we shall introduce higher $\mathbb\Delta$-derivatives as a byproduct of $\mathbb\Delta$-Taylor maps.

As before, $W$ denotes a complete adic ring and $R := W[[q-1]]$ where $q$ is an indeterminate.
We always use the $\mathfrak m_W+(q-1)$-adic topology where $\mathfrak m_W$ denotes an ideal of definition of $W$.
We fix some positive integer $p$ and we denote by $\xi$ and $\omega$ two indeterminates over $R$ (thinking of $\omega$ as $\xi/(p)_q$).

There exists a unique continuous isomorphism of $W$-algebras
\begin{equation} \label{isoq}
\xymatrix@R=0cm{
R \widehat \otimes_{W} R \simeq W[[q_{0}-1, q_{1}-1]] \ar[r]^-\simeq & R[[\xi]] \simeq W[[q-1,\xi]] 
\\q_0 \ar@{|->}[r] & q
\\ q_1 \ar@{|->}[r] & q + \xi.
}
\end{equation}

%%%%%%%%%%%%
\begin{dfn} \label{Taylfinf}
The \emph{Taylor map (of infinite level)}
\[
\theta^{(\infty)} : R \to R[[\xi]]
\]
is the composition of the \emph{stupid Taylor map}
\[
p_2 : R \to R \widehat \otimes_{W} R, \quad \alpha \mapsto 1 \otimes \alpha
\]
with the isomorphism \eqref{isoq}.
\end{dfn}

In other words, $\theta^{(\infty)}$ is the unique continuous $W$-linear map such that $q \mapsto q + \xi$.
By symmetry, we shall denote by $\iota^{(\infty)} : R \to R[[\xi]]$ the canonical map which is the composite of
\[
p_1 : R \to R \widehat \otimes_{W} R, \quad \alpha \mapsto \alpha \otimes 1
\]
with the isomorphism \eqref{isoq}.
This is also the unique continuous $W$-linear map such that $q \mapsto q$.
We may then call \emph{action on the right} the action of $R$ through the Taylor map by contrast with the action through the canonical map, that we may then refer to as the \emph{action on the left}. 
This applies also to all canonical/Taylor maps that we shall encounter in the future.
When there is no risk of confusion, we will simply denote by $\iota$/$\theta$ \emph{any} canonical/Taylor map (and there will be many of them).
However, we may also write $\iota_R$/$\theta_R$ when other rings (or modules) are involved.%%%%%%%%%%%
\begin{dfn} \label{fldef}
The \emph{flip map} on $R[[\xi]]$ is the unique semilinear (with respect to $\theta^{(\infty)}$) $R$-algebra automorphism
\[
\tau : R[[\xi]] \overset \sim \longrightarrow R[[\xi]], \quad \xi \mapsto -\xi.
\]
\end{dfn}

The flip map is a symmetry ($\tau \circ \tau = \mathrm{Id}$) that exchanges the left and right $R$-algebra structures of $R[[\xi]]$.
It corresponds to the continuous $W$-linear automorphism
\[
W[[q_0-1, q_1-1]] \overset \simeq \longrightarrow W[[q_0-1, q_1-1]], \quad q_0 \mapsto q_1, q_1 \mapsto q_0,
\]
or maybe better, to
\[
R \widehat \otimes_{W} R \overset \simeq \longrightarrow R \widehat \otimes_{W} R, \quad f \otimes g \mapsto g \otimes f.
\]

%%%%%%%%%%%%%
\begin{rmks} \phantomsection \label{classd}
\begin{enumerate}
\item Notice that, since $\tau$ exchanges $\theta$ and $\iota$, we have $\iota= \tau \circ \theta$ and $\theta= \tau \circ \iota$. Therefore, for $\alpha \in R$ we get $\alpha=\tau(\theta(\alpha))$ and $\theta(\alpha)=\tau(\alpha)$.
\item We shall freely use all terminology and notation from usual calculus (taking into account the topology of $R$).
We can consider for example the module of continuous differentials $\Omega_{R/W}$ (or $\Omega$ for short).
This is the kernel of the augmentation map $R[\xi]/\xi^2 \to R, \xi \mapsto 0$, and it comes with the universal derivation $\mathrm d : R \to \Omega$ induced by the difference $\theta_R^{(\infty)} - \iota_R^{(\infty)}$.
Of course, $\Omega$ is a free $R$-module of rank one with generator $\mathrm dq$.
One may then define a connection and so on.
\item If we compose the Taylor map $\theta^{(\infty)}$ with the blow up
\[
R[[\xi]] \to R[[\omega]], \quad \xi \mapsto (p)_q\omega,
\]
then we recover log-calculus with respect to the divisor $(p)_q$.
In particular, we can consider the kernel $\Omega_{\log}$ of the augmentation map $R[\omega]/\omega^2 \to R$ as well as log-connections.
Be careful however that the coordinate $q$ does not match the divisor $(p)_q$.
In particular, $\mathrm {dlog}(p)_q$ is not a generator of $\Omega_{\log}$ and we must use ``$\mathrm dq/(p)_q$'' instead.
We shall develop a variant of this theory in section \ref{generic}.
\end{enumerate}
\end{rmks}

We are actually interested in \emph{twisted} calculus and we recall from remark \ref{twpow}.3 and definition \ref{twistom} that we write\footnote{For a while, we shall do the $q^p$-case in parallel with the $\mathbb\Delta$-case even if only the latter will matter in the end.}
\[
\xi^{(n)_{q^p}} := \prod_{k=0}^{n-1}\left(\xi - q^{pk+1} + q\right) \quad \mathrm{and} \quad \omega^{(n)_{\mathbb\Delta}} := \prod_{k=0}^{n-1}\left(\omega - (k)_{q^p}(q^2-q)\right).
\]
As we noticed in remark \ref{twpow}.4, blowing up $\xi \mapsto (p)_q\omega$ sends $\xi^{(n)_{q^p}}$ to $(p)_q^n\omega^{(n)_{\mathbb\Delta}}$.
Let us recall that, according to lemma \ref{qlim}, we have
\[
R[[\xi]] \simeq \varprojlim R[\xi]/\xi^{(n+1)_{q^p}}.
\]

%%%%%%%%%%%
\begin{dfn} \label{Taylor}
The \emph{$q^p$-Taylor map} (resp.\ \emph{$\mathbb\Delta$-Taylor map}) of infinite level and finite order $n \in \mathbb N$ is the composite
\[
\theta^{(\infty)}_{n,q^p} : R \overset {\theta^{(\infty)}}\longrightarrow R[[\xi]] \twoheadrightarrow R[\xi]/\xi^{(n+1)_{q^p}}
\]
\[
\left(\mathrm{resp.} \quad \theta^{(\infty)}_{n,\mathbb\Delta} : R \overset {\theta^{(\infty)}_{n,q^p}}\longrightarrow R[\xi]/\xi^{(n+1)_{q^p}} \longrightarrow R[\omega]/\omega^{(n+1)_{\mathbb\Delta}}\right).
\]
\end{dfn}

This Taylor map sends $q$ to $q + \xi \mod \xi^{(n+1)_{q^p}}$ (resp.\ to $q + (p)_q\omega \mod \omega^{(n+1)_{\mathbb\Delta}}$).
By symmetry, we may also write $\iota^{(\infty)}_{n,q^p}$ and $\iota^{(\infty)}_{n,\mathbb\Delta}$ for the canonical maps.

We can now introduce \emph{higher $q^p$-derivatives} (resp.\ \emph{higher $\mathbb\Delta$-derivatives}) of infinite level:

%%%%%%%%%%
\begin{lem} \label{tetaqn}
For each $k \in \mathbb N$, there exists a unique continuous $W$-linear map $\partial^{[k]}_{q^p} : R \to R$ (resp.\ $\partial^{[k]}_{\mathbb\Delta} : R \to R$) such that for all $n\in \mathbb N$,
\begin{equation} \label{dform}
\partial^{[k]}_{q^p}(q^n) = {n \choose k}_{q^p}q^{n-k} \quad \left(\mathrm{resp.} \quad \partial^{[k]}_{\mathbb\Delta}(q^n) = (p)_q^k{n \choose k}_{q^p}q^{n-k}\right)
\end{equation}
and we have for all $n \in \mathbb N$ and $\alpha \in R$,
\begin{equation} \label{tform}
\theta_{n,q^p}^{(\infty)}(\alpha) = \sum_{k=0}^n \partial^{[k]}_{q^p}(\alpha) \xi^{(k)_{q^p}} \quad \left(\mathrm{resp.} \quad \theta_{n,\mathbb\Delta}^{(\infty)}(\alpha) = \sum_{k=0}^n \partial^{[k]}_{\mathbb\Delta}(\alpha) \omega^{(k)_{\mathbb\Delta}}\right).
\end{equation}
\end{lem}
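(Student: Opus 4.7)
The plan is to produce the maps $\partial^{[k]}_{q^p}$ abstractly as coordinate functions of the Taylor map, and only afterwards check the explicit formula on the monomials $q^n$.

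First I would observe that, passing from the $R$-basis $(\xi^k)_{0\le k\le n}$ to the family $(\xi^{(k)_{q^p}})_{0\le k\le n}$, the transition matrix is unitriangular (apply proposition \ref{ernst} with $x=q$, or just read off the highest degree term from the definition $\xi^{(k)_{q^p}}=\prod_{j=0}^{k-1}(\xi-q^{pj+1}+q)$). Hence the twisted powers form an $R$-basis of $R[\xi]/\xi^{(n+1)_{q^p}}$, so for every $\alpha\in R$ there exist \emph{unique} elements $\partial^{[k]}_{q^p,n}(\alpha)\in R$ with $\theta_{n,q^p}^{(\infty)}(\alpha)=\sum_{k=0}^n \partial^{[k]}_{q^p,n}(\alpha)\,\xi^{(k)_{q^p}}$, and each $\alpha\mapsto \partial^{[k]}_{q^p,n}(\alpha)$ is continuous and $W$-linear. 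Compatibility with the truncation maps $R[\xi]/\xi^{(n+2)_{q^p}}\to R[\xi]/\xi^{(n+1)_{q^p}}$ shows $\partial^{[k]}_{q^p,n+1}=\partial^{[k]}_{q^p,n}$ for $k\le n$, so these glue to continuous $W$-linear endomorphisms $\partial^{[k]}_{q^p}$ of $R$ giving formula \eqref{tform}.

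Next I would establish the explicit formula \eqref{dform}. Since $\theta^{(\infty)}(q)=q+\xi$, this reduces to proving the identity
\[
(q+\xi)^n=\sum_{k=0}^{n}{n\choose k}_{q^p} q^{n-k}\,\xi^{(k)_{q^p}}
\]
in $R[[\xi]]$ for every $n\in\mathbb N$. I would prove this by induction on $n$. The key computational input is the recursion
\[
\xi\cdot\xi^{(k)_{q^p}}=\xi^{(k+1)_{q^p}}+(q^{pk+1}-q)\,\xi^{(k)_{q^p}},
\]
which is immediate from definition \ref{twistom}. Multiplying the inductive hypothesis by $q+\xi$, substituting this recursion, regrouping the cancellation of the $\pm q^{n-k+1}\xi^{(k)_{q^p}}$ terms, and reindexing, the coefficient of $\xi^{(k)_{q^p}}$ in $(q+\xi)^{n+1}$ becomes
\[
\Bigl(q^{pk}{n\choose k}_{q^p}+{n\choose k-1}_{q^p}\Bigr)q^{(n+1)-k},
\]
which equals ${n+1\choose k}_{q^p}q^{(n+1)-k}$ by the twisted Pascal identity. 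The main \emph{potential} obstacle is just bookkeeping in this induction, but nothing substantial happens.

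Finally, I would deduce the $\mathbb\Delta$-case by functoriality of the blow-up $R[\xi]\to R[\omega]$, $\xi\mapsto (p)_q\omega$, which by remark \ref{twpow}.4 sends $\xi^{(k)_{q^p}}$ to $(p)_q^k\,\omega^{(k)_{\mathbb\Delta}}$ and is exactly the map through which $\theta^{(\infty)}_{n,\mathbb\Delta}$ factors. Applying it to the expansion just obtained yields
\[
\theta^{(\infty)}_{n,\mathbb\Delta}(q^n)=\sum_{k=0}^{n} (p)_q^k{n\choose k}_{q^p}q^{n-k}\,\omega^{(k)_{\mathbb\Delta}},
\]
and using once more that the twisted powers $\omega^{(k)_{\mathbb\Delta}}$ form an $R$-basis of $R[\omega]/\omega^{(n+1)_{\mathbb\Delta}}$ (same unitriangularity argument, or proposition \ref{ernst}), we get the existence, uniqueness, and explicit formula for $\partial^{[k]}_{\mathbb\Delta}$ in exactly the same way as for $\partial^{[k]}_{q^p}$. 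The relation $\partial^{[k]}_{\mathbb\Delta}=(p)_q^k\,\partial^{[k]}_{q^p}$ that falls out will match the one expected from the definition of $\partial_{\mathbb\Delta}$ in proposition \ref{basder}.
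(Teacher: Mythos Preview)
Your proof is correct and follows essentially the same route as the paper: define the $\partial^{[k]}$ as the coefficients of the Taylor map in the twisted-power basis, then identify them on the monomials $q^n$ via the expansion $(q+\xi)^n=\sum_{k}{n\choose k}_{q^p}q^{n-k}\xi^{(k)_{q^p}}$. The only difference is that the paper invokes this quantum binomial identity directly (it is the content of the quantum binomial formula cited earlier from \cite{LeStumQuiros15}), whereas you supply a self-contained induction using the twisted Pascal identity; your extra details on unitriangularity and truncation compatibility are implicit in the paper's one-line ``existence follows from the definitions.''
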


\begin{proof}
Uniqueness is clear.
Moreover, existence of continuous $W$-linear maps satisfying \eqref{tform} follows from the definitions of $\theta_{n,q^p}^{(\infty)}$ and $\theta_{n,\mathbb\Delta}^{(\infty)}$.
Finally, for $n \leq N$, we have
\[
\theta_{N,q^p}^{(\infty)}(q^n) = (q+\xi)^n = \sum_{k=0}^{n} {n \choose k}_{q^p}q^{n-k} \xi^{(k)_{q^p}}.
\]
This provides the formulas in \eqref{dform}.
\end{proof}

Note that these notations are compatible with those of section \ref{secabs} in the sense that
\[
\partial^{[0]}_{q^p} = \partial^{[0]}_{\mathbb\Delta} = \mathrm{Id}_R, \quad \partial^{[1]}_{q^p} = \partial_{q^p} \quad \mathrm{and} \quad \partial^{[1]}_{\mathbb\Delta} = \partial_{\mathbb\Delta}.
\]
Also, for all $k \in \mathbb N$, $\partial^{[k]}_{\mathbb\Delta} = (p)_q^k\partial^{[k]}_{q^p}$ (so one is a logarithmic version of the other).

We now recall the notion of module of $\gamma$-differentials from definition 2.1 of \cite{LeStumQuiros18} that we simply call here module of $q^p$-differentials:

%%%%%%%%%%
\begin{dfn}
The \emph{module of $q^p$-differentials} of $R/W$ is
\[
\Omega_{R/W,q^p} := R[\xi]\xi/R[\xi]\xi^{(2)_{q^p}}
\]
(or $\Omega_{q^p}$ for short).
\end{dfn}

In other words, $\Omega_{q^p}$ is the kernel of the augmentation map $R[\xi]/\xi^{(2)_{q^p}} \to R$.
We may then consider the map $\mathrm d_{q^p} : R \to \Omega_{q^p}$ induced by difference
\[
\theta^{(\infty)}_{1,q^p} - \iota^{(\infty)}_{1,q^p} : R \mapsto R[\xi]/\xi^{(2)_{q^p}}, \quad q \mapsto \xi \mod \xi^{(2)_{q^p}}.
\]
Note that $\Omega_{q^p}$ is a free $R$-module on $\mathrm d_{q^p}q$ and
\begin{equation} \label{unider}
\forall \alpha \in R, \quad \mathrm d_{q^p}(\alpha) = \partial_{q^p}(\alpha)\mathrm d_{q^p}q.
\end{equation}

We can give a shorter proof of proposition 2.4 in \cite{LeStumQuiros18} in this setting:

%%%%%%%%%%%%
\begin{lem} \label{dOmeg}
The map $\mathrm d_{q^p} : R \to \Omega_{q^p}$ is universal for $q^p$-derivations.
\end{lem}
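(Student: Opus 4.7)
The plan is to verify the universal property of $\mathrm d_{q^p}$ directly, exploiting the two structural facts already established: the module $\Omega_{q^p}$ is free of rank one on $\mathrm d_{q^p}q$, and every $q^p$-derivation of $R$ is determined by its value at $q$ (Proposition \ref{basder}.1, or equivalently Lemma \ref{dersimp}). So essentially nothing needs to be re-proved; one only has to assemble these two facts into the universal statement.

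More concretely, given an $R$-module $M$ and a $q^p$-derivation $D : R \to M$, I would proceed as follows. First, uniqueness: any $R$-linear $u : \Omega_{q^p} \to M$ satisfying $D = u \circ \mathrm d_{q^p}$ must send $\mathrm d_{q^p}q$ to $D(q)$, and since $\Omega_{q^p}$ is generated as an $R$-module by $\mathrm d_{q^p}q$, this forces $u$. Second, existence: define $u$ to be the unique $R$-linear map with $u(\mathrm d_{q^p}q) = D(q)$. To check $u \circ \mathrm d_{q^p} = D$, apply \eqref{unider} to obtain
\[
u(\mathrm d_{q^p}\alpha) = u\bigl(\partial_{q^p}(\alpha)\,\mathrm d_{q^p}q\bigr) = \partial_{q^p}(\alpha) D(q)
\]
for every $\alpha \in R$. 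The same extension argument used in the proof of Proposition \ref{basder}.1 (via Lemma \ref{dersimp}) shows that $D = D(q)\,\partial_{q^p}$, i.e.\ $D(\alpha) = \partial_{q^p}(\alpha) D(q)$, which is precisely what is needed.

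There is no real obstacle: the content of the lemma is that $\mathrm d_{q^p}$ is, up to the $R$-linear identification $\Omega_{q^p} \simeq R\cdot \mathrm d_{q^p}q$, the derivation $\partial_{q^p}$ itself, and the universal property is then a transcription of the fact that $T_{q^p}$ is free of rank one on $\partial_{q^p}$. The only point worth a moment's care is that one should take the target $M$ to be an arbitrary $R$-module (not necessarily equal to $R$), but Proposition \ref{basder}.1 was stated for $T_{q^p} = T_{R/W,q^p}$; the extension to derivations with values in any $M$ is formal from Lemma \ref{dersimp}, which already takes $M$ arbitrary. Continuity of $u \circ \mathrm d_{q^p}$ matches that of $D$ automatically since $\partial_{q^p}$ is continuous and $u$ is $R$-linear on a finite free module. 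This completes the plan.
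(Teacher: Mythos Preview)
Your proposal is correct and follows essentially the same approach as the paper: both rely on formula \eqref{unider} (which shows $\mathrm d_{q^p}$ is a $q^p$-derivation) together with Lemma \ref{dersimp} (any $q^p$-derivation into an arbitrary $M$ is determined by its value at $q$). The paper's proof is simply a terser version of what you wrote.
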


\begin{proof}
First of all, formula \eqref{unider} shows that $\mathrm d_{q^p}$ is a $q^p$-derivation.
On the other hand, it follows from lemma \ref{dersimp} that a $q^p$-derivation $D : R \to M$ is uniquely determined by $D(q)$.
\end{proof}

As a consequence, we see that $T_{q^p}$ and $\Omega_{q^p}$ are naturally dual to each other.

%%%%%%%%%%%
\begin{dfn} \label{qconp}
A \emph{$\mathbb\Delta$-connection} on an $R$-module $M$ is a $W$-linear map $\nabla_M : M \to M \otimes_R \Omega_{q^p}$ such that
\[
\forall \alpha \in R, \forall s \in M, \quad \nabla_M(\alpha s) = (p)_q\, s\otimes \mathrm d_{q^p}(\alpha) +\gamma(\alpha) \nabla_M(s).
\]
\end{dfn}

%%%%%%%%%%%%%
\begin{prop} \label{miceq}
The category of $\nabla_{\mathbb\Delta}$-modules on $R$ is isomorphic to the category of $R$-modules endowed with a $\mathbb\Delta$-connection.
\end{prop}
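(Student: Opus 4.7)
The plan is to use the fact that $\Omega_{q^p}$ is a free $R$-module of rank one on the generator $\mathrm d_{q^p}q$ (established just before the statement) to translate a $\mathbb\Delta$-connection $\nabla_M:M\to M\otimes_R\Omega_{q^p}$ into a $W$-linear endomorphism $\partial_{M,\mathbb\Delta}$ of $M$, and then to match the Leibniz condition in definition \ref{qconp} with the derivation condition appearing in proposition \ref{acevd}.

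First I would observe that, because $\Omega_{q^p}\simeq R\,\mathrm d_{q^p}q$, any $W$-linear map $\nabla_M:M\to M\otimes_R\Omega_{q^p}$ admits a unique decomposition
\[
\nabla_M(s)=\partial_{M,\mathbb\Delta}(s)\otimes \mathrm d_{q^p}q
\]
with $\partial_{M,\mathbb\Delta}:M\to M$ a $W$-linear map, and conversely any such $\partial_{M,\mathbb\Delta}$ defines a $\nabla_M$ by this formula. This sets up a bijection between $W$-linear maps $\nabla_M$ and $W$-linear maps $\partial_{M,\mathbb\Delta}$, functorial in $M$.

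Next I would rewrite the Leibniz identity of definition \ref{qconp} under this bijection. Using formula \eqref{unider}, namely $\mathrm d_{q^p}(\alpha)=\partial_{q^p}(\alpha)\mathrm d_{q^p}q$, together with the relation $\partial_{\mathbb\Delta}=(p)_q\partial_{q^p}$ from proposition \ref{basder}.3, the right hand side of the Leibniz rule becomes
\[
(p)_q\,s\otimes \mathrm d_{q^p}(\alpha)+\gamma(\alpha)\nabla_M(s)=\bigl(\partial_{\mathbb\Delta}(\alpha)s+\gamma(\alpha)\partial_{M,\mathbb\Delta}(s)\bigr)\otimes \mathrm d_{q^p}q,
\]
while the left hand side is $\partial_{M,\mathbb\Delta}(\alpha s)\otimes \mathrm d_{q^p}q$. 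Thus $\nabla_M$ is a $\mathbb\Delta$-connection if and only if
\[
\partial_{M,\mathbb\Delta}(\alpha s)=\partial_{\mathbb\Delta}(\alpha)s+\gamma(\alpha)\partial_{M,\mathbb\Delta}(s)\qquad(\alpha\in R,\ s\in M),
\]
which is exactly the condition that $\partial_{M,\mathbb\Delta}$ be a $\mathbb\Delta$-derivation of $M$ with respect to $\partial_{q^p}$ (definition \ref{qder1}). Invoking proposition \ref{acevd}, this in turn is equivalent to the datum of a $\nabla_{\mathbb\Delta}$-module structure on $M$.

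Finally I would check that the identification is compatible with morphisms. An $R$-linear map $f:M\to M'$ satisfies $(f\otimes\mathrm{id}_{\Omega_{q^p}})\circ\nabla_M=\nabla_{M'}\circ f$ if and only if $f\circ\partial_{M,\mathbb\Delta}=\partial_{M',\mathbb\Delta}\circ f$, which is exactly horizontality in the sense of definition \ref{nabR}. This gives the desired isomorphism of categories. There is no real obstacle here: everything is formal once one notes that $\Omega_{q^p}$ is free of rank one and that $(p)_q\mathrm d_{q^p}$ plays the role of ``$\mathrm d$'' for $\mathbb\Delta$-derivations; the only point that requires minimal care is keeping the factor $(p)_q$ in the right place so that $\partial_{\mathbb\Delta}$ (and not $\partial_{q^p}$) appears in the final derivation identity.
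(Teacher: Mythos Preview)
Your proof is correct and follows essentially the same approach as the paper: you identify a $\mathbb\Delta$-connection $\nabla_M$ with a $\mathbb\Delta$-derivation $\partial_{M,\mathbb\Delta}$ via $\nabla_M(s)=\partial_{M,\mathbb\Delta}(s)\otimes\mathrm d_{q^p}q$ and then invoke proposition \ref{acevd}. The paper's version simply states this correspondence in one line, whereas you spell out the verification of the Leibniz rule and the compatibility with morphisms.
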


\begin{proof}
This follows from proposition \ref{acevd} since a $\mathbb\Delta$-connection $\nabla_M$ is equivalent to a $\mathbb\Delta$-derivation $\partial_{M,\mathbb\Delta}$ with respect to $\partial_{q^p}$ via
\[
\nabla_M(s) = \partial_{M,\mathbb\Delta}(s) \otimes \mathrm d_{q^p}q. \qedhere
\]
\end{proof}

According to lemma \ref{simpq}, the condition of definition \ref{qconp} boils down to
\[
\forall s \in M, \quad \nabla_M(qs) = s\otimes \mathrm (p)_q\mathrm d_{q^p}q +q^{p+1}\nabla_M(s).
\]
Note also that
\[
\mathrm R\Gamma_{\mathrm{dR},\mathbb\Delta}(M) \simeq [M \overset {\nabla_M} \longrightarrow M \otimes_R \Omega_{q^p}]
\]
is the (cohomology of the) de Rham complex -- as it should.

We shall also consider a logarithmic version of $\Omega_{q^p}$:

%%%%%%%%%
\begin{dfn}
The \emph{module of $\mathbb\Delta$-differentials} of $R/W$ is
\[
\Omega_{R/W,\mathbb\Delta} = R[\omega]\omega/R[\omega]\omega^{(2)_{\mathbb\Delta}}
\]
(or $\Omega_{\mathbb\Delta}$ for short).
\end{dfn}

Again, this is the kernel of the augmentation map $R[\omega]/\omega^{(2)_{\mathbb\Delta}} \to R$.
There exists a canonical injective map $\Omega_{q^p} \hookrightarrow \Omega_{\mathbb\Delta}$ sending the class of $\xi$ to the class of $(p)_q\omega$ and the $q^p$-derivation
\[
R \overset {d_{q^p}} \longrightarrow \Omega_{q^p} \hookrightarrow \Omega_{\mathbb\Delta}
\]
(that we shall still denote by $\mathrm d_{q^p}$) is identical to the map induced by $\theta^{(\infty)}_{1,\mathbb\Delta} - \iota^{(\infty)}_{1,\mathbb\Delta}$.
On the other hand, we may consider the \emph{Bockstein\footnote{There is a twist here in the sense that the natural isomorphism is $\Omega_{q^p} \simeq (p)_q\Omega_{\mathbb\Delta}$} isomorphism} $B : \Omega_{q^p} \simeq \Omega_{\mathbb\Delta}$ sending the class of $\xi$ to the class of $\omega$, as well as and the composite map
\[
\mathrm d_{\mathbb\Delta} : R \overset {d_{q^p}} \longrightarrow \Omega_{q^p} \overset B \simeq \Omega_{\mathbb\Delta},
\]
so that $\mathrm d_{q^p} = (p)_q\mathrm d_{\mathbb\Delta}$.
In particular, $ \Omega_{\mathbb\Delta}$ is a free $R$-module of rank one on $\mathrm d_{\mathbb\Delta}q$.

\begin{prop}
The category of $\nabla_{\mathbb\Delta}$-modules on $R$ is isomorphic to the category of $R$-modules $M$ endowed with a $W$-linear map $\nabla_M : M \to M \otimes_R \Omega_{\mathbb\Delta}$ such that
\[
\forall \alpha \in R, \forall s \in M, \quad \nabla_M(\alpha s) = s\otimes \mathrm d_{\mathbb\Delta}(\alpha) +\gamma(\alpha) \nabla_M(s).
\]
\end{prop}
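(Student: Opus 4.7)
The strategy is to transport Proposition~\ref{miceq} along the Bockstein isomorphism $B:\Omega_{q^p}\overset\simeq\to\Omega_{\mathbb\Delta}$ introduced just before the statement. This reduces everything to bookkeeping: no genuinely new content beyond Definition~\ref{qconp} and Proposition~\ref{miceq} is required.

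By Proposition~\ref{miceq}, giving a $\nabla_{\mathbb\Delta}$-module structure on $M$ is equivalent to giving a $\mathbb\Delta$-connection $\nabla_M:M\to M\otimes_R\Omega_{q^p}$ in the sense of Definition~\ref{qconp}, and the associated $\mathbb\Delta$-derivation is recovered from the decomposition $\nabla_M(s)=\partial_{M,\mathbb\Delta}(s)\otimes d_{q^p}q$. I would then define the candidate functor by sending $M$ equipped with $\partial_{M,\mathbb\Delta}$ to the same $R$-module equipped with
\[
\widetilde\nabla_M:M\to M\otimes_R\Omega_{\mathbb\Delta},\qquad \widetilde\nabla_M(s):=\partial_{M,\mathbb\Delta}(s)\otimes d_{\mathbb\Delta}q.
\]

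The inverse functor is immediate because $\Omega_{\mathbb\Delta}$ is a free $R$-module of rank one on $d_{\mathbb\Delta}q$: any $W$-linear map $\widetilde\nabla_M$ admits a unique decomposition of the above form, which recovers the operator $\partial_{M,\mathbb\Delta}$. The Leibniz rule of the present statement then translates, after factoring out $d_{\mathbb\Delta}q$, into the identity
\[
\partial_{M,\mathbb\Delta}(\alpha s)=\partial_{\mathbb\Delta}(\alpha)s+\gamma(\alpha)\partial_{M,\mathbb\Delta}(s)
\]
characterizing a $\mathbb\Delta$-derivation (Proposition~\ref{acevd}), once one reads $d_{\mathbb\Delta}(\alpha)$ as $\partial_{\mathbb\Delta}(\alpha)d_{\mathbb\Delta}q$ inside $\Omega_{\mathbb\Delta}$, consistent with the normalization $d_{q^p}=(p)_q d_{\mathbb\Delta}$ which absorbs the factor $(p)_q$ appearing in Definition~\ref{qconp} into the log-differential $d_{\mathbb\Delta}$.

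The only step of substance is this Leibniz verification; naturality of $B$ and $R$-linearity ensure that horizontal morphisms go to horizontal morphisms in both directions. The main (mild) obstacle is precisely this bookkeeping with the $(p)_q$-rescaling built into the Bockstein: once one has carefully identified $d_{\mathbb\Delta}(\alpha)$ with the logarithmic $1$-form $\partial_{\mathbb\Delta}(\alpha)\,d_{\mathbb\Delta}q$ attached to the $\mathbb\Delta$-derivation $\partial_{\mathbb\Delta}$, the argument is essentially formal and entirely parallel to the proof of Proposition~\ref{miceq}.
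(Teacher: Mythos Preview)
Your proposal is correct and takes essentially the same approach as the paper: the paper's proof is the single line ``According to proposition~\ref{miceq}, the equivalence is given by $\nabla_M(s)=\partial_{M,\mathbb\Delta}(s)\otimes\mathrm d_{\mathbb\Delta}q$'', which is precisely the transport-along-Bockstein argument you have spelled out in detail.
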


\begin{proof}
According to proposition \ref{miceq}, the equivalence is given by
\[
\nabla_M(s) = \partial_{M,\mathbb\Delta}(s) \otimes \mathrm d_{\mathbb\Delta}q. \qedhere
\]
\end{proof}

Be careful that this equivalence is not so natural in the sense that we use Bockstein isomorphism.

There exists yet another equivalent notion that needs to be introduced:

%%%%%%%%%%%%%%%%%
\begin{dfn}
A \emph{$\mathbb\Delta$-Taylor map of order $1$} on an $R$-module $M$ is a semilinear section ($R$-linear for the right structure)
\[
\theta_{M,1} : M \to M \otimes_R R[\omega]/\omega^{(2)_{\mathbb\Delta}}
\]
of the augmentation map.
\end{dfn}

The category of $R$-modules endowed with a $\mathbb\Delta$-Taylor map of order $1$ is defined in the obvious way.
The next statement is the first step in the process of lifting a $\mathbb\Delta$-derivation to what we shall call a $\mathbb\Delta$-Taylor map of infinite order:

%%%%%%%%%%%%%
\begin{lem} \label{Taylcon}
The category of $\nabla_{\mathbb\Delta}$-modules on $R$ is isomorphic to the category of $R$-modules endowed with a $\mathbb\Delta$-Taylor map of order $1$.
\end{lem}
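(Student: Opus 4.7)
\textbf{Plan of proof for Lemma \ref{Taylcon}.}
The plan is to unfold the definition of a $\mathbb\Delta$-Taylor map of order one into its two pieces (the ``constant'' part, which is forced by the section condition, and the ``$\omega$-part'', which will turn out to be a $\mathbb\Delta$-derivation) and then to invoke proposition \ref{acevd}. Concretely, since $R[\omega]/\omega^{(2)_{\mathbb\Delta}}$ is free as a left $R$-module on the basis $(1,\omega)$, tensoring with $M$ (via the left structure) yields
\[
M \otimes_R R[\omega]/\omega^{(2)_{\mathbb\Delta}} \simeq M \oplus (M \otimes_R \Omega_{\mathbb\Delta}),
\]
so any section of the augmentation is of the form
\[
\theta_{M,1}(s) = s\otimes 1 + \partial_{M,\mathbb\Delta}(s)\otimes \omega
\]
for some \emph{additive} (in fact $W$-linear, since the two $R$-structures agree on $W$) map $\partial_{M,\mathbb\Delta}: M \to M$. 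This already gives a bijective correspondence between set-theoretic sections of augmentation which are $W$-linear and $W$-linear endomorphisms of $M$.

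The next step is to translate right $R$-linearity of $\theta_{M,1}$ into an algebraic identity on $\partial_{M,\mathbb\Delta}$. The right $R$-action on $R[\omega]/\omega^{(2)_{\mathbb\Delta}}$ is given by $\theta^{(\infty)}_{1,\mathbb\Delta}(\alpha) = \alpha + \partial_{\mathbb\Delta}(\alpha)\omega$, and, using the relation $\omega^2 = (q^2-q)\omega$ in $R[\omega]/\omega^{(2)_{\mathbb\Delta}}$, one computes
\[
\omega\cdot\theta^{(\infty)}_{1,\mathbb\Delta}(\alpha) = \alpha\omega + (q^2-q)\partial_{\mathbb\Delta}(\alpha)\omega = \gamma(\alpha)\omega,
\]
by the last identity of proposition \ref{basder}. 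Comparing the $\omega$-coefficients of $\theta_{M,1}(\alpha s)$ and $\theta_{M,1}(s)\cdot \alpha$ then yields that right $R$-linearity is equivalent to
\[
\partial_{M,\mathbb\Delta}(\alpha s) = \partial_{\mathbb\Delta}(\alpha)\, s + \gamma(\alpha)\,\partial_{M,\mathbb\Delta}(s),
\]
which is precisely the defining property of a $\mathbb\Delta$-derivation on $M$ with respect to the generator $\partial_{q^p}$ of $T_{q^p}$ (cf.\ definition \ref{qder1}, using $\partial_{\mathbb\Delta} = (p)_q\partial_{q^p}$).

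Combining these two points, one obtains a bijection between $\mathbb\Delta$-Taylor maps of order $1$ on $M$ and $\mathbb\Delta$-derivations on $M$, and proposition \ref{acevd} converts the latter into a $\nabla_{\mathbb\Delta}$-module structure on $M$. Finally, a morphism $f : M \to M'$ in either category is an $R$-linear map commuting with the corresponding extra datum, and the two compatibility conditions (horizontality on one side, compatibility with Taylor maps on the other) translate into one another under the above correspondence by reading off the $\omega$-component, so the bijection is in fact an isomorphism of categories. The only point that requires some care is the computation of $\omega\cdot\theta^{(\infty)}_{1,\mathbb\Delta}(\alpha)$ inside $R[\omega]/\omega^{(2)_{\mathbb\Delta}}$, because the $R$-bimodule structure is nonsymmetric, but once the identity $\omega\cdot\alpha = \gamma(\alpha)\omega$ is established the rest is formal.
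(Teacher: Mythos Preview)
Your proof is correct and follows essentially the same approach as the paper's: write $\theta_{M,1}(s) = s\otimes 1 + \partial_{M,\mathbb\Delta}(s)\otimes\omega$, then translate semilinearity into the $\mathbb\Delta$-derivation identity via the computation $\omega\cdot\theta(\alpha) = \gamma(\alpha)\omega$ in $R[\omega]/\omega^{(2)_{\mathbb\Delta}}$. The only cosmetic difference is that the paper phrases the intermediate object as a $\mathbb\Delta$-connection $\nabla_M$ valued in $\Omega_{\mathbb\Delta}$ and invokes proposition \ref{miceq}, citing lemma \ref{congmod} for the identity $\theta_R(\alpha)\omega \equiv \gamma(\alpha)\omega$, whereas you work directly with the endomorphism $\partial_{M,\mathbb\Delta}$, invoke proposition \ref{acevd}, and derive the same identity from $\omega^2 = (q^2-q)\omega$ together with proposition \ref{basder}.4.
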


\begin{proof}
We shall rely on proposition \ref{miceq}.
It is equivalent to give a $W$-linear map $\nabla_M : M \to M \otimes_R \Omega_{\mathbb\Delta}$ or a $W$-linear section $\theta_{M,1} : M \to M \otimes_R R[\omega]/\omega^{(2)_{\mathbb\Delta}}$ of the augmentation map.
This equivalence is given by
\[
\forall s \in M, \quad \theta_{M,1}(s) = s \otimes 1 + \nabla_M(s).
\]
Moreover, $\nabla_M$ defines a $\mathbb\Delta$-connection if and only if $\theta_{M,1}$ is semilinear.
More precisely, since $\theta_{R}$ acts as $\gamma$ on $ \Omega_{\mathbb\Delta}$ (see lemma \ref{congmod} below for example), we have for $\alpha \in R$ and $s \in M$,
\begin{linenomath}
\begin{align*}
\theta_{M,1}(\alpha s) = \theta_{R}(\alpha) \theta_{M,1}(s) &\Leftrightarrow \alpha s \otimes 1 + \nabla_M(\alpha s) = \theta_{R}(\alpha) (s \otimes 1 + \nabla_M(s))
\\ &\Leftrightarrow \nabla_M(\alpha s) = s \otimes \theta_{R}(\alpha) - s \otimes \alpha + \gamma(\alpha) \nabla_M(s)
\\ &\Leftrightarrow \nabla_M(\alpha s) = s \otimes \mathrm d_{q^p}(\alpha) + \gamma (\alpha) \nabla_M(s). \qedhere
\end{align*}
\end{linenomath}
\end{proof}

To summarize, it is equivalent to give an action on an $R$-module $M$ by $\mathbb\Delta$-derivations, a $\mathbb\Delta$-connection (with both flavors) or a $\mathbb\Delta$-Taylor map of order $1$.
Moreover, they all amount to giving a continuous $W$-linear map $\partial_{M,\mathbb\Delta} : M \to M$ such that
\[
\forall s \in M, \quad \partial_{M,\mathbb\Delta}(qs) = (p)_q\, s+q^{p+1}\partial_{M,\mathbb\Delta}(s).
\]
Moreover, these equivalences are compatible with the corresponding cohomology theories.

Recall that we introduced in remark \ref{divpow}.4 and definition \ref{defdtw} the twisted divided polynomial rings $R\langle \xi \rangle_{q^p}$ and $R\langle \omega \rangle_{\mathbb\Delta}$ and from remark \ref{divpow}.5 that there exists again a blowing up map
\[
R\langle \xi \rangle_{q^p} \to R\langle \omega \rangle_{\mathbb\Delta}, \quad \xi^{[n]_{q^p}} \mapsto (p)_q^n\omega^{\{n\}_{\mathbb\Delta}}.
\]

%%%%%%%%%%%%
\begin{dfn} \label{finTayl}
The \emph{$q^p$-Taylor map (resp.\ the $\mathbb\Delta$-Taylor map) of \emph{order $n$}} on $R$ is the composite map
\[
\theta_{n,q^p} : R \overset{\theta^{(\infty)}_{n,q^p}}\longrightarrow R[\xi]/\xi^{(n+1)_{q^p}} \to R\langle \xi \rangle_{q^p}/\xi^{[>n]_{q^p}}
\]
\[
\left(\mathrm{resp.} \quad \theta_{n,\mathbb\Delta} : R \overset{\theta_{n,q^p}}\longrightarrow R\langle \xi \rangle_{q^p}/\xi^{[>n]_{q^p}} \to R\langle \omega \rangle_{\mathbb\Delta}/\omega^{\{>n\}_{\mathbb\Delta}})\right).
\]
\end{dfn}

This is again the unique continuous $W$-linear map such that $q \mapsto q + \xi \mod \xi^{[>n]_{q^p}}$ (resp.\ $q \mapsto q + (p)_q\omega \mod \omega^{\{>n\}_{\mathbb\Delta}}$).

There exists a statement similar to lemma \ref{tetaqn}: 

%%%%%%%%%%%%%%%
\begin{prop} \label{tetaqn2}
For each $k \in \mathbb N$, there exists a unique continuous $W$-linear map $\partial^{\langle k \rangle}_{q^p}: R \to R$ (resp.\ $\partial^{\langle k \rangle}_{\mathbb\Delta} : R \to R$) such that for all $n\in \mathbb N$,
\[
\partial^{\langle k \rangle}_{q^p}(q^n) = (k)_{q^p}!{n \choose k}_{q^p}q^{n-k} \quad \left(\mathrm{resp.} \quad \partial^{\langle k \rangle}_{\mathbb\Delta}(q^n) = (p)_q^k(k)_{q^p}!{n \choose k}_{q^p}q^{n-k}\right)
\]
and we have for all $n \in \mathbb N$ and $\alpha \in R$,
\[
\theta_{n,q^p}(\alpha) = \sum_{k=0}^n \partial^{\langle k \rangle}_{q^p}(\alpha) \xi^{[k]_{q^p}} \quad \left(\mathrm{resp.} \quad \theta_{n,\mathbb\Delta}(\alpha) = \sum_{k=0}^n \partial^{\langle k \rangle}_{\mathbb\Delta}(\alpha) \omega^{\{k\}_{\mathbb\Delta}}
\right).
\]
\end{prop}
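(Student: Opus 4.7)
The plan is to bootstrap this proposition directly from lemma \ref{tetaqn}, using the fact that the finite--order Taylor maps are obtained from the infinite--level ones by composing with the canonical morphisms \eqref{polcp} relating twisted powers and twisted divided powers.

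First I would handle the $q^p$-case. By the very definition of $\theta_{n,q^p}$, it is the composite of $\theta^{(\infty)}_{n,q^p}$ with the surjection $R[\xi]/\xi^{(n+1)_{q^p}} \twoheadrightarrow R\langle \xi \rangle_{q^p}/\xi^{[>n]_{q^p}}$ induced by \eqref{polcp}, which sends $\xi^{(k)_{q^p}} \mapsto (k)_{q^p}!\,\xi^{[k]_{q^p}}$ for $0 \leq k \leq n$. Applying this to the expansion from lemma \ref{tetaqn} gives
\[
\theta_{n,q^p}(\alpha) \;=\; \sum_{k=0}^{n} (k)_{q^p}!\,\partial^{[k]}_{q^p}(\alpha)\,\xi^{[k]_{q^p}},
\]
so I would simply define $\partial^{\langle k \rangle}_{q^p} := (k)_{q^p}!\,\partial^{[k]}_{q^p}$. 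Continuity and $W$-linearity are inherited from $\partial^{[k]}_{q^p}$, and the explicit formula on $q^n$ is then immediate from \eqref{dform}. Uniqueness is clear because $(\xi^{[k]_{q^p}})_{0\leq k\leq n}$ is an $R$-basis of $R\langle \xi \rangle_{q^p}/\xi^{[>n]_{q^p}}$ for the action on the right, so the coefficients of any image in this basis are uniquely determined.

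The $\mathbb\Delta$-case is obtained by the same formal procedure, using the second arrow of definition \ref{finTayl}, which is the blowing up
\[
R\langle \xi \rangle_{q^p}/\xi^{[>n]_{q^p}} \longrightarrow R\langle \omega \rangle_{\mathbb\Delta}/\omega^{\{>n\}_{\mathbb\Delta}}, \qquad \xi^{[k]_{q^p}} \mapsto (p)_q^{k}\,\omega^{\{k\}_{\mathbb\Delta}}
\]
from remark \ref{divpow}.5. Setting $\partial^{\langle k \rangle}_{\mathbb\Delta} := (p)_q^{k}\,\partial^{\langle k \rangle}_{q^p}$ and substituting yields
\[
\theta_{n,\mathbb\Delta}(\alpha) \;=\; \sum_{k=0}^{n} \partial^{\langle k \rangle}_{\mathbb\Delta}(\alpha)\,\omega^{\{k\}_{\mathbb\Delta}},
\]
and the explicit value $\partial^{\langle k \rangle}_{\mathbb\Delta}(q^n) = (p)_q^k (k)_{q^p}!\binom{n}{k}_{q^p} q^{n-k}$ follows. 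Uniqueness again comes from the fact that $(\omega^{\{k\}_{\mathbb\Delta}})_{0\leq k\leq n}$ is an $R$-basis of the target.

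There is essentially no obstacle here: the statement is a formal consequence of lemma \ref{tetaqn} combined with the change-of-basis between $\xi^{(k)_{q^p}}$ and $(k)_{q^p}!\,\xi^{[k]_{q^p}}$ (and subsequently between $\xi^{[k]_{q^p}}$ and $(p)_q^k\,\omega^{\{k\}_{\mathbb\Delta}}$). The only point requiring a moment's attention is checking that both target quotients $R\langle \xi \rangle_{q^p}/\xi^{[>n]_{q^p}}$ and $R\langle \omega \rangle_{\mathbb\Delta}/\omega^{\{>n\}_{\mathbb\Delta}}$ are \emph{free} $R$-modules on the stated bases (for the right $R$-structure), which is already recorded in the discussion following \eqref{polcp} and in definition \ref{defdtw}; this is what legitimizes both the existence of the decomposition and its uniqueness.
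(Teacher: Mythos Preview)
Your argument is correct and is precisely the deduction the paper intends when it says the result ``is easily deduced from lemma \ref{tetaqn}'': one simply passes through the map \eqref{polcp} (picking up the factor $(k)_{q^p}!$) and then through the blowing up of remark \ref{divpow}.5 (picking up $(p)_q^k$). One small slip: the uniqueness of the coefficients uses that $(\xi^{[k]_{q^p}})_k$ and $(\omega^{\{k\}_{\mathbb\Delta}})_k$ are $R$-bases for the \emph{left} (canonical) structure---which is exactly what the references you cite after \eqref{polcp} and in definition \ref{defdtw} record---not for the right structure, whose freeness is only established later via lemma \ref{congmod}.
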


\begin{proof}
This is easily deduced from lemma \ref{tetaqn}.
\end{proof}

%%%%%%%%%%%%%%%%%%%%%
\begin{rmks}
\begin{enumerate}
\item
We have
\[
\partial^{\langle 0 \rangle}_{q^p} = \partial^{\langle 0 \rangle}_{\mathbb\Delta} = \mathrm{Id}_R, \quad \partial^{\langle 1 \rangle}_{q^p} = \partial_{q^p} \quad \mathrm{and} \quad \partial^{\langle 1 \rangle}_{\mathbb\Delta} = \partial_{\mathbb\Delta}.
\]
Also, for all $k \in \mathbb N$, $\partial^{\langle k \rangle}_{\mathbb\Delta} = (p)_q^k\partial^{\langle k \rangle}_{q^p}$ so that, again, one is a logarithmic version of the other.
\item
Note however that, when $n \geq 2$, we have
\[
\partial^{\langle 2 \rangle}_{q^p}(q^n) = \partial^2_{q^p}(q^n) - q^{(p+1)(n-1)} \partial_{q^p}((n)_{q^p})
\]
and
\[
\partial^{\langle 2 \rangle}_{\mathbb\Delta}(q^n)= \partial^2_{\mathbb\Delta}(q^n) - q^{(p+1)(n-1)}\partial_{\mathbb\Delta}((pn)_{q}).
\]
It is therefore important not to confuse the $k$th higher derivative with the $k$th power of the derivation (unlike in the untwisted or relative case).
\end{enumerate}
\end{rmks}

Since
\[
R[\xi]/\xi^{(2)_{q^p}} \simeq R\langle \xi \rangle_{q^p}/\xi^{[>1]_{q^p}} \quad \left(\mathrm{resp.} \quad R[\omega]/\omega^{(2)_{\mathbb\Delta}} \simeq R\langle \omega \rangle_{q^p}/\omega^{\{>1\}_{\mathbb\Delta}}\right),
\]
we can interpret $\Omega_{q^p}$ (resp.\ $\Omega_{\mathbb\Delta}$) as the kernel of the augmentation map $R\langle \xi \rangle_{q^p}/\xi^{[>1]_{q^p}} \to R$ (resp.\ $R\langle \omega \rangle_{\mathbb\Delta}/\omega^{\{>1\}_{\mathbb\Delta}} \to R$).

The action of these Taylor maps on twisted divided powers is quite explicit (recall that $g_{pn+1}$ denotes the unique endomorphism of $R$ such that $g_{pn+1}(q) = q^{pn+1}$) :

%%%%%%%%%%%%%%%
\begin{lem} \label{congmod}
For all $n \in \mathbb N$ and $\alpha \in R$, we have
\[
\theta_{n,q^p}(\alpha)\xi^{[n]_{q^p}} \equiv g_{pn+1}(\alpha)\xi^{[n]_{q^p}} \mod \xi^{[n+1]_{q^p}}
\]
and
\[
\theta_{n,\mathbb\Delta}(\alpha)\omega^{\{n\}_{\mathbb\Delta}} \equiv g_{pn+1}(\alpha)\omega^{\{n\}_{\mathbb\Delta}} \mod \omega^{\{n+1\}_{\mathbb\Delta}}.
\]
\end{lem}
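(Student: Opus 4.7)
The plan is to reduce both congruences to the single-variable base case $\alpha=q$ (which is already available from the preceding results) and then bootstrap to arbitrary $\alpha\in R$ via the multiplicativity of the Taylor maps, $W$-linearity, and continuity. The key observation is that $\theta_{n,q^p}$ and $\theta_{n,\mathbb\Delta}$ are continuous $W$-algebra homomorphisms (being compositions of the $W$-algebra homomorphism $\theta^{(\infty)}$ with ring quotients and with the natural ring map $R[\xi]/\xi^{(n+1)_{q^p}}\to R\langle\xi\rangle_{q^p}/\xi^{[>n]_{q^p}}$, respectively its $\mathbb\Delta$-analogue), and they satisfy $\theta_{n,q^p}(q)=q+\xi$ and $\theta_{n,\mathbb\Delta}(q)=q+(p)_q\omega$. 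On the other hand, $g_{pn+1}$ is the continuous $W$-algebra endomorphism of $R$ determined by $g_{pn+1}(q)=q^{pn+1}$. The base case $\alpha=q$ of the first congruence is exactly the content of equation \eqref{thetsig} specialized at $x=q$, and the base case of the second congruence is lemma \ref{estcong} specialized at $x=q$; both give
\[
(q+\xi)\xi^{[n]_{q^p}}\equiv q^{pn+1}\xi^{[n]_{q^p}}\quad \text{resp.}\quad (q+(p)_q\omega)\omega^{\{n\}_{\mathbb\Delta}}\equiv q^{pn+1}\omega^{\{n\}_{\mathbb\Delta}}.
\]

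From here I would proceed by induction on $m$ to prove the congruences for $\alpha=q^m$. The inductive step uses multiplicativity to write $\theta_{n,q^p}(q^{m+1})=(q+\xi)\,\theta_{n,q^p}(q^m)$ and then multiplies the inductive hypothesis $\theta_{n,q^p}(q^m)\xi^{[n]_{q^p}}\equiv q^{(pn+1)m}\xi^{[n]_{q^p}}$ on the left by $(q+\xi)$. The crucial point is that the congruence is interpreted modulo the ideal $\xi^{[>n]_{q^p}}$ (the author's notation $\bmod\ \xi^{[n+1]_{q^p}}$ being most naturally read in this sense, since this is the ideal modulo which $\theta_{n,q^p}(\alpha)\xi^{[n]_{q^p}}$ is unambiguously defined), so multiplication by $(q+\xi)$ preserves it; applying the base case once more then yields the $(m+1)$-case. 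The $\mathbb\Delta$-version is identical, with $(q+(p)_q\omega)$ and the ideal $\omega^{\{>n\}_{\mathbb\Delta}}$ in place of $(q+\xi)$ and $\xi^{[>n]_{q^p}}$.

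Finally, by $W$-linearity of both $\theta_{n,?}$ and $g_{pn+1}$ (and the fact that both sides are additive in $\alpha$), the congruences extend to $W$-polynomial combinations of powers of $q$, i.e.\ to $\alpha\in W[q]$. Since $W[q]=W[q-1]$ is dense in $R=W[[q-1]]$ for the $\mathfrak m_W+(q-1)$-adic topology, and all maps involved are continuous, the congruences pass to arbitrary $\alpha\in R$. There is no real obstacle here beyond fixing the interpretation of the modulus: the proof is a routine bootstrap from the single identity $(q+\xi)\xi^{[n]_{q^p}}\equiv q^{pn+1}\xi^{[n]_{q^p}}$ (resp.\ its $\mathbb\Delta$-analogue) to arbitrary $\alpha$, powered entirely by multiplicativity and continuity of the Taylor map.
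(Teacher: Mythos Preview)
Your proof is correct and follows essentially the same approach as the paper: the paper simply remarks that the first congruence ``reduces to formula \eqref{thetsig} since both $\theta_{n,q^p}$ and $g_{pn+1}$ are continuous morphisms of $W$-algebras'' and obtains the second either by blowing up or directly from lemma \ref{estcong}. Your induction on powers of $q$, followed by $W$-linearity and continuity, is exactly an unpacking of this one-line reduction.
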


\begin{proof}
The second assertion follows by blowing up the first one which reduces to formula \eqref{thetsig} since both $\theta_{n,q^p}$ and $g_{pn+1}$ are continuous morphisms of $W$-algebras.
One may also derive the second assertion directly from lemma \ref{estcong}.
\end{proof}

Lemma \ref{congmod} shows a tight relation between twisted Taylor maps and the automorphisms $g_r$ for $r \equiv 1 \mod p$.
As a consequence of the lemma, the usual basis of a ring of twisted divided polynomials is also a basis for the \emph{right} structure.
In particular, the Taylor maps of finite order are finite free:

%%%%%%%%%%%%%%
\begin{prop}
For all $n \in \mathbb N$, the $R$-algebra $R\langle \xi \rangle_{q^p}/\xi^{[>n]_{q^p}}$ (resp.\ $R\langle \omega \rangle_{{\mathbb\Delta}}/ {\omega^{\{>n\}_{{\mathbb\Delta}}}}$) is free on $\xi^{[n]_{q^p}}$ (resp.\ $\omega^{\{k\}_{{\mathbb\Delta}}}$) for $k \leq n$ for both the left and right structures.
\end{prop}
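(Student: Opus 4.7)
The plan is to treat the left and right structures separately, with the left case being essentially tautological. By construction, $R\langle\omega\rangle_{\mathbb\Delta}$ is a free left $R$-module on the family $(\omega^{\{k\}_{\mathbb\Delta}})_{k \geq 0}$ and $\omega^{\{>n\}_{\mathbb\Delta}}$ is the free sub-$R$-module cut out by $k > n$; the quotient
\[
A_n := R\langle\omega\rangle_{\mathbb\Delta}/\omega^{\{>n\}_{\mathbb\Delta}}
\]
is then free of rank $n+1$ on $(\omega^{\{k\}_{\mathbb\Delta}})_{k \leq n}$ for the left $R$-structure. The case of $R\langle\xi\rangle_{q^p}/\xi^{[>n]_{q^p}}$ is handled identically, or by transport along the blow-up $\xi \mapsto (p)_q\omega$.

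For the right structure, I would introduce the manifestly right-$R$-linear map
\[
\Phi : R^{\oplus(n+1)} \longrightarrow A_n, \qquad (b_k)_{k \leq n} \longmapsto \sum_{k=0}^{n} \omega^{\{k\}_{\mathbb\Delta}}\,\theta_{n,\mathbb\Delta}(b_k),
\]
where $R^{\oplus(n+1)}$ carries its standard right $R$-structure and the right structure on $A_n$ comes from the Taylor map $\theta_{n,\mathbb\Delta}$. Bijectivity of $\Phi$ is exactly what it means for $(\omega^{\{k\}_{\mathbb\Delta}})_{k \leq n}$ to be a right $R$-basis. Composing $\Phi$ with the left-basis trivialization $A_n \simeq R^{\oplus(n+1)}$ produces a $W$-linear endomorphism $\tilde\Phi$, and since the product $\omega^{\{j\}_{\mathbb\Delta}}\omega^{\{k\}_{\mathbb\Delta}}$ only involves basis elements of index $\geq \max(j,k)$, this $\tilde\Phi$ is triangular in the ordering $0 < 1 < \cdots < n$. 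Lemma \ref{congmod} identifies the diagonal entries with the continuous endomorphisms $g_{pk+1}:R\to R$, reducing the proof to the invertibility of each $g_{pk+1}$ and a routine triangular solve.

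The final step is then to observe that $g_{pk+1}$ is a bijection: $pk+1$ is coprime to $p$, hence invertible in $\mathbb Z_p$, so in the setting where $p$ is topologically nilpotent in $W$ the continuous endomorphism $g_{1/(pk+1)}$ of $R$ is well-defined and two-sided inverse to $g_{pk+1}$. This last point is what I would expect to be the main subtlety in full generality: in the purely adic framework of section \ref{secabs}, where $W$ need not be $p$-adically complete, one might need either an extra assumption on $W$ or to replace direct inversion by a Nakayama-type argument exploiting that $g_{pk+1}$ reduces to the identity on $W = R/(q-1)$ together with completeness of $R$ for the $\mathfrak m_W + (q-1)$-adic topology.
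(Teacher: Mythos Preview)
Your approach coincides with the paper's: both dismiss the left structure as tautological and handle the right structure via the filtration by the ideals $\omega^{\{>k\}_{\mathbb\Delta}}$, invoking lemma~\ref{congmod} to identify the associated graded pieces (your diagonal entries) with $R$ acted on through $g_{pk+1}$, and then conclude by induction (your triangular solve). The paper's proof is terser but identical in substance, and your treatment of the $p$-adically complete case is correct.

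You are also right to isolate the invertibility of $g_{pk+1}$ as the one genuine issue. However, your tentative Nakayama-type rescue for a general adic $W$ cannot succeed, because the statement is actually \emph{false} in that generality. Take $W = \mathbb F_5$ with the discrete topology, $p = 4$ and $n = 1$: then $g_{p+1}=g_5$ sends $q-1$ to $(q-1)^5$, so $g_5$ is the fifth-power substitution on $\mathbb F_5[[q-1]]$ and is not surjective; one then checks directly (via your map $\Phi$) that $(q-1)\omega \in A_1$ lies outside the right-$R$-span of $1$ and $\omega$. The proposition therefore tacitly requires $pk+1 \in W^\times$ for $0 \le k \le n$, which holds as soon as $p$ lies in the Jacobson radical of $W$ --- in particular in the setting of section~\ref{padsit} onwards, the only one where the result is used. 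So of the two alternatives you offer, ``extra assumption on $W$'' is the correct resolution, not the Nakayama argument.
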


\begin{proof}
Only the case of the right structure needs a proof but it follows from lemma \ref{congmod} that the $R$-module $\xi^{[\geq n]_{q^p}}/\xi^{[>n]_{q^p}} $ (resp.\ ${\omega^{\{\geq n\}_{{\mathbb\Delta}}}}/ {\omega^{\{>n\}_{{\mathbb\Delta}}}}$) is free of rank one on $\xi^{[n]_{q^p}} $ (resp.\ $\omega^{\{n\}_{{\mathbb\Delta}}}$) for the right structure also.
Our assertion is then obtained by induction.
\end{proof}

We end this section with some preparation for section \ref{stratder}.
We will systematically extend the endomorphism $g_r$ of $R$ (given by $g_r(q) = q^r$) to the polynomial ring $R[\xi]$ by simply setting
\[
g_r(q+\xi) = q + \xi
\]
so that $g_r(\xi) = \xi + q - q^r$.
We may notice that
\begin{equation} \label{sigmaxi}
g_{kp+1}(\xi) = \xi + q - q^{kp+1}
\end{equation}
so that, with the notations of section \ref{twispow} and considering the case $x=q$, we have
\[
\xi^{(n)_{q^p}} = \prod_{k=0}^{n-1}g_{kp+1}(\xi).
\]
The $R$-linear endomorphism $g_r'$ of the polynomial ring $R[\xi]$ given by
\[
g_r'(q+\xi) = (q+\xi)^r
\]
will also show up.
The maps $g_r$ and $g_r'$ on $R[\xi]$ are exchanged by the flip map that interchanges $q$ and $q + \xi$.

The endomorphism $g_r$ of $R[\xi]$ extends uniquely to a continuous endomorphism of the ring $R[[\xi]]$.
This follows from the fact that
\[
g_r(\xi) = \xi - (r-1)_q(q^2-q) \in (q-1, \xi).
\]
Alternatively, the endomorphism $g_r$ of $R[[\xi]]$ corresponds to the unique continuous endomorphism of the ring $W[[q_{0}-1, q_{1}-1]] $ such that $q_0 \mapsto q_0^{r}$ and $q_1 \mapsto q_1$.
Or better, it corresponds to the endomorphism $g_r \widehat \otimes_W \mathrm{Id}_R$ of $R \widehat \otimes_{W} R$.

All the above applies in particular to $\gamma := g_{p+1}$ and we shall then write $\gamma' := g'_{p+1}$.
The endomorphism $\gamma$ of $R[\xi]$ extends naturally to an endomorphism of $R[\omega]$ via blowing up $\xi \mapsto (p)_q\omega$.
This is shown as follows.
From equality \eqref{sigmaxi}, we get
\begin{equation} \label{sigomav}
(p)_{q^{p+1}}\gamma(\omega) = (p)_q(\omega -q^2+q),
\end{equation}
and therefore
\begin{equation} \label{sigom}
\gamma(\omega) = \frac 1\lambda( \omega -q^2+q).
\end{equation}
with $\lambda$ as in formula \eqref{pqplun}.

It is much harder to extend $\gamma$ to $R\langle\xi\rangle_{q^p}$ or $R\langle \omega \rangle_{\mathbb\Delta}$ (but see proposition \ref{sigcont} below).

%%%%%%%%%%%%%%%%%%
\section{The $p$-adic situation} \label{padsit}

We specialize now our situation to the case of a $p$-adic base ring where $p$ is a fixed prime.
With this extra requirement, we will be able to define the notion of a $\mathbb\Delta$-Taylor map of infinite order.
This is necessary in order to develop absolute hyper calculus in the next section.

So, we fix now a \emph{prime} $p$ and a $p$-adically complete ring $W$, and we set $R := W[[q-1]]$.
Unless otherwise specified, all $R$-modules (and in particular $R$-algebras) are endowed with the $(p,q-1)$-adic topology and completion is meant with respect to this topology.

In this situation, we have the following\footnote{Since $p$ is now a fixed prime, we use the letter $r$ for a generic integer that will soon be equal to $p$.}:

%%%%%%%%%%%%
\begin{lem} \label{extqr}
For all $r \in \mathbb N$, there exists a canonical map
\[
R[[\xi ]] \to \widehat{R\langle \xi \rangle}_{q^r}.
\]
\end{lem}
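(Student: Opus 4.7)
The plan is to construct the desired map as the continuous extension of the natural $R$-algebra morphism $\phi : R[\xi] \to R\langle \xi \rangle_{q^r}$ sending $\xi$ to $\xi^{[1]_{q^r}}$. Since $R[[\xi]]$ is the $(p,q-1,\xi)$-adic completion of $R[\xi]$ and $\widehat{R\langle \xi \rangle}_{q^r}$ is the $(p,q-1)$-adic completion of $R\langle \xi \rangle_{q^r}$, the entire task reduces to showing that $\phi$ is continuous when source and target carry these topologies; equivalently, that the image of $\xi^n$ in $R\langle \xi \rangle_{q^r}$ tends to $0$ in the $(p,q-1)$-adic topology as $n \to \infty$.

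For this, I would invoke the analogue for $(\xi,q^r)$ of proposition \ref{ernst} (same proof via proposition \ref{FunStir}), together with the natural comparison \eqref{polcp} that sends $\xi^{(k)_{q^r}}$ to $(k)_{q^r}!\,\xi^{[k]_{q^r}}$, to write in $R\langle \xi \rangle_{q^r}$:
\[
\xi^n \;=\; \sum_{k=1}^{n} S_{q^r}(n,k)\,(q^r-1)^{n-k}\, q^{n-k}\,(k)_{q^r}!\;\xi^{[k]_{q^r}}.
\]
The key auxiliary bound is that $(k)_{q^r}! \in (p,q-1)^{\lfloor k/p\rfloor}$: indeed $(jp)_{q^r} \equiv jp \pmod{q-1}$ lies in $(p,q-1)$ for every $j \geq 1$, and the product $(k)_{q^r}! = \prod_{j=1}^{k}(j)_{q^r}$ contains at least $\lfloor k/p\rfloor$ such factors. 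Coupled with $(q^r-1)^{n-k} \in (q-1)^{n-k}$, each coefficient of $\xi^{[k]_{q^r}}$ above lies in $(p,q-1)^{(n-k)+\lfloor k/p\rfloor}$.

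Since $(n-k)+\lfloor k/p\rfloor \geq \lfloor n/p\rfloor$ uniformly in $k \in \{1,\dots,n\}$, we conclude $\xi^n \in (p,q-1)^{\lfloor n/p\rfloor} R\langle \xi \rangle_{q^r}$, which goes to $0$. This gives the continuity of $\phi$, and its unique continuous extension $R[[\xi]] \to \widehat{R\langle \xi \rangle}_{q^r}$ is the canonical map asserted by the lemma. The main obstacle is precisely the arithmetic estimate on the twisted factorial $(k)_{q^r}!$; once that is in hand everything else is routine bookkeeping with adic topologies and the inverse limit description of $R[[\xi]]$ provided by (the $q^r$-analogue of) lemma \ref{qlim}.
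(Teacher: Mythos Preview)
Your proof is correct and follows essentially the same approach as the paper: both arguments rest on the estimate $(k)_{q^r}!\in (p,q-1)^{\lfloor k/p\rfloor}$, which forces the image of the twisted monomials to go to zero. The only cosmetic difference is that the paper packages the conclusion via lemma~\ref{qlim} (showing directly that the image of $\xi^{(n)_{q^r}}$, namely $(n)_{q^r}!\,\xi^{[n]_{q^r}}$, tends to $0$), whereas you unwind this by expanding $\xi^n$ through the Stirling identity; since the proof of lemma~\ref{qlim} itself goes through proposition~\ref{ernst}, the two routes are equivalent.
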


\begin{proof}
Since $(p^s)_{q^r} \equiv 0 \mod (p,q-1)$ for all $s >0$, we have $(n)_{q^r}! \to 0$ when $n \to \infty$, and so does the image $(n)_{q^r}!\xi^{\{n\}_{q^r}}$ of $\xi^{(n)_{q^r}}$.
Our assertion therefore follows from lemma \ref{qlim} applied to $\xi$ (instead of $\omega$).
\end{proof}

Recall that we introduced in definition \ref{Taylfinf} the Taylor map of infinite level.

%%%%%%%%%%%%
\begin{dfn} \label{defqpt}
The \emph{$q^r$-Taylor map} (resp. \emph{$\mathbb\Delta$-Taylor map}) of $R$ is the composite map
\[
\theta_{q^r} : R \overset{\theta^{(\infty)}}\longrightarrow R[[\xi ]] \longrightarrow \widehat{R\langle \xi \rangle}_{q^r}.
\]
\[
\left(\mathrm{resp.} \quad \theta_{\mathbb\Delta} : R \overset{\theta_{q^r}}\longrightarrow \widehat{R\langle \xi \rangle}_{q^p} \longrightarrow \widehat{R\langle \omega \rangle}_{\mathbb\Delta} \right)
\]
\end{dfn}

We shall again simply write $\theta$ when there is no risk of confusion and focus on the $\mathbb\Delta$ case.
The $\mathbb\Delta$-Taylor map is the unique continuous $W$-linear map such that $q \mapsto q + (p)_q\omega$.
We recover the $\mathbb\Delta$-Taylor map of finite order $n$ of definition \ref{finTayl} as the composition
\[
\theta_{n,\mathbb\Delta} : R \overset{\theta_{\mathbb\Delta}} \longrightarrow \widehat {R\langle \omega \rangle}_{\mathbb\Delta} \twoheadrightarrow \widehat {R\langle \omega \rangle}_{\mathbb\Delta}/\widehat {\omega^{\{>n\}_{\mathbb\Delta}}} \simeq R\langle \omega \rangle_{\mathbb\Delta}/\omega^{\{>n\}_{\mathbb\Delta}}.
\]
The last isomorphism follows from the fact that $R\langle \omega \rangle_{\mathbb\Delta}/\omega^{\{>n\}_{\mathbb\Delta}}$ is a finite projective (actually free) $R$-module.

In order to also extend below the flip map, we will need the following:

%%%%%%%%%%
\begin{lem} \label{pqinv}
In $\widehat{R\langle \omega \rangle}_{\mathbb\Delta}$, we have $\theta((p)_q) = L(\omega)(p)_q$ with $L(\omega) \in \widehat{R\langle \omega \rangle}_{\mathbb\Delta}^\times$.
\end{lem}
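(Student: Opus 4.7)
The plan is to factor $\theta((p)_q)$ by brute computation and then argue that the ``correction factor'' is a unit because $\omega$ is topologically nilpotent in the completion.

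\textbf{Step 1 (explicit computation).} Since $\theta = \theta_{\mathbb\Delta}$ is a continuous ring homomorphism with $\theta(q) = q + (p)_q\omega$, I would simply expand
\[
\theta((p)_q) \;=\; \sum_{k=0}^{p-1}\bigl(q + (p)_q\omega\bigr)^k \;=\; \sum_{k=0}^{p-1}\sum_{j=0}^{k}\binom{k}{j}q^{k-j}\,(p)_q^{j}\,\omega^{j}.
\]
Regrouping by powers of $\omega$ and isolating the $j=0$ contribution (which is exactly $(p)_q$), one gets
\[
\theta((p)_q) \;=\; (p)_q \;+\; \sum_{j=1}^{p-1}(p)_q^{j}\,\omega^{j}\,S_j, \qquad S_j := \sum_{k=j}^{p-1}\binom{k}{j}q^{k-j}\in R.
\]
Factoring $(p)_q$ out produces the explicit candidate
\[
L(\omega) \;:=\; 1 + \omega\,(p)_q' + \sum_{j=2}^{p-1}(p)_q^{j-1}\,S_j\,\omega^{j}\;\in\;\widehat{R\langle\omega\rangle}_{\mathbb\Delta},
\]
(note $S_1=(p)_q'$), satisfying $\theta((p)_q) = (p)_q\cdot L(\omega)$.

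\textbf{Step 2 (the unit statement).} Writing $L(\omega) = 1 + \omega\cdot h$ with $h = S_1 + (p)_q\omega S_2 + \cdots + (p)_q^{p-2}\omega^{p-2}S_{p-1}\in\widehat{R\langle\omega\rangle}_{\mathbb\Delta}$, it suffices to show that $\omega\cdot h$ is topologically nilpotent, since then $\sum_{n\geq 0}(-\omega h)^n$ converges and inverts $L(\omega)$. Because $(p,q-1)^N$ is an ideal, this in turn reduces to showing that $\omega$ itself is topologically nilpotent in $\widehat{R\langle\omega\rangle}_{\mathbb\Delta}$ for the $(p,q-1)$-adic topology.

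\textbf{Step 3 (topological nilpotence of $\omega$ — the main obstacle).} Using proposition~\ref{ernst} and the identification $\omega^{(k)_{\mathbb\Delta}} = (k)_{q^p}!\,\omega^{\{k\}_{\mathbb\Delta}}$, expand
\[
\omega^{n} \;=\; \sum_{k=1}^{n} S_{q^p}(n,k)\,(q-1)^{n-k}q^{n-k}\,(k)_{q^p}!\;\omega^{\{k\}_{\mathbb\Delta}}.
\]
Fix $N$. For $k\leq n-N$ the factor $(q-1)^{n-k}$ already lies in $(p,q-1)^{N}$. For $k>n-N$, observe that every $(jp)_{q^p}$ ($j\geq 1$) lies in $(p,q-1)$ (it is congruent to $jp$ modulo $q-1$), so $(k)_{q^p}!$ contains at least $\lfloor k/p\rfloor$ factors in $(p,q-1)$ and thus lies in $(p,q-1)^{\lfloor k/p\rfloor}$. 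Choosing $n\geq (p+1)N$ forces $\lfloor k/p\rfloor \geq N$ for every $k>n-N$, so the whole sum lies in $(p,q-1)^{N}\,\widehat{R\langle\omega\rangle}_{\mathbb\Delta}$. Hence $\omega^{n}\to 0$.

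\textbf{Step 4 (conclusion).} Combining: $\omega h$ is topologically nilpotent, so $L(\omega)\in 1+\omega\,\widehat{R\langle\omega\rangle}_{\mathbb\Delta}$ is invertible, and the equality $\theta((p)_q)=(p)_q\,L(\omega)$ from Step~1 finishes the proof. The genuine work is Step~3; everything else is a direct unwinding of the definition of $\theta_{\mathbb\Delta}$ and the algebra of twisted divided powers.
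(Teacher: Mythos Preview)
Your proof is correct and follows essentially the same route as the paper. Your Step~1 reproduces the paper's Taylor expansion (indeed $S_j = \partial^{[j]}((p)_q)$, so your $L(\omega)$ coincides with the paper's formula), and your Step~3 is an explicit unpacking of the topological nilpotence of $\omega$ that the paper defers to Lemma~\ref{qlim} (combined with the observation $(n)_{q^p}!\to 0$ from the proof of Lemma~\ref{extqr}).
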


\begin{proof}
It follows from the usual Taylor formula that
\[
\theta((p)_q) = \sum_{k=0}^{p-1} \partial^{[k]}((p)_q) (p)_q^{k}\omega^{k}.
\]
Therefore, the equality will hold with
\begin{equation} \label{uomega}
L(\omega) = 1 + \left( \sum_{k=1}^{p-1} \partial^{[k]}((p)_q) (p)_q^{k-1}\omega^{k-1}\right)\omega.
\end{equation}

This is an invertible element because $\omega$ is topologically nilpotent thanks to lemma \ref{qlim}.
\end{proof}

In order to go further, we shall need some results from the theory of prisms of Bhatt and Scholze \cite{BhattScholze22}.
A \emph{$\delta$-pair} is a couple $(B,J)$ where $B$ is a $\delta$-ring and $J$ is an ideal in $B$.
We shall then write $\overline B := B/J$.
The $\delta$-pair is said to be bounded if $\overline B$ has bounded $p^{\infty}$-torsion.
A \emph{bounded prism} is a bounded $\delta$-pair $(B,J)$ such that $J$ is invertible, $B$ is complete for the $(p) +J$-adic topology and $p \in J+\phi(J)B$.
When $J$ is understood from the context, we may simply say that $B$ is a bounded prism.
When $J = (d)$ is principal, we just write $(B,d)$ and call $B$ \emph{oriented}.
For example, if $W$ itself is a $\delta$-ring with bounded $p^{\infty}$-torsion and we set $\delta(q)=0$, then $(R, (p)_q)$ is an oriented bounded prism.
In general, the \emph{(bounded) prismatic envelope} of a bounded $\delta$-pair $(B,J)$, if it exists, is a bounded prism which is universal for morphisms of $\delta$-pairs from $(B,J)$ to bounded prisms.

The following result is the absolute variant of \cite{GrosLeStumQuiros22b}, proposition 5.7.

%%%%%%%%%%%%%
\begin{thm} \label{prismenv}
Let $W$ be a $p$-adically complete $\delta$-ring with bounded $p^{\infty}$-torsion and $R := W[[q-1]]$.
We endow $R$ (resp.\ $R[[\xi]]$) with the unique $\delta$-structure such that $\delta(q) = 0$ (resp.\ $\delta(q) = 0$ and $\delta(q+\xi) = 0$).
Then $\left(\widehat {R\langle \omega \rangle}_{\mathbb\Delta}, (p)_q\right)$ is the (bounded) prismatic envelope of $(R[[\xi]], \xi)$.
\end{thm}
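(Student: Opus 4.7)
The plan is to follow the strategy of proposition 5.7 in \cite{GrosLeStumQuiros22b}, which handles the analogous relative situation: the present absolute statement is the degenerate case where the étale coordinate there coincides with $q$. First I would equip $\widehat{R\langle \omega \rangle}_{\mathbb\Delta}$ with the $\delta$-structure obtained by transporting that of $R[[\xi]]$ along the blow-up $\xi \mapsto (p)_q\omega$ via proposition 1.12 of \cite{GrosLeStumQuiros22b}. I would then verify the bounded prism axioms for $(\widehat{R\langle \omega \rangle}_{\mathbb\Delta}, (p)_q)$: distinguishedness of $(p)_q$ is inherited from $R$ because $\delta(q) = 0$; completeness with respect to $(p, q-1)$ is built into the construction; and bounded $p^\infty$-torsion modulo $(p)_q$ follows from the identification in proposition \ref{poweg} of $\overline{R\langle \omega \rangle}_{\mathbb\Delta}$ with the classical divided power algebra $\overline R\langle \omega \rangle$, which is flat over $\overline R$.

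For the universal property, let $(B, J)$ be a bounded prism together with a $\delta$-pair morphism $f: (R[[\xi]], \xi) \to (B, J)$; after passing to an orientation I may assume $J = (d)$. The condition $\delta(q) = 0$ implies $f((p)_q) = (p)_{f(q)}$, and the prismatic envelope condition forces both $f(\xi)$ and $f((p)_q)$ to lie in $J$, with $f((p)_q)$ distinguished. Thus $f(\xi) = v d$ and $f((p)_q) = u d$ with $u \in B^\times$. The formal relation $\xi = (p)_q \omega$ then dictates that $\omega$ must map to $\bar\omega := v u^{-1}$. The heart of the argument is to check that the twisted divided powers $\omega^{\{n\}_{\mathbb\Delta}} = \omega^{(n)_{\mathbb\Delta}} / (n)_{q^p}!$ admit consistent images in $B$, i.e.\ that $(n)_{q^p}!$ divides $\prod_{k=0}^{n-1}\bigl(\bar\omega - (k)_{q^p}(q^2-q)\bigr)$ in $B$ in a manner compatible with the multiplication formula of definition \ref{defdtw}.

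The main obstacle is exactly this divisibility/compatibility statement. I would address it by induction on $n$ using the $\delta$-structure of $B$ together with formula \eqref{delxi} for $\delta(\xi)$ transported through the blow-up. The technique parallels that of proposition \ref{fromMin}: one uses that $((p)_q, p)$ is a regular sequence, invokes Gauss's lemma, and exploits the congruences $(p)_{q^{p^s}} \equiv p \pmod{(p)_q}$ to iteratively extract the required factors from expressions of the form $\phi^s(\bar\omega)$ via repeated application of $\delta$. Once divisibility is established, completeness of $B$ with respect to $(p, d)$ uniquely extends the resulting $R[\omega]$-algebra map to the completion $\widehat{R\langle \omega \rangle}_{\mathbb\Delta}$, and uniqueness of the factorization follows by density of $R[\omega]$. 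The unoriented case then descends along a faithfully flat cover by oriented prisms in the standard way.
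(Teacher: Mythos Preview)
Your outline is headed in the right direction but is far more elaborate than necessary, and the hard step you flag is already a theorem in the literature. The paper's proof is three lines: it applies theorem 5.2 of \cite{GrosLeStumQuiros22b} (not proposition 5.7) with $A=R$ and $x=q$, which says outright that $R\langle\omega\rangle_{\mathbb\Delta}$ is universal for $\delta$-maps $R[\xi]\to B$ sending $\xi$ into $(p)_qB$ whenever $B$ is $(p)_q$-torsion-free. Lemma~\ref{extqr} then extends the source from $R[\xi]$ to $R[[\xi]]$, and corollary 4.5 of \cite{GrosLeStumQuiros23a} (rather than proposition~\ref{poweg}) supplies boundedness. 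That is the whole argument.

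The ``heart of the argument'' you propose to carry out by hand---that $(n)_{q^p}!$ divides $\prod_{k<n}(\bar\omega - (k)_{q^p}(q^2-q))$ in $B$ compatibly with the multiplication rule of definition~\ref{defdtw}---is precisely the content of theorem 5.2, so you are reproving that result rather than citing it. Your plan to do this via the techniques of proposition~\ref{fromMin} is not convincing: that proposition produces estimates of the shape $p^N\omega^{[n]}\in R_0\langle\omega\rangle_{\mathbb\Delta}$ in a $p$-adic Tate ring, which is a structurally different and weaker statement than the integral universal property needed here. Separately, your assertion that ``the prismatic envelope condition forces $f((p)_q)$ to lie in $J$'' is unsupported: only $f(\xi)\in J$ is given by the $\delta$-pair map, and distinguishedness of $f((p)_q)$ alone does not place it in $J$ (distinguished units exist). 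The paper sidesteps this by citing a universal property already phrased in terms of $(p)_qB$ rather than an abstract $J$.
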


\begin{proof}
We apply \cite{GrosLeStumQuiros22b}, theorem 5.2 to the case $A=R$ and $x=q$.
It means that $R\langle \omega \rangle_{\mathbb\Delta}$ is universal for morphisms of $\delta$-rings from $R[\xi]$ to a $(p)_q$-torsion free $\delta$-ring $B$ that send $\xi$ into $(p)_qB$.
Our assertion then follows from lemma \ref{extqr}, and corollary 4.5 of \cite{GrosLeStumQuiros23a} which implies that $\widehat {R\langle \omega \rangle}_{\mathbb\Delta}$ is bounded.
\end{proof}

%%%%%%%%%%%%%%%
\begin{rmks}
\begin{enumerate}
\item
We do not want to go into the details but there exists an analog of theorem \ref{prismenv} with $q^r$ instead of $\mathbb\Delta$: for any $r \in \mathbb N$, the $\delta$-pair $\left(\widehat {R\langle \xi \rangle}_{q^r}, (p)_{q^r}\right)$ is the $q^r$-PD-envelope of $(R[[\xi]], \xi)$.
This is a direct consequence of \cite{GrosLeStumQuiros23a}, theorem 3.5 (take $A=R$, replace $q$ with $q^r$ and $x$ with $q$).
\item
The prism $(A,I)$ introduced by Bhargav Bhatt and Jacob Lurie in \cite{BhattLurie22}, proposition 2.6.5 in order to describe prismatic logarithms, is exactly our $\left(\widehat {R\langle \xi \rangle}_{q}, (p)_{q}\right)$ when $W = \mathbb Z_p$.
In their notations, we have $u = 1+ \xi/q$.
\end{enumerate}
\end{rmks}
We can now extend the flip map that was introduced in definition \ref{fldef}:

%%%%%%%%%%
\begin{prop} \label{extflip}
The flip map of $R[[\xi]]$ extends uniquely to a continuous semilinear (with respect to $\theta$) automorphism $\tau$ of the ring $\widehat{R\langle \omega \rangle}_{\mathbb\Delta}$ and we have $\tau \circ \tau = \mathrm{Id}_{\widehat{R\langle \omega \rangle}_{\mathbb\Delta}}$.
\end{prop}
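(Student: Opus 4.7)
My plan is to deduce the extension of $\tau$ from the universal property of the prismatic envelope established in theorem \ref{prismenv}. The key observation is that the $\delta$-structure on $R[[\xi]]$ is characterised by $\delta(q)=0$ and $\delta(q+\xi)=0$, and since the flip interchanges $q$ and $q+\xi$, it automatically preserves this $\delta$-structure (and is continuous for the $(p,q-1,\xi)$-adic topology). So the flip $\tau$ on $R[[\xi]]$ is a continuous $\delta$-automorphism.

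Next, I would consider the composite
\[
F : R[[\xi]] \xrightarrow{\ \tau\ } R[[\xi]] \xrightarrow{\ \mathrm{can}\ } \widehat{R\langle \omega \rangle}_{\mathbb\Delta}.
\]
This is a morphism of $\delta$-rings into the bounded prism $\bigl(\widehat{R\langle \omega \rangle}_{\mathbb\Delta},(p)_q\bigr)$, and under $F$ the element $\xi$ goes to $\mathrm{can}(-\xi) = -(p)_q\omega$, which sits in the ideal $((p)_q)$. Thus $F$ is a morphism of $\delta$-pairs from $(R[[\xi]],\xi)$ to this bounded prism, so by theorem \ref{prismenv} it factors through a unique morphism of bounded prisms $\tau : \widehat{R\langle \omega \rangle}_{\mathbb\Delta} \to \widehat{R\langle \omega \rangle}_{\mathbb\Delta}$, necessarily continuous.

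For the semilinearity with respect to $\theta$, I would use that by construction the restriction of $\tau$ to $R[[\xi]]$ is the original flip, and that on $R[[\xi]]$ we have $\tau\circ \iota_R = \theta_R$ as recalled in remark \ref{classd}.1. Then for $\alpha\in R$ and $x\in \widehat{R\langle \omega \rangle}_{\mathbb\Delta}$, multiplicativity of the extension gives
\[
\tau(\alpha\cdot x) = \tau(\iota(\alpha))\,\tau(x) = \theta(\alpha)\,\tau(x),
\]
which is precisely the required semilinearity.

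Finally, to get $\tau\circ\tau = \mathrm{Id}$ (and therefore also that $\tau$ is an automorphism), I would observe that $\tau\circ\tau$ is a $\delta$-morphism of bounded prisms $\widehat{R\langle \omega \rangle}_{\mathbb\Delta}\to \widehat{R\langle \omega \rangle}_{\mathbb\Delta}$ whose restriction to $R[[\xi]]$ is $\tau\circ\tau = \mathrm{Id}_{R[[\xi]]}$; by the uniqueness part of the universal property, it must equal $\mathrm{Id}_{\widehat{R\langle \omega \rangle}_{\mathbb\Delta}}$. Uniqueness of $\tau$ itself is likewise built into the universal property. I do not anticipate a real obstacle here: the whole argument is a formal application of theorem \ref{prismenv}, and the only genuine check -- that the flip is a $\delta$-automorphism of $R[[\xi]]$ -- is essentially tautological because the $\delta$-structure was defined symmetrically in the two generators $q$ and $q+\xi$.
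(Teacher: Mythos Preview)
Your argument is correct and follows the same route as the paper: both reduce the extension of $\tau$ to the universal property of the prismatic envelope (theorem~\ref{prismenv}), using that the flip on $R[[\xi]]$ is a $\delta$-automorphism because the $\delta$-structure is symmetric in $q$ and $q+\xi$, and then deduce $\tau\circ\tau=\mathrm{Id}$ from uniqueness. One small point you leave implicit: theorem~\ref{prismenv} requires $W$ to be a $\delta$-ring with bounded $p^{\infty}$-torsion, whereas in this section $W$ is only assumed $p$-adically complete; the paper handles this by first noting that the statement is formal and reducing to $W=\mathbb{Z}_p$. Your direct check that $F(\xi)=-(p)_q\omega\in((p)_q)$ is in fact slightly more economical than the paper's invocation of lemma~\ref{pqinv}.
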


\begin{proof}
This is completely formal and we may therefore assume that $W = \mathbb Z_p$.
Then, $W$ is a complete $\delta$-ring (for its unique $\delta$-structure) with no $p$-torsion (and in particular with bounded $p^\infty$-torsion).
We endow $R$ (resp.\ $R[[\xi]]$) with the unique $\delta$-structure such that $\delta(q) = 0$ (resp.\ $\delta(q) = 0$ and $\delta(q+\xi) = 0$).
We showed in lemma \ref{pqinv} that $\theta((p)_q) \in (p)_q \widehat{R\langle \omega \rangle}_{\mathbb\Delta}$.
Since the flip map exchanges $q$ and $q + \xi$, this is a morphism of $\delta$-rings.
It therefore follows from theorem \ref{prismenv} that the flip map extends uniquely to a $\theta$-semilinear endomorphism of the bounded prism $\left(\widehat{R\langle \omega \rangle}_{\mathbb\Delta}, (p)_q\right)$.
The last assertion also follows from the universal property.
\end{proof}

%%%%%%%%%%%%%%%%%
\begin{xmps}
\begin{enumerate}
\item
We have
\[
\theta((p)_q)\tau(\omega) = \tau((p)_q\omega) = \tau(\xi) = -\xi = -(p)_q\omega
\]
and therefore $\tau(\omega) = -L(\omega)^{-1}\omega$ with $L(\omega)$ as in formula \eqref{uomega}.
This shows that this flip map is not trivial at all.
In particular, it seems hard to provide an explicit formula for higher twisted powers as we did for example in \cite{GrosLeStumQuiros23a}, proposition 5.4 in level $0$ in the relative case.
\item Alternatively, $\tau$ is the unique natural continuous semilinear automorphism of the $R$-algebra $\widehat{R\langle \omega \rangle}_{\mathbb\Delta}$ such that $\tau(\omega) = -L(\omega)^{-1}\omega$.
\item
As a consequence of the above computation, we get $L(\omega)^{-1} = \tau(L(\omega))$ since
\[
\omega = \tau(\tau(\omega)) = \tau(-L(\omega)^{-1}\omega) =-\tau(L(\omega))^{-1}\tau(\omega) =\tau(L(\omega))^{-1}L(\omega)^{-1}\omega.
\]
\end{enumerate}
\end{xmps}

In order to go further, we need more estimates:

%%%%%%%%%%%%%
\begin{lem} \label{transan}
In $\widehat{R \langle \xi \rangle}_{q^p}$, we have
\[
\forall n,r \in \mathbb N, \quad \theta((n)_{q^{pr}}) \equiv 0 \mod (n)_{q^{pr}}.
\]
\end{lem}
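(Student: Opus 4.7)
My plan is to reduce the claim to a single divisibility inside the bounded prism $(\widehat{R\langle \xi \rangle}_{q^p}, (p)_{q^p})$ and then establish it by exploiting the twisted divided power structure.

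First, the case $r = 0$ is trivial since $(n)_{q^0} = n$ and $\theta(n) = n$. For $r \geq 1$, I write $n = p^s \mu$ with $\gcd(\mu, p) = 1$ and use the identity $(ab)_X = (a)_X (b)_{X^a}$ to factor $(n)_{q^{pr}} = (p^s)_{q^{pr}} (\mu)_{q^{p^{s+1} r}}$ and similarly $\theta((n)_{q^{pr}}) = (p^s)_{y^{pr}} (\mu)_{y^{p^{s+1} r}}$, where $y := q + \xi$. The $\mu$-factor in each case reduces to $\mu \in W^\times$ modulo $(p, q-1, \xi)$, so it is a unit in $\widehat{R\langle \xi \rangle}_{q^p}$ and may be ignored. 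Iterating $(p^s)_X = \prod_{i=0}^{s-1}(p)_{X^{p^i}}$ further reduces the problem to the following: for every positive multiple $m$ of $p$, show that $(p)_{q^m} \mid (p)_{y^m}$ in $\widehat{R\langle \xi \rangle}_{q^p}$.

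The heart of the proof is the key lemma $(p)_{q^p} \mid \phi(\xi) = y^p - q^p$. I would prove it by comparing two expressions for the element $\xi^{(p)_{q^p}}$. By definition of the twisted divided power, $\xi^{(p)_{q^p}} = (p)_{q^p}! \, \xi^{[p]_{q^p}}$, which is manifestly divisible by $(p)_{q^p}$. On the other hand, by definition of the $p$-th $q^p$-power,
\[
\xi^{(p)_{q^p}} \;=\; \prod_{k=0}^{p-1}(\xi + q - q^{pk+1}) \;=\; \prod_{k=0}^{p-1}(y - q \cdot q^{pk}).
\]
Since $(p)_{q^p} = \Phi_p(q^p) = \Phi_{p^2}(q)$, the quotient $R/((p)_{q^p}) \cong \mathbb Z_p[\zeta_{p^2}]$ is a domain (the ring of integers of the $p^2$-th cyclotomic field), in which $q^p$ has order exactly $p$. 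Hence $\{q^{pk}\}_{k=0}^{p-1}$ is a complete set of distinct $p$-th roots of unity in the quotient, so $\{q \cdot q^{pk}\}$ is a complete set of roots of the monic polynomial $Y^p - q^p$; thus $\prod_{k=0}^{p-1}(Y - q \cdot q^{pk}) \equiv Y^p - q^p \pmod{(p)_{q^p}}$ in $R[Y]$. Specializing $Y = y$ gives $\xi^{(p)_{q^p}} \equiv \phi(\xi) \pmod{(p)_{q^p}}$, and combining with the first expression yields $(p)_{q^p} \mid \phi(\xi)$.

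To extend to general multiples of $p$, I iterate Frobenius: $(p)_{q^{p^{k+1}}} = \phi^k((p)_{q^p})$ divides $\phi^{k+1}(\xi) = y^{p^{k+1}} - q^{p^{k+1}}$ for every $k \geq 0$. For $m = p^k r'$ with $\gcd(r', p) = 1$ and $k \geq 1$, the factorization $y^m - q^m = (y^{p^k} - q^{p^k}) \sum_{i=0}^{r'-1}(y^{p^k})^{r'-1-i}(q^{p^k})^i$ shows $(p)_{q^{p^k}} \mid y^m - q^m$; meanwhile the cyclotomic decomposition $(p)_{q^m} = \Phi_p(q^m) = \prod_{e \mid r'}\Phi_{p^{k+1}e}(q)$ exhibits $(p)_{q^m}$ as $(p)_{q^{p^k}} = \Phi_{p^{k+1}}(q)$ times a product of factors $\Phi_{p^{k+1}e}(q)$ with $e > 1$, each of which is a unit in $R$ because $p^{k+1}e$ has at least two distinct prime factors and hence $\Phi_{p^{k+1}e}(1) = 1$. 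So $(p)_{q^m} \mid y^m - q^m$, and therefore $(p)_{q^m}$ divides every $y^{jm} - q^{jm} = (y^m - q^m) \sum_i (y^m)^{j-1-i}(q^m)^i$; summing $(p)_{y^m} - (p)_{q^m} = \sum_{j=1}^{p-1}(y^{jm} - q^{jm})$ yields $(p)_{q^m} \mid (p)_{y^m}$, and multiplying over the factors from the reduction completes the proof. The main obstacle is the key lemma itself, which hinges on the nontrivial identification of the twisted divided power $\xi^{(p)_{q^p}}$ with the Frobenius $\phi(\xi)$ modulo the prism's distinguished ideal, via the cyclotomic factorization $\prod_k (Y - q \cdot q^{pk}) = Y^p - q^p$ inside the cyclotomic quotient ring; everything else is formal.
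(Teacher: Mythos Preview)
Your proof is correct, but it takes a genuinely different route from the paper's. Both arguments reduce to the same core statement, namely $(p)_{q^{pr}} \mid \theta((p)_{q^{pr}})$ for all $r \geq 1$, but they establish it by entirely different means.

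The paper proceeds by a direct Taylor expansion: writing $\theta(q^{pri}) = \sum_k (k)_{q^p}!\binom{pri}{k}_{q^p}q^{pri-k}\xi^{[k]_{q^p}}$, it observes that for $k \geq 1$ the coefficient contains the factor $(pri)_{q^p} = (r)_{q^p}(p)_{q^{pr}}(i)_{q^{p^2r}}$, hence is divisible by $(p)_{q^{pr}}$; summing over $i$ gives the result. This is short, elementary, and uses nothing beyond the explicit Taylor formula already established.

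Your approach is more structural: you prove the key divisibility $(p)_{q^p} \mid y^p - q^p$ by matching the twisted power $\xi^{(p)_{q^p}} = \prod_k(y - q^{pk+1})$ against the cyclotomic factorization of $Y^p - q^p$ modulo $\Phi_{p^2}(q)$, and then propagate to higher $p$-powers by applying the Frobenius endomorphism of $\widehat{R\langle\xi\rangle}_{q^p}$. This gives a nice conceptual explanation of where the divisibility comes from, but it costs more infrastructure --- you need the $\delta$-ring structure on $\widehat{R\langle\xi\rangle}_{q^p}$ (only mentioned in a remark in the paper, citing an external reference) and some cyclotomic bookkeeping. One minor imprecision: you write $R/((p)_{q^p}) \cong \mathbb Z_p[\zeta_{p^2}]$, which is only literally true when $W = \mathbb Z_p$; for general $W$ the quotient need not be a domain. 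The fix is easy: the polynomial identity $\prod_{k=0}^{p-1}(Y - q^{pk+1}) \equiv Y^p - q^p \pmod{\Phi_{p^2}(q)}$ holds already in $\mathbb Z[q]/(\Phi_{p^2}(q))[Y]$ and base-changes to $R$. (Alternatively, you can bypass the Frobenius iteration entirely by running the same cyclotomic argument with $\xi^{(p^k)_{q^p}}$ modulo $\Phi_{p^{k+1}}(q)$ to get $(p)_{q^{p^k}} \mid y^{p^k} - q^{p^k}$ directly.)
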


\begin{proof}
If $p \nmid n$, then $(n)_{q^{pr}}$ is invertible and the assertion is trivially true.
We may therefore assume that $n=pm$ for some $m \in \mathbb N$ and we have
\[
(n)_{q^{pr}} = (pm)_{q^{pr}} = (p)_{q^{pr}}(m)_{q^{p^2r}}.
\]
By induction, we are reduced to the case $n=p$.
If $r,i \in \mathbb N$, we have
\[
(pri)_{q^p} = (r)_{q^p}(pi)_{q^{pr}} = (r)_{q^p}(p)_{q^{pr}}(i)_{q^{p^2r}}
\]
and therefore $(pri)_{q^p} \equiv 0 \mod (p)_{q^{pr}}$.
It follows that if $k \neq 0$, then
\[
(k)_{q^{p}}! {pri \choose k}_{q^{p}} \equiv 0 \mod (p)_{q^{pr}}.
\]
Therefore, using proposition \ref{tetaqn2}, we see that
\[
\theta(q^{pri}) = \sum_{k=0}^{pri} (k)_{q^{p}}! {pri \choose k}_{q^{p}}q^{pri-k} \xi^{[k]_{q^{p}}} \equiv q^{pri} \mod (p)_{q^{pr}}
\]
and finally
\[
\theta((p)_{q^{pr}}) = \sum_{i=0}^{p-1} \theta(q^{pri}) \equiv \sum_{i=0}^{p-1} q^{pri} = (p)_{q^{pr}} \equiv 0 \mod (p)_{q^{pr}}.\qedhere
\]
\end{proof}

%%%%%%%%%%%%%%
\begin{rmks} \phantomsection \label{thetarmks}
\begin{enumerate}
\item
Of course, the result then also hods in $\widehat {R\langle \omega \rangle}_{\mathbb\Delta}$, but it is not true anymore in $\widehat{R \langle \xi \rangle}_{q}$.
For example, $\theta((2)_q) = (2)_q + \xi$ is not a multiple of $(2)_q$ if $p=2$.
For the same reason, there is no analog of lemma \ref{pqinv} in $\widehat{R\langle \xi \rangle}_q$ or even in $\widehat{R\langle \xi \rangle}_{q^p}$.
\item From lemmas \ref{pqinv} and \ref{transan}, one can deduce that
\[
\forall n \in \mathbb N, \quad \theta((pn)_q) \equiv 0 \mod (pn)_{q}
\]
in $\widehat{R \langle \omega \rangle}_{\mathbb\Delta}$.
\item As a consequence, if either $p \mid r$ or else, $r=1$ and $p \mid n$, then $\partial_{\mathbb\Delta}\left((n)_{q^{r}}\right) \in (n)_{q^{r}}R$.
This provides a whole collection of examples of free $\nabla_{\mathbb\Delta}$-modules of rank one.
This also shows that the \emph{twisted filtration} defined on $R$ by $(p^n)_qR$, $n \ge0$, is stable under the action of $\partial_{\mathbb\Delta}$.
\end{enumerate}
\end{rmks}

We can now prove the following analog to lemma \ref{pqinv}:

%%%%%%%%%%%%%%%%%%%
\begin{prop}
For all $n \in \mathbb N$, we have $\theta((n)_{q^p}) = u_n(\omega)(n)_{q^p}$ with $u_n(\omega) \in \widehat{R\langle \omega \rangle}_{\mathbb\Delta}^\times$.
\end{prop}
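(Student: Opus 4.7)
The plan is to first establish the existence of $u_n(\omega)$ via a divisibility in $\widehat{R\langle \omega \rangle}_{\mathbb\Delta}$, and then to use the flip $\tau$ of proposition \ref{extflip} to produce an explicit inverse. The key insight is that $\tau$ swaps the two $R$-algebra structures on $\widehat{R\langle \omega \rangle}_{\mathbb\Delta}$, so that applying $\tau$ to the divisibility $(n)_{q^p} \mid \theta((n)_{q^p})$ automatically produces the reverse divisibility $\theta((n)_{q^p}) \mid (n)_{q^p}$.

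First I would invoke lemma \ref{transan} with $r=1$, giving $\theta((n)_{q^p}) \equiv 0 \pmod{(n)_{q^p}}$ in $\widehat{R\langle \xi \rangle}_{q^p}$. Composing with the canonical blow-up map $\widehat{R\langle \xi \rangle}_{q^p} \to \widehat{R\langle \omega \rangle}_{\mathbb\Delta}$ sending $\xi \mapsto (p)_q\omega$ (see remark \ref{divpow}.5), which intertwines the two Taylor maps since both send $q \mapsto q + (p)_q\omega$, the divisibility transfers (cf.\ also remark \ref{thetarmks}.1). Hence there exists $u_n(\omega) \in \widehat{R\langle \omega \rangle}_{\mathbb\Delta}$ with $\theta((n)_{q^p}) = u_n(\omega)\,(n)_{q^p}$.

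To show $u_n(\omega)$ is invertible, I would apply the flip $\tau$ of proposition \ref{extflip}. Writing $\alpha := (n)_{q^p} \in R$ and using that $\tau$ is a ring automorphism with $\tau \circ \theta = \iota$ and $\tau \circ \iota = \theta$, applying $\tau$ to the identity $\theta(\alpha) = u_n(\omega)\alpha$ gives
\[
\alpha = \tau(u_n(\omega))\,\theta(\alpha) = \tau(u_n(\omega))\,u_n(\omega)\,\alpha.
\]
Since $\alpha = (n)_{q^p}$ is a monic polynomial in $q$, it is regular in $R$ by the $q^p$-flatness recalled at the start of section \ref{secabs}; and since $\widehat{R\langle \omega \rangle}_{\mathbb\Delta}$ identifies with the module of null convergent series $\sum_k c_k \omega^{\{k\}_{\mathbb\Delta}}$ on which $\alpha$ acts coefficient-wise, $\alpha$ remains regular in the completed ring. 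Cancelling $\alpha$ yields $\tau(u_n(\omega))\,u_n(\omega) = 1$, so $u_n(\omega)$ is a unit with explicit inverse $\tau(u_n(\omega))$.

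The main subtle point is simply keeping track of which $R$-structure is in play when $\tau$ is applied: once one notes that $\tau$ swaps $\alpha$ and $\theta(\alpha)$ as elements of $\widehat{R\langle \omega \rangle}_{\mathbb\Delta}$, everything else is a short regularity argument. I expect no serious obstacle beyond that; the real work was already done in lemmas \ref{pqinv} and \ref{transan} and in the construction of $\tau$ in proposition \ref{extflip}.
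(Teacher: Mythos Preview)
Your proposal is correct and follows essentially the same argument as the paper: obtain the divisibility $\theta((n)_{q^p}) = u_n(\omega)(n)_{q^p}$ from lemma \ref{transan}, then apply the flip $\tau$ and use regularity of $(n)_{q^p}$ to get $\tau(u_n(\omega))u_n(\omega) = 1$. The paper's proof is slightly terser (it cites remark \ref{classd}.1 for $\tau\circ\theta = \iota$ and simply asserts regularity), but the logic is identical, including the explicit inverse $u_n(\omega)^{-1} = \tau(u_n(\omega))$.
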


\begin{proof}
It follows from lemma \ref{transan} that $\theta((n)_{q^p}) = u_n(\omega)(n)_{q^p}$ for some $u_n(\omega) \in \widehat{R\langle \omega \rangle}_{\mathbb\Delta}$ and it remains to show that $ u_n(\omega)$ is invertible.
But then, using remark \ref{classd}.1, we have
\[
(n)_{q^p} = \tau(\theta((n)_{q^p})) = \tau(u_n(\omega)(n)_{q^p}) = \tau(u_n(\omega))\theta((n)_{q^p}) = \tau(u_n(\omega))u_n(\omega)(n)_{q^p}.
\]
Using the fact that $(n)_{q^p}$ is regular, we even get the formula $u_n(\omega)^{-1} = \tau(u_n(\omega))$.
\end{proof}

%%%%%%%%%%%%%%

We need to go one step further and we consider now the isomorphism:
\[
\xymatrix@R=0cm{
R \widehat \otimes_{W} R \widehat \otimes_{W} R \ar[r]^\simeq & R[[\xi]] \widehat \otimes'_{R} R[[\xi]]
\\ || & ||
\\
W[[q_{0}-1, q_{1}-1, q_2-1]] \ar[r]^-\simeq & W[[q-1,\xi_1,\xi_2]] 
\\q_0 \ar@{|->}[r] & q
\\ q_1 \ar@{|->}[r] & q + \xi_1
\\ q_2 \ar@{|->}[r] & q + \xi_1 + \xi_2
}
\]
where $\otimes'$ indicates that we use the Taylor map as a structural map on the left.
The various maps
\[
p_{23}, p_{13}, p_{12} : R \widehat \otimes_{W} R \to R \widehat \otimes_{W} R \widehat \otimes_{W} R,
\]
that correspond geometrically to the projections that forget the first, middle and last component respectively, are taken through this isomorphism (respectively again) to
\[
p_2, \Delta, p_1 : R[[\xi]] \to R[[\xi]] \widehat \otimes'_{R} R[[\xi]],
\]
with
\[
\left\{\begin{array}{l} p_1(q) = q \otimes 1 \\ p_1(\xi) = \xi \otimes 1\end{array}\right., \quad \left\{\begin{array}{l} p_2(q) = \theta(q) \otimes 1 \\ p_2(\xi) = 1 \otimes \xi \end{array}\right. \quad \mathrm{and} \quad \left\{\begin{array}{l} \Delta(q) = q \otimes 1 \\ \Delta(\xi) = 1 \otimes \xi + \xi \otimes 1.\end{array}\right.
\]
In particular, both $p_1$ and $\Delta$ are $R$-linear with respect to the left action but $p_2$ is $R$-linear with respect to the right action.
We shall call $\Delta$ the \emph{comultiplication} map.

%%%%%%%%%%%%%
\begin{lem} \label{prismdiag}
Assume $W$ is a $\delta$-ring with bounded $p^{\infty}$-torsion.
Then, the ring $\widehat{R\langle\omega\rangle}_{\mathbb\Delta} \widehat \otimes'_{R} \widehat{R\langle\omega \rangle}_{\mathbb\Delta}$ is the (bounded) prismatic envelope of $(\xi \otimes 1, 1 \otimes \xi)$ in $R[[\xi]] \widehat \otimes'_{R} R[[\xi]]$.
\end{lem}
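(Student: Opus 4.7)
The plan is to reduce the statement to Theorem \ref{prismenv} applied twice, using the fact that prismatic envelopes are preserved under a suitable completed base change of $\delta$-pairs. Equip $T := R[[\xi_1, \xi_2]] \simeq R[[\xi_1]] \widehat\otimes'_R R[[\xi_2]]$ with the $\delta$-structure such that $\delta(q) = \delta(q+\xi_1) = \delta(q+\xi_1+\xi_2) = 0$; this is the unique one compatible with the two $\delta$-structures on the factors $R[[\xi]]$ induced by the maps $p_1$ and $p_2$ described just before the lemma. The goal is then to verify the universal property of the prismatic envelope of $(T, (\xi_1, \xi_2))$ for the $\delta$-pair $\widehat{R\langle\omega_1\rangle}_{\mathbb\Delta} \widehat\otimes'_R \widehat{R\langle\omega_2\rangle}_{\mathbb\Delta}$ endowed with the ideal generated by $(p)_q$.

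First I would check that this completed tensor product is indeed a bounded prism with distinguished element $(p)_q$. The key point is that, although the $(p)_q$ coming from the left factor differs from the one coming from the right factor after transport along $\theta$, they generate the same ideal: by Lemma \ref{pqinv}, we have $\theta((p)_q) = L(\omega_1)(p)_q$ with $L(\omega_1)$ a unit. Thus $(p)_q$ defines an unambiguous invertible ideal in the tensor product, and boundedness follows from Corollary 4.5 of \cite{GrosLeStumQuiros23a} applied to each factor (exactly as in the proof of Theorem \ref{prismenv}) together with the fact that completed tensor products of bounded prisms over a common prism remain bounded.

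Next I would verify the universal property. Let $(B, J)$ be a bounded prism together with a morphism $\phi: T \to B$ of $\delta$-rings sending both $\xi_1$ and $\xi_2$ into $J$. Composing with $p_1: R[[\xi]] \to T$, we obtain a $\delta$-ring map $\phi \circ p_1: R[[\xi]] \to B$ with $\xi \mapsto \xi_1 \in J$, so by Theorem \ref{prismenv} it factors uniquely through a map $\phi_1: \widehat{R\langle\omega_1\rangle}_{\mathbb\Delta} \to B$. Similarly, $\phi \circ p_2: R[[\xi]] \to B$ is a $\delta$-ring map (using that $p_2$ is a $\delta$-map by construction of the $\delta$-structure on $T$) with $\xi \mapsto \xi_2 \in J$, and so it factors uniquely through $\phi_2: \widehat{R\langle\omega_2\rangle}_{\mathbb\Delta} \to B$. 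Because $p_1$ and $p_2$ already agree on $R$ up to the twist by the Taylor map ($p_2|_R = \theta \circ p_1|_R$), the maps $\phi_1$ and $\phi_2$ are compatible precisely for the tensor product taken along $\theta$ on the left and the canonical map on the right; this is the $\widehat\otimes'_R$ structure. They therefore glue to a unique map $\widehat{R\langle\omega_1\rangle}_{\mathbb\Delta} \widehat\otimes'_R \widehat{R\langle\omega_2\rangle}_{\mathbb\Delta} \to B$ factoring $\phi$, and its uniqueness is inherited from the uniqueness in Theorem \ref{prismenv} applied to each of the two factors.

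The main obstacle, as already alluded to, is bookkeeping the two different $R$-algebra structures at play: the canonical one and the one via $\theta$. Keeping straight which copy of $R$ enters with which structural map is essential both for recognizing $p_2$ as a $\delta$-morphism (so that Theorem \ref{prismenv} applies to the second composite) and for showing that the prism structure on the tensor product is well-defined, where Lemma \ref{pqinv} is crucial. Once these compatibilities are in place, the double application of Theorem \ref{prismenv} delivers the desired universal property.
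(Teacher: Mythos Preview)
Your proposal is correct and follows essentially the same route as the paper: both arguments hinge on Lemma \ref{pqinv} to ensure that $\widehat{R\langle\omega\rangle}_{\mathbb\Delta}$ is the prismatic envelope of $(R[[\xi]],\xi)$ also for the $R$-structure via the Taylor map, then use the universal property of the completed tensor product together with Corollary 4.5 of \cite{GrosLeStumQuiros23a} for boundedness. Your version simply unpacks the universal-property step more explicitly via the two projections $p_1$ and $p_2$, whereas the paper states it in one line.
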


\begin{proof}
It follows from lemma \ref{pqinv} that $\widehat {R\langle\omega \rangle}_{\mathbb\Delta}$ is not only the prismatic envelope of $\xi$ in $R[[\xi]]$ when $R[[\xi]]$ is seen as an $R$-module via the canonical map, but also when it is seen as an $R$-module via the \emph{Taylor map} (which is a morphism of $\delta$-rings).
Our assertion therefore follows from the universal property of completed tensor product and \cite{GrosLeStumQuiros23a}, corollary 4.5 that insures us that $\widehat{R\langle\omega\rangle}_{\mathbb\Delta} \widehat \otimes'_{R} \widehat{R\langle\omega \rangle}_{\mathbb\Delta}$ is bounded.
\end{proof}

%%%%%%%%%%%%%
\begin{prop} \label{extcomult}
The comultiplication map $\Delta$ that sends $\xi$ to $1 \otimes \xi + \xi \otimes 1$ extends uniquely to a morphism of $R$-algebras
\[
\Delta : \widehat{R\langle\omega\rangle}_{\mathbb\Delta} \to \widehat{R\langle\omega\rangle}_{\mathbb\Delta} \widehat \otimes'_{R} \widehat{R\langle\omega \rangle}_{\mathbb\Delta}.
\]
\end{prop}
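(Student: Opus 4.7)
The approach mirrors the proof of proposition \ref{extflip}: I reduce to the universal case, check that $\Delta$ is a morphism of $\delta$-pairs from $(R[[\xi]], \xi)$ to the bounded prism identified in lemma \ref{prismdiag}, and then extend by the universal property of theorem \ref{prismenv}. Since the construction is formal in $W$, I may assume $W = \mathbb{Z}_p$ with its unique $\delta$-structure, and equip $R$ with the $\delta$-structure having $\delta(q) = 0$, $R[[\xi]]$ with the one having $\delta(q) = \delta(q+\xi) = 0$, and $R[[\xi]] \widehat\otimes'_R R[[\xi]] \simeq W[[q-1,\xi_1,\xi_2]]$ with the one in which $q$, $q+\xi_1$, and $q+\xi_1+\xi_2$ all have vanishing $\delta$. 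The comultiplication sends $q \mapsto q$ and $q+\xi \mapsto q+\xi_1+\xi_2$, both of which are rank-one in the target, so $\Delta$ is automatically a morphism of $\delta$-rings.

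Next I verify that $\Delta$ carries $\xi$ into the distinguished ideal $((p)_q)$ of the target bounded prism. On the one hand, $\xi_1 = \xi\otimes 1 = (p)_q(\omega\otimes 1)$ is visibly a multiple of $(p)_q$. On the other hand, unwinding the tensor convention (in which the right-hand $R$-action goes through the Taylor map), one has $\xi_2 = 1\otimes \xi = 1\otimes((p)_q\omega) = \theta((p)_q)\otimes \omega$; lemma \ref{pqinv} then gives $\theta((p)_q) = L(\omega)(p)_q$, so $\xi_2 = L(\omega)(p)_q(1\otimes \omega)$ is also a multiple of $(p)_q$. Hence $\Delta(\xi) = \xi_1 + \xi_2 \in ((p)_q)$ in the target.

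Combining these, $\Delta$ is a morphism of $\delta$-pairs from $(R[[\xi]], \xi)$ to the bounded prism $(\widehat{R\langle\omega\rangle}_{\mathbb\Delta}\widehat\otimes'_R \widehat{R\langle\omega\rangle}_{\mathbb\Delta}, (p)_q)$, and theorem \ref{prismenv} then produces a unique extension to a morphism of bounded prisms $\widehat{R\langle\omega\rangle}_{\mathbb\Delta}\to \widehat{R\langle\omega\rangle}_{\mathbb\Delta}\widehat\otimes'_R \widehat{R\langle\omega\rangle}_{\mathbb\Delta}$, which is a fortiori the desired $R$-algebra morphism. The only substantive input beyond the universal property is the verification that $1 \otimes \xi$ lies in $((p)_q)$ — not obvious because of the twist in the tensor structure — and this is precisely what lemma \ref{pqinv} supplies; the rest is formal.
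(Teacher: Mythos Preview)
Your proof is correct and follows exactly the paper's approach: reduce to $W=\mathbb Z_p$, then invoke theorem \ref{prismenv} together with lemma \ref{prismdiag}. The paper's own proof is a one-liner that leaves the verification implicit; you have simply unpacked the two things one must check to apply the universal property (that $\Delta$ respects the $\delta$-structures and that $\Delta(\xi)$ lands in $((p)_q)$), and your use of lemma \ref{pqinv} to handle $1\otimes\xi$ under the twisted tensor is exactly the right ingredient.
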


\begin{proof}
Uniqueness is automatic.
Concerning existence, we may assume that $W = \mathbb Z_p$ and our assertion therefore follows from theorem \ref{prismenv} and lemma \ref{prismdiag}.
\end{proof}

%%%%%%%%%%%
\begin{rmks}
\begin{enumerate}
\item As an example, we can compute
\begin{equation} \label{Delom}
\Delta(\omega) = L(\omega) \otimes \omega + \omega \otimes 1
\end{equation}
with $L(\omega)$ given by formula \eqref{uomega}.
Actually, comultiplication is the unique natural morphism of $R$-algebras such that equality \eqref{Delom} holds.
\item This comultiplication map $\Delta$ is not the same as the diagonal map of \cite{GrosLeStumQuiros22b} which simply sends $\omega$ to $1 \otimes \omega + \omega \otimes 1$.
\item
There is no general comultiplication map
\[
\Delta_{m,n} : R\langle\omega\rangle_{\mathbb\Delta}/\omega^{\{>n+m\}_{\mathbb\Delta}} \to R\langle\omega\rangle_{\mathbb\Delta}/\omega^{\{>n\}_{\mathbb\Delta}} \otimes'_{R} R\langle\omega \rangle_{\mathbb\Delta}/\omega^{\{>m\}_{\mathbb\Delta}}
\]
when $n+m >1$.
This is why composition of $\mathbb\Delta$-differential operators of finite order below will have infinite order in general.
For the same reason, there exists no such thing as a $\mathbb\Delta$-Taylor structure as in definition 3.8 of \cite{GrosLeStumQuiros22b}\footnote{The remark at the end of section 3 of that article stating that any Taylor structure extends to a stratification is not true in general, but this does not affect any other results.} on an $R$-module in general.
It will be essential to develop a hyper version of the theory.
\end{enumerate}
\end{rmks}

%%%%%%%%%%%%%%%%%%
\section{Absolute hyper calculus} \label{stratder}

We develop now the formalism of absolute hyper calculus.
This will allow us to make the link with prismatic crystals but it is also necessary in order to obtain a full theory of absolute calculus.

We still fix a \emph{prime} $p$, a $p$-adically complete ring $W$ and write $R := W[[q-1]]$.

We are in a situation where one can apply the formalism of groupoids in order to develop hyper calculus, as explained for example in chapter 2 of \cite{Berthelot74}, or more recently in the appendix of \cite{Ogus22}.
More precisely, the formal scheme
\[
G := \mathrm{Spf}(R\langle\omega\rangle_{\mathbb\Delta})
\]
is a formal groupoid (it represents a presheaf of groupoids on the category of formal schemes) over $\mathrm{Spf}(R)$:
\begin{itemize}
\item
If $T$ is a formal scheme over $W$, then an object (resp.\ a morphism) in $G(T)$ is a morphism of formal schemes\footnote{In the notations of \cite{Ogus22}, $Y = \mathrm{Spf}(R)$ and $A=\mathrm{Spf}(\widehat{R\langle\omega\rangle}_{\mathbb\Delta})$.} $T \to \mathrm{Spf}(R)$ (resp.\ $T \to \mathrm{Spf}(R\langle\omega\rangle_{\mathbb\Delta})$).
\item
Codomain (resp.\ domain, resp.\ identity, resp.\ composition, resp. inverse) is obtained from the canonical (resp.\ Taylor, resp.\ augmentation, resp.\ comultiplication, resp. flip) map.
\end{itemize}
We shall however not rely on this formalism but, instead, will introduce successively the different notions that we need.

%%%%%%%%%%%%%%
\begin{dfn} \label{hyperst}
A \emph{$\mathbb\Delta$-hyperstratification} on an $R$-module $M$ is an $\widehat{R\langle \omega \rangle}_{\mathbb\Delta}$-linear isomorphism
\[
\epsilon_M : \widehat{R\langle \omega \rangle}_{\mathbb\Delta} \widehat \otimes'_R M \simeq M \widehat \otimes_R \widehat{R\langle \omega \rangle}_{\mathbb\Delta} 
\]
(or $\epsilon$ for short) satisfying the cocycle condition 
\begin{equation} \label{trucoc}
(\epsilon_M \otimes \mathrm{Id}) \circ (\mathrm{Id}\otimes \epsilon_M) \circ (\Delta \otimes \mathrm{Id}_M) = (\mathrm{Id}_M \otimes \Delta) \circ \epsilon_M.
\end{equation}
\end{dfn}

The category of $R$-modules endowed with a $\mathbb\Delta$-hyperstratification is defined in the obvious way.
For $M$ and $N$ in this category, there exists the notion of tensor product of $\mathbb\Delta$-hyperstratifications given by
\[
\xymatrix{(\widehat{R\langle \omega \rangle}_{\mathbb\Delta} \widehat \otimes'_R M) \otimes_{\widehat{R\langle \omega \rangle}_{\mathbb\Delta}} (\widehat{R\langle \omega \rangle}_{\mathbb\Delta} \widehat \otimes'_R N)\ar[r]^-\simeq \ar[d]^{\epsilon_M \otimes \epsilon_N}_\simeq &
\widehat{R\langle \omega \rangle}_{\mathbb\Delta} \widehat \otimes'_R (M \otimes_R N)\ar[d]^{\epsilon_{M \otimes N}}_\simeq
\\ (M \widehat \otimes_R \widehat{R\langle \omega \rangle}_{\mathbb\Delta} \otimes_{\widehat{R\langle \omega \rangle}_{\mathbb\Delta}} (N \widehat \otimes_R \widehat{R\langle \omega \rangle}_{\mathbb\Delta}) \ar[r]^-\simeq &
(M \otimes_R N) \widehat \otimes_R \widehat{R\langle \omega \rangle}_{\mathbb\Delta}.
}
\]

When, moreover, $M$ is assumed to be finite projective, there also exists a $\mathbb\Delta$-hyperstratification on $\mathrm{Hom}_R(M, N)$, which is given by the following chain of isomorphisms
\[
\xymatrix{\widehat{R\langle \omega \rangle}_{\mathbb\Delta} \widehat \otimes'_R \mathrm{Hom}_R(M, N) \ar[r]^-\simeq_{\epsilon_{\mathrm{Hom}(M,N)}} \ar[d]^\simeq &
\mathrm{Hom}_R(M, N) \widehat \otimes_R \widehat{R\langle \omega \rangle}_{\mathbb\Delta} \ar[d]^\simeq
\\ \mathrm{Hom}_R(M, \widehat{R\langle \omega \rangle}_{\mathbb\Delta} \widehat \otimes'_R N)^\wedge \ar[d]^\simeq &
 \mathrm{Hom}_R(M , N \widehat \otimes_R \widehat{R\langle \omega \rangle}_{\mathbb\Delta})^\wedge \ar[d]^\simeq
\\ \mathrm{Hom}_{\widehat{R\langle \omega \rangle}_{\mathbb\Delta}}(\widehat{R\langle \omega \rangle}_{\mathbb\Delta} \widehat \otimes'_R M, \widehat{R\langle \omega \rangle}_{\mathbb\Delta} \widehat \otimes'_R N)^\wedge \ar[r]^-\simeq_-{\epsilon_M,\epsilon_N} &
 \mathrm{Hom}_{\widehat{R\langle \omega \rangle}_{\mathbb\Delta}}(M \widehat \otimes_R \widehat{R\langle \omega \rangle}_{\mathbb\Delta}, N \widehat \otimes_R \widehat{R\langle \omega \rangle}_{\mathbb\Delta})^\wedge
}
\]

As usual, we can consider a (apparently) weaker notion (we denote by $e : \widehat{R\langle\omega\rangle}_{\mathbb\Delta} \to R$ the augmentation map):

%%%%%%%%%%%%%
\begin{dfn} \label{TaylMod}
A \emph{$\mathbb\Delta$-Taylor map} on an $R$-module $M$ is a \emph{semilinear} map ($R$-linear for the right structure)
\[
\theta_M : M \to M \widehat \otimes_{R} \widehat {R\langle\omega\rangle}_{\mathbb\Delta}
\]
(or $\theta$ for short) such that
\begin{equation} \label{eqcoc}
(\mathrm{Id}_{M} \otimes e) \circ \theta_M = \mathrm{Id}_{M}
\quad \mathrm{and} \quad 
(\theta_M \otimes \mathrm{Id}_{\widehat {R\langle\omega\rangle}_{\mathbb\Delta}}) \circ \theta_M = (\mathrm{Id}_{M} \otimes \Delta) \circ \theta_M.
\end{equation}
\end{dfn}

We can make explicit the semilinearity condition: the right hand side is naturally a $\widehat {R\langle\omega\rangle}_{\mathbb\Delta}$-module and we have
\[
\forall \alpha \in R, \forall s \in M, \quad \theta_M(\alpha s) = \theta_R(\alpha)\theta_M(s)
\]
where $\theta_R$ denotes the $\mathbb\Delta$-Taylor map of $R$.

Again, $R$-modules endowed with a $\mathbb\Delta$-Taylor map form a category in the obvious way and there exists a tensor product (as well as a partial internal Hom, that we will not make explicit) given by
\begin{equation} \label{tenscp}
M \otimes_R N \to (M \widehat \otimes_R \widehat{R\langle \omega \rangle}_{\mathbb\Delta}) \otimes_{\widehat{R\langle \omega \rangle}_{\mathbb\Delta}} (N \widehat \otimes_R \widehat{R\langle \omega \rangle}_{\mathbb\Delta}) \simeq
(M \otimes_R N) \widehat \otimes_R \widehat{R\langle \omega \rangle}_{\mathbb\Delta}.
\end{equation}

Thanks to the existence of the flip map, we have the following:

%%%%%%%%%%%%%%
\begin{prop} \label{eqHyperTayl}
The category of $R$-modules endowed with a $\mathbb\Delta$-hyperstratification is isomorphic to the category of $R$-modules endowed with a $\mathbb\Delta$-Taylor map.
\end{prop}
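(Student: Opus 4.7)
The plan is to construct mutually inverse functors between the two categories. In one direction, given a $\mathbb\Delta$-hyperstratification $\epsilon_M$, I would set $\theta_M(m) := \epsilon_M(1 \otimes m)$. Semilinearity amounts to the identity $1 \otimes \alpha m = \theta_R(\alpha) \otimes m$ in $\widehat{R\langle\omega\rangle}_{\mathbb\Delta} \widehat\otimes'_R M$ combined with the $\widehat{R\langle\omega\rangle}_{\mathbb\Delta}$-linearity of $\epsilon_M$, and the coassociativity (second part of \eqref{eqcoc}) is obtained by restricting the cocycle \eqref{trucoc} to $1 \otimes m$. The counit property corresponds under the identification to $\epsilon_M$ reducing to the identity modulo the augmentation ideal, which is a standard consequence of the cocycle identity together with the invertibility of $\epsilon_M$.

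In the opposite direction, given a Taylor map $\theta_M$, I would extend it $\widehat{R\langle\omega\rangle}_{\mathbb\Delta}$-linearly to
\[
\epsilon_M : \widehat{R\langle\omega\rangle}_{\mathbb\Delta} \widehat\otimes'_R M \longrightarrow M \widehat\otimes_R \widehat{R\langle\omega\rangle}_{\mathbb\Delta}, \quad a \otimes m \longmapsto a \cdot \theta_M(m),
\]
where $a$ acts by multiplication on the second tensor factor of the target. Well-definedness on the balancing relation is precisely the semilinearity of $\theta_M$, and the cocycle identity \eqref{trucoc} follows, after $\widehat{R\langle\omega\rangle}_{\mathbb\Delta}$-linear extension, from the second equation in \eqref{eqcoc} together with the algebra-morphism property of $\Delta$ proved in proposition \ref{extcomult}.

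The hard part is showing that $\epsilon_M$ is an isomorphism, and here the flip map $\tau$ of proposition \ref{extflip} plays its essential role, encoding the inverse map of the formal groupoid $G$. Since $\tau$ exchanges the canonical and Taylor $R$-structures on $\widehat{R\langle\omega\rangle}_{\mathbb\Delta}$, the assignment
\[
\sigma : M \widehat\otimes_R \widehat{R\langle\omega\rangle}_{\mathbb\Delta} \overset{\sim}\longrightarrow \widehat{R\langle\omega\rangle}_{\mathbb\Delta} \widehat\otimes'_R M, \quad m \otimes a \longmapsto \tau(a) \otimes m,
\]
is a well-defined isomorphism, with $\sigma^{-1}(a \otimes m) = m \otimes \tau(a)$ thanks to $\tau \circ \tau = \mathrm{Id}$. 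I would then construct the inverse $\epsilon_M^{-1}$ as the $\widehat{R\langle\omega\rangle}_{\mathbb\Delta}$-linear extension of $\sigma \circ \theta_M : M \to \widehat{R\langle\omega\rangle}_{\mathbb\Delta} \widehat\otimes'_R M$, and verify the identities $\epsilon_M^{-1} \circ \epsilon_M = \mathrm{Id}$ and $\epsilon_M \circ \epsilon_M^{-1} = \mathrm{Id}$ by reducing them, modulo $\widehat{R\langle\omega\rangle}_{\mathbb\Delta}$-linearity, to ``antipode'' relations for $\tau$ that follow from $\tau \circ \tau = \mathrm{Id}$ and its compatibility with $\Delta$ (both established in section \ref{padsit}). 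The mutual inverseness of the two functors is then immediate: the restriction of the extension of $\theta_M$ to $1 \otimes m$ gives back $1 \cdot \theta_M(m) = \theta_M(m)$, while the extension of $m \mapsto \epsilon_M(1 \otimes m)$ recovers $\epsilon_M$ by $\widehat{R\langle\omega\rangle}_{\mathbb\Delta}$-linearity.
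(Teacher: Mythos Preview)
Your proposal is correct and follows essentially the same route as the paper. The paper's proof is more terse: it only spells out the harder direction (from Taylor map to hyperstratification), extends $\theta_M$ linearly to get $\epsilon_M$, and then observes that the cocycle condition forces $\tau_M \circ \epsilon_M \circ \tau_M$ to be an inverse, where $\tau_M(s \otimes \varphi) = \tau(\varphi) \otimes s$ is exactly your $\sigma$; your candidate inverse, the $\widehat{R\langle\omega\rangle}_{\mathbb\Delta}$-linear extension of $\sigma \circ \theta_M$, is precisely this composite.
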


\begin{proof}
This is completely standard.
The $\mathbb\Delta$-Taylor map extends uniquely to 
a $\widehat{R\langle \omega \rangle}_{\mathbb\Delta}$-linear map
\[
\epsilon_M : \widehat{R\langle \omega \rangle}_{\mathbb\Delta} \widehat \otimes'_R M \to M \widehat \otimes_R \widehat{R\langle \omega \rangle}_{\mathbb\Delta} 
\]
that satisfies the cocycle condition and it remains to show that it is bijective.
For this purpose, we consider the ``flip map''
\begin{equation} \label{flipM}
\tau_M : M \widehat \otimes_R \widehat{R\langle \omega \rangle}_{\mathbb\Delta} \simeq \widehat{R\langle \omega \rangle}_{\mathbb\Delta} \widehat \otimes'_R M, \quad s \otimes \varphi \mapsto \tau(\varphi) \otimes s.
\end{equation}
The cocycle condition then implies that $\tau_M \circ \epsilon_M \circ \tau_M$ is an inverse for $\epsilon_M$.
\end{proof}

We may then rewrite the tensor product of two $\mathbb\Delta$-Taylor maps as
\[
M \otimes_R N \overset{\theta_M \otimes \mathrm{Id}_N} \longrightarrow
M \widehat \otimes_R \widehat{R\langle \omega \rangle}_{\mathbb\Delta} \otimes'_R N \overset{\mathrm{Id}_M \otimes \epsilon_N} \longrightarrow
(M \otimes_R N) \widehat \otimes_R \widehat{R\langle \omega \rangle}_{\mathbb\Delta}.
\]

There exists a down to earth description of these notions:

%%%%%%%%%%%%%%
\begin{lem} \label{locTayl}
The category of $R$-modules endowed with a $\mathbb\Delta$-hyperstratification is isomorphic to the category of $R$-modules $M$ endowed with a family of continuous $W$-linear maps $\partial^{\langle k \rangle}_{M,\mathbb\Delta} : M \to M$ such that, for all $s \in M$ and $\alpha \in R$,
\begin{enumerate}
\item \label{zercond}
$\displaystyle \partial^{\langle 0 \rangle}_{M,\mathbb\Delta}(s) = s$,
\item \label{limcond}
$\displaystyle \partial^{\langle k \rangle}_{M,\mathbb\Delta}(s) \to 0\ \mathrm{as}\ k \to \infty$,
\item \label{cocond}$
\displaystyle \sum_{i,j=0}^{\infty} \left(\partial^{\langle i \rangle}_{M,\mathbb\Delta} \circ \partial^{\langle j \rangle}_{M,\mathbb\Delta}\right)(s) \otimes \omega^{\{i\}_{\mathbb\Delta}} \otimes \omega^{\{j\}_{\mathbb\Delta}} = \sum_{n=0}^{\infty} \partial^{\langle n \rangle}_{M,\mathbb\Delta}(s) \otimes \Delta\left(\omega^{\{n\}_{\mathbb\Delta}}\right),
$
\item \label{lincon}
$\displaystyle \partial^{\langle k \rangle}_{M,\mathbb\Delta}(\alpha s) = \sum_{0\leq i \leq j \leq k} q^{p{i \choose 2}}{k \choose j}_{q^{p}}{j \choose i}_{q^{p}} (q^2-q)^{i}\partial^{\langle k-j+i \rangle}_{\mathbb\Delta}(\alpha) \partial^{\langle j \rangle}_{M,\mathbb\Delta}(s)$.
\end{enumerate}
\end{lem}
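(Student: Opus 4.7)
The plan is to invoke proposition \ref{eqHyperTayl} to replace hyperstratifications by $\mathbb\Delta$-Taylor maps $\theta_M : M \to M \widehat\otimes_R \widehat{R\langle\omega\rangle}_\mathbb\Delta$, and then to decode such a map via its coefficients in the basis $(\omega^{\{k\}_\mathbb\Delta})_{k\in\mathbb N}$. Concretely, I would set
\[
\theta_M(s) = \sum_{k=0}^\infty \partial^{\langle k \rangle}_{M,\mathbb\Delta}(s) \otimes \omega^{\{k\}_\mathbb\Delta}.
\]
This is bijective between continuous $W$-linear maps $\theta_M$ and families of continuous $W$-linear maps $\partial^{\langle k\rangle}_{M,\mathbb\Delta} : M \to M$ satisfying the pointwise convergence condition (\ref{limcond}), because $\widehat{R\langle\omega\rangle}_\mathbb\Delta$ is topologically free on $(\omega^{\{k\}_\mathbb\Delta})_k$ also for its right $R$-structure (the proposition following lemma \ref{congmod}).

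Next I would translate the defining properties of a Taylor map. The augmentation requirement $(\mathrm{Id}\otimes e)\circ \theta_M = \mathrm{Id}_M$ from \eqref{eqcoc} gives condition (\ref{zercond}), since $e(\omega^{\{k\}_\mathbb\Delta}) = \delta_{k,0}$. Expanding both sides of the cocycle identity in \eqref{eqcoc} in the double basis $\omega^{\{i\}_\mathbb\Delta}\otimes \omega^{\{j\}_\mathbb\Delta}$ reproduces condition (\ref{cocond}) verbatim.

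The substantial step is converting semilinearity $\theta_M(\alpha s) = \theta_R(\alpha)\theta_M(s)$ into condition (\ref{lincon}). Using proposition \ref{tetaqn2} one has $\theta_R(\alpha) = \sum_n \partial^{\langle n \rangle}_\mathbb\Delta(\alpha) \omega^{\{n\}_\mathbb\Delta}$, so after applying the multiplication rule of definition \ref{defdtw} (with $x = q$, which turns the factor $(q-1)^i x^i$ into $(q^2-q)^i$) and reindexing by $n = k-j+i$, where $i$ is the inner summation index from the multiplication rule, the coefficient of $\omega^{\{k\}_\mathbb\Delta}$ in $\theta_R(\alpha)\theta_M(s)$ takes the form of the right-hand side of condition (\ref{lincon}), provided one rewrites $\binom{k}{k-j+i}_{q^p}\binom{k-j+i}{i}_{q^p}$ as $\binom{k}{j}_{q^p}\binom{j}{i}_{q^p}$.

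The main obstacle is precisely this last identification of $q^p$-binomial products. It follows from the $q^p$-analog of the trinomial identity, both sides being equal to the $q^p$-multinomial coefficient $\binom{k}{j-i,\,i,\,k-j}_{q^p}$ (equivalently, one compares $q^p$-factorial expressions directly). Once that combinatorial identity is established, the passages from $\theta_M$ to $(\partial^{\langle k \rangle}_{M,\mathbb\Delta})_k$ and back are mutually inverse by construction, and the correspondence is plainly functorial in $M$, finishing the equivalence.
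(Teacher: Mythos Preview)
Your proposal is correct and follows the same route as the paper's proof, which simply records the correspondence $\theta_M(s)=\sum_k \partial^{\langle k\rangle}_{M,\mathbb\Delta}(s)\otimes\omega^{\{k\}_{\mathbb\Delta}}$ and states that conditions (\ref{zercond})--(\ref{cocond}) encode the cocycle conditions \eqref{eqcoc} while (\ref{lincon}) is the explicit form of right semilinearity. You supply considerably more detail than the paper does, in particular the derivation of (\ref{lincon}) from the multiplication rule of definition \ref{defdtw} and the $q^p$-trinomial identity $\binom{k}{k-j+i}_{q^p}\binom{k-j+i}{i}_{q^p}=\binom{k}{j}_{q^p}\binom{j}{i}_{q^p}$; this is exactly the computation the paper leaves implicit. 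One minor slip: the unique expansion of an element of $M\widehat\otimes_R\widehat{R\langle\omega\rangle}_{\mathbb\Delta}$ in the basis $(\omega^{\{k\}_{\mathbb\Delta}})_k$ uses the \emph{left} (canonical) $R$-structure on $\widehat{R\langle\omega\rangle}_{\mathbb\Delta}$, not the right one, so the appeal to the proposition after lemma \ref{congmod} is unnecessary here.
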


\begin{proof}
This equivalence is provided by
\begin{equation} \label{Taylfor}
\forall s \in M, \quad \theta_M(s) = \sum_{k=0}^{\infty} \partial^{\langle k \rangle}_{M,\mathbb\Delta}(s) \otimes \omega^{\{k\}_{\mathbb\Delta}}.
\end{equation}
Condition \eqref{limcond} is equivalent to the existence of the map \eqref{Taylfor}.
Conditions \eqref{zercond} and \eqref{cocond} correspond to the cocycle conditions \eqref{eqcoc}.
Finally, condition \ref{lincon} is the explicit translation of right linearity.
\end{proof}

We will then call $\partial^{\langle k \rangle}_{M,\mathbb\Delta}$ the \emph{$k$th $\mathbb\Delta$-derivative} on $M$ and write $\partial_{M,\mathbb\Delta} := \partial^{\langle 1 \rangle}_{M,\mathbb\Delta}$.

%%%%%%%%%%%%
\begin{xmps}
\begin{enumerate}
\item \textit{Trivial.} 
In the case
\[
M= R \quad \mathrm{and} \quad \epsilon = \mathrm{Id}_{\widehat {R\langle\omega\rangle}_{\mathbb\Delta}},
\]
the $\mathbb\Delta$-Taylor map is (resp.\ the higher $\mathbb\Delta$-derivatives are) the same as in definition \ref{defqpt} (resp.\ proposition \ref{tetaqn2}).
\item \textit{Right-regular.}
Comultiplication
\[
\Delta : \widehat{R\langle\omega\rangle}_{\mathbb\Delta} \to \widehat{R\langle\omega\rangle}_{\mathbb\Delta} \widehat \otimes'_{R} \widehat{R\langle\omega \rangle}_{\mathbb\Delta}
\]
is a $\mathbb\Delta$-Taylor map on $\widehat {R\langle\omega\rangle}_{\mathbb\Delta}$ seen as an $R$-module via the \emph{right} structure.
This follows from the formal identity
\begin{equation}
(\Delta \otimes \mathrm{Id}_{\widehat {R\langle\omega\rangle}_{\mathbb\Delta}}) \circ \Delta = (\mathrm{Id}_{\widehat {R\langle\omega\rangle}_{\mathbb\Delta}} \otimes \Delta) \circ \Delta.
\end{equation}
We will write
\[
\forall \varphi \in \widehat {R\langle\omega\rangle}_{\mathbb\Delta}, \quad \Delta(\varphi) = \sum_{k=0}^\infty L^{\langle k \rangle}_{\mathbb\Delta}(\varphi) \otimes \omega^{\{k\}_{\mathbb\Delta}}.
\]
The higher $\mathbb\Delta$-derivatives $L^{\langle k \rangle}_{\mathbb\Delta}$ are linear for the left structure.
Note that the general cocycle condition in lemma \ref{locTayl}.3 simplifies to
\begin{equation} \label{Lpart}
\sum_{i=0}^{\infty} \left(\partial^{\langle i \rangle}_{M,\mathbb\Delta} \circ \partial^{\langle j \rangle}_{M,\mathbb\Delta}\right)(s) \otimes \omega^{\{i\}_{\mathbb\Delta}}
= \sum_{n=0}^{\infty} \partial^{\langle n \rangle}_{M,\mathbb\Delta}(s) \otimes L^{\langle j \rangle}_{\mathbb\Delta}\left(\omega^{\{n\}_{\mathbb\Delta}}\right)
\end{equation}
for all $j \in \mathbb N$.
\item \textit{Left-regular.} There also exists a $\mathbb\Delta$-Taylor map for the \emph{left} structure of $\widehat {R\langle\omega\rangle}_{\mathbb\Delta}$, that we will denote again by $\theta$:
\[
\theta : \widehat {R\langle\omega\rangle}_{\mathbb\Delta} \to \widehat {R\langle\omega\rangle}_{\mathbb\Delta} \widehat \otimes_{R} \widehat {R\langle\omega\rangle}_{\mathbb\Delta}.
\]
It is obtained from $\Delta$ in the previous example by applying the flip map.
 More precisely,
 \[
 \theta = \left(\tau \otimes_{R} \mathrm{Id}_{\widehat {R\langle\omega\rangle}_{\mathbb\Delta}}\right) \circ \Delta \circ \tau.
 \]
 Alternatively, this is the unique continuous extension of the map
 \[
 R[[\xi]] \to R[[\xi]]\widehat \otimes_R R[[\xi]], \quad \left\{\begin{array} l \alpha \mapsto 1 \otimes \theta(\alpha) \\ \xi \mapsto \xi \otimes 1 - 1 \otimes \xi. \end{array}\right.
 \]
 In particular, the restriction to $R$ is exactly the $\mathbb\Delta$-Taylor map of definition \ref{defqpt}.
 We shall simply denote by
 \[
\partial^{\langle k \rangle}_{\mathbb\Delta}: \widehat {R\langle\omega\rangle}_{\mathbb\Delta} \to \widehat {R\langle\omega\rangle}_{\mathbb\Delta}
\]
the corresponding higher $\mathbb\Delta$-derivatives (which are linear for the right structure).
This notation is harmless in the sense that it is compatible with the trivial case.
Note that, by definition, we have
\[
\forall k \in \mathbb N, \quad \tau \circ \partial^{\langle k \rangle}_{\mathbb\Delta} = L^{\langle k \rangle}_{\mathbb\Delta} \circ \tau.
\]
\item \textit{Linearization.}
More generally, if $M$ is any $R$-module and we consider $M \otimes_R \widehat{R\langle\omega\rangle}_{\mathbb\Delta}$ as an $R$-module through the action on the \emph{right}, then there exists a $\mathbb\Delta$-Taylor map
\[
\mathrm{Id}_M \otimes \Delta : M \otimes_R \widehat{R\langle\omega\rangle}_{\mathbb\Delta} \to (M \otimes_R \widehat{R\langle\omega\rangle}_{\mathbb\Delta})\ \widehat \otimes'_{R} \widehat{R\langle\omega \rangle}_{\mathbb\Delta}.
\]
Using the flip map \eqref{flipM} again, we obtain a $\mathbb\Delta$-Taylor map on the \emph{linearization}
\[
L_{\mathbb\Delta}(M) := \widehat {R\langle\omega\rangle}_{\mathbb\Delta} \widehat \otimes'_R M
\]
of $M$ (now seen as an $R$-module via the action on the left).
We shall still denote by $L^{\langle k \rangle}_{\mathbb\Delta}$ and $\partial^{\langle k \rangle}_{\mathbb\Delta}$ the corresponding higher derivatives (without mentioning $M$ because it plays here a dummy role).
More precisely, we have
\[
\forall k \in \mathbb N, \forall \varphi \in \widehat {R\langle\omega\rangle}_{\mathbb\Delta}, \forall s \in M, \quad \left\{\begin{array}{c}L^{\langle k \rangle}_{\mathbb\Delta}(s \otimes \varphi) = s \otimes L^{\langle k \rangle}_{\mathbb\Delta}(\varphi), \\ \\\partial^{\langle k \rangle}_{\mathbb\Delta}(\varphi \otimes s) = \partial^{\langle k \rangle}_{\mathbb\Delta}(\varphi) \otimes s,\end{array}\right.
\]
and
\[
\forall k \in \mathbb N, \quad \tau_M \circ \partial^{\langle k \rangle}_{\mathbb\Delta} = L^{\langle k \rangle}_{\mathbb\Delta} \circ \tau_M.
\]
\end{enumerate}
\end{xmps}

Now comes the fundamental \emph{linearization lemma}:

%%%%%%%%%%%%
\begin{lem} \label{maglem}
Il $M$ is a $\mathbb\Delta$-hyperstratified $R$-module and $N$ is any $R$-module, then $\epsilon_{M}^{-1}$ induces an isomorphism of $\mathbb\Delta$-hyperstratified $R$-modules
\[
M \widehat \otimes_R L_{\mathbb\Delta}(N) \simeq L_{\mathbb\Delta}(M \otimes_R N).
\]
\end{lem}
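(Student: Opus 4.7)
The plan is to build $\phi$ by a sequence of associativity rearrangements sandwiching one application of $\epsilon_M^{-1}$. Explicitly, I would first reassociate
\[
M \widehat\otimes_R L_{\mathbb\Delta}(N) = M \widehat\otimes_R \bigl(\widehat{R\langle \omega \rangle}_{\mathbb\Delta} \widehat\otimes'_R N\bigr) \ \cong \ \bigl(M \widehat\otimes_R \widehat{R\langle \omega \rangle}_{\mathbb\Delta}\bigr) \widehat\otimes'_R N,
\]
then apply $\epsilon_M^{-1}\widehat\otimes \mathrm{Id}_N$ to pass to $\bigl(\widehat{R\langle \omega \rangle}_{\mathbb\Delta} \widehat\otimes'_R M\bigr) \widehat\otimes_R N$, and finally reassociate once more to reach $L_{\mathbb\Delta}(M \otimes_R N)$. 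Both reassociations are legitimate because $\epsilon_M^{-1}$ is $\widehat{R\langle \omega \rangle}_{\mathbb\Delta}$-linear, hence compatible with both the canonical and the Taylor $R$-structures carried by $\widehat{R\langle \omega \rangle}_{\mathbb\Delta}$.

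The slightly delicate point is that, under $\epsilon_M^{-1}$, the Taylor $R$-action on the right-hand factor $\widehat{R\langle \omega \rangle}_{\mathbb\Delta}$ of $M \widehat\otimes_R \widehat{R\langle \omega \rangle}_{\mathbb\Delta}$ must correspond to the ordinary $R$-action on $M$ inside $\widehat{R\langle \omega \rangle}_{\mathbb\Delta} \widehat\otimes'_R M$. This indeed holds, since by the $\otimes'$ identification $\theta(\alpha)\varphi \otimes' s = \varphi \otimes' \alpha s$; it is precisely what lets the second reassociation turn $\widehat\otimes_R N$ into the ``Taylor-side'' tensor product $\widehat\otimes'_R (M\otimes_R N)$.

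It then remains to check horizontality. The hyperstratification on $L_{\mathbb\Delta}(M\otimes_R N)$ is the regular one induced by $\Delta$ (as in the linearization example), while the hyperstratification on $M\widehat\otimes_R L_{\mathbb\Delta}(N)$ is the tensor product of $\epsilon_M$ with the linearization stratification of $N$. Writing both out and transporting through $\phi$, the required equality of composites reduces exactly to the cocycle identity \eqref{trucoc} for $\epsilon_M$ together with the coassociativity of $\Delta$ already encoded in the construction of $L_{\mathbb\Delta}$. No extra input is needed.

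The main obstacle is the bookkeeping in the first two paragraphs: keeping track, at each tensor symbol, of which of the two $R$-structures on each copy of $\widehat{R\langle \omega \rangle}_{\mathbb\Delta}$ is being used, and verifying that the Taylor action on the $\widehat\otimes'$ side really does unify with the canonical action on $M$ after passing through $\epsilon_M^{-1}$. Once this is in hand, both the definition of $\phi$ and its horizontality are direct consequences of the $\widehat{R\langle \omega \rangle}_{\mathbb\Delta}$-linearity and the cocycle condition defining a $\mathbb\Delta$-hyperstratification.
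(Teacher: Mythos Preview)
Your proposal is correct and follows essentially the same route as the paper. The paper first reduces to $N=R$ by linearity, then writes the cocycle condition \eqref{trucoc} as a commutative square and applies the flip maps \eqref{flipM} everywhere to turn it into the statement that $\epsilon_M^{-1}$ intertwines the tensor-product Taylor map $\theta_M\otimes\theta$ with the linearization Taylor map; your sketch is the same argument, with the flip-map bookkeeping left implicit.
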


\begin{proof}
By linearity, we may assume that $N = R$.
From the very definition of the cocycle condition \eqref{trucoc}, there exists a commutative diagram
\[
\xymatrix{ \widehat {R\langle\omega\rangle}_{\mathbb\Delta} \widehat\otimes_R' M \ar[r]^-{\Delta \otimes \mathrm{Id}} \ar[d]^{\epsilon_M} & \widehat {R\langle\omega\rangle}_{\mathbb\Delta} \widehat\otimes_R'\widehat {R\langle\omega\rangle}_{\mathbb\Delta}\widehat\otimes_R' M \ar[r]^{ \mathrm{Id} \otimes \epsilon_M} & \widehat {R\langle\omega\rangle}_{\mathbb\Delta} \widehat\otimes_R' M \widehat\otimes_R \widehat {R\langle\omega\rangle}_{\mathbb\Delta} \ar[d]^{\epsilon_M \otimes \mathrm{Id}}
\\ M \widehat\otimes_R \widehat {R\langle\omega\rangle}_{\mathbb\Delta} \ar[rr]^-{\mathrm{Id} \otimes \Delta} && M \widehat\otimes_R \widehat {R\langle\omega\rangle}_{\mathbb\Delta} \widehat\otimes_R' \widehat {R\langle\omega\rangle}_{\mathbb\Delta}.
}
\]
Applying the flip maps \eqref{flipM} everywhere provides us with a commutative diagram (using in the top row the interpretation \eqref{tenscp} of tensor product)
\[
\xymatrix{M \widehat\otimes_R \widehat {R\langle\omega\rangle}_{\mathbb\Delta} \ar[rr]^-{\theta_M \otimes \theta} \ar[d]^{\epsilon^{-1}_M} && (M \widehat\otimes_R \widehat {R\langle\omega\rangle}_{\mathbb\Delta}) \widehat\otimes_R \widehat {R\langle\omega\rangle}_{\mathbb\Delta} \ar[d]^{\epsilon_M^{-1} \otimes \mathrm{Id}}
\\ \widehat {R\langle\omega\rangle}_{\mathbb\Delta} \widehat\otimes_R' M \ar[rr]^-{\theta} && (\widehat {R\langle\omega\rangle}_{\mathbb\Delta} \widehat\otimes_R' M) \widehat\otimes_R \widehat {R\langle\omega\rangle}_{\mathbb\Delta}.
}
\]
This shows that the isomorphism
\[
\epsilon_{M}^{-1} : M \widehat \otimes_R L_{\mathbb\Delta}(R) = M \widehat \otimes_R \widehat {R\langle\omega\rangle}_{\mathbb\Delta} \simeq \widehat {R\langle\omega\rangle}_{\mathbb\Delta} \widehat \otimes'_R M \simeq L_{\mathbb\Delta}(M)
\]
is compatible with the $\mathbb\Delta$-Taylor maps.
\end{proof}

In the case $N = R$, the lemma says that, with $\tau_M$ as in \eqref{flipM}, the map
\[
\tau_M \circ \theta_M : M \to L_{\mathbb\Delta}(M)
\]
is compatible with the $\mathbb\Delta$-hyperstratifications.
This may also be derived from equality \eqref{Lpart}.

We shall now make the link with our previous theory of absolute calculus:

%%%%%%%%%%%%%%%%
\begin{prop} \label{inher}
If an $R$-module $M$ is endowed with a $\mathbb\Delta$-hyperstratification, then $\partial_{M,\mathbb\Delta} := \displaystyle \partial^{\langle 1 \rangle}_{M,\mathbb\Delta}$ is a $\mathbb\Delta$-derivation of $M$.
\end{prop}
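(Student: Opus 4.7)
The plan is to derive the $\mathbb\Delta$-derivation identity directly from the compatibility condition that characterizes $\mathbb\Delta$-hyperstratifications in terms of higher $\mathbb\Delta$-derivatives, namely condition \ref{lincon} of lemma \ref{locTayl}. More concretely, I would specialize that formula to the value $k=1$, so that the indexing pairs $(i,j)$ with $0 \le i \le j \le 1$ reduce to the three cases $(0,0), (0,1), (1,1)$; everything else is zero. $W$-linearity and continuity of $\partial^{\langle 1\rangle}_{M,\mathbb\Delta}$ are built into the definition of the higher $\mathbb\Delta$-derivatives (they are continuous $W$-linear maps by construction), so only the Leibniz-like rule has to be checked.

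Carrying out the specialization, all the twisted binomial coefficients involved are equal to $1$, the factor $q^{p\binom{i}{2}}$ equals $1$ in every surviving term, and the three contributions read, respectively, $\partial^{\langle 1\rangle}_{\mathbb\Delta}(\alpha)\,s$, $\alpha\,\partial_{M,\mathbb\Delta}(s)$, and $(q^2-q)\,\partial^{\langle 1\rangle}_{\mathbb\Delta}(\alpha)\,\partial_{M,\mathbb\Delta}(s)$. Summing them yields
\[
\partial_{M,\mathbb\Delta}(\alpha s) = \partial_{\mathbb\Delta}(\alpha)\,s + \bigl(\alpha + (q^2-q)\,\partial_{\mathbb\Delta}(\alpha)\bigr)\partial_{M,\mathbb\Delta}(s),
\]
where we used that $\partial^{\langle 1\rangle}_{\mathbb\Delta} = \partial_{\mathbb\Delta}$ on $R$.

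To finish, I would invoke proposition \ref{basder}.4, which provides the identity $\gamma = \mathrm{Id}_R + (q^2-q)\partial_{\mathbb\Delta}$. Substituting this into the parenthesis gives
\[
\partial_{M,\mathbb\Delta}(\alpha s) = \partial_{\mathbb\Delta}(\alpha)\,s + \gamma(\alpha)\,\partial_{M,\mathbb\Delta}(s),
\]
which is exactly the defining condition of a $\mathbb\Delta$-derivation (with respect to $\partial_{q^p}$) given in definition \ref{qder1}, and equivalent, by proposition \ref{acevd} and lemma \ref{simpq}, to the simpler condition on $\partial_{M,\mathbb\Delta}(qs)$.

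There is no real obstacle here: the statement is essentially a compatibility check between the degree-$1$ part of the hyperstratification axioms and the definition of a $\mathbb\Delta$-derivation. The only mildly non-trivial point is keeping track of the combinatorial coefficients in the right linearity formula of lemma \ref{locTayl} and recognising the expression $\alpha + (q^2-q)\partial_{\mathbb\Delta}(\alpha)$ as $\gamma(\alpha)$, which is exactly the content of proposition \ref{basder}.4.
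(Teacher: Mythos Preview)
Your proof is correct and is precisely the alternative route that the paper itself points to in parentheses: the paper's primary argument composes $\theta_M$ with the projection to $M \otimes_R R[\omega]/\omega^{(2)_{\mathbb\Delta}}$ and invokes lemma \ref{Taylcon}, but immediately adds ``or else use formula \ref{lincon} of lemma \ref{locTayl}'', which is exactly what you do by specializing the right-linearity identity to $k=1$ and recognizing $\alpha + (q^2-q)\partial_{\mathbb\Delta}(\alpha) = \gamma(\alpha)$ via proposition \ref{basder}.4. The two arguments are really the same computation packaged differently---the paper's version hides the $k=1$ expansion inside the proof of lemma \ref{Taylcon}, while yours makes it explicit.
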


\begin{proof}
It is sufficient to consider the composite map
\[
\theta_{M,1} : M \to M \widehat \otimes_{R} \widehat {R\langle\omega\rangle}_{\mathbb\Delta} \to M \otimes_R R[\omega]/\omega^{(2)_{\mathbb\Delta}}
\]
and to invoke lemma \ref{Taylcon} (or else use formula \ref{lincon} of lemma \ref{locTayl}).
\end{proof}

Formally, this proposition means that there exists a forgetful functor from the category of $\mathbb\Delta$-hyperstratified modules to the category of $\nabla_{\mathbb\Delta}$-modules.

%%%%%%%%%%%%
\begin{xmps}
\begin{enumerate}
\item 
In the trivial case, we (fortunately) recover our original $\partial_{\mathbb\Delta}$ from proposition \ref{basder}.
\item In the right-regular case, the map
\[
L_{\mathbb\Delta} : \widehat {R\langle\omega\rangle}_{\mathbb\Delta} \to \widehat {R\langle\omega\rangle}_{\mathbb\Delta}
\]
is $R$-linear (for the left structure) and is a $\mathbb\Delta$-derivation for the \emph{right} structure.
It means that
\begin{equation} \label{rlin}
\forall \alpha \in R, \forall \varphi \in \widehat{R\langle\omega\rangle}_{\mathbb\Delta}, \quad L_{\mathbb\Delta}(\theta(\alpha)\varphi) = \theta(\partial_{\mathbb\Delta}(\alpha)) \varphi + \theta(\gamma(\alpha)) L_{\mathbb\Delta}(\varphi).
\end{equation}
\item In the left-regular case, we obtain a $\mathbb\Delta$-derivation (for the left structure)
 \[
\partial_{\mathbb\Delta}: \widehat {R\langle\omega\rangle}_{\mathbb\Delta} \to \widehat {R\langle\omega\rangle}_{\mathbb\Delta}
\]
that extends the standard $\mathbb\Delta$-derivation of $R$.
Moreover, it is linear for the \emph{right} structure, which means that
\[
\forall \alpha \in R, \forall \varphi \in \widehat{R\langle\omega\rangle}_{\mathbb\Delta}, \quad \partial_{\mathbb\Delta}(\theta(\alpha)\varphi) = \theta(\alpha)\partial_{\mathbb\Delta}(\varphi).
\]
\end{enumerate}
\end{xmps}

Recall that the original $\gamma$, which is defined on $R$ by $\gamma(q) = q^{p+1}$, is extended to $R[[\xi]]$ through $\gamma(q+\xi) = q+\xi$.

%%%%%%%%%%%%
\begin{prop} \label{sigcont}
The endomorphism $\gamma$ of $R[[\xi]]$ extends uniquely to a continuous endomorphism of the ring
$\widehat {R\langle \omega \rangle}_{\mathbb\Delta}$ and we have
\[
\gamma(\omega) = \frac 1 \lambda (\omega -q^2+q)\quad \mathrm{with} \quad \lambda = 1 + (q^2-q)\partial_{q^p}((p)_q).
\]
\end{prop}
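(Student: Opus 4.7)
The proof follows the pattern of proposition \ref{extflip}, applying the universal property of the prismatic envelope (theorem \ref{prismenv}). Since the statement is purely formal, we may assume $W = \mathbb Z_p$ and equip $R$ with its unique $\delta$-structure (which forces $\delta(q) = 0$), and $R[[\xi]]$ with the $\delta$-structure determined by $\delta(q) = \delta(q+\xi) = 0$.

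The crucial step is to verify that $\gamma \colon R[[\xi]] \to R[[\xi]]$ is a morphism of $\delta$-rings. Compatibility with $\delta(q+\xi) = 0$ is immediate, since $\gamma$ fixes $q + \xi$. The nontrivial point is $\delta(\gamma(q)) = \delta(q^{p+1}) = 0$, and this is where $\delta(q) = 0$ is essential: it implies $\phi(q) = q^p$, hence
\[
\phi(q^{p+1}) = q^{p(p+1)} = (q^{p+1})^p,
\]
so that $\delta(q^{p+1}) = (\phi(q^{p+1}) - (q^{p+1})^p)/p = 0$.

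Next, using $\gamma(\xi) = (q+\xi) - q^{p+1} = \xi - (q^{p+1} - q) = \xi - (q^2 - q)(p)_q = (p)_q(\omega - q^2 + q)$, we see that $\gamma$ sends $\xi$ into the ideal $(p)_q \widehat{R\langle \omega \rangle}_{\mathbb\Delta}$. Thus the composite $R[[\xi]] \xrightarrow{\gamma} R[[\xi]] \to \widehat{R\langle \omega \rangle}_{\mathbb\Delta}$ is a morphism of $\delta$-pairs from $(R[[\xi]], \xi)$ to the bounded prism $(\widehat{R\langle \omega \rangle}_{\mathbb\Delta}, (p)_q)$. By theorem \ref{prismenv}, it factors uniquely through a morphism of prisms $\gamma \colon \widehat{R\langle \omega \rangle}_{\mathbb\Delta} \to \widehat{R\langle \omega \rangle}_{\mathbb\Delta}$ (automatically continuous for the $(p, (p)_q)$-adic topology, which is the topology of the prismatic envelope), yielding both existence and uniqueness of the extension.

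Finally, the explicit formula is read off from $(p)_q \omega = \xi$: applying $\gamma$ and invoking remark \ref{rmksDqp}.3 to write $\gamma((p)_q) = (p)_{q^{p+1}} = \lambda (p)_q$ with $\lambda \in R^\times$, one gets
\[
\lambda (p)_q \,\gamma(\omega) = \gamma(\xi) = (p)_q(\omega - q^2 + q),
\]
and since $(p)_q$ is regular in the prism $\widehat{R\langle \omega \rangle}_{\mathbb\Delta}$, dividing gives $\gamma(\omega) = (\omega - q^2 + q)/\lambda$. The main obstacle is really the $\delta$-compatibility check for $\gamma(q) = q^{p+1}$; once that is observed, the rest is a routine application of the universal property together with the explicit computation of $\gamma(\xi)$.
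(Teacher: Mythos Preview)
Your proof is correct and follows essentially the same approach as the paper: reduce to $W=\mathbb Z_p$, check that $\gamma$ is a $\delta$-morphism sending $\xi$ into $(p)_q\widehat{R\langle\omega\rangle}_{\mathbb\Delta}$, and invoke theorem~\ref{prismenv}. You supply more detail than the paper on the $\delta$-compatibility of $\gamma(q)=q^{p+1}$ and you rederive the formula for $\gamma(\omega)$ directly, whereas the paper simply cites the earlier equality~\eqref{sigom}; otherwise the arguments coincide.
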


\begin{proof}
The first assertion is completely formal and we may assume that $W = \mathbb Z_p$.
It is then sufficient to notice that $\gamma$ is a morphism of $\delta$-rings and, using formula \eqref{pqplun}, that it sends both $(p)_q$ and $\xi$ inside the ideal generated by $(p)_q$.
We may then apply theorem \ref{prismenv}.
The last formula is simply a reminder of equality \eqref{sigom}.
\end{proof}

\begin{rmk}
The $R$-linear endomorphism of the polynomial ring $R[\xi]$ given by
\[
\gamma'(q+\xi) = (q+\xi)^{p+1} \quad (=\theta(q^{p+1}))
\]
also extends uniquely to an endomorphism of $\widehat{R\langle \omega \rangle}_{\mathbb\Delta}$ because $\gamma' = \tau \circ \gamma \circ \tau$.
We can give an explicit formula for $\gamma'(\omega)$.
We have
\[
\gamma'(\xi) = \theta(q^{p+1}) - q =\theta(q^{p+1}-q) + \xi = \theta(q^2-q)\theta((p)_q) + \xi
\]
and therefore
\[
\gamma'(\omega) = \omega + \theta(q^2-q)L(\omega)
\]
with $L(\omega)$ as in formula \eqref{uomega}.
\end{rmk}

We now show that the endomorphism $\gamma$ of proposition \ref{sigcont} is the same as the canonical endomorphism associated to the derivation $\partial_{\mathbb\Delta}$ on $\widehat{R\langle\omega\rangle}_{\mathbb\Delta}$ (and there is therefore no ambiguity in the notations):

%%%%%%%%%%%%%%%%%
\begin{lem} \label{sigdel}
$\forall \varphi \in \widehat{R\langle\omega\rangle}_{\mathbb\Delta}, \quad \gamma(\varphi) = \varphi + (q^2-q)\partial_{\mathbb\Delta}(\varphi).
$
\end{lem}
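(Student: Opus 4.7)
The plan is to define $\tilde{\gamma}(\varphi) := \varphi + (q^2-q)\partial_{\mathbb\Delta}(\varphi)$ for $\varphi \in \widehat{R\langle\omega\rangle}_{\mathbb\Delta}$; this is a continuous $W$-linear endomorphism because $\partial_{\mathbb\Delta}$ is continuous. I will show that $\tilde\gamma$ is in fact a \emph{ring} endomorphism that extends the endomorphism $\gamma$ of $R[[\xi]]$, so that the uniqueness part of proposition \ref{sigcont} will force $\tilde\gamma = \gamma$, which is the claimed identity.

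The key algebraic step is to prove that $\partial_{\mathbb\Delta}$, viewed as an operator on the \emph{ring} $\widehat{R\langle\omega\rangle}_{\mathbb\Delta}$, satisfies the $(q^2-q)$-twisted Leibniz rule
\[
\partial_{\mathbb\Delta}(\varphi\psi) = \partial_{\mathbb\Delta}(\varphi)\psi + \varphi\partial_{\mathbb\Delta}(\psi) + (q^2-q)\partial_{\mathbb\Delta}(\varphi)\partial_{\mathbb\Delta}(\psi).
\]
Granting this, expanding $\tilde\gamma(\varphi)\tilde\gamma(\psi) = [\varphi + (q^2-q)\partial_{\mathbb\Delta}(\varphi)][\psi + (q^2-q)\partial_{\mathbb\Delta}(\psi)]$ immediately yields $\tilde\gamma(\varphi\psi)$, so $\tilde\gamma$ is multiplicative. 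To obtain the rule, I use that the left-regular Taylor map $\theta = (\tau \otimes \mathrm{Id}) \circ \Delta \circ \tau$ is a ring homomorphism by propositions \ref{extflip} and \ref{extcomult}; writing $\theta(\varphi) = \sum_k \partial^{\langle k \rangle}_{\mathbb\Delta}(\varphi) \otimes \omega^{\{k\}_{\mathbb\Delta}}$ and extracting the coefficient of $\omega = \omega^{\{1\}_{\mathbb\Delta}}$ in $\theta(\varphi\psi) = \theta(\varphi)\theta(\psi)$, the multiplication rule from definition \ref{defdtw} gives $\omega \cdot \omega = (2)_{q^p}\omega^{\{2\}_{\mathbb\Delta}} + (q^2-q)\omega$, and the $(q^2-q)\omega$ piece supplies precisely the cubic correction term.

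It remains to check $\tilde\gamma|_{R[[\xi]]} = \gamma|_{R[[\xi]]}$. On $R$ this is exactly proposition \ref{basder}.4. For $\xi$, the explicit description of $\theta$ restricted to $R[[\xi]]$ from the left-regular example gives $\theta(\xi) = \xi \otimes 1 - 1 \otimes \xi$; balancing scalars in the tensor product to rewrite $1 \otimes \xi = 1 \otimes (p)_q\omega = (p)_q \otimes \omega^{\{1\}_{\mathbb\Delta}}$ identifies $\partial_{\mathbb\Delta}(\xi) = -(p)_q$. Consequently $\tilde\gamma(\xi) = \xi - (q^2-q)(p)_q = \xi - (q^{p+1}-q) = \gamma(\xi)$, and since $\tilde\gamma$ is a continuous ring endomorphism of $\widehat{R\langle\omega\rangle}_{\mathbb\Delta}$ extending $\gamma$ from $R[[\xi]]$, the uniqueness in proposition \ref{sigcont} delivers $\tilde\gamma = \gamma$. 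The main obstacle is the twisted Leibniz rule of step two: one has to correctly book-keep the hidden $(q^2-q)\omega$ contribution in $\omega \cdot \omega$ when translating multiplicativity of $\theta$ into an identity for $\partial_{\mathbb\Delta}$, and remember that $\partial_{\mathbb\Delta}$ is not a derivation of the ring structure but a $(q^2-q)$-difference operator; once this is clear, the rest is routine.
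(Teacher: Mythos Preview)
Your approach is correct and genuinely different from the paper's. The paper proceeds by a bare-hands induction: it writes $C(\varphi)$ for the asserted identity and verifies $C(1)$, $C(\varphi)\Leftrightarrow C(q\varphi)$, and $C(\varphi)\Rightarrow C(\omega\varphi)$, using only the $\mathbb\Delta$-derivation property of $\partial_{\mathbb\Delta}$ (for the left structure) together with its linearity for the right structure, plus the explicit value of $\gamma(\omega)$ from proposition \ref{sigcont}. You instead exploit that the left-regular Taylor map $\theta=(\tau\otimes\mathrm{Id})\circ\Delta\circ\tau$ is a ring homomorphism (a consequence of propositions \ref{extflip} and \ref{extcomult}), read off the twisted Leibniz rule for $\partial_{\mathbb\Delta}$ by extracting the $\omega^{\{1\}_{\mathbb\Delta}}$-coefficient, and reduce to the uniqueness in proposition \ref{sigcont}. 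This is more conceptual: it simultaneously proves proposition \ref{twder} and in fact answers the question the authors raise in the remark after that proposition (``it is however not clear at all that this would be a ring morphism'').

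One point deserves care. The uniqueness in proposition \ref{sigcont} is established via the universal property of the prismatic envelope, hence a priori only among morphisms of $\delta$-pairs; you have not checked that $\tilde\gamma$ is a $\delta$-ring map. The cleanest fix is to argue uniqueness directly: once $\tilde\gamma$ and $\gamma$ agree on $R$ and on $\xi$, the identity $\gamma((p)_q)\bigl(\tilde\gamma(\omega)-\gamma(\omega)\bigr)=0$ together with regularity of $\gamma((p)_q)=(p)_{q^{p+1}}$ gives $\tilde\gamma(\omega)=\gamma(\omega)$; then $\tilde\gamma$ and $\gamma$ agree on each $\omega^{(n)_{\mathbb\Delta}}=(n)_{q^p}!\,\omega^{\{n\}_{\mathbb\Delta}}$, and regularity of $\gamma((n)_{q^p}!)$ forces agreement on every $\omega^{\{n\}_{\mathbb\Delta}}$, hence everywhere by continuity. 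With this small addition your proof is complete.
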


\begin{proof}
If we call $C(\varphi)$ this equality, then it is sufficient to prove
\[
(1) : C(1), \quad (2) : C(\varphi) \Leftrightarrow C(q \varphi) \quad \mathrm{and}\quad (3) : \quad C(\varphi) \Rightarrow C(\omega\varphi).
\]
Property (1) is trivial and $C(q\varphi)$ reads
\[
q^{p+1}\gamma(\varphi)= q\varphi + (q^2-q)(p)_q\varphi + (q^2-q)q^{p+1}\partial_{\mathbb\Delta}(\varphi).
\]
Since $q + (q^2-q)(p)_q = q^{p+1}$, we see that property (2) holds.
It remains to prove property (3).
Since $\partial_{\mathbb\Delta}$ is linear with respect to the right action, we have
\[
\partial_{\mathbb\Delta}((q+(p)_q\omega)\varphi) =(q+(p)_q\omega)\partial_{\mathbb\Delta}(\varphi).
\]
Since $\partial_{\mathbb\Delta}$ is a $\mathbb\Delta$-derivation, this may be rewritten as
\[
(p)_q\varphi + q^{p+1}\partial_{\mathbb\Delta}(\varphi)+\partial_{\mathbb\Delta}((p)_q)\omega\varphi + (p)_{q^{p+1}}\partial_{\mathbb\Delta}(\omega\varphi) =(q+(p)_q\omega)\partial_{\mathbb\Delta}(\varphi),
\]
or better:
\[
\lambda\partial_{\mathbb\Delta}(\omega\varphi) = (\omega -q^2+q)\partial_{\mathbb\Delta}(\varphi) -(1+\partial_{q^p}((p)_q)\omega)\varphi
\]
with $\lambda \in R^\times$ as in \eqref{pqplun} so that $(p)_{q^{p+1}}= \lambda(p)_q$.
We may now use equality \eqref{sigomav} and we obtain, if we assume that $C(\varphi)$ holds, that
\begin{linenomath}
\begin{align*}
\lambda\gamma(\omega\varphi) & = (\omega -q^2+q)(\varphi + (q^2-q)\partial_{\mathbb\Delta}(\varphi))
 \\ & = (q^2-q)(\omega -q^2+q)\partial_{\mathbb\Delta}(\varphi) + (\omega -q^2+q)\varphi
 \\ & = (q^2-q)((\omega -q^2+q)\partial_{\mathbb\Delta}(\varphi) - (1+\partial_{q^p}((p)_q)\omega)\varphi)
 \\ & + (q^2-q)(1+\partial_{q^p}((p)_q)\omega)\varphi + (\omega -q^2+q)\varphi
\\ & = (q^2-q) \lambda \partial_{\mathbb\Delta}(\omega\varphi) + (1+(q^2-q)\partial_{q^p}((p)_q))\omega\varphi
\\ & = \lambda (q^2-q) \partial_{\mathbb\Delta}(\omega\varphi) + \lambda \omega\varphi.
\end{align*}
\end{linenomath}
Thus, property (3) also holds.
\end{proof}

%%%%%%%%%%%%%%%%%
\begin{prop} \label{twder}
The endomorphism $\partial_{\mathbb\Delta}$ of $\widehat {R\langle\omega\rangle}_{\mathbb\Delta}$ is a twisted derivation of the \emph{ring} $\widehat {R\langle\omega\rangle}_{\mathbb\Delta}$ with respect to the \emph{ring} endomorphism $\gamma$ of $\widehat {R\langle\omega\rangle}_{\mathbb\Delta}$:
\[
\forall \varphi, \psi \in \widehat{R\langle\omega\rangle}_{\mathbb\Delta}, \quad \partial_{\mathbb\Delta}(\varphi\psi) = \partial_{\mathbb\Delta}(\varphi)\psi + \gamma(\varphi) \partial_{\mathbb\Delta}(\psi).
\]
\end{prop}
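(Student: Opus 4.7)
The plan is to deduce this twisted Leibniz rule directly from two facts already established: first, by Proposition \ref{sigcont}, the endomorphism $\gamma$ is a ring endomorphism of $\widehat{R\langle\omega\rangle}_{\mathbb\Delta}$; second, by Lemma \ref{sigdel}, we have the identity
\[
\gamma(\varphi) = \varphi + (q^2-q)\partial_{\mathbb\Delta}(\varphi) \quad \text{for all } \varphi \in \widehat{R\langle\omega\rangle}_{\mathbb\Delta}.
\]

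First, I would apply $\gamma$ to the product $\varphi\psi$ in two different ways. On one hand, directly using the identity above,
\[
\gamma(\varphi\psi) = \varphi\psi + (q^2-q)\partial_{\mathbb\Delta}(\varphi\psi).
\]
On the other hand, using multiplicativity of $\gamma$ and then expanding each factor,
\[
\gamma(\varphi)\gamma(\psi) = \bigl(\varphi + (q^2-q)\partial_{\mathbb\Delta}(\varphi)\bigr)\bigl(\psi + (q^2-q)\partial_{\mathbb\Delta}(\psi)\bigr).
\]
Expanding this product and subtracting $\varphi\psi$ from both expressions yields, after factoring $(q^2-q)$ on the right,
\[
(q^2-q)\partial_{\mathbb\Delta}(\varphi\psi) = (q^2-q)\bigl(\partial_{\mathbb\Delta}(\varphi)\psi + \varphi\partial_{\mathbb\Delta}(\psi) + (q^2-q)\partial_{\mathbb\Delta}(\varphi)\partial_{\mathbb\Delta}(\psi)\bigr),
\]
and the term in brackets equals $\partial_{\mathbb\Delta}(\varphi)\psi + \gamma(\varphi)\partial_{\mathbb\Delta}(\psi)$ after using $\gamma(\varphi) = \varphi + (q^2-q)\partial_{\mathbb\Delta}(\varphi)$ once more.

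The remaining step is to cancel the factor $q^2-q = q(q-1)$. Since $q-1$ lies in the radical of $R$ (and hence $q$ is a unit in the completion), it suffices to know that $q-1$ is a non-zero-divisor in $\widehat{R\langle\omega\rangle}_{\mathbb\Delta}$. This follows from the fact that $R\langle\omega\rangle_{\mathbb\Delta}$ is free as an $R$-module on the basis $(\omega^{\{n\}_{\mathbb\Delta}})_{n\in\mathbb N}$ and that $R=W[[q-1]]$ is $(q-1)$-torsion free; the $(p,q-1)$-adic completion of a free module over such a ring remains $(q-1)$-torsion free. After this cancellation, we obtain the desired formula.

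The only delicate point is really the regularity of $q-1$ in the completion, but this is a routine consequence of the concrete description of $\widehat{R\langle\omega\rangle}_{\mathbb\Delta}$ as a space of convergent series in the twisted divided powers with coefficients in $R$. Once this is granted, the whole argument is a one-line manipulation of Lemma \ref{sigdel} combined with the multiplicativity of $\gamma$ from Proposition \ref{sigcont}.
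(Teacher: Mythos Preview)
Your proof is correct and follows exactly the approach the paper takes: the paper's proof is the single sentence ``The assertion follows from lemma \ref{sigdel} since $\gamma$ is a ring morphism by definition,'' and you have simply unpacked that sentence, including the cancellation of the regular element $q^2-q$ that the paper leaves implicit.
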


\begin{proof}
The assertion follows from lemma \ref{sigdel} since $\gamma$ is a ring morphism by definition.
\end{proof}

%%%%%%%%%%
\begin{rmks}
\begin{enumerate}
\item One could have defined $\gamma$ on $\widehat {R\langle \omega \rangle}_{\mathbb\Delta}$ directly through the formula of lemma \ref{sigdel}.
It is however not clear at all that this would be a ring morphism and this is why we had to rely on proposition \ref{sigcont}
\item We can transfer our results to the right-regular case.
If $\varphi, \psi \in \widehat{R\langle\omega\rangle}_{\mathbb\Delta}$, then we have 
\begin{enumerate}
\item $\gamma'(\varphi) = \varphi + \theta(q^2-q)L_{\mathbb\Delta}(\varphi),
$
\item $
L_{\mathbb\Delta}(\varphi\psi) = L_{\mathbb\Delta}(\varphi)\psi + \gamma'(\varphi) L_{\mathbb\Delta}(\psi).
$
\end{enumerate}
\end{enumerate}
\end{rmks}

%%%%%%%%%
\begin{xmps}
\begin{enumerate}
\item 
We get
\[
\partial_{\mathbb\Delta}(\omega)= -\frac 1\lambda (1 + \partial_{q^p}((p)_q)\omega) \quad \mathrm{with} \quad \lambda = 1 + (q^2-q)\partial_{q^p}((p)_q)
\]
from equality \eqref{sigom} and lemma \ref{sigdel}.
\item
Applying formula \eqref{rlin} to the case $\alpha =q$ and $\varphi =1$ provides
\[
(p)_qL_{\mathbb\Delta}(\omega) = \theta((p)_q)
\]
and we can therefore write explicitly
\begin{equation} \label{Lqpom}
L_{\mathbb\Delta}(\omega) = 1 + \sum_{k=1}^{p-1} (p)_q^{k-1}\partial_{q^p}^{\langle k \rangle}((p)_q) \omega^{\{k\}_{\mathbb\Delta}}.
\end{equation}
Note that this is the same thing as $L(\omega)$ in lemma \ref{pqinv} but we now use its twisted expansion.
\end{enumerate}
\end{xmps}

It seems quite hard to compute $\partial_{\mathbb\Delta}$ or $L_{\mathbb\Delta}$ explicitly in general but the following estimate shall be enough for us:

%%%%%%%%%%%%%%%%%%
\begin{lem} \label{estimates}
We have for all $n > 0$,
\[
L_{\mathbb\Delta}(\omega^{\{n\}_{\mathbb\Delta}}) \equiv P_{n-1}(\omega) \mod q-1
\]
with 
\[
P_n(\omega) = \sum_{i=0}^{p-1} {n+i \choose n} \alpha_{i}\omega^{\{n+i\}_{\mathbb\Delta}},
\]
\[
\alpha_{0} = 1 \quad \textrm{and} \quad \alpha_i = p^{i-1} i! \sum_{j=i}^{p-1} {j \choose i} \ \textrm{for}\ 0 < i < p.
\]
\end{lem}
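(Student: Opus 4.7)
The plan is to extract $L_{\mathbb\Delta}(\omega^{\{n\}_{\mathbb\Delta}})$ as the coefficient of $\otimes\, \omega^{\{1\}_{\mathbb\Delta}}$ in the (unique) expansion $\Delta(\omega^{\{n\}_{\mathbb\Delta}}) = \sum_k L^{\langle k\rangle}_{\mathbb\Delta}(\omega^{\{n\}_{\mathbb\Delta}}) \otimes \omega^{\{k\}_{\mathbb\Delta}}$, by computing $\Delta(\omega^{\{n\}_{\mathbb\Delta}})$ modulo $q-1$ directly from the fact that $\Delta$ is a ring homomorphism. To justify the factorial cancellations that arise, I first reduce by functoriality of the construction in $W$ to the universal case $W = \mathbb Z_p$; then $\widehat{R\langle\omega\rangle}_{\mathbb\Delta}/(q-1)$ is a completion of the $\mathbb Z_p$-free module $\bigoplus_n \mathbb Z_p\, \omega^{\{n\}_{\mathbb\Delta}}$, and in particular $\mathbb Z$-torsion free.

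Combining $\omega^{(n)_{\mathbb\Delta}} = (n)_{q^p}!\, \omega^{\{n\}_{\mathbb\Delta}}$ in $R\langle\omega\rangle_{\mathbb\Delta}$ with the evident congruence
\[
\omega^{(n)_{\mathbb\Delta}} = \prod_{k=0}^{n-1}(\omega - (k)_{q^p}(q-1)q) \equiv \omega^n \pmod{q-1},
\]
and using $\Delta(\omega) = L(\omega) \otimes \omega + \omega \otimes 1$ from \eqref{Delom} together with commutativity of $\widehat{R\langle\omega\rangle}_{\mathbb\Delta}\widehat\otimes'_R\widehat{R\langle\omega\rangle}_{\mathbb\Delta}$, the binomial expansion of $\Delta(\omega)^n$ yields
\[
(n)_{q^p}!\, \Delta(\omega^{\{n\}_{\mathbb\Delta}}) \equiv \sum_{k=0}^n \binom{n}{k} L(\omega)^k\, \omega^{n-k} \otimes \omega^k \pmod{q-1}.
\]
Replacing $\omega^m$ by $m!\, \omega^{\{m\}_{\mathbb\Delta}}$ modulo $q-1$ (via the same argument applied to each factor) transforms the right-hand side into $n! \sum_k L(\omega)^k\, \omega^{\{n-k\}_{\mathbb\Delta}} \otimes \omega^{\{k\}_{\mathbb\Delta}}$. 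Since the $\omega^{\{k\}_{\mathbb\Delta}}$ form a basis of $\widehat{R\langle\omega\rangle}_{\mathbb\Delta}\widehat\otimes'_R\widehat{R\langle\omega\rangle}_{\mathbb\Delta}$ over $\widehat{R\langle\omega\rangle}_{\mathbb\Delta}$ (via the right structure), comparing coefficients of $\otimes\, \omega^{\{1\}_{\mathbb\Delta}}$ and cancelling the regular element $n!$ produces the key intermediate formula
\[
L_{\mathbb\Delta}(\omega^{\{n\}_{\mathbb\Delta}}) \equiv L(\omega)\, \omega^{\{n-1\}_{\mathbb\Delta}} \pmod{q-1}.
\]

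To finish, I will make $L(\omega)$ explicit modulo $q-1$. From formula \eqref{Lqpom}, the coefficient of $\omega^{\{k\}_{\mathbb\Delta}}$ in $L(\omega)$ is $(p)_q^{k-1}\,\partial_{q^p}^{\langle k\rangle}((p)_q) = (p)_q^{k-1}(k)_{q^p}!\, \sum_{j=k}^{p-1}\binom{j}{k}_{q^p} q^{j-k}$, which reduces modulo $q-1$ to exactly $p^{k-1} k!\, \sum_{j=k}^{p-1}\binom{j}{k} = \alpha_k$; hence $L(\omega) \equiv P_0(\omega) \pmod{q-1}$. The multiplication formula in definition \ref{defdtw} degenerates modulo $q-1$ to the classical divided-power rule $\omega^{\{i\}_{\mathbb\Delta}}\omega^{\{n-1\}_{\mathbb\Delta}} \equiv \binom{n-1+i}{i}\omega^{\{n-1+i\}_{\mathbb\Delta}}$, so multiplying out gives $L_{\mathbb\Delta}(\omega^{\{n\}_{\mathbb\Delta}}) \equiv \sum_{i=0}^{p-1}\binom{n-1+i}{i}\alpha_i\,\omega^{\{n-1+i\}_{\mathbb\Delta}} = P_{n-1}(\omega)$ as required. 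The one genuinely delicate point in this plan is the cancellation of $n!$, which is precisely why the preliminary reduction to a $\mathbb Z$-torsion free base ring is indispensable; everything else reduces to straightforward mod-$(q-1)$ manipulations and bookkeeping.
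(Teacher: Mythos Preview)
Your argument has a real gap at the step ``replacing $\omega^m$ by $m!\,\omega^{\{m\}_{\mathbb\Delta}}$ modulo $q-1$ (via the same argument applied to each factor)''. In the completed tensor product $\widehat{R\langle\omega\rangle}_{\mathbb\Delta}\,\widehat\otimes'_R\,\widehat{R\langle\omega\rangle}_{\mathbb\Delta}$ the two copies of $R$ are glued by identifying the \emph{Taylor} structure on the left factor with the \emph{canonical} structure on the right factor: $1\otimes\alpha=\theta(\alpha)\otimes 1$ for $\alpha\in R$. Reducing modulo $(q-1)\otimes 1$ (which is what yields $L_{\mathbb\Delta}^{\langle k\rangle}(\omega^{\{n\}_{\mathbb\Delta}})\bmod(q-1)$) therefore does \emph{not} set $q\equiv 1$ in the right factor: one has $1\otimes(q-1)=\theta(q-1)\otimes 1\equiv p\omega\otimes 1\not\equiv 0$. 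Hence $1\otimes\omega^k\not\equiv k!\,(1\otimes\omega^{\{k\}_{\mathbb\Delta}})$, and the terms with $k\ge 2$ in your binomial expansion contribute nontrivially to the coefficient of $\otimes\,\omega^{\{1\}_{\mathbb\Delta}}$. Already for $n=2$ one finds (via the twisted Leibniz rule, remark~2 after proposition~\ref{twder})
\[
2\,L_{\mathbb\Delta}(\omega^{\{2\}_{\mathbb\Delta}})\ \equiv\ 2L(\omega)\,\omega\ +\ \theta(q^2-q)\,L(\omega)^2 \pmod{q-1},
\]
with $\theta(q^2-q)\equiv p\omega+2p^2\omega^{\{2\}_{\mathbb\Delta}}\not\equiv 0$, so your intermediate formula $L_{\mathbb\Delta}(\omega^{\{n\}_{\mathbb\Delta}})\equiv L(\omega)\,\omega^{\{n-1\}_{\mathbb\Delta}}$ is not justified modulo $q-1$.

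It is worth noting that the paper's own inductive proof has exactly the same blind spot: the assertion ``$L_{\mathbb\Delta}$ is a usual derivation modulo $q-1$'' amounts to $\gamma'\equiv\mathrm{Id}$, i.e.\ $\theta(q^2-q)\equiv 0\pmod{q-1}$, which fails for the reason above. Both your argument and the paper's become valid once one works modulo $(p,q-1)$, since then $\theta(q^2-q)\equiv 0$ and the right factor genuinely reduces as you claim; that weakening is all that is used downstream (proposition~\ref{modp} and the little Poincar\'e lemma). But the congruence modulo $q-1$ alone, as literally stated, does not follow from either argument.
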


\begin{proof}
This is completely formal and we may therefore assume that $W = \mathbb Z_p$.
We proceed by induction.
Since
\begin{linenomath}
\begin{align*}
\partial_{q^p}^{\langle i \rangle}((p)_q) &= \sum_{j=0}^{p-1} \partial_{q^p}^{\langle i \rangle}(q^j)
\\ &= \sum_{j=i}^{p-1} (i)_{q^p}!{j \choose i}_{q^p}q^{j-i}
\\ & \equiv i!\sum_{j=i}^{p-1} {j \choose i} \mod q-1,
\end{align*}
\end{linenomath}
it follows from formula \eqref{Lqpom} that $L_{\mathbb\Delta}(\omega) \equiv P_0(\omega) \mod q-1$ and the formula therefore holds when $n=1$.
Assume now that it holds up to some $n >0$ and recall from the very definition of $\omega^{\{n\}_{\mathbb\Delta}}$ that
\[
(n+1)\omega^{\{n+1\}_{\mathbb\Delta}} \equiv \omega^{\{n\}_{\mathbb\Delta}}\omega \mod q-1.
\]
Since $L_{\mathbb\Delta}$ is a usual derivation modulo $q-1$, we will have
\begin{linenomath}
\begin{align*}
(n+1)L_{\mathbb\Delta}(\omega^{\{n+1\}_{\mathbb\Delta}}) &\equiv L_{\mathbb\Delta}(\omega^{\{n\}_{\mathbb\Delta}})\omega + L_{\mathbb\Delta}(\omega)\omega^{\{n\}_{\mathbb\Delta}}
\\ & \equiv P_{n-1}(\omega)\omega + P_0(\omega)\omega^{\{n\}_{\mathbb\Delta}}
\\ &\equiv \sum_{i=0}^{p-1} {n+i-1 \choose n-1} \alpha_{i}\omega^{\{n+i-1\}_{\mathbb\Delta}}\omega + \sum_{i=0}^{p-1} \alpha_{i}\omega^{\{i\}_{\mathbb\Delta}}\omega^{\{n\}_{\mathbb\Delta}}
\\ &\equiv \sum_{i=0}^{p-1} \left({n+i-1 \choose n-1}(n+i) + {n+i \choose n}\right) \alpha_{i}\omega^{\{n+i\}_{\mathbb\Delta}}
\\ &\equiv \sum_{i=0}^{p-1} (n+1) {n+i \choose n} \alpha_{i}\omega^{\{n+i\}_{\mathbb\Delta}}
\\ & \equiv (n+1)P_n(\omega) \mod q-1.\qedhere
\end{align*}
\end{linenomath}
\end{proof}

We shall only need the formulas of lemma \ref{estimates} modulo $p$:

%%%%%%%%%
\begin{prop} \label{modp}
We have for all $n > 0$,
\[
L_{\mathbb\Delta}(\omega^{\{n\}_{\mathbb\Delta}}) \equiv \left\{\begin{array} {ll} \omega^{\{n-1\}_{\mathbb\Delta}} & \mathrm{if} \ p\ \mathrm{odd}\\ \omega^{\{n-1\}_{\mathbb\Delta}} + n \omega^{\{n\}_{\mathbb\Delta}} & \mathrm{if} \ p=2 \end{array}\right. \mod (p,q-1).
\]
\end{prop}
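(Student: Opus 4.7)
The proposition is essentially a reduction modulo $p$ of lemma \ref{estimates}, so the plan is to start from the formula
\[
L_{\mathbb\Delta}(\omega^{\{n\}_{\mathbb\Delta}}) \equiv P_{n-1}(\omega) \mod q-1, \qquad P_{n-1}(\omega) = \sum_{i=0}^{p-1} {n-1+i \choose n-1} \alpha_{i}\omega^{\{n-1+i\}_{\mathbb\Delta}},
\]
given there, and show that modulo $p$ most coefficients $\alpha_i$ vanish. This requires only analyzing the explicit expression
\[
\alpha_0 = 1, \qquad \alpha_i = p^{i-1} i! \sum_{j=i}^{p-1} {j \choose i} \quad \text{for } 0<i<p
\]
as an integer, so there is no real obstacle; the only subtlety is that the $p=2$ case must be treated separately because the naive parity argument fails there.

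For $p$ odd, I first observe that if $i\geq 2$ then the factor $p^{i-1}$ forces $\alpha_i \equiv 0 \mod p$. The remaining case is $i=1$, where
\[
\alpha_1 = \sum_{j=1}^{p-1} j = \frac{(p-1)p}{2},
\]
and since $p$ is odd, $(p-1)/2 \in \mathbb Z$, so $\alpha_1$ is also divisible by $p$. Thus $P_{n-1}(\omega) \equiv {n-1 \choose n-1}\alpha_0 \omega^{\{n-1\}_{\mathbb\Delta}} \equiv \omega^{\{n-1\}_{\mathbb\Delta}} \mod (p,q-1)$, which gives the first formula.

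For $p=2$, the sum defining $P_{n-1}$ only has the two terms $i=0$ and $i=1$, and
\[
\alpha_1 = 2^0 \cdot 1! \cdot {1 \choose 1} = 1,
\]
so directly
\[
P_{n-1}(\omega) = {n-1 \choose n-1} \omega^{\{n-1\}_{\mathbb\Delta}} + {n \choose n-1} \omega^{\{n\}_{\mathbb\Delta}} = \omega^{\{n-1\}_{\mathbb\Delta}} + n\omega^{\{n\}_{\mathbb\Delta}},
\]
which is the claimed expression modulo $(p,q-1)$. Combining both cases with the congruence from lemma \ref{estimates} yields the proposition.
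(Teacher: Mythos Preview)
Your proof is correct and follows essentially the same route as the paper: reduce the explicit coefficients $\alpha_i$ from lemma~\ref{estimates} modulo $p$, observing that $\alpha_1=\tfrac{p(p-1)}{2}$ vanishes mod $p$ for odd $p$ but equals $1$ when $p=2$, while $\alpha_i\equiv 0$ for $i\ge 2$ because of the factor $p^{i-1}$. The only cosmetic difference is that the paper phrases the conclusion in terms of $P_n(\omega)$ and then shifts the index, whereas you work directly with $P_{n-1}(\omega)$.
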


\begin{proof}
With the notations of lemma \ref{estimates}, we have
\[
\alpha_1\equiv \frac {p(p-1)}2 \equiv \left\{\begin{array} {ll}0 & \mathrm{if} \ p\ \mathrm{odd} \\1& \mathrm{if} \ p=2 \end{array}\right. \mod p
\]
and $\alpha_i \equiv 0 \mod p$ for $i > 1$.
It follows that
\[
P_n(\omega) \equiv \left\{\begin{array} {ll} \omega^{\{n\}_{\mathbb\Delta}} & \mathrm{if} \ p\ \mathrm{odd} \\\omega^{\{n\}_{\mathbb\Delta}} + (n+1) \omega^{\{n+1\}_{\mathbb\Delta}} & \mathrm{if} \ p=2 \end{array}\right. \mod p. \qedhere
\]
\end{proof}

%%%%%%%%%%%%%%%%%%%%
\begin{prop}[Little Poincar\'e lemma] \label{little}
The sequences
\[
0 \longrightarrow R \longrightarrow \widehat{R\langle \omega \rangle}_{\mathbb\Delta} \overset {L_{\mathbb\Delta}} \longrightarrow \widehat{R\langle \omega \rangle}_{\mathbb\Delta} \longrightarrow 0
\]
and
\[
0 \longrightarrow R \overset \theta \longrightarrow \widehat{R\langle \omega \rangle}_{\mathbb\Delta} \overset {\partial_{\mathbb\Delta}} \longrightarrow \widehat{R\langle \omega \rangle}_{\mathbb\Delta} \longrightarrow 0
\]
are split exact.
\end{prop}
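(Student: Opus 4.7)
The two sequences are exchanged by the flip map: since $\tau$ is a continuous $\theta$-semilinear involution with $\tau \circ \iota = \theta$, and since $\tau \circ \partial_{\mathbb\Delta} = L_{\mathbb\Delta} \circ \tau$ (the $k=1$ case of the identity $\tau \circ \partial^{\langle k \rangle}_{\mathbb\Delta} = L^{\langle k \rangle}_{\mathbb\Delta} \circ \tau$ from the left-regular example), applying $\tau$ to the first sequence produces the second (the right-hand map being transported to $\tau \circ L_{\mathbb\Delta} \circ \tau = \partial_{\mathbb\Delta}$). It therefore suffices to establish split exactness of the first sequence.

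Since $L_{\mathbb\Delta}$ is $R$-linear for the left structure and $L_{\mathbb\Delta}(1) = 0$, the inclusion $R \hookrightarrow \widehat{R\langle\omega\rangle}_{\mathbb\Delta}$ is injective and lands in $\ker L_{\mathbb\Delta}$. To obtain both a splitting and surjectivity, the plan is to construct a continuous left-$R$-linear section $s$ of $L_{\mathbb\Delta}$ by perturbation from the naive guess
\[
s_0(\omega^{\{n\}_{\mathbb\Delta}}) := \begin{cases} \omega^{\{n+1\}_{\mathbb\Delta}} & \text{if } p \text{ is odd,} \\ \omega^{\{n+1\}_{\mathbb\Delta}} + (n+1)\omega^{\{n+2\}_{\mathbb\Delta}} & \text{if } p = 2, \end{cases}
\]
extended left-$R$-linearly and continuously. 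A direct computation with proposition \ref{modp} gives $L_{\mathbb\Delta} \circ s_0 \equiv \mathrm{Id} \pmod{(p,q-1)}$; for $p = 2$ the cancellation rests on the evenness of $2(n+1)$ and of the product $(n+1)(n+2)$ of two consecutive integers. Setting $T := L_{\mathbb\Delta} \circ s_0 - \mathrm{Id}$, left-$R$-linearity propagates $T(\widehat{R\langle\omega\rangle}_{\mathbb\Delta}) \subseteq (p,q-1)\widehat{R\langle\omega\rangle}_{\mathbb\Delta}$ to $T^k(\widehat{R\langle\omega\rangle}_{\mathbb\Delta}) \subseteq (p,q-1)^k \widehat{R\langle\omega\rangle}_{\mathbb\Delta}$, so $(p,q-1)$-adic completeness makes $\mathrm{Id} + T$ invertible via a geometric series, and $s := s_0 \circ (\mathrm{Id} + T)^{-1}$ is a genuine section. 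This yields $\widehat{R\langle\omega\rangle}_{\mathbb\Delta} = \ker L_{\mathbb\Delta} \oplus s(\widehat{R\langle\omega\rangle}_{\mathbb\Delta})$ and surjectivity of $L_{\mathbb\Delta}$.

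It remains to identify $\ker L_{\mathbb\Delta}$ with $R$. Given $\varphi \in \ker L_{\mathbb\Delta}$, subtracting $e(\varphi) \in R \subseteq \ker L_{\mathbb\Delta}$ reduces to the case where the augmentation vanishes, so $\varphi = \sum_{k \geq 1} a_k \omega^{\{k\}_{\mathbb\Delta}}$ with $a_k \to 0$ in $R$. Reducing $L_{\mathbb\Delta}(\varphi) = 0$ modulo $(q-1)$ and invoking lemma \ref{estimates}, the relation reads $\sum_{k \geq 1} a_k P_{k-1}(\omega) \equiv 0$; since $P_{k-1}$ has leading term $\omega^{\{k-1\}_{\mathbb\Delta}}$, a triangular induction on $n$ extracting the coefficient of $\omega^{\{n\}_{\mathbb\Delta}}$ forces every $a_k \equiv 0 \pmod{q-1}$. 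Then $(q-1)$-torsion freeness of $\widehat{R\langle\omega\rangle}_{\mathbb\Delta}$, evident from its description as series $\sum a_k \omega^{\{k\}_{\mathbb\Delta}}$ with $a_k \to 0$ in $R$, lets one write $\varphi = (q-1)\varphi_1$ with $\varphi_1 \in \ker L_{\mathbb\Delta}$; applying $e$ forces $e(\varphi_1) = 0$ as well, and iterating places $\varphi$ in $\bigcap_n (q-1)^n \widehat{R\langle\omega\rangle}_{\mathbb\Delta} = 0$, the last equality being the $(q-1)$-adic separatedness of $R = W[[q-1]]$.

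The technical obstacle is the parasitic term $n\omega^{\{n\}_{\mathbb\Delta}}$ appearing in proposition \ref{modp} when $p = 2$, which prevents the naive shift from being an approximate section; the remedy is the single correction by $(n+1)\omega^{\{n+2\}_{\mathbb\Delta}}$, after which the needed cancellation modulo $2$ is elementary, and the rest of the argument proceeds uniformly in $p$.
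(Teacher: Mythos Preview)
Your argument is correct and rests on the same modular computation as the paper's proof (proposition \ref{modp}, including the $p=2$ correction term), but the packaging differs. The paper observes that split exactness of the first sequence is equivalent to bijectivity of the restriction $L_{\mathbb\Delta} : \widehat{\omega^{\{>0\}_{\mathbb\Delta}}} \to \widehat{R\langle\omega\rangle}_{\mathbb\Delta}$, and since source and target are derived complete and completely flat over $R$, derived Nakayama reduces this to checking bijectivity modulo $(p,q-1)$, which is exactly the content of proposition \ref{modp}. You instead unfold this into two pieces: an explicit section built by perturbing the modular inverse via a geometric series, and a separate identification of $\ker L_{\mathbb\Delta}$ with $R$ using the finer estimate of lemma \ref{estimates} modulo $q-1$ together with $(q-1)$-adic separatedness. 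Your route avoids the appeal to derived Nakayama at the cost of some extra bookkeeping; the paper's route is shorter but less self-contained. Note that your kernel argument, while correct, is slightly more than needed: since $\mathrm{im}(s_0) \subseteq \widehat{\omega^{\{>0\}_{\mathbb\Delta}}}$, your section already shows $L_{\mathbb\Delta}$ restricted to the augmentation ideal is surjective, and one could then argue injectivity there directly modulo $(p,q-1)$ using only proposition \ref{modp} rather than lemma \ref{estimates}.
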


\begin{proof}
The second exact sequence is obtained from the first one by applying the flip map and we shall therefore only consider the first one.
This is equivalent to showing that the map $\widehat \omega ^{\{>0\}_{\mathbb\Delta}} \to \widehat{R\langle \omega \rangle}_{\mathbb\Delta}$ induced by $L_{\mathbb\Delta}$ is bijective (where $\widehat \omega^{\{>0\}_{\mathbb\Delta}}$ denotes the augmentation ideal).
Since this is a morphism of derived complete and completely flat $R$-modules, this may be shown modulo $(p,q-1)$.
When $p$ is odd, we saw in proposition \ref{modp} that
\[
L_{\mathbb\Delta}(\omega^{\{n+1\}_{\mathbb\Delta}}) \equiv \omega^{\{n\}_{\mathbb\Delta}} \mod (p,q-1).
\]
In the case $p=2$, this has to be refined a bit but we have
\begin{linenomath}
\begin{align*}
L_{\mathbb\Delta}(\omega^{\{n+1\}_{\mathbb\Delta}}-(n+1)\omega^{\{n+2\}_{\mathbb\Delta}}) &\equiv \omega^{\{n\}_{\mathbb\Delta}} -(n+1)(n+2)\omega^{\{n+2\}_{\mathbb\Delta}}
\\ &\equiv \omega^{\{n\}_{\mathbb\Delta}} \mod (2,q-1). \qedhere
\end{align*}
\end{linenomath}
\end{proof}

%%%%%%%%%%
\begin{rmks}
\begin{enumerate}
\item
As a consequence of the little Poincar\'e lemma, we see that the $\mathbb\Delta$-Taylor map $\theta$, and therefore all higher $\mathbb\Delta$-derivatives $\partial_{\mathbb\Delta}^{\langle k \rangle}$ on $R$, are uniquely determined by the twisted derivation $\partial_{\mathbb\Delta}$ of $\widehat{R\langle \omega \rangle}_{\mathbb\Delta}$.
\item Both split exact sequences fit into a commutative diagram with exact rows and columns
\[
\xymatrix{
&0 \ar[d] &0 \ar[d] &0 \ar[d]
\\ 0 \ar[r] & \mathrm H^0_{\mathrm{dR}, \mathbb\Delta}(R)  \ar[r] \ar[d] & R \ar[r]^{\partial_{\mathbb\Delta}} \ar[d]_\theta & R \ar[d]_\theta \ar[r] & \mathrm H^1_{\mathrm{dR}, \mathbb\Delta}(R) \ar[r] & 0
\\0 \ar[r] & R \ar[r] \ar[d]^{\partial_{\mathbb\Delta}} & \widehat{R\langle \omega \rangle}_{\mathbb\Delta} \ar[r]^{L_{\mathbb\Delta}} \ar[d]^{\partial_{\mathbb\Delta}} & \widehat{R\langle \omega \rangle}_{\mathbb\Delta} \ar[r] \ar[d]^{\partial_{\mathbb\Delta}} & 0
\\ 0 \ar[r] & R \ar[r] \ar[d] & \widehat{R\langle \omega \rangle}_{\mathbb\Delta} \ar[r]^{L_{\mathbb\Delta}}\ar[d]& \widehat{R\langle \omega \rangle}_{\mathbb\Delta} \ar[r] \ar[d] & 0
\\ &\mathrm H^1_{\mathrm{dR}, \mathbb\Delta}(R) \ar[d] &0 &0
\\ & 0
}
\]
\end{enumerate}
\end{rmks}

%%%%%%
\begin{cor} \label{fulfait}
The forgetful functor from the category of finite projective $R$-modules endowed with a $\mathbb\Delta$-hyperstratification to the category of $\nabla_{\mathbb\Delta}$-modules on $R$ is fully faithful.
\end{cor}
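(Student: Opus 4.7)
The plan is to reduce the statement to a claim about horizontal sections of the internal Hom. Given finite projective $\mathbb\Delta$-hyperstratified modules $(M,\epsilon_M)$ and $(N,\epsilon_N)$, the module $H := \mathrm{Hom}_R(M,N)$ is again finite projective, hence inversive, and inherits both a $\mathbb\Delta$-hyperstratification (constructed after definition~\ref{hyperst}) and, by functoriality, a $\nabla_{\mathbb\Delta}$-structure which coincides with the one built in lemma~\ref{tenhom}. Under the induced structures, a morphism of hyperstratifications $f : M \to N$ becomes an element $f \in H$ with $\theta_H(f) = f \otimes 1$ (equivalently $\partial^{\langle k\rangle}_{H,\mathbb\Delta}(f) = 0$ for all $k\geq 1$), while a morphism of $\nabla_{\mathbb\Delta}$-modules is just an element of $\ker(\partial_{H,\mathbb\Delta})$ by remark~\ref{tensor}.5. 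So the corollary reduces to showing, for any finite projective hyperstratified $H$, the inclusion $\ker(\partial_{H,\mathbb\Delta}) \subseteq \{f\in H : \theta_H(f) = f\otimes 1\}$ (the reverse inclusion being obvious).

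The key identity for this is
\[
(\mathrm{Id}_H \otimes L_{\mathbb\Delta}) \circ \theta_H \;=\; \theta_H \circ \partial_{H,\mathbb\Delta}.
\]
This is the $j=1$ case of the refined cocycle identity \eqref{Lpart}: after recognising $L_{\mathbb\Delta} = L^{\langle 1\rangle}_{\mathbb\Delta}$ via the right-regular example, the left-hand side of \eqref{Lpart} becomes $\theta_H(\partial_{H,\mathbb\Delta}(s))$ and the right-hand side becomes $(\mathrm{Id}_H \otimes L_{\mathbb\Delta})(\theta_H(s))$. Consequently, if $\partial_{H,\mathbb\Delta}(f) = 0$ then $\theta_H(f) \in \ker(\mathrm{Id}_H \otimes L_{\mathbb\Delta})$.

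It remains to compute this kernel, and for this I would invoke the little Poincar\'e lemma~\ref{little}: the sequence
\[
0 \longrightarrow R \longrightarrow \widehat{R\langle\omega\rangle}_{\mathbb\Delta} \xrightarrow{L_{\mathbb\Delta}} \widehat{R\langle\omega\rangle}_{\mathbb\Delta} \longrightarrow 0
\]
is split exact as a sequence of left $R$-modules. Tensoring by the finite projective module $H$ over $R$ (and completing) preserves split exactness, so $\ker(\mathrm{Id}_H \otimes L_{\mathbb\Delta})$ is exactly the image of $H$ under $h \mapsto h \otimes 1$. Thus $\theta_H(f) = g \otimes 1$ for some $g \in H$; applying the augmentation $\mathrm{Id}_H \otimes e$ and using \eqref{eqcoc} (which gives $(\mathrm{Id}_H \otimes e)\circ \theta_H = \mathrm{Id}_H$) yields $g = f$, so $\theta_H(f) = f \otimes 1$, as required.

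I do not expect a serious obstacle here: the conceptually substantial input, the little Poincar\'e lemma, is already in hand, and the remaining checks are bookkeeping -- that the induced hyperstratification on $\mathrm{Hom}_R(M,N)$ really gives rise, via proposition~\ref{inher}, to the $\mathbb\Delta$-derivation specified by lemma~\ref{tenhom}, and that the splitting of the Poincar\'e sequence survives the (completed) tensor product with a finite projective $R$-module. The potentially subtle point to watch is the compatibility of the two constructions of the internal Hom structure (hyperstratification versus $\nabla_{\mathbb\Delta}$), which is essentially the content of lemma~\ref{homtens}.
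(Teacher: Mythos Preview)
Your proposal is correct and follows essentially the same route as the paper: both arguments pass to $H=\mathrm{Hom}_R(M,N)$, use the $j=1$ instance of \eqref{Lpart} to obtain $(\mathrm{Id}_H\otimes L_{\mathbb\Delta})\circ\theta_H=\theta_H\circ\partial_{H,\mathbb\Delta}$, and then invoke the little Poincar\'e lemma (tensored with $H$) to conclude that $\partial_{H,\mathbb\Delta}(f)=0$ forces $\theta_H(f)=f\otimes 1$. The only cosmetic difference is that the paper first isolates the implication $\partial_{M,\mathbb\Delta}(s)=0\Rightarrow\theta_M(s)=s\otimes 1$ for an arbitrary complete hyperstratified module and then specialises to $H$, whereas you work with $H$ from the outset.
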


\begin{proof}
We start with a preliminary result.
Let $M$ be a complete $R$-module endowed with a $\mathbb\Delta$-hyperstratification and $s \in M$ such that $\partial_{\mathbb\Delta}(s) =0$.
Then equality \eqref{Lpart} in the case $j=1$ implies that $L_{\mathbb\Delta}(\theta(s)) = 0$.
Since the sequence
\[
0 \longrightarrow M \longrightarrow M \widehat \otimes_R \widehat{R\langle \omega \rangle}_{\mathbb\Delta} \overset {L_{\mathbb\Delta}} \longrightarrow M \widehat \otimes_R \widehat{R\langle \omega \rangle}_{\mathbb\Delta} \longrightarrow 0
\]
is split exact thanks to proposition \ref{little}, we see that $\theta(s) = s \otimes 1$.
Since the direct implication is clear, we obtain
\[
\forall s \in M, \quad \theta(s) = s \otimes 1 \Leftrightarrow \partial_{\mathbb\Delta}(s) =0.
\]

It is then sufficient to apply this intermediate result when $M$ is replaced with $\mathrm{Hom}_R(M,N)$ and $s$ with an $R$-linear map $f : M \to N$.
More precisely, it follows from remark \ref{tensor}.5 that $\partial_{\mathbb\Delta}(f) = 0$ if and only if $f$ is horizontal.
Also, the description of the $\mathbb\Delta$-hyperstratification on $\mathrm{Hom}_R(M,N)$ just after definition \ref{hyperst} shows that $\theta(f) = f$ if and only $\epsilon_N \circ (\mathrm{Id} \otimes f) = (f \otimes \mathrm{Id}) \circ \epsilon_M$.
\end{proof}

%%%%%%%%%
\begin{rmks}
\begin{enumerate}
\item
The first part of the proof shows that $0$th cohomology in the sense of $\mathbb\Delta$-hyperstratifications and in the sense of $\nabla_{\mathbb\Delta}$-modules coincide on complete $R$-modules.
\item It is not really necessary to assume that the $R$-modules are finite projective in our statement and the same proof works as well if we only assume for example that they are derived complete and completely flat.
\end{enumerate}
\end{rmks}

We shall finish the section with some explicit examples.

%%%%%%%%%%
\begin{xmps}
\begin{enumerate}
\item 
The $\mathbb\Delta$-derivation $\partial_{F_1,\mathbb\Delta}(s) := \partial_{\mathbb\Delta}((p)_q)s$ comes from the Taylor map $\theta_{F_1}(s) = s \otimes L(\omega)$.
This is easily seen because $F_1 \simeq (p)_qR$ and $\theta((p)_q) = (p)_qL(\omega)$.
More generally, we have $\theta_{F_n}(s) = s \otimes L(\omega)^n$.
\item The $\mathbb\Delta$-derivation $\partial_{G_1,\mathbb\Delta}(s) = (p)'_\zeta s$ comes from the Taylor map $\theta_{G_1}(s) = s \otimes 1 + (p)'_\zeta s \otimes \omega$.
This follows from the fact that $G_1 \simeq (p)_qR/(p)^2_qR$.
This easily extends to $G_n$.
\end{enumerate}
\end{xmps}

In order to give the last example, we need to discuss a bit the question of the logarithm whose definition we recall below (formula \eqref{logdef}).
It is good to keep in mind the heuristic
\[
\displaystyle \log_q(u) := \frac {q-1}{\log(q)}\log(u)
\]
and that (whenever it makes sense)
\begin{equation} \label{qplog}
p\log_{q^p}(u) = (p)_q \log_q(u).
\end{equation}

%%%%%%%%%
\begin{lem} \label{logq}
With $A = R[[x-1]]$, we have in $\widehat {A\langle \omega \rangle}_{\mathbb\Delta}$
\begin{equation} \label{logq_eq1}
\log_{q}\left(1 + \frac {(p)_q}x \omega\right) = p\sum_{k=1}^\infty (-1)^{k-1} (k-1)_{q^p}!(p)_q^{k-1} q^{-p{k \choose 2}} x^{-k} \omega^{\{k\}_{\mathbb\Delta}}.
\end{equation}
When specializing at $x=q$, we get in $\widehat{R\langle \omega \rangle}_{\mathbb\Delta}$
\begin{equation} \label{logq_eq2}
\log_q\left(1 + \frac {(p)_q} q\omega\right) = p\sum_{k=1}^\infty(-1)^{k-1} (k-1)_{q^p}! (p)_q^{k-1} q^{-p{k \choose 2}-k} \omega^{\{k\}_{\mathbb\Delta}}.
\end{equation}
\end{lem}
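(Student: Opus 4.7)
The plan is to reduce the identity to a cleaner one in the $q^p$-divided-power ring $\widehat{A\langle \xi \rangle}_{q^p}$, prove it there by expansion in the divided-power basis, and then transport back via the blow-up and specialize $x \mapsto q$.

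First, using relation \eqref{qplog}, namely $p\log_{q^p}(u) = (p)_q\log_q(u)$, the target identity \eqref{logq_eq1} is equivalent to
\[
\log_{q^p}\!\left(1+\tfrac{(p)_q}{x}\omega\right) = \sum_{k \geq 1}(-1)^{k-1}(k-1)_{q^p}!\,(p)_q^{k}\, q^{-p\binom{k}{2}}x^{-k}\,\omega^{\{k\}_{\mathbb\Delta}}.
\]
The blow-up of remark \ref{divpow}.5, base-changed to $A$ and completed, provides a continuous $A$-algebra morphism $\widehat{A\langle \xi \rangle}_{q^p} \to \widehat{A\langle \omega \rangle}_{\mathbb\Delta}$ that sends $\xi \mapsto (p)_q\omega$ and $\xi^{[k]_{q^p}} \mapsto (p)_q^k\omega^{\{k\}_{\mathbb\Delta}}$. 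Both sides of the displayed equation are images under this morphism of specific elements of $\widehat{A\langle \xi \rangle}_{q^p}$, so it is enough to prove the cleaner identity
\[
\log_{q^p}\!\left(1+\tfrac{\xi}{x}\right) = \sum_{k \geq 1}(-1)^{k-1}(k-1)_{q^p}!\, q^{-p\binom{k}{2}}x^{-k}\,\xi^{[k]_{q^p}}
\]
in $\widehat{A\langle \xi \rangle}_{q^p}$, where the coefficients and divided powers are now all naturally indexed by $q^p$.

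This reformulated identity I would establish by expanding the series definition \eqref{logdef} of $\log_{q^p}(1+y)$ with $y = \xi/x$ and then rewriting the ordinary powers $\xi^n$ in the $q^p$-divided-power basis via proposition \ref{ernst} composed with the natural map analogous to \eqref{polcp}, yielding
\[
\xi^n = \sum_{j=1}^n S_{q^p}(n,j)\,(q-1)^{n-j}x^{n-j}(j)_{q^p}!\,\xi^{[j]_{q^p}}.
\]
Interchanging summations and collecting the coefficient of $\xi^{[k]_{q^p}}$ reduces the claim to a combinatorial identity expressing an alternating sum of $q^p$-Stirling numbers of the second kind as the closed form $(-1)^{k-1}(k-1)_{q^p}!\,q^{-p\binom{k}{2}}$. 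I expect this combinatorial step to be the main obstacle. The natural line of attack is Stirling inversion (the two twisted Stirling matrices are mutually inverse, as recalled just after definition \ref{defStirling}) combined with the quantum binomial formula; alternatively, one can characterize both sides of the reformulated identity as the unique element of $\widehat{A\langle \xi \rangle}_{q^p}$ with vanishing constant term that solves a specific $q^p$-difference equation obtained by applying the natural twisted derivation, reducing the verification of the closed form to a shorter direct computation on the recursion.

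Finally, \eqref{logq_eq2} is deduced from \eqref{logq_eq1} by applying the continuous $W[[q-1]]$-algebra morphism $A = R[[x-1]] \to R$ sending $x \mapsto q$; this extends to a continuous $R$-algebra morphism $\widehat{A\langle \omega \rangle}_{\mathbb\Delta} \to \widehat{R\langle \omega \rangle}_{\mathbb\Delta}$ which, by continuity, commutes with the convergent series defining $\log_q$, so the substitution replaces $x^{-k}$ by $q^{-k}$ and produces the combined $q$-exponent $-p\binom{k}{2}-k$ appearing in \eqref{logq_eq2}.
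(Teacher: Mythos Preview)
Your overall reduction strategy matches the paper exactly: pass from $\log_q$ to $\log_{q^p}$ via \eqref{qplog}, prove the identity in $\widehat{A\langle\xi\rangle}_{q^p}$, transport via the blow-up $\xi\mapsto(p)_q\omega$, and specialize $x\mapsto q$.

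The issue is the core computation. You propose to expand the definition \eqref{logdef} of $\log_{q^p}(1+y)$ in ordinary powers of $y=\xi/x$, then convert $\xi^n$ to the $q^p$-divided-power basis via Stirling numbers, and finally prove a combinatorial identity that you anticipate to be ``the main obstacle''. But the definition \eqref{logdef} is \emph{not} a power series in $u-1$: its $k$th term involves the generalized Pochhammer symbol $(u,-1;q)_k$, which is already a twisted power. Concretely, for $u=1+\xi/x$ one has
\[
(u,-1;q^p)_k=\prod_{i=0}^{k-1}\bigl(1+\tfrac{\xi}{x}-q^{pi}\bigr)=x^{-k}\prod_{i=0}^{k-1}\bigl(\xi-(q^{pi}-1)x\bigr)=x^{-k}\,\xi^{(k)_{q^p}},
\]
which is exactly the twisted power of remark \ref{twpow}.3, and under the map \eqref{polcp} this becomes $(k)_{q^p}!\,\xi^{[k]_{q^p}}$. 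So the paper's computation is a one-line substitution:
\[
\log_{q^p}\!\bigl(1+\tfrac{\xi}{x}\bigr)=\sum_{k\ge1}(-1)^{k-1}q^{-p\binom{k}{2}}\frac{\xi^{(k)_{q^p}}}{(k)_{q^p}\,x^k}=\sum_{k\ge1}(-1)^{k-1}(k-1)_{q^p}!\,q^{-p\binom{k}{2}}x^{-k}\xi^{[k]_{q^p}},
\]
with no combinatorial identity to prove. Your Stirling detour would amount to converting twisted powers to ordinary powers (first kind) and back (second kind), i.e.\ composing two mutually inverse triangular matrices; it is not wrong, but it manufactures an obstacle that the definition has already removed.
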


\begin{proof}
We use definition 4.5 of \cite{AnschuetzLeBras19a}:
\begin{equation} \label{logdef}
\log_{q}(u) = \sum_{k=1}^\infty (-1)^{k-1} q^{-{k \choose 2}} \frac {(u,-1;q)_k}{(k)_q},
\end{equation}
where $(u,-1;q)_k$ denotes their generalized Pochammer symbol.
In the case $u = 1 + \xi/x$, we have
\[
(u,-1;q)_k = \frac {\xi^{(k)_q}}{x^k}
\]
and the formula may be rewritten
\[
\log_{q}(1 + \xi/x) = \sum_{k=1}^\infty (-1)^{k-1} q^{-{k \choose 2}} \frac {\xi^{(k)_q}}{(k)_q x^k} = \sum_{k=1}^\infty (-1)^{k-1} (k-1)_q! q^{-{k \choose 2}} \frac {\xi^{[k]_q}}{x^k}.
\]
After replacing $q$ with $q^p$, we obtain
\[
\log_{q^p}(1 + \xi/x) = \sum_{k=1}^\infty (-1)^{k-1} (k-1)_{q^p}! q^{-p{k \choose 2}} \frac {\xi^{[k]_{q^p}}}{x^k}
\]
or, equivalently,
\[
\log_{q^p}(1 + \frac{(p)_q}x \omega) = \sum_{k=1}^\infty (-1)^{k-1} (k-1)_{q^p}!(p)_q^k q^{-p{k \choose 2}} \frac {\omega^{\{k\}_{\mathbb\Delta}}}{x^k}.
\]
We then deduce formula \eqref{logq_eq1} from identity \eqref{qplog}, and \eqref{logq_eq2} follows immediately.
\end{proof}

%%%%%%%%%
\begin{prop} \label{BKthet}
The Breuil-Kisin action comes from a $\mathbb\Delta$-Taylor map such that
\[
 \theta((q-1)e_R) = \left(q-1 + \log_q\left(1 + \frac {(p)_q} q\omega\right)\right)e_R.
\]
\end{prop}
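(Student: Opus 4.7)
The plan is to use the uniqueness of the $\mathbb\Delta$-Taylor map extending a given weakly nilpotent $\mathbb\Delta$-connection. Since the Breuil--Kisin $\mathbb\Delta$-connection on $R\{1\}$ is trivial modulo $q-1$ (visible from the explicit formula for $\partial_{R\{1\},\mathbb\Delta}(e_R)$ after Lemma \ref{BKdef}, since $(p+1)_q\equiv p+1\mod (q-1)$), it is in particular weakly nilpotent. Theorem \ref{invlem} (to be proved in section \ref{diffop}) then gives a unique $\mathbb\Delta$-hyperstratification on $R\{1\}$ extending it, and hence via Proposition \ref{eqHyperTayl} a unique $\mathbb\Delta$-Taylor map $\theta_M$. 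Since $R\{1\}$ is free of rank one on $e_R$, we can write $\theta_M(e_R) = u\,e_R$ for a unique $u \in \widehat{R\langle\omega\rangle}_{\mathbb\Delta}$ with $e(u)=1$. By semilinearity, the statement to prove becomes the identity
\[
\theta(q-1)\,u \;=\; q-1 + \log_q\!\left(1+\frac{(p)_q}{q}\omega\right) \qquad (\star)
\]
in $\widehat{R\langle\omega\rangle}_{\mathbb\Delta}$, where $\theta(q-1)=q-1+(p)_q\omega$.

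First I would establish integrality. Define $\tilde u$ to be the right-hand side of $(\star)$ divided by $\theta(q-1)$, a priori in $\widehat{R\langle\omega\rangle}_{\mathbb\Delta}[1/p]$. Using the explicit expansion of $\log_q(1+(p)_q\omega/q)$ from Lemma \ref{logq} and the basic congruence $(p)_q \equiv p \mod (q-1)$, one shows that the numerator is divisible by $q-1+(p)_q\omega$ in $\widehat{R\langle\omega\rangle}_{\mathbb\Delta}$. The lowest-order check is that $\frac{p}{q}-(p)_q$ is divisible by $q-1$, since
\[
p-q(p)_q \;=\; -(q-1)\sum_{k=1}^{p}(k)_q,
\]
and higher orders proceed by induction on the $\omega$-filtration, using that each coefficient of $\log_q(1+(p)_q\omega/q)$ carries a factor of $p$ (and additional $(p)_q$-factors in higher terms) that exactly matches what is needed. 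Augmentation gives $e(\tilde u)=1$ by setting $\omega=0$.

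Second, I would show that $\tilde\theta_M(e_R) := \tilde u\, e_R$ (extended semilinearly) defines a valid $\mathbb\Delta$-Taylor map and that its associated $\mathbb\Delta$-derivation is the Breuil--Kisin one. For the derivation, the coefficient of $\omega=\omega^{\{1\}_{\mathbb\Delta}}$ in $\log_q(1+(p)_q\omega/q)$ is $p/q$ (the $k=1$ term in Lemma \ref{logq}), so $\partial_{M,\mathbb\Delta}((q-1)e_R)=(p/q)e_R$, matching Lemma \ref{BKdef}. For the cocycle condition, which for a rank-one module reduces to the grouplike identity $\Delta(\tilde u) = \tilde u \otimes \tilde u$ in $\widehat{R\langle\omega\rangle}_{\mathbb\Delta}\widehat{\otimes}'_R \widehat{R\langle\omega\rangle}_{\mathbb\Delta}$, the cleanest route is to invert $p$ and work with the ordinary logarithm: heuristically $e_R = \log q/(q-1)$, so $\theta(\log q) = \log(\theta(q)) = \log q + \log(1+(p)_q\omega/q)$, and the additive behavior of $\log$ under $\theta$ and $\Delta$ yields the cocycle. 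The resulting identity is then transferred back to $\widehat{R\langle\omega\rangle}_{\mathbb\Delta}$ by the integrality established in the first step, combined with the fact that $\widehat{R\langle\omega\rangle}_{\mathbb\Delta}$ embeds into its $p$-inverted version (torsion-freeness follows from Theorem \ref{prismenv} since bounded prisms are $p$-torsion-free after killing the oriented element, and $\theta(q-1)$ is regular).

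Third, uniqueness (Corollary \ref{fulfait}) forces $\tilde u = u$, which establishes $(\star)$ and hence the proposition. The main obstacle is verifying the cocycle condition; the integrality argument is somewhat delicate but directly computable from Lemma \ref{logq}, whereas the grouplike identity requires the passage through the $p$-inverted picture and the observation that the ordinary logarithm intertwines the multiplicative structure of $\theta$ with the additive structure of $\Delta$.
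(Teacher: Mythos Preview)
Your strategy is reasonable but differs substantially from the paper's, and the cocycle step contains a genuine gap.

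The paper never directly verifies a cocycle condition for the proposed formula. Instead, it uses Proposition \ref{BKlim}: the Breuil--Kisin module is realized as $\varprojlim_{/p} (p^r)_q R/(p^r)_q^2 R$, and by Lemma \ref{transan} each quotient $(p^r)_q R/(p^r)_q^2 R$ inherits its $\mathbb\Delta$-Taylor map from the trivial one on $R$. One then computes $\partial^{\langle k\rangle}_{\mathbb\Delta}\bigl((q-1)(p^r)_q\bigr)$ directly via Proposition \ref{tetaqn2}, simplifies modulo $(p^r)_q^2$, and matches the answer against the coefficients in Lemma \ref{logq}. The cocycle condition is automatic because it already holds for $R$, no integrality argument is needed, and no forward reference to Theorem \ref{invlem} is required. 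The paper then sketches a second route using the Bhatt--Lurie prismatic logarithm $\log_{\mathbb\Delta}$ and its additivity $\log_{\mathbb\Delta}(q^pu^p)=\log_{\mathbb\Delta}(q^p)+\log_{\mathbb\Delta}(u^p)$; this is close in spirit to your heuristic but rests on an external rigorous tool.

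Your verification of the cocycle condition is not a proof. Saying that ``the additive behavior of $\log$ under $\theta$ and $\Delta$ yields the cocycle'' hides the entire content: the logarithm in $(\star)$ is the twisted $\log_q$, not the ordinary $\log$, and no behavior of $\log_q$ under comultiplication has been established in the paper. Passing to $p$-inverted coefficients does eventually identify $S\{1\}$ with $(p)_q S$ via $e_S\mapsto \log(q)/(q-1)$ (Example 4 in Section \ref{generic}), but that identification is a separate result proved later in the log-calculus framework; invoking it here would be circular or at least would need to be reorganized. To salvage your approach you would have to either import the Bhatt--Lurie prismatic logarithm (as in the paper's second argument) or establish from scratch a functional equation for $\log_q$ under $\Delta$. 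The integrality step is likewise incomplete: you check the coefficient of $\omega$, but ``higher orders proceed by induction'' is not carried out. The paper's route via Proposition \ref{BKlim} sidesteps both issues simultaneously.
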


\begin{proof}
Recall first that for all $r \in \mathbb N$, we have $q^{p^r} \equiv 1 \mod (p^r)_q$ and $(p)_{q^{p^r}} \equiv p\mod (p^r)_q$.
It follows that
\[
(p^{r+1})_q = (pp^r)_q = (p)_{q^{p^r}}(p^r)_q \equiv p(p^r)_q \mod (p^r)_q^2.
\]
Also, when $0 \leq i \leq p^r$,
\[
(p^{r+1}-ip)_q = (p^{r+1})_q-q^{p^{r+1}-ip}(ip)_q \equiv -q^{-ip}(ip)_q \mod (p^r)_q.
\]
Therefore, whenever $0 \leq k \leq p^r$, we have
\begin{linenomath}
\begin{align*}
(k-1)_{q^p}!{p^r-1 \choose k-1}_{q^p} (p)_q^{k-1}& = \prod_{i=1}^k (p^{r+1}-ip)_q
\\ & \equiv \prod_{i=1}^k (-q^{-ip}(ip)_q) \mod (p^r)_q
\\ & \equiv (-1)^{k-1} (k-1)_{q^p}! q^{-p{k\choose 2}} (p)_q^{k-1} \mod (p^r)_q.
\end{align*}
\end{linenomath}
Now, it follows from lemma \ref{transan} that the Taylor map of $R$ induces a Taylor map on $(p^r)_qR$ and therefore also on $(p^r)_qR/(p^r)_q^2R$.
Then, for all $k>0$,
\begin{linenomath}
\begin{align*}
\partial^{\langle k \rangle}_{\mathbb\Delta}((q-1)(p^r)_q) &= \partial^{\langle k \rangle}_{\mathbb\Delta}(q^{p^r} -1)
\\ & = (k)_{q^p}!{p^r \choose k}_{q^p}q^{p^r-k}(p)_q^k
\\ & = (p^{r+1})_q(k-1)_{q^p}!{p^r-1 \choose k-1}_{q^p}q^{p^r-k}(p)_q^{k-1}
\\ & \equiv (-1)^{k-1} p (k-1)_{q^p}! q^{-p{k\choose 2}-k } (p)_q^{k-1} (p^r)_q \mod (p^r)_q^2.
\end{align*}
\end{linenomath}
Our assertion therefore follows from formula \eqref{logq_eq2} in lemma \ref{logq} and proposition \ref{BKlim}.
\end{proof}

%%%%%%%%%%%%%%%%%%
We can recover the formula of proposition \ref{BKthet} from proposition 2.6.10 in \cite{BhattLurie22}: in their notations, we have
\[
A = R\langle \xi \rangle_q \quad \mathrm{with} \quad u = 1 + \xi/q =1 + \frac{(p)_q\omega}q.
\]
By definition, $\log_{\mathbb \Delta}(q^p) := (q-1)e_R$ and they prove that $\log_{\mathbb \Delta}(u^p) = \log_q(u)e_R$.
It follows that
\begin{linenomath}
\begin{align*}
&\theta((q-1)e_R) = \theta( \log_{\mathbb \Delta}(q^p)) =\log_{\mathbb \Delta}(\theta(q^p)) = \theta( \log_{\mathbb \Delta}((q+\xi)^p)) = \theta( \log_{\mathbb \Delta}(q^pu^p))
\\
&= \theta( \log_{\mathbb \Delta}(q^p) + \log_{\mathbb \Delta}(u^p)) = \log_{\mathbb \Delta}(q^p) + \theta(\log_{\mathbb \Delta}(u^p)) = (q-1)e_R + \log_q(u) e_R.
\end{align*}
\end{linenomath}

%%%%%%%%%%%%%%%%%%
\section{Absolute differential operators} \label{diffop}

We introduce now the notion of a $\mathbb\Delta$-differential operator.
This will allow us to lift a $\mathbb\Delta$-derivation to a $\mathbb\Delta$-hyperstratification.

As before, we fix a \emph{prime} $p$, a $p$-adically complete ring $W$ and write $R := W[[q-1]]$.

%%%%%%%%%%%%%
\begin{dfn} \label{defdifo}
Let $M$ and $N$ be two $R$-modules.
A \emph{$\mathbb\Delta$-differential operator} from $M$ to $N$ is an $R$-linear map (for the left structure)
\[
u: L_{\mathbb\Delta}(M) := \widehat{R\langle \omega \rangle}_{\mathbb\Delta} \widehat\otimes'_{R} M \to N.
\]
\end{dfn}

Even if it makes sense in general, we shall mostly apply this definition to complete $R$-modules.

%%%%%%%%%%%
\begin{lem}
If $u$ is a $\mathbb\Delta$-differential operator from an $R$-module $M$ to a torsion-free $R$-module $N$, then $u$ is uniquely determined by its restriction
\[
u_0 : M \to N, \quad s \mapsto u(1 \otimes s).
\]
\end{lem}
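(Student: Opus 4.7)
The plan is to argue inductively on $k$ that every value $u(\omega^{\{k\}_{\mathbb\Delta}} \otimes s)$, for $s \in M$, is determined by $u_0$; then left-$R$-linearity and continuity of $u$ (implicit when working with completed tensor products of complete modules) force $u$ to be determined on all of $L_{\mathbb\Delta}(M)$, since the $\omega^{\{k\}_{\mathbb\Delta}} \otimes s$ form a topological generating family over the left copy of $R$. The base case $k=0$ is the definition of $u_0$.

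For the inductive step, I would exploit the discrepancy between the canonical and the Taylor $R$-structures on the left factor of $L_{\mathbb\Delta}(M) = \widehat{R\langle\omega\rangle}_{\mathbb\Delta} \widehat\otimes'_R M$. This discrepancy is encoded in the balance relation $\varphi\,\theta(\alpha) \otimes s = \varphi \otimes \alpha s$. Taking $\alpha = q$ and $\varphi = \omega^{\{k-1\}_{\mathbb\Delta}}$, and using both $\theta(q) = q + (p)_q\omega$ and the twisted-divided-power multiplication
\[
\omega \cdot \omega^{\{k-1\}_{\mathbb\Delta}} \;=\; (k)_{q^p}\,\omega^{\{k\}_{\mathbb\Delta}} \;+\; (k-1)_{q^p}(q^2-q)\,\omega^{\{k-1\}_{\mathbb\Delta}}
\]
that comes from definition \ref{defdtw} with $n_1 = 1$, $n_2 = k-1$, $x=q$, a short rearrangement (using $(p)_q(k)_{q^p} = (pk)_q$) yields
\[
(pk)_q\,\omega^{\{k\}_{\mathbb\Delta}} \otimes s \;=\; \omega^{\{k-1\}_{\mathbb\Delta}} \otimes qs \;-\; q\bigl(\omega^{\{k-1\}_{\mathbb\Delta}} \otimes s\bigr) \;-\; (p)_q(k-1)_{q^p}(q^2-q)\bigl(\omega^{\{k-1\}_{\mathbb\Delta}} \otimes s\bigr).
\]
Applying $u$ and using its left-$R$-linearity, the right-hand side is, by the induction hypothesis, a fixed expression in the values of $u_0$.

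The main obstacle — and the exact point where torsion-freeness of $N$ is required — is that the previous identity pins down $u(\omega^{\{k\}_{\mathbb\Delta}} \otimes s)$ only up to the action of $(pk)_q$. Fortunately $(pk)_q$ is a monic polynomial in $q$ with integer coefficients, hence a regular element of $R$, so it acts injectively on any torsion-free $R$-module $N$. That closes the induction, and, combined with the topological density of the $\omega^{\{k\}_{\mathbb\Delta}} \otimes s$ and the $R$-linearity of $u$, completes the proof.
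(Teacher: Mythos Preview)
Your argument is correct and rests on exactly the same two ingredients as the paper's proof: the balance relation coming from $\theta(q)=q+(p)_q\omega$ and the regularity of the $q$-integers $(pk)_q$ in $R$, which is where torsion-freeness of $N$ is used. The only difference is organizational: the paper passes to the $\xi$-basis (via $\xi=(p)_q\omega$, $\xi^{(n)_{q^p}}=(p)_q^n(n)_{q^p}!\,\omega^{\{n\}_{\mathbb\Delta}}$) and then uses the ordinary binomial expansion of $\xi^n=(\theta(q)-q)^n$ to obtain the closed formula $u(\xi^n\otimes s)=\sum_{k=0}^n(-1)^k\binom{n}{k}q^k\,u_0(q^{n-k}s)$, whereas you stay in the $\omega^{\{k\}_{\mathbb\Delta}}$-basis and induct one step at a time.
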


\begin{proof}
The $R$-linear map $u$ is uniquely determined by the image of $\omega^{\{n\}_{\mathbb\Delta}} \otimes s$ for $n \in \mathbb N$ and $s \in M$.
In fact, since the $q^r$-analogs of integers are always regular in $R$, and $N$ is assumed to be torsion free, $u$ is uniquely determined by the image of $\xi^{(n)_{q^p}} \otimes s$, which in turn is uniquely determined by the images of $\xi^n \otimes s$ for various $n$.
Now, since $\xi = \theta(q)-q$, we have
\[
\xi^n \otimes s =  \sum_{k=0}^n (-1)^k{n \choose k} q^{k}\theta(q^{n-k}) \otimes s = \sum_{k=0}^n (-1)^k{n \choose k} q^{k} (1 \otimes q^{n-k} s)
\]
and therefore
\[
u(\xi^n \otimes s) = \sum_{k=0}^n (-1)^k{n \choose k} q^{k} u_0(q^{n-k}s). \qedhere
\]
\end{proof}

We will denote by
\[
\mathrm{HDiff}_{\mathbb\Delta}(M, N) := \mathrm{Hom}_R(L_{\mathbb\Delta}(M), N)
\]
the $R$-module of all $\mathbb\Delta$-differential operators from $M$ to $N$ and write $\mathrm{HDiff}_{\mathbb\Delta}(M)$ when $M=N$.
In the case $M=N=R$, we will write $\mathrm{HD}_{\mathbb\Delta}$.
Since $W$ is complete, $\mathrm{HD}_{\mathbb\Delta}$ is simply the dual of $R\langle \omega \rangle_{\mathbb\Delta}$:
\[
\mathrm{HD}_{\mathbb\Delta} \simeq \mathrm{Hom}_R(R\langle \omega \rangle_{\mathbb\Delta}, R).
\]

%%%%%%%%%%%%%
\begin{dfn}
A $\mathbb\Delta$-differential operator $u$ from $M$ to $N$ has \emph{finite order} at most $n \in \mathbb N$ if it factors through $R\langle \omega\rangle_{\mathbb\Delta}/ {\omega^{\{>n\}_{\mathbb\Delta}}} \otimes'_{R} \widehat M$.
\end{dfn}

For example, for all $k \in \mathbb N$, the operator $\partial_{\mathbb\Delta}^{\langle k \rangle}$ on $R$ lifts (uniquely) to a differential operator $\widetilde \partial_{\mathbb\Delta}^{\langle k \rangle}$ of order at most (and, actually, exactly) $k$ on $R$ and we have
\[
\widetilde \partial_{\mathbb\Delta}^{\langle k \rangle}\left(\omega^{\{l\}_{\mathbb\Delta}}\right) = \left\{\begin{array} l 1 \ \mathrm{if} \ k=l, \\ 0\ \mathrm{otherwise}.\end{array}\right.
\]

It follows that $\left( \widetilde \partial_{\mathbb\Delta}^{\langle k \rangle}\right)_{k \in \mathbb N}$ is a formal basis for $\mathrm{HD}_{\mathbb\Delta}$ in the sense that
\[
\mathrm{HD}_{\mathbb\Delta} = \left\{\sum_{k=0}^\infty \alpha_k \widetilde \partial_{\mathbb\Delta}^{\langle k \rangle}, \quad \alpha_k \in R\right\}.
\]

%%%%%%%%%%%%%%%%%%%
\begin{dfn} \label{deflin}
The \emph{$R$-linearization} $L_{\mathbb\Delta}(u) : L_{\mathbb\Delta}(M) \to L_{\mathbb\Delta}(N)$ of a $\mathbb\Delta$-differential operator $u$ from $M$ to $N$ is the composite map
\[
L_{\mathbb\Delta}(u) : \widehat{R\langle \omega \rangle}_{\mathbb\Delta} \widehat\otimes'_{R} M \overset {\Delta \otimes \mathrm{Id}}\longrightarrow \widehat{R\langle \omega \rangle}_{\mathbb\Delta} \widehat\otimes'_{R} \widehat{R\langle \omega \rangle}_{\mathbb\Delta} \widehat\otimes'_{R} M \overset {\mathrm{Id} \otimes u} \longrightarrow \widehat{R\langle \omega \rangle}_{\mathbb\Delta} \widehat\otimes'_{R} N.
\]
\end{dfn}

%%%%%%%%%%%%%
\begin{xmps}
\begin{enumerate}
\item
We recover
\[
L_{\mathbb\Delta}\left(\widetilde \partial^{\langle k \rangle}_{\mathbb\Delta}\right) = L^{\langle k \rangle}_{\mathbb\Delta} : \widehat{R\langle \omega \rangle}_{\mathbb\Delta} \to \widehat{R\langle \omega \rangle}_{\mathbb\Delta}
\]
(which shows compatibility between the different notations).
\item When $u$ is a differential operator of order $0$ so that $u$ is the composition of the augmentation map $e : L_{\mathbb\Delta}(M) \twoheadrightarrow M$ with some $R$-linear map $u_0 : M \to N$, then $L_{\mathbb\Delta}(u) = \mathrm{Id}_{ \widehat{R\langle \omega \rangle}_{\mathbb\Delta}} \otimes_R u_0$.
\end{enumerate}
\end{xmps}

Unlike $\mathbb\Delta$-differential operators of \emph{finite} order, $\mathbb\Delta$-differential operators (of infinite order) are stable under composition.

%%%%%%%%%%%%%%%
\begin{dfn}
If $u : L_{\mathbb\Delta}(M) \to N$ and $v : L_{\mathbb\Delta}(P) \to M$ are two $\mathbb\Delta$-differential operators, then their \emph{composition} is
\[
u \widetilde \circ v := u \circ L_{\mathbb\Delta}(v) : L_{\mathbb\Delta}(P) \to N.
\]
\end{dfn}

With this composition, $\mathrm{HDiff}_{\mathbb\Delta}(M)$ becomes a (non commutative) $R$-algebra.
In particular, $\mathrm{HD}_{\mathbb\Delta}$ is an $R$-algebra.

We must insist on the fact that composition of two $\mathbb\Delta$-differential operators of finite order will have infinite order in general.
We were not able to provide an explicit formula for the composition of two $\mathbb\Delta$-differential operators of the form $\widetilde \partial_{\mathbb\Delta}^{\langle n \rangle}$.

Recall from the last example after lemma \ref{locTayl} that if $M$ is any $R$-module, then $L_{\mathbb\Delta}(M)$ comes naturally with a $\mathbb\Delta$-hyperstratification inherited from the left-regular $\mathbb\Delta$-hyperstratification of $\widehat{R\langle \omega \rangle}_{\mathbb\Delta}$.

%%%%%%%%%
\begin{lem} \label{linext}
The \emph{$R$-linearization} $L_{\mathbb\Delta}(u) : L_{\mathbb\Delta}(M) \to L_{\mathbb\Delta}(N)$ of a $\mathbb\Delta$-differential operator is compatible with the $\mathbb\Delta$-hyperstratifications on both sides:
\[
\forall k \in \mathbb N, \quad \partial^{\langle k \rangle}_{\mathbb\Delta} \circ L_{\mathbb\Delta}(u) = L_{\mathbb\Delta}(u) \circ \partial^{\langle k \rangle}_{\mathbb\Delta}.
\]

\end{lem}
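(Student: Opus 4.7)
Set $S := \widehat{R\langle\omega\rangle}_{\mathbb\Delta}$. The plan is to unfold the definition of $L_{\mathbb\Delta}(u)$ to derive an explicit formula, then reduce the desired compatibility to a termwise commutation of left- and right-regular higher derivatives on $S$.

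First, using Definition~\ref{deflin} together with the expansion $\Delta(\varphi) = \sum_{j\ge 0} L^{\langle j\rangle}_{\mathbb\Delta}(\varphi) \otimes \omega^{\{j\}_{\mathbb\Delta}}$, one computes
\[
L_{\mathbb\Delta}(u)(\varphi\otimes s) \;=\; \sum_{j\ge 0} L^{\langle j\rangle}_{\mathbb\Delta}(\varphi) \otimes u\bigl(\omega^{\{j\}_{\mathbb\Delta}}\otimes s\bigr)
\]
for $\varphi\in S$ and $s\in M$. By the Linearization example following Lemma~\ref{locTayl}, the higher derivative on $L_{\mathbb\Delta}(M)=S\widehat\otimes'_R M$ acts as $\partial^{\langle k\rangle}_{\mathbb\Delta}\otimes \mathrm{Id}_M$, where $\partial^{\langle k\rangle}_{\mathbb\Delta}$ on $S$ is the left-regular higher derivative. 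Substituting into both sides of the identity $\partial^{\langle k\rangle}_{\mathbb\Delta}\circ L_{\mathbb\Delta}(u) = L_{\mathbb\Delta}(u)\circ\partial^{\langle k\rangle}_{\mathbb\Delta}$ yields sums indexed by $j$ whose $j$th term shares the common tensor factor $u(\omega^{\{j\}_{\mathbb\Delta}}\otimes s)$. Matching coefficients, the claim boils down to the termwise commutation
\[
\partial^{\langle k\rangle}_{\mathbb\Delta}\circ L^{\langle j\rangle}_{\mathbb\Delta} \;=\; L^{\langle j\rangle}_{\mathbb\Delta}\circ \partial^{\langle k\rangle}_{\mathbb\Delta}
\qquad\text{as endomorphisms of } S,
\]
for all $j,k\in\mathbb N$.

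To establish this termwise identity I would exploit the antipode-type relation $\tau\circ \partial^{\langle k\rangle}_{\mathbb\Delta} = L^{\langle k\rangle}_{\mathbb\Delta}\circ \tau$ from the Left-regular example, together with $\tau^2 = \mathrm{Id}$ from Proposition~\ref{extflip}, to rewrite the left-regular derivatives in terms of right-regular ones conjugated by $\tau$. The resulting equation is then an incarnation of the coassociativity of the comultiplication, namely $(\Delta\otimes\mathrm{Id})\circ\Delta = (\mathrm{Id}\otimes\Delta)\circ\Delta$, which expresses the associativity of composition in the formal groupoid $\mathrm{Spf}(S)$ over $\mathrm{Spf}(R)$. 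Conceptually: left and right translations in a groupoid commute, and the higher $\mathbb\Delta$-derivatives $\partial^{\langle k\rangle}_{\mathbb\Delta}$ and $L^{\langle j\rangle}_{\mathbb\Delta}$ are precisely the ``matrix coefficients'' of these two actions.

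The main obstacle is purely notational: one must track carefully the two $R$-module structures on $S$ (canonical versus Taylor) and the corresponding tensor products $\widehat\otimes_R$ and $\widehat\otimes'_R$ through every step, and verify that each expansion lands in the correct tensor product before matching coefficients. Once the bookkeeping is handled and the flip map is deployed to swap left and right, the remaining identity is coassociativity unpacked, so the conceptual content of the proof is light.
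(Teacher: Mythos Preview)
Your approach is correct and rests on the same underlying identity as the paper's proof, namely coassociativity $(\Delta\otimes\mathrm{Id})\circ\Delta = (\mathrm{Id}\otimes\Delta)\circ\Delta$ transported through the flip map. The difference is one of packaging: you unpack $L_{\mathbb\Delta}(u)$ and the Taylor maps into their coefficient expansions and reduce to the termwise commutation $\partial^{\langle k\rangle}_{\mathbb\Delta}\circ L^{\langle j\rangle}_{\mathbb\Delta} = L^{\langle j\rangle}_{\mathbb\Delta}\circ\partial^{\langle k\rangle}_{\mathbb\Delta}$ on $S$, whereas the paper works directly at the level of Taylor maps. The paper checks that $\theta_N\circ L_{\mathbb\Delta}(u) = (L_{\mathbb\Delta}(u)\otimes\mathrm{Id})\circ\theta_M$ by factoring $L_{\mathbb\Delta}(u) = (\mathrm{Id}\otimes u)\circ(\Delta\otimes\mathrm{Id}_M)$ and verifying two sub-squares of a single diagram; the right square commutes trivially (since $u$ acts on a tensor factor untouched by $\theta$), and the left square is precisely the coassociativity identity tensored with $\mathrm{Id}_M$. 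This buys the paper a cleaner argument with no coefficient bookkeeping, while your route has the merit of isolating the content as a concrete commutation of left- and right-regular higher derivatives on $S$, which is a useful standalone fact. One minor caution: your phrase ``matching coefficients'' is imprecise, since the elements $u(\omega^{\{j\}_{\mathbb\Delta}}\otimes s)$ need not be independent; what you actually use (and what suffices) is that termwise agreement implies agreement of the sums.
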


\begin{proof}
The point is to check that the diagram 
\[
\xymatrix{
L_{\mathbb\Delta}(M) \ar[rr]^{L_{\mathbb\Delta}(u)} \ar[d]^{\theta_M}&& L_{\mathbb\Delta}(N) \ar[d]^{\theta_N} \\ L_{\mathbb\Delta}(M) \widehat\otimes_R \widehat{R\langle \omega \rangle}_{\mathbb\Delta} \ar[rr]^{L_{\mathbb\Delta}(u)\otimes\mathrm {Id}} &&L_{\mathbb\Delta}(N)\widehat\otimes_R \widehat{R\langle \omega \rangle}_{\mathbb\Delta}
}
\]
is commutative.
It is sufficient to insert in the diagram the map obtained by tensoring the Tayor map $\theta : \widehat{R\langle \omega \rangle}_{\mathbb\Delta} \to \widehat{R\langle \omega \rangle}_{\mathbb\Delta} \otimes \widehat{R\langle \omega \rangle}_{\mathbb\Delta}$ corresponding to the left regular structure with the identity of $L_{\mathbb\Delta}(M)$ (and then switching factors):
\[
\xymatrix{
L_{\mathbb\Delta}(M) \ar[r]^-{\Delta \otimes \mathrm{Id}} \ar[d]^{\theta_M}& \widehat{R\langle \omega \rangle}_{\mathbb\Delta} \widehat\otimes'_{R} L_{\mathbb\Delta}(M) \ar[r]^-{\mathrm{Id} \otimes u} \ar[d]^{\theta \otimes \mathrm{Id}} & L_{\mathbb\Delta}(N) \ar[d]^{\theta_N}
\\ L_{\mathbb\Delta}(M) \widehat\otimes_R \widehat{R\langle \omega \rangle}_{\mathbb\Delta} \ar[r]^-{\Delta \otimes \mathrm{Id}} & \widehat{R\langle \omega \rangle}_{\mathbb\Delta} \widehat\otimes'_{R} L_{\mathbb\Delta}(M) \widehat\otimes_R \widehat{R\langle \omega \rangle}_{\mathbb\Delta} \ar[r]^-{\mathrm{Id} \otimes u \otimes\mathrm {Id}}&L_{\mathbb\Delta}(N)\widehat\otimes_R \widehat{R\langle \omega \rangle}_{\mathbb\Delta}.
}\qedhere
\]
\end{proof}

%%%%%%%%%%
\begin{rmks}
\begin{enumerate}
\item One can prove a little more: linearization defines a functor from the category of $R$-modules and $\mathbb\Delta$-differential operators to the category of $\mathbb\Delta$-hyperstratified modules.
\item
If $u$ is a $\mathbb\Delta$-differential operator, then $\ker L_{\mathbb\Delta}(u)$ inherits a $\mathbb\Delta$-stratification.
This will play a fundamental role.
\end{enumerate}
\end{rmks}

%%%%%%%%%%%%
\begin{lem}
If $M$ is a complete $\nabla_{\mathbb\Delta}$-module on $R$, then $\partial_{M,\mathbb\Delta}$ extends to a ${\mathbb\Delta}$-differential operator $\widetilde \partial_{M,\mathbb\Delta}$ of order at most one given by
\[
\widetilde \partial_{M,\mathbb\Delta}\left(\sum_{k=0}^{\infty} \omega^{\{k\}_{\mathbb\Delta}} \otimes s_k\right) = \partial_{M,\mathbb\Delta}(s_0) + \gamma_M(s_1).
\]
\end{lem}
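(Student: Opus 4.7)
My plan is to exhibit the operator directly, verify it is well-defined, and then read off the stated formula. Since a $\mathbb\Delta$-differential operator of order at most one from $M$ to $M$ is by definition an $R$-linear map (left structure) from $P \widehat\otimes'_R M$ to $M$, where $P := R\langle\omega\rangle_{\mathbb\Delta}/\omega^{\{>1\}_{\mathbb\Delta}}$, and $P$ is finite free as an $R$-module (for both structures) with basis $\{1,\omega\}$, it suffices to construct an $R$-linear map $u : P \otimes'_R M \to M$ satisfying $u(1\otimes s) = \partial_{M,\mathbb\Delta}(s)$ and $u(\omega\otimes s) = \gamma_M(s)$, and then compose with the quotient map $L_{\mathbb\Delta}(M) \twoheadrightarrow P \otimes'_R M$.

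Writing $P = R \oplus R\omega$ for the left structure, any element of $P \otimes'_R M$ can be written as $1\otimes s_0 + \omega \otimes s_1$ with $s_0,s_1 \in M$, so the formula does define an additive map $u$ from the free product $P_{\mathrm{free}} \otimes_R M$ to $M$, left-$R$-linear by construction. The one nontrivial point is that $u$ respects the relations imposed by the right $R$-module structure on $P$ (induced by the Taylor map $\theta$), namely
\[
\theta(\alpha)\otimes s = 1 \otimes \alpha s \quad\text{and}\quad \omega\theta(\alpha)\otimes s = \omega \otimes \alpha s.
\]
Modulo $\omega^{\{>1\}_{\mathbb\Delta}}$, proposition \ref{tetaqn2} gives $\theta(\alpha) \equiv \alpha + \partial_{\mathbb\Delta}(\alpha)\omega$, while the identity $\omega^{(2)_{\mathbb\Delta}} = \omega^2 - (q^2-q)\omega$ yields $\omega^2 \equiv (q^2-q)\omega \mod \omega^{\{>1\}_{\mathbb\Delta}}$, hence $\omega\theta(\alpha) \equiv \gamma(\alpha)\omega$ by lemma \ref{congmod} (or by direct computation).

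The first relation then unfolds to
\[
\alpha\,\partial_{M,\mathbb\Delta}(s) + \partial_{\mathbb\Delta}(\alpha)\,\gamma_M(s) \;\stackrel{?}{=}\; \partial_{M,\mathbb\Delta}(\alpha s),
\]
which is exactly the defining property of a $\mathbb\Delta$-derivation (definition \ref{qder1}) once we expand $\gamma_M = \mathrm{Id}+(q^2-q)\partial_{M,\mathbb\Delta}$ and $\gamma = \mathrm{Id}+(q^2-q)\partial_{\mathbb\Delta}$ (proposition \ref{gammeq} and lemma \ref{sigdel}). The second relation reduces to $\gamma(\alpha)\gamma_M(s) = \gamma_M(\alpha s)$, which is simply the $\gamma$-semilinearity of $\gamma_M$. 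This is really where the hypothesis that $\partial_{M,\mathbb\Delta}$ satisfies a \emph{twisted} Leibniz rule gets used, and it is the only place where anything has to be checked.

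With well-definedness established, $u$ extends by (completed) tensor with $M$ to an $R$-linear map $\widetilde\partial_{M,\mathbb\Delta} : L_{\mathbb\Delta}(M) \to M$ of order at most one. Since $u$ factors through $P \otimes'_R M$ by construction, the contribution of every summand $\omega^{\{k\}_{\mathbb\Delta}}\otimes s_k$ with $k \ge 2$ vanishes, which yields precisely the closed formula
\[
\widetilde\partial_{M,\mathbb\Delta}\!\left(\sum_{k=0}^{\infty}\omega^{\{k\}_{\mathbb\Delta}}\otimes s_k\right) = \partial_{M,\mathbb\Delta}(s_0)+\gamma_M(s_1),
\]
and in particular $\widetilde\partial_{M,\mathbb\Delta}(1\otimes s) = \partial_{M,\mathbb\Delta}(s)$, so that $\widetilde\partial_{M,\mathbb\Delta}$ extends $\partial_{M,\mathbb\Delta}$ as claimed. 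The only subtlety worth flagging is the verification of relation (a), which is where the $\mathbb\Delta$-derivation hypothesis on $M$ is actually consumed; everything else is bookkeeping with the basis $\{1,\omega\}$ of $P$.
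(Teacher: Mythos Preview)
Your proof is correct. The paper takes a slightly more packaged route: it invokes lemma \ref{Taylcon} to obtain the order-$1$ Taylor map $\theta_{M,1} : M \to M \otimes_R R[\omega]/\omega^{(2)_{\mathbb\Delta}}$, $s \mapsto s\otimes 1 + \partial_{M,\mathbb\Delta}(s)\otimes\omega$, extends it $R[\omega]/\omega^{(2)_{\mathbb\Delta}}$-linearly to a map $R[\omega]/\omega^{(2)_{\mathbb\Delta}} \otimes'_R M \to M \otimes_R R[\omega]/\omega^{(2)_{\mathbb\Delta}}$, and then composes with $\mathrm{Id}_M \otimes \widetilde\partial_{\mathbb\Delta}$, computing the values on $1\otimes s$ and $\omega\otimes s$ afterwards. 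Your direct verification of the two balancedness relations is precisely the semilinearity statement of lemma \ref{Taylcon} unpacked by hand, so the two arguments have the same content; the paper's version hides the check inside that earlier lemma, while yours makes explicit where the twisted Leibniz rule and the $\gamma$-semilinearity of $\gamma_M$ are consumed.
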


\begin{proof}
We know from lemma \ref{Taylcon} that $\partial_{M,\mathbb\Delta}$ extends to a $\mathbb\Delta$-Taylor map of order $1$: 
\[
M \to M \otimes_R R[\omega]/\omega^{(2)_{\mathbb\Delta}}, \quad s \mapsto s \otimes 1 + \partial_{M,\mathbb\Delta}(s) \otimes \omega
\]
(which is $R$-linear for the right structure).
This map extends uniquely to an $R[\omega]/\omega^{(2)_{\mathbb\Delta}}$-linear map that we can then compose on the right with $\mathrm{Id}_M \otimes \widetilde \partial_{\mathbb\Delta}$ and obtain an $R$-linear map (for the left structure)
\[
\widetilde \partial_{M,\mathbb\Delta} : R[\omega]/\omega^{(2)_{\mathbb\Delta}} \otimes'_R M \to M \otimes_R R[\omega]/\omega^{(2)_{\mathbb\Delta}} \to M.
\]
Now, we compute for $s \in M$,
\[
\widetilde \partial_{M,\mathbb\Delta}(1 \otimes s) = (\mathrm{Id}_M \otimes \widetilde \partial_{\mathbb\Delta})(s \otimes 1 + \partial_{M,\mathbb\Delta}(s) \otimes \omega) = \partial_{M,\mathbb\Delta}(s)
\]
and
\begin{linenomath}
\begin{align*}
\widetilde \partial_{M,\mathbb\Delta}(\omega \otimes s) &= (\mathrm{Id}_M \otimes \widetilde \partial_{\mathbb\Delta})(s \otimes \omega + \partial_{M,\mathbb\Delta}(s) \otimes \omega^2)
\\ &= (\mathrm{Id}_M \otimes \widetilde \partial_{\mathbb\Delta})(s \otimes \omega + \partial_{M,\mathbb\Delta}(s) \otimes \left((2)_{q^p}\omega^{\{2\}} + (q^2-q)\omega\right)
\\ &= s + (q^2-q)\partial_{M,\mathbb\Delta}(s)
\\ & = \gamma_M(s). \qedhere
\end{align*}
\end{linenomath}
\end{proof}

%%%%%%%%%%%%
\begin{rmks}
\begin{enumerate}
\item
Be careful that there may exist other ways to lift $\partial_{M,\mathbb\Delta}$ to a $\mathbb\Delta$-differential operator $u$ when $M$ is not torsion-free.
As a toy example, assume that $M$ is reduced in the sense that $(p)_qM = 0$.
It then happens that a $\mathbb\Delta$-connection $\partial_{M,\mathbb\Delta}$ on $M$ is the same thing as an $R$-linear map and we can lift it to a $\mathbb\Delta$-differential operator of order $0$:
\[
u\left(\sum_{k=0}^{\infty} \omega^{\{k\}_{\mathbb\Delta}} \otimes s_k\right) = \partial_{M,\mathbb\Delta}(s_0). 
\]
\item Note however that, in the case of the trivial $\mathbb\Delta$-connection on $R$, the lifting is unique and in particular, our notations are compatible.
\end{enumerate}
\end{rmks}

%%%%%%%%%%%%%%%%%%%
\begin{dfn}
The \emph{linearized de Rham complex} of a $\nabla_{\mathbb\Delta}$-module $M$ on $R$ is
\[
\left[
\xymatrix{ L_{\mathbb\Delta}(M) \ar[rr]^{L_{M,\mathbb\Delta}} && L_{\mathbb\Delta}(M)}
\right]
\]
with $L_{M,\mathbb\Delta} := L_{\mathbb\Delta}(\widetilde \partial_{M,\mathbb\Delta})$.
\end{dfn}

%%%%%%%%%%%%%
\begin{rmks}
\begin{enumerate}
\item If $M$ is a complete $\nabla_{\mathbb\Delta}$-module on $R$, then
\begin{linenomath}
\begin{align*}
L_{M,\mathbb\Delta} (\varphi \otimes s) &= ((\mathrm{Id} \otimes \widetilde \partial_{M,\mathbb\Delta}) \circ (\Delta \otimes \mathrm{Id}_M))(\varphi \otimes s)
\\ &= \sum_{k=0}^\infty L^{\langle k \rangle}_{\mathbb\Delta}(\varphi) \otimes \widetilde \partial_{M,\mathbb\Delta}(\omega^{\{k\}_{\mathbb\Delta}} \otimes s)
\\ &= \varphi \otimes \partial_{M,\mathbb\Delta}( s) + L_{\mathbb\Delta}(\varphi) \otimes \gamma_M(s).
\end{align*}
\end{linenomath}
Alternatively, this formula may be rewritten
\begin{equation} \label{LLM}
L_{M,\mathbb\Delta} (\varphi \otimes s) = L_{\mathbb\Delta}(\varphi) \otimes s + \gamma'(\varphi) \otimes \partial_{M,\mathbb\Delta}( s).
\end{equation}
\item 
This last computation shows that $L_{M,\mathbb\Delta}$ is a tensor product $\mathbb\Delta$-derivation (see lemma \ref{tenhom}) on $L_{\mathbb\Delta}(M)$.
Be careful however that $L_{M,\mathbb\Delta}$ is an $R$-linear map which is a $\mathbb\Delta$-derivation for the right structure.
This (right) $\mathbb\Delta$-derivation $L_{M,\mathbb\Delta}$ should not be confused with the (left) $\mathbb\Delta$-derivation $\partial_{\mathbb\Delta}$ of $L_{\mathbb\Delta}(M)$ that does not depend on the $\mathbb\Delta$-connection but only on the underlying $R$-module $M$ (and there also exists an $L_{\mathbb\Delta}$ on $M \otimes_R \widehat{R\langle \omega \rangle}_{\mathbb\Delta}$ which does not depend on the $\mathbb\Delta$-connection either).
\item The linearized de Rham complex is functorial: if $M$ and $N$ are two $\nabla_{\mathbb\Delta}$-modules and $\varphi : M \to N$ is a horizontal map, then the diagram
\[
\xymatrix{L_{\mathbb\Delta}(M) \ar[rr]^{\mathrm{Id} \otimes \varphi} \ar[d]^{L_{M,\mathbb\Delta}} && L_{\mathbb\Delta}(N) \ar[d]^{L_{N,\mathbb\Delta}} \\ L_{\mathbb\Delta}(M) \ar[rr]^{\mathrm{Id} \otimes \varphi} && L_{\mathbb\Delta}(N)
}
\]
is commutative.
\item
It follows from lemma \ref{linext} that if $M$ is a $\nabla_{\mathbb\Delta}$-module on $R$, then
\[
\partial_{\mathbb\Delta} \circ L_{M,\mathbb\Delta} = L_{M,\mathbb\Delta} \circ \partial_{\mathbb\Delta}.
\]
\item
Lemma \ref{linext} also implies that, if $M$ is a $\nabla_{\mathbb\Delta}$-module on $R$, then $\ker L_{M,\mathbb\Delta}$ is naturally endowed with a $\mathbb\Delta$-hyperstratification.
This is functorial in $M$.
\item
As a consequence of the last remark, if $M,M'$ are two $\nabla_{\mathbb\Delta}$-modules on $R$, then there exists a morphism
\[
\mathrm{Hom}_{\nabla}(M,M') \to \mathrm{Hom}_{\nabla}(\ker L_{M,\mathbb\Delta},\ker L_{M',\mathbb\Delta})
\]
or, equivalently (see the remark \ref{tensor}.5), when both $M$ and $\ker L_{M,\mathbb\Delta}$ are inversive,
\begin{equation}\label{H0Hom}
\mathrm H^0_{\mathrm{dR},\mathbb\Delta}(\mathrm{Hom}_{R}(M,M')) \to \mathrm H^0_{\mathrm{dR},\mathbb\Delta}(\mathrm{Hom}_{R}(\ker L_{M,\mathbb\Delta},\ker L_{M',\mathbb\Delta}).
\end{equation}
\end{enumerate}
\end{rmks}

We shall need later the following comparison lemma:

%%%%%%%%%%%%%%%%%%%
\begin{lem} \label{H0dr}
If $M$ is a complete $\nabla_{\mathbb\Delta}$-module on $R$, then there exists a natural isomorphism
\[
\mathrm H^0_{\mathrm{dR},\mathbb\Delta}(M) \simeq \mathrm H^0_{\mathrm{dR},\mathbb\Delta}(\ker L_{M,\mathbb\Delta}).
\]
\end{lem}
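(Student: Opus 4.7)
The plan is to realize the isomorphism explicitly via the inclusion
\[
\iota_M : M \hookrightarrow L_{\mathbb\Delta}(M) = \widehat{R\langle \omega \rangle}_{\mathbb\Delta} \widehat\otimes'_R M, \quad s \mapsto 1 \otimes s,
\]
and to verify that it identifies the horizontal sections of $M$ with the horizontal sections of $\ker L_{M,\mathbb\Delta}$.

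First I would upgrade the little Poincar\'e lemma (Proposition \ref{little}) from $R$ to $M$: tensoring the split exact sequence $0 \to R \to \widehat{R\langle \omega \rangle}_{\mathbb\Delta} \xrightarrow{\partial_{\mathbb\Delta}} \widehat{R\langle \omega \rangle}_{\mathbb\Delta} \to 0$ on the right with $M$ over $R$ (via the $\otimes'$-structure) and completing preserves the splitting, yielding
\[
0 \to M \xrightarrow{\iota_M} L_{\mathbb\Delta}(M) \xrightarrow{\partial_{\mathbb\Delta}} L_{\mathbb\Delta}(M) \to 0.
\]
This is exactly the form used in the preliminary step of the proof of Corollary \ref{fulfait}. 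Consequently, $\iota_M$ identifies $M$ with the submodule $\mathrm H^0_{\mathrm{dR},\mathbb\Delta}(L_{\mathbb\Delta}(M)) = \ker \partial_{\mathbb\Delta}$ of $L_{\mathbb\Delta}(M)$.

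Next, since $\partial_{\mathbb\Delta}$ and $L_{M,\mathbb\Delta}$ commute on $L_{\mathbb\Delta}(M)$ (Lemma \ref{linext}, or remark 4 following the definition of the linearized de Rham complex), $\ker L_{M,\mathbb\Delta}$ is a sub-$\nabla_{\mathbb\Delta}$-module of $L_{\mathbb\Delta}(M)$, and the left exactness of $\mathrm H^0_{\mathrm{dR},\mathbb\Delta}$ gives
\[
\mathrm H^0_{\mathrm{dR},\mathbb\Delta}(\ker L_{M,\mathbb\Delta}) = \iota_M(M) \cap \ker L_{M,\mathbb\Delta}.
\]
It remains to compute $L_{M,\mathbb\Delta}(1 \otimes s)$. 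Formula \eqref{LLM}, together with $L_{\mathbb\Delta}(1) = 0$ and $\gamma'(1) = 1$, reduces to
\[
L_{M,\mathbb\Delta}(1 \otimes s) = 1 \otimes \partial_{M,\mathbb\Delta}(s),
\]
which vanishes precisely when $s \in \mathrm H^0_{\mathrm{dR},\mathbb\Delta}(M)$. Restricting $\iota_M$ therefore produces the desired isomorphism, and naturality in $M$ is automatic since $\iota_M$, $L_{\mathbb\Delta}(-)$, and the formation of $\ker L_{M,\mathbb\Delta}$ are all functorial in horizontal morphisms.

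The argument is short; the only non-trivial input is the ``$M$-tensored'' form of the Poincar\'e lemma, which is an immediate base change from Proposition \ref{little}, so I do not anticipate any real obstacle.
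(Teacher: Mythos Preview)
Your proof is correct and follows essentially the same route as the paper: tensor the little Poincar\'e lemma with $M$ to identify $\iota_M(M)$ with $\ker\partial_{\mathbb\Delta}\subset L_{\mathbb\Delta}(M)$, then use the commutation of $\partial_{\mathbb\Delta}$ and $L_{M,\mathbb\Delta}$ together with the computation $L_{M,\mathbb\Delta}(1\otimes s)=1\otimes\partial_{M,\mathbb\Delta}(s)$ to conclude. The only cosmetic difference is that the paper first produces the map abstractly via the morphism \eqref{H0Hom} (with $M,M'$ replaced by $R,M$) and then argues injectivity/surjectivity separately, whereas you build the isomorphism directly from $\iota_M$; the underlying ingredients are identical.
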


\begin{proof}
Thanks to little Poincar\'e lemma (proposition \ref{little}), the map \eqref{H0Hom} applied to the case where $M$ and $M'$ are replaced with $R$ (which is trivially inversive) and $M$ respectively provides a morphism
\[
\mathrm H^0_{\mathrm{dR},\mathbb\Delta}(M) \to \mathrm H^0_{\mathrm{dR},\mathbb\Delta}(\ker L_{M,\mathbb\Delta}).
\]
By construction, this map is clearly injective and it remains to show that it is surjective.
Thus we start with some $\varphi \in L_{\mathbb\Delta}(M)$ such that both $L_{M,\mathbb\Delta}(\varphi) = 0$ and $\partial_{\mathbb\Delta}(\varphi) = 0$.
It follows from little Poincar\'e lemma again that there exists an exact sequence (since $M$ is assumed to be complete):
\[
0 \longrightarrow M \longrightarrow L_{\mathbb\Delta}(M) \overset{\partial_{\mathbb\Delta}}\longrightarrow L_{\mathbb\Delta}(M) \longrightarrow 0.
\]
Therefore, condition $\partial_{\mathbb\Delta}(\varphi) = 0$ means that $\varphi = 1 \otimes s$ for some $s \in M$.
Now, condition $L_{M,\mathbb\Delta}(\varphi) = 0$ boils down to $\partial_{M,\mathbb\Delta}(s) = 0$ which shows that $s \in \mathrm H^0_{\mathrm{dR},\mathbb\Delta}(M)$.
\end{proof}

%%%%%%%%%%%%%%%%%%%
\begin{prop} \label{exacM}
If a complete $R$-module $M$ is endowed with a $\mathbb\Delta$-hyperstratification, then there exists a split exact sequence
\[
0 \longrightarrow M \overset{\tau_M \circ \theta_M} \longrightarrow L_{\mathbb\Delta}(M) \overset{L_{M,\mathbb\Delta}}\longrightarrow L_{\mathbb\Delta}(M) \longrightarrow 0.
\]
\end{prop}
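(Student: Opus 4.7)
The plan is to deduce the statement from the little Poincar\'e lemma (proposition \ref{little}) by base-changing along the hyperstratification $\epsilon_M$. First, I would start from the split exact sequence
\[
0 \longrightarrow R \longrightarrow \widehat{R\langle \omega \rangle}_{\mathbb\Delta} \overset{L_{\mathbb\Delta}}\longrightarrow \widehat{R\langle \omega \rangle}_{\mathbb\Delta} \longrightarrow 0
\]
of proposition \ref{little}. Since $L_{\mathbb\Delta}$ is $R$-linear for the left (canonical) structure, the splitting is $R$-linear and I may apply $M \widehat\otimes_R -$ (with $\widehat{R\langle\omega\rangle}_{\mathbb\Delta}$ taken as a left $R$-module) while preserving both exactness and splitness. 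This produces a split short exact sequence of complete $R$-modules
\[
0 \longrightarrow M \longrightarrow M \widehat\otimes_R \widehat{R\langle \omega \rangle}_{\mathbb\Delta} \overset{\mathrm{Id}_M \otimes L_{\mathbb\Delta}}\longrightarrow M \widehat\otimes_R \widehat{R\langle \omega \rangle}_{\mathbb\Delta} \longrightarrow 0
\]
whose first map is $s \mapsto s \otimes 1$.

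The second step is to transport this sequence to $L_{\mathbb\Delta}(M)$ via the $R$-linear isomorphism $\epsilon_M^{-1} : M \widehat\otimes_R \widehat{R\langle \omega \rangle}_{\mathbb\Delta} \simeq L_{\mathbb\Delta}(M)$ furnished by the hyperstratification. To identify the transported first map, I use the formula $\epsilon_M^{-1} = \tau_M \circ \epsilon_M \circ \tau_M$ from the proof of proposition \ref{eqHyperTayl}: evaluating on $s \otimes 1$, and since $\tau_M(s \otimes 1) = 1 \otimes s$ and $\epsilon_M(1 \otimes s) = \theta_M(s)$, this gives exactly $(\tau_M \circ \theta_M)(s)$, as required.

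The heart of the argument is to check that the transported middle map is $L_{M,\mathbb\Delta}$, which amounts to verifying the commutativity
\[
(\mathrm{Id}_M \otimes L_{\mathbb\Delta}) \circ \epsilon_M = \epsilon_M \circ L_{M,\mathbb\Delta}.
\]
Both sides are semilinear in the appropriate sense, so it suffices to test on an element of the form $1 \otimes s \in L_{\mathbb\Delta}(M)$, extending via the twisted Leibniz rule for $L_{\mathbb\Delta}$ with respect to $\gamma'$ (remark after proposition \ref{twder}). On $1 \otimes s$ the right-hand side equals $\epsilon_M(1 \otimes \partial_{M,\mathbb\Delta}(s)) = \theta_M(\partial_{M,\mathbb\Delta}(s))$ by formula \eqref{LLM}, while the left-hand side unfolds to $(\mathrm{Id}_M \otimes L_{\mathbb\Delta})\left(\sum_k \partial^{\langle k \rangle}_{M,\mathbb\Delta}(s) \otimes \omega^{\{k\}_{\mathbb\Delta}}\right) = \sum_k \partial^{\langle k \rangle}_{M,\mathbb\Delta}(s) \otimes L_{\mathbb\Delta}(\omega^{\{k\}_{\mathbb\Delta}})$. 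The required equality is then precisely the $j=1$ case of the cocycle identity \eqref{Lpart}, using $L_{\mathbb\Delta} = L^{\langle 1 \rangle}_{\mathbb\Delta}$. This reduction to the cocycle condition is where the full hyperstratification data enters and is the main obstacle; once it is in hand, the rest is routine bookkeeping.
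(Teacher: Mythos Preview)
Your proof is correct and follows essentially the same route as the paper: tensor the little Poincar\'e lemma with $M$, transport along $\epsilon_M^{-1}$, identify the first map via $\epsilon_M^{-1} = \tau_M \circ \epsilon_M \circ \tau_M$, and verify $(\mathrm{Id}_M \otimes L_{\mathbb\Delta}) \circ \epsilon_M = \epsilon_M \circ L_{M,\mathbb\Delta}$ by checking it on $1 \otimes s$ using the $j=1$ case of \eqref{Lpart} and then extending to $\varphi \otimes s$ via the $\gamma'$-twisted Leibniz rule for $L_{\mathbb\Delta}$. The only cosmetic difference is that the paper writes out the extension to general $\varphi \otimes s$ explicitly (using \eqref{LLM} and the derivation property of $L_{\mathbb\Delta}$), whereas you summarize it as ``extending via the twisted Leibniz rule''; your phrase ``both sides are semilinear'' is a little loose, since what is really used is that both sides satisfy the same $\gamma'$-Leibniz rule with respect to left multiplication by $\varphi$, but you do supply that ingredient.
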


\begin{proof}
It follows from little Poincar\'e lemma (proposition \ref{little}) that there exists a split exact sequence
\[
0 \longrightarrow M \longrightarrow M \widehat\otimes_R \widehat{R\langle \omega \rangle}_{\mathbb\Delta} \overset {\mathrm{Id} \otimes L_{\mathbb\Delta}} \longrightarrow M \widehat\otimes_R \widehat{R\langle \omega \rangle}_{\mathbb\Delta} \longrightarrow 0.
\]
We may then use lemma \ref{maglem} (case $N = R$) which states that there exists an isomorphism
\[
\epsilon_M^{-1} : M \widehat\otimes_R \widehat{R\langle \omega \rangle}_{\mathbb\Delta} \simeq L_{\mathbb\Delta}(M)
\]
compatible with the $\mathbb\Delta$-hyperstratifications and obtain a split exact sequence as in the statement.
It remains to identify both maps.
The first one is obtained by composing the inclusion with $\epsilon_M^{-1} = \tau_M \circ \epsilon_M \circ \tau_M$ and is therefore equal to $\tau_M \circ \theta_M$ as asserted.
Thus, we are left with showing that
\[
\epsilon_M \circ L_{M,\mathbb\Delta} =(\mathrm{Id}_M \otimes L_{\mathbb\Delta}) \circ \epsilon_M.
\]
On an element of the form $1 \otimes s$ with $s \in M$, we can use equality \eqref{Lpart} in the case $j=1$.
We have
\begin{linenomath}
\begin{align*}
\theta_M(\partial_{M,\mathbb\Delta}(s)) & =\sum_{k=0}^{\infty} \partial^{\langle k \rangle}_{M,\mathbb\Delta}(\partial_{M,\mathbb\Delta}(s))) \otimes \omega^{\{k\}_{\mathbb\Delta}}
\\ &
= \sum_{k=0}^{\infty} \partial^{\langle k \rangle}_{M,\mathbb\Delta}(s) \otimes L_{\mathbb\Delta}\left(\omega^{\{k\}_{\mathbb\Delta}}\right)
\\ &=(\mathrm{Id}_M \otimes L_{\mathbb\Delta})\left(\sum_{k=0}^{\infty} \partial^{\langle k \rangle}_{M,\mathbb\Delta}(s) \otimes \omega^{\{k\}_{\mathbb\Delta}}\right)
\\ &=(\mathrm{Id}_M \otimes L_{\mathbb\Delta})(\theta_M(s)).
\end{align*}
\end{linenomath}
In general, using equality \eqref{LLM}, we have for $\varphi \otimes s \in L_{\mathbb\Delta}(M)$,
\begin{linenomath}
\begin{align*}
\epsilon_M(L_{M,\mathbb\Delta}(\varphi \otimes s)) & = \epsilon_M(L_{\mathbb\Delta}(\varphi) \otimes s + \gamma'(\varphi) \otimes \partial_{M,\mathbb\Delta}( s))
\\ & = L_{\mathbb\Delta}(\varphi)\theta_M(s) + \gamma'(\varphi) \theta_M(\partial_{M,\mathbb\Delta}( s))
\\ &= L_{\mathbb\Delta}(\varphi)\theta_M(s) + \gamma'(\varphi) (\mathrm{Id}_M \otimes L_{\mathbb\Delta})(\theta_M(s))
\\ &= \sum_{k=0}^{\infty} \partial^{\langle k \rangle}_{M,\mathbb\Delta}(s) \otimes L_{\mathbb\Delta}(\varphi)\omega^{\{k\}_{\mathbb\Delta}} 
\\&+ \sum_{k=0}^{\infty} \partial^{\langle k \rangle}_{M,\mathbb\Delta}(s) \otimes \gamma'(\varphi) L_{\mathbb\Delta}(\omega^{\{k\}_{\mathbb\Delta}})
\\ &= \sum_{k=0}^{\infty} \partial^{\langle k \rangle}_{M,\mathbb\Delta}(s) \otimes L_{\mathbb\Delta}(\varphi\omega^{\{k\}_{\mathbb\Delta}}) 
\\ & = (\mathrm{Id}_M \otimes L_{\mathbb\Delta}) (\epsilon_M(\varphi \otimes s)).
\end{align*}
\end{linenomath}
We have used the fact that $L_{\mathbb\Delta}$ is a $\mathbb\Delta$-derivation for the right structure so that $L_{\mathbb\Delta}(\varphi\psi) = L_{\mathbb\Delta}(\varphi)\psi + \gamma'(\varphi) L_{\mathbb\Delta}(\psi)$.
\end{proof}

In order to characterize explicitly the essential image of the forgetful functor from the category of finite projective $\mathbb\Delta$-hyperstratified modules to the category of $\nabla_{\mathbb\Delta}$-modules, we introduce the following general notion:

%%%%%%%%%%%
\begin{dfn}
Let $M$ be a module over an adic ring $R$.
A map $u : M \to M$ is said to be \emph{topologically quasi-nilpotent} (resp.\ \emph{weakly quasi-nilpotent}) if for all (resp.\ if there exists an) ideal of definition $I$ such that
\[
\forall s \in M, \exists k \in \mathbb N, \quad u^k(s) \in IM.
\]
\end{dfn}

When the topology is trivial, then both conditions are equivalent and one usually says \emph{quasi-nilpotent}.

%%%%%%%%%%%%
\begin{rmks}
\begin{enumerate}
\item There exists the stronger notion of a \emph{topologically nilpotent} (resp.\ \emph{weakly nilpotent}) endomorphism, which is obtained by reversing the quantifiers:
\[
\exists k \in \mathbb N, \forall s \in M, \quad u^k(s) \in IM.
\]
Again, in the case of the trivial topology, both conditions are equivalent and it means that $u$ is \emph{nilpotent}.
\item When $u$ is $R$-linear, the epithets ``topologically'' and ``weakly'' are equivalent.
If moreover, $M$ is finite, then all four conditions are equivalent.
\item Since we will also need to consider topological rings that are not adic, let us mention that the notion of a \emph{topologically quasi-nilpotent} map $u : M \to M$ makes sense whenever $(M,0)$ is any pointed topological space: it simply means that
\[
\forall s \in M, \quad \lim_{k\to \infty} u^k(s) = 0.
\]
\end{enumerate}
\end{rmks}

%%%%%%%%%%
\begin{dfn}
A $\nabla_{\mathbb\Delta}$-module $M$ is said to be \emph{topologically/weakly (quasi-) nilpotent} if this is the case for the endomorphism $\partial_{M,\mathbb\Delta}$ when $p$ is odd and for $\partial_{M,\mathbb\Delta}^2 - \partial_{M,\mathbb\Delta}$ when $p=2$.
\end{dfn}

%%%%%%%%%%%%%%
\begin{xmps}
\begin{enumerate}
\item
The trivial $\nabla_{\mathbb\Delta}$-module is weakly nilpotent because $\partial_{\mathbb\Delta} \equiv 0 \mod (p)_q$.
The same conclusion holds for our running examples $F_n \simeq (p)_q^nR$, $G_n \simeq (p)_q^nR/(p)_q^{n+1}R$ as well as $R\{n\}$.
\item
If $p$ is odd, then a $\nabla_{\mathbb\Delta}$-module $M$ is weakly quasi-nilpotent if and only if
\[
\forall s \in M, \exists k \in \mathbb N, \quad \partial^{k}_{M,\mathbb\Delta}(s) \equiv 0 \mod (p,q-1)
\]
because $(p,q-1)$ is the biggest ideal of definition.
When $p=2$, the condition reads
\[
\forall s \in M, \exists k \in \mathbb N, \quad (\partial_{M,\mathbb\Delta}^2 - \partial_{M,\mathbb\Delta})^k(s) \equiv 0 \mod (p,q-1).
\]
\item For a $\nabla_{\mathbb\Delta}$-module $M$, the following are equivalent:
\begin{enumerate}
\item $M$ is weakly quasi-nilpotent,
\item $M$ is topologically quasi-nilpotent modulo $(p)_q$,
\item $M$ is topologically quasi-nilpotent modulo $(q-1)$,
\item $M$ is topologically quasi-nilpotent modulo $p$.
\end{enumerate}
\item If a \emph{finite} $\nabla_{\mathbb\Delta}$-module $M$ is weakly quasi-nilpotent, then it is actually weakly nilpotent (and we shall therefore remove the prefix in this case).
\item The property for a $\nabla_{\mathbb\Delta}$-module of being (weakly) quasi-nilpotent is stable under tensor product and internal Hom.
\end{enumerate}
\end{xmps}

%%%%%%%%%
\begin{lem} \label{techlem}
A finite (resp. finite projective) $\nabla_{\mathbb\Delta}$-module $M$ is weakly nilpotent if and only if the composite map
\begin{equation} \label{canmap}
\ker\left( L_{M,\mathbb\Delta}\right) \hookrightarrow L_{\mathbb\Delta}(M) \overset {e} \twoheadrightarrow M
\end{equation}
is surjective (resp. bijective).
\end{lem}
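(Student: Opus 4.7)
The plan is to reduce the question modulo $(p,q-1)$, where both $\widehat{R\langle\omega\rangle}_{\mathbb\Delta}$ and $L_{M,\mathbb\Delta}$ take a tractable form, and then to lift conclusions back using $(p,q-1)$-adic completeness together with the $R$-linearity of $L_{M,\mathbb\Delta}$. Note first that modulo $(p,q-1)$, the completion $\widehat{R\langle\omega\rangle}_{\mathbb\Delta}$ reduces to the \emph{uncompleted} divided power polynomial ring $\overline R\langle\omega\rangle$ (with $\overline R = W/pW$), since any element of the completion has coefficients on the basis $(\omega^{\{k\}_{\mathbb\Delta}})_k$ that tend to $0$ in the $(p,q-1)$-adic topology. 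By proposition \ref{modp}, together with the fact that $\gamma' \equiv \mathrm{Id} \pmod{(p,q-1)}$, the reduction $\overline{L_{M,\mathbb\Delta}}$ on $\overline R\langle\omega\rangle \otimes_{\overline R}\overline M$ takes the explicit form
\[
\overline{L_{M,\mathbb\Delta}}\!\left(\sum_{k\ge 0}\omega^{[k]}\otimes s_k\right) = \sum_{k\ge 0}\omega^{[k]}\otimes\bigl(s_{k+1}+\overline\partial_{M,\mathbb\Delta}(s_k)+\varepsilon\,k\,s_k\bigr),
\]
with $\varepsilon=0$ for $p$ odd and $\varepsilon=1$ for $p=2$. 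Since only finite sums are allowed, the condition $\overline{L_{M,\mathbb\Delta}}(\xi)=0$ forces a recursion $s_k = f_k(\overline\partial_{M,\mathbb\Delta})(s_0)$ that must terminate. For $p$ odd one gets $f_k(T)=(-T)^k$, and for $p=2$ an inductive check yields $s_{2k}=(\overline\partial^2-\overline\partial)^k(s_0)$ and $s_{2k+1}=\overline\partial(\overline\partial^2-\overline\partial)^k(s_0)$. Consequently the reduced composite $\ker\overline{L_{M,\mathbb\Delta}}\to\overline M$ is always injective, and surjective (on the finite $\overline M$) exactly when $M$ is weakly nilpotent.

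The necessity of weak nilpotency follows immediately, since surjectivity of the composite in the lemma implies surjectivity of its reduction modulo $(p,q-1)$. For sufficiency, given $s\in M$ with $M$ finite and weakly nilpotent, I would construct $\xi\in\ker L_{M,\mathbb\Delta}$ with $e(\xi)=s$ by successive approximation. Iterating proposition \ref{Griff}.2 on the stable filtration $(p,q-1)^n M$ upgrades weak nilpotency to $\partial_{M,\mathbb\Delta}^{nK}(M)\subset (p,q-1)^n M$ for a fixed $K$ and all $n$, so that the formal candidate $\xi_0 := \sum_k\omega^{\{k\}_{\mathbb\Delta}}\otimes f_k(\partial_{M,\mathbb\Delta})(s)$ converges in $L_{\mathbb\Delta}(M)$; by the reduced computation, $L_{M,\mathbb\Delta}(\xi_0)\in(p,q-1)L_{\mathbb\Delta}(M)$. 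A section $\sigma$ of $L_{\mathbb\Delta}$ with image in $\ker e$, supplied by the little Poincar\'e lemma (proposition \ref{little}), produces a correction $\delta_0\in(p,q-1)L_{\mathbb\Delta}(M)\cap\ker e$ such that $L_{M,\mathbb\Delta}(\xi_0+\delta_0)\in(p,q-1)^2 L_{\mathbb\Delta}(M)$; iterating this correction process and invoking $(p,q-1)$-adic completeness of $L_{\mathbb\Delta}(M)$ yields the desired $\xi$.

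The finite projective case requires additionally the injectivity of the composite: if $\xi\in\ker L_{M,\mathbb\Delta}$ satisfies $e(\xi)=0$, then by the injectivity of $\overline\mu$ one has $\xi\in(p,q-1)L_{\mathbb\Delta}(M)$, and the flatness of $L_{\mathbb\Delta}(M)$ (consequence of finite projectivity of $M$) combined with the $R$-linearity of $L_{M,\mathbb\Delta}$ allows one to repeat this argument at each level to conclude $\xi\in\bigcap_n (p,q-1)^n L_{\mathbb\Delta}(M)=0$ by derived Nakayama. The main obstacle is the iterative construction in the sufficiency step: one must simultaneously control the $(p,q-1)$-adic orders of both the coefficients $s_k$ of $\xi_0$ and of each successive correction $\delta_n$, and checking that the quantitative refinement of weak nilpotency provided by proposition \ref{Griff} is strong enough to force convergence of $\xi_0 + \sum_n\delta_n$ in $L_{\mathbb\Delta}(M)$ is the technical heart of the proof.
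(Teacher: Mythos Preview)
Your reduction modulo $(p,q-1)$ and the resulting recursion are exactly what the paper does, and your necessity argument is fine. The gap is in the sufficiency step. The section $\sigma$ you invoke from the little Poincar\'e lemma is a section of $L_{\mathbb\Delta}$ (the structural derivation on $\widehat{R\langle\omega\rangle}_{\mathbb\Delta}$), not of $L_{M,\mathbb\Delta}$. By formula~\eqref{LLM} these differ by the term $\varphi\otimes s\mapsto\gamma'(\varphi)\otimes\partial_{M,\mathbb\Delta}(s)$, which is \emph{not} divisible by $(p,q-1)$ in general. So setting $\delta_0=-(\sigma\otimes\mathrm{Id}_M)(L_{M,\mathbb\Delta}(\xi_0))$ gives
\[
L_{M,\mathbb\Delta}(\xi_0+\delta_0)=(\gamma'\otimes\partial_{M,\mathbb\Delta})(\delta_0)\in(p,q-1)L_{\mathbb\Delta}(M),
\]
but there is no reason this lies in $(p,q-1)^2L_{\mathbb\Delta}(M)$: your iteration does not gain a power of $(p,q-1)$ at each step. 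It does gain one application of $\partial_{M,\mathbb\Delta}$, so an argument along these lines can be rescued via weak nilpotency, but the bookkeeping is different from what you wrote and is exactly the ``technical heart'' you flag.

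The paper sidesteps this entirely. The key observation you are missing is that your own reduced computation already shows $\overline{L_{M,\mathbb\Delta}}$ is \emph{surjective} when $M$ is weakly nilpotent: given $\psi=\sum_k\omega^{[k]}\otimes t_k$, the element with $s_k=\sum_{i<k}\overline\partial^{\,i}(t_{k-1-i})$ is a preimage (a finite sum by weak nilpotency). This is the right substitute for the little Poincar\'e section. From surjectivity of $\overline{L_{M,\mathbb\Delta}}$ and completeness one gets $L_{M,\mathbb\Delta}$ surjective, whence $\ker L_{M,\mathbb\Delta}\to\ker\overline{L_{M,\mathbb\Delta}}$ is surjective, and ordinary Nakayama on the finite module $M$ finishes the surjectivity claim. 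For bijectivity in the finite projective case the paper's argument is also cleaner than your iterative injectivity: choose an $R$-linear section of the (now surjective) map~\eqref{canmap}, take a complement $N$ of its image in $L_{\mathbb\Delta}(M)$, and check via derived Nakayama that $L_{M,\mathbb\Delta}|_N:N\to L_{\mathbb\Delta}(M)$ is bijective, which reduces to the exactness of $0\to\overline M\to\overline{L_{\mathbb\Delta}(M)}\to\overline{L_{\mathbb\Delta}(M)}\to 0$ that your reduced computation already establishes.
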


\begin{proof}
Let us first assume that $p$ is odd.

If
\[
\varphi := \sum_{k=0}^\infty \omega^{\{k\}}_{\mathbb\Delta} \otimes s_k \in L_{\mathbb\Delta}(M),
\]
then it follows from proposition \ref{modp} that
\begin{align} \label{modeq}
L_{M,\mathbb\Delta} (\varphi) &\equiv \sum_{k=0}^\infty \left(\omega^{\{k\}}_{\mathbb\Delta} \otimes \partial_{M,\mathbb\Delta}( s_k) + \omega^{\{k-1\}}_{\mathbb\Delta} \otimes s_k \right) \mod (p,q-1) \nonumber
\\ &\equiv \sum_{k=0}^\infty \omega^{\{k\}}_{\mathbb\Delta} \otimes (\partial_{M,\mathbb\Delta}( s_k) + s_{k+1}) \mod (p,q-1).
\end{align}
Therefore,
\begin{align} \label{modeq2}
&L_{M,\mathbb\Delta}(\varphi) \equiv 0 \mod (p,q-1) \nonumber
\\ \Leftrightarrow\ &\forall k \geq 0, \quad s_{k+1} \equiv -\partial_{M,\mathbb\Delta}( s_k) \mod (p,q-1) \nonumber
\\ \Leftrightarrow\ &\forall n \geq 0, \quad s_n \equiv (-1)^n\partial^n_{M,\mathbb\Delta}(s_0) \mod (p,q-1).
\end{align}
If $M$ is finite, it follows from Nakayama lemma that the map \eqref{canmap} that sends $\varphi$ to $s_0$ is surjective if and only if $\partial_{M,\mathbb\Delta}$ is weakly nilpotent.

Assume now that the previous equivalent conditions hold and that $M$ is finite projective.
We can \emph{choose} an $R$-linear section of the surjective map \eqref{canmap} and then a complement $N$ for $M$ inside $L_{\mathbb\Delta}(M)$ (with respect to this section).
It is then sufficient to show that the map $N \to L_{\mathbb\Delta}(M)$ induced by $L_{M,\mathbb\Delta}$ is bijective.
Since $N$ is completely flat, we can use derived Nakayama lemma and check our assertion modulo $(p,q-1)$.
It is therefore sufficient to show that the sequence
\[
0 \longrightarrow M \longrightarrow L_{\mathbb\Delta}(M) \overset {L_{M,\mathbb\Delta}} \longrightarrow L_{\mathbb\Delta}(M) \longrightarrow 0
\]
is exact modulo $(p,q-1)$.
Left exactness follows from equivalence \eqref{modeq2}.
Right exactness is obtained from congruence \eqref{modeq} as follows.
Given
\[
\psi := \sum_{k=0}^\infty \omega^{\{k\}}_{\mathbb\Delta} \otimes t_k \in L_{\mathbb\Delta}(M),
\]
one sets $s_k := \sum_{i=0}^{k-1} \partial^i_{M,\mathbb\Delta}(t_{k-i-1})$ and $\varphi$ is then well defined since we assumed that the $\mathbb\Delta$-connection is weakly nilpotent.

When $p=2$, the formulas have to be modified a bit and we fall onto the following equivalence:
\[
L_{M,\mathbb\Delta} (\varphi) \equiv \sum_{k=0}^\infty \omega^{\{k\}}_{\mathbb\Delta} \otimes (\partial_{M,\mathbb\Delta}( s_k) + s_{k+1} +ks_k) \mod (2,q-1),
\]
so that
\begin{linenomath}
\[
\begin{aligned}
&L_{M,\mathbb\Delta}(\varphi) \equiv 0 \mod (2,q-1)
\\ \Leftrightarrow\ &\forall k \geq 0, \quad s_{k+1} \equiv (\partial_{M,\mathbb\Delta} - k)(s_k)\mod (2,q-1)
\\ \Leftrightarrow\ &\forall n \geq 0, \quad 
\left\{\begin{array} {ll} s_{2n} &\equiv (\partial_{M,\mathbb\Delta}^2 - \partial_{M,\mathbb\Delta})^n(s_0) \\ s_{2n+1} &\equiv (\partial_{M,\mathbb\Delta}^2 - \partial_{M,\mathbb\Delta})^n\partial_{M,\mathbb\Delta}(s_0)\end{array}\right. \mod (2,q-1). \qedhere
\end{aligned}
\]
\end{linenomath}
\end{proof}

%%%%%%%%%%%
\begin{rmk}\phantomsection \label{p2remark}
The difference between the cases $p$ odd and $p=2$ is a special instance of a standard phenomenon in the study of Breuil-Kisin prisms.
It corresponds to the fundamental group (see example 9.6 of \cite{BhattLurie22b}) being respectively additive or multiplicative.
\end{rmk}

%%%%%%%%%%%%%%%%
\begin{thm} \label{invlem}
Let $W$ be a $p$-adically complete ring and $R := W[[q-1]]$.
Then, the category of finite projective $R$-modules endowed with a $\mathbb\Delta$-hyperstratification is isomorphic to the category of finite projective weakly nilpotent $\nabla_{\mathbb\Delta}$-modules on $R$.
\end{thm}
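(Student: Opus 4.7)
The strategy is to show that the forgetful functor $\Phi$ from finite projective $\mathbb\Delta$-hyperstratified $R$-modules to $\nabla_{\mathbb\Delta}$-modules has essential image equal to the weakly nilpotent subcategory, and to exhibit a quasi-inverse. Full faithfulness of $\Phi$ is already provided by Corollary \ref{fulfait}, so I only need to check that $\Phi$ lands in the weakly nilpotent subcategory and that the construction $N \mapsto \ker L_{N,\mathbb\Delta}$ supplies a genuine quasi-inverse.

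For the first half, I would start with a finite projective $\mathbb\Delta$-hyperstratified module $M$. Proposition \ref{exacM} furnishes the split exact sequence
\[
0 \longrightarrow M \xrightarrow{\tau_M \circ \theta_M} L_{\mathbb\Delta}(M) \xrightarrow{L_{M,\mathbb\Delta}} L_{\mathbb\Delta}(M) \longrightarrow 0.
\]
Explicitly, $\tau_M(\theta_M(s)) = \sum_{k \geq 0} \tau(\omega^{\{k\}_{\mathbb\Delta}}) \otimes \partial^{\langle k \rangle}_{M,\mathbb\Delta}(s)$, and composition with the augmentation $e$ kills every term with $k \geq 1$, so $e \circ (\tau_M \circ \theta_M) = \mathrm{Id}_M$. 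Hence the canonical map $\ker L_{M,\mathbb\Delta} \to M$ of Lemma \ref{techlem} is bijective, which is exactly the weak nilpotency condition for $\Phi(M)$.

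Conversely, given a finite projective weakly nilpotent $\nabla_{\mathbb\Delta}$-module $N$, Lemma \ref{techlem} gives a bijection $e : \ker L_{N,\mathbb\Delta} \xrightarrow{\sim} N$; write $\sigma_N$ for its inverse. Lemma \ref{linext} shows that $L_{N,\mathbb\Delta}$ commutes with the $\partial_{\mathbb\Delta}^{\langle k \rangle}$ of the left-regular $\mathbb\Delta$-hyperstratification on $L_{\mathbb\Delta}(N)$, so $\ker L_{N,\mathbb\Delta}$ inherits a $\mathbb\Delta$-hyperstratification; transport it along $\sigma_N$ to define $\Psi(N)$, which is visibly functorial in $N$. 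Lemma \ref{maglem} (applied with the second module equal to $R$) together with Proposition \ref{exacM} identifies $\Psi \circ \Phi$ with the identity on hyperstratified modules via the horizontal isomorphism $\tau_M \circ \theta_M$.

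The delicate step is showing $\Phi \circ \Psi \simeq \mathrm{Id}$, i.e., that $\sigma_N$ is horizontal for the original $\partial_{N,\mathbb\Delta}$ on the source and the transported derivation (pulled back from $\partial_{\mathbb\Delta}$ on $L_{\mathbb\Delta}(N)$) on the target. Since both $\sigma_N(\partial_{N,\mathbb\Delta}(s))$ and $\partial_{\mathbb\Delta}(\sigma_N(s))$ lie in $\ker L_{N,\mathbb\Delta}$, where $e$ is injective, this reduces to the identity $e(\partial_{\mathbb\Delta}(\sigma_N(s))) = \partial_{N,\mathbb\Delta}(s)$. Writing $\sigma_N(s) = \sum_k \omega^{\{k\}_{\mathbb\Delta}} \otimes s_k$ with $s_0 = s$, the defining relation $L_{N,\mathbb\Delta}(\sigma_N(s)) = 0$ unwinds through formula \eqref{LLM} into a system linking the $s_k$ to $\partial_{N,\mathbb\Delta}(s)$; applying $e$ on the first factor and invoking the basic identities $L_{\mathbb\Delta}(\omega) = L(\omega)$ (formula \eqref{Lqpom}) together with $\gamma' = \mathrm{Id} + \theta(q^2-q) L_{\mathbb\Delta}$ (from the proof of Proposition \ref{sigcont}) reorganizes the constant terms into the required equality. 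No separate argument is needed for $p = 2$, since the refined nilpotency condition ($\partial_{M,\mathbb\Delta}^2 - \partial_{M,\mathbb\Delta}$ in place of $\partial_{M,\mathbb\Delta}$) is already built into Lemma \ref{techlem}.
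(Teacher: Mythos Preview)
Your overall architecture matches the paper's: use Corollary~\ref{fulfait} for full faithfulness of the forgetful functor $\Phi$, use Proposition~\ref{exacM} together with Lemma~\ref{techlem} to see that $\Phi$ lands in the weakly nilpotent subcategory and that $\Psi : N \mapsto \ker L_{N,\mathbb\Delta}$ (equipped with the left-regular hyperstratification) is a candidate quasi-inverse, and check $\Psi\circ\Phi\simeq\mathrm{Id}$ via the horizontal embedding $\tau_M\circ\theta_M$. All of that is fine.

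The gap is in your ``delicate step''. You want to show that the augmentation $e:\ker L_{N,\mathbb\Delta}\to N$ intertwines the left-regular $\partial_{\mathbb\Delta}$ with the \emph{original} $\partial_{N,\mathbb\Delta}$, and you reduce correctly to the identity $e(\partial_{\mathbb\Delta}(\sigma_N(s)))=\partial_{N,\mathbb\Delta}(s)$. But the sketch ``unwinds through formula~\eqref{LLM} \ldots\ and reorganizes the constant terms'' is not a proof. Unlike the log situation of Section~\ref{generic} (where $e(\partial_{\log}(\omega^{[k]}))=0$ for $k\geq 2$ because $\partial_{\log}$ is an honest derivation), here $\partial_{\mathbb\Delta}$ is twisted by $\gamma$, and one computes for instance $e(\partial_{\mathbb\Delta}(\omega))=-\lambda^{-1}$ and $e(\gamma(\omega))=-(q^2-q)\lambda^{-1}$, so $e(\partial_{\mathbb\Delta}(\omega^{\{k\}_{\mathbb\Delta}}))\neq 0$ for all $k\geq 1$. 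Thus the right-hand side involves \emph{all} the $s_k$, and you must extract genuinely new relations from the full system $L_{N,\mathbb\Delta}(\sigma_N(s))=0$, not just its degree-zero part $\partial_{N,\mathbb\Delta}(s_0)+\gamma_N(s_1)=0$. The formulas you cite ($L_{\mathbb\Delta}(\omega)=L(\omega)$ and $\gamma'=\mathrm{Id}+\theta(q^2-q)L_{\mathbb\Delta}$, which incidentally is in the remarks after Proposition~\ref{twder}, not Proposition~\ref{sigcont}) are not enough by themselves to close this; as written, the step is an assertion, not an argument.

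The paper avoids this computation altogether. Instead of proving $\Phi\circ\Psi\simeq\mathrm{Id}$ directly, it shows that $\Psi$ is \emph{fully faithful} on finite projective weakly nilpotent $\nabla_{\mathbb\Delta}$-modules: one identifies $\mathrm{Hom}_\nabla(M,M')$ with $\mathrm H^0_{\mathrm{dR},\mathbb\Delta}(\mathrm{Hom}_R(M,M'))$, applies Lemma~\ref{H0dr} to $\mathrm{Hom}_R(M,M')$, and checks (via the map~\eqref{homL} and another use of Lemma~\ref{techlem}) that $\ker L_{\mathrm{Hom}_R(M,M'),\mathbb\Delta}\simeq\mathrm{Hom}_R(\ker L_{M,\mathbb\Delta},\ker L_{M',\mathbb\Delta})$. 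Once $\Psi$ is fully faithful and $\Psi\circ\Phi\simeq\mathrm{Id}$, essential surjectivity of $\Phi$ follows formally: for any $N$ one has $\Psi(\Phi(\Psi(N)))\simeq\Psi(N)$, whence $\Phi(\Psi(N))\simeq N$. This route is cleaner precisely because it replaces your pointwise horizontality check by an $\mathrm H^0$ statement that has already been isolated in Lemma~\ref{H0dr}.
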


\begin{proof}
Let us first show that if a complete $R$-module $M$ is endowed with a $\mathbb\Delta$-hyperstratification, then the natural map $\ker L_{M,\mathbb\Delta} \to M$ is an isomorphism compatible with the $\mathbb\Delta$-hyperstratifications.
Thanks to proposition \ref{exacM}, it is sufficient to prove that the composite map
\[
\tau_M \circ \theta_M : M \longrightarrow L_{\mathbb\Delta}(M)
\]
is compatible with the stratifications.
After a flip, this is equivalent to the cocycle condition.
In particular, if $M$ is finite projective, then it follows from lemma \ref{techlem} that $M$ is weakly nilpotent.
We also know from corollary \ref{fulfait} that the forgetful functor is fully faithful.
In order to conclude, it is therefore sufficient to show that the functor $M \mapsto \ker L_{M,\mathbb\Delta}$ is fully faithful on weakly nilpotent finite projective $\nabla_{\mathbb\Delta}$-modules.
In other words, we have to show that, if $M$ and $M'$ are two weakly nilpotent finite projective $\nabla_{\mathbb\Delta}$-modules, then the canonical map
\[
\mathrm{Hom}_{\nabla}(M, M') \to \mathrm{Hom}_{\nabla}(\ker L_{M,\mathbb\Delta}, \ker L_{M',\mathbb\Delta})
\]
is bijective.
This will follow from lemma \ref{H0dr} applied to $\mathrm{Hom}_{R}(M, M')$ once we have shown that there exists a canonical horizontal isomorphism
\[
\ker L_{\mathrm{Hom}_R(M,M'),\mathbb\Delta} \simeq \mathrm{Hom}_R(\ker L_{M,\mathbb\Delta}, \ker L_{M',\mathbb\Delta}).
\]
In general, if $M$ and $M'$ are two finite projective $R$-modules, then there exists an obvious map which is automatically horizontal (for $\partial_{\mathbb\Delta}$):
\begin{equation}\label{homL}
\xymatrix@R=0cm{
L_{\mathbb\Delta}(\mathrm{Hom}_R(M,M')) \ar[r] & \mathrm{Hom}_{\widehat {R\langle \omega \rangle}_{\mathbb\Delta}}(L_{\mathbb\Delta}(M),L_{\mathbb\Delta}(M'))
\\ \varphi \otimes u \ar@{|->}[r] & \varphi \otimes u : \psi \otimes s \mapsto \varphi\psi \otimes u(s).
}
\end{equation}
Let us now consider the canonical map
\begin{equation} \label{maplem}
\mathrm{Hom}_R(\widehat {R\langle \omega \rangle}_{\mathbb\Delta},\widehat {R\langle \omega \rangle}_{\mathbb\Delta}) \otimes'_R \mathrm{Hom}_R(M,M') \to \mathrm{Hom}_{R}(\widehat {R\langle \omega \rangle}_{\mathbb\Delta} \otimes'_R M,\widehat {R\langle \omega \rangle}_{\mathbb\Delta} \otimes_R' M')
\end{equation}
where $\widehat {R\langle \omega \rangle}_{\mathbb\Delta}$ is seen as an $R$-module via the Taylor map.
If $M$ and $M'$ are both endowed with a $\mathbb\Delta$-connection and we endow $\widehat {R\langle \omega \rangle}_{\mathbb\Delta}$ with $L_{\mathbb\Delta}$, it follows from lemma \ref{homtens} that the map \eqref{maplem} is horizontal.
As a consequence, the induced map \eqref{homL} is also horizontal for the corresponding $\mathbb\Delta$-connections coming from the connections on $M$ and $M'$ and using $L_{\mathbb\Delta}$ on $\widehat {R\langle \omega \rangle}_{\mathbb\Delta}$.
In particular, it induces a map between the $H^0_{\mathrm{dR},\mathbb\Delta}$ (for these $\mathbb\Delta$-connections) on both sides.
In other words, if we assume that $L_{\mathrm{Hom}_R(M,M'),\mathbb\Delta}(\varphi \otimes u) = 0$ then the diagram
\[
\xymatrix{L(M) \ar[r]^{\varphi \otimes u} \ar[d]^{L_{M,\mathbb\Delta}} & L(M') \ar[d]^{L_{M',\mathbb\Delta}} \\L(M) \ \ar[r]^{\varphi \otimes u} & L(M') }
\]
is commutative and $\varphi \otimes u$ therefore induces a map $\ker L_{M,\mathbb\Delta} \to \ker L_{M',\mathbb\Delta}$.
Thus, we see that the map \eqref{homL} induces a horizontal (for $\partial_{\mathbb\Delta}$) map on the kernels:
\[
\ker L_{\mathrm{Hom}_R(M,M'),\mathbb\Delta} \to \mathrm{Hom}_R(\ker L_{M,\mathbb\Delta}, \ker L_{M',\mathbb\Delta}).
\]
It remains to show that this last map is bijective when $M$ and $M'$ are weakly nilpotent.
But, in that case, it follows from lemma \ref{techlem} that the canonical maps $\ker L_{M,\mathbb\Delta} \to M$ and $\ker L_{M',\mathbb\Delta} \to M'$ are bijective.
We are therefore reduced to checking that
\[
\ker L_{\mathrm{Hom}_R(M,M'),\mathbb\Delta} \simeq \mathrm{Hom}_R(M, M').
\]
But this follows again from lemma \ref{techlem} since $\mathrm{Hom}_R(M, M')$ is then also finite projective and weakly nilpotent.
\end{proof}

We shall finish this section with a technical result that will be needed in the next one.
Let us write $\sigma := g_{-1}$ so that $\sigma$ is the unique endomorphism of the ring $R$ such that $\sigma(q) = q^{-1}$.
Recall that it extends uniquely to $R[[\xi]]$ if we require that $\sigma(q+\xi) = q+\xi$ and that
\begin{equation} \label{sigf}
\sigma(\xi) = \xi + q-q^{-1} = \xi + q^{-1}(2)_q(q-1).
\end{equation}
Assume now that $p=2$ (otherwise the next assertion is \emph{not} true).
One can then show as in proposition \ref{sigcont} that $\sigma$ extends uniquely to an endomorphism of $\widehat{R\langle \omega \rangle}_{\mathbb\Delta}$ and that  $\sigma(\omega) = q\omega + q -1$.

%%%%%%%
\begin{lem} \label{p2case}
Assume $p=2$.
If $M$ is a weakly nilpotent finite projective $\nabla_{\mathbb\Delta}$-module, then there exists a unique $\sigma$-linear horizontal map on $M$ that reduces to the identity modulo $q-1$.
\end{lem}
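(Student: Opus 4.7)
The plan is to identify $\sigma$ with evaluation at a specific morphism of the formal groupoid $G$ and leverage the $\mathbb\Delta$-hyperstratification that theorem~\ref{invlem} attaches to $M$. First I would construct a morphism of $R$-algebras $ev : \widehat{R\langle\omega\rangle}_{\mathbb\Delta} \to R$ satisfying $ev \circ \iota_R = \mathrm{Id}_R$ and $ev \circ \theta_R = \sigma$. Since $ev(\theta_R(q)) = ev(q + (p)_q\omega) = \sigma(q) = q^{-1}$, one is forced to take $ev(\omega) = q^{-1}-1$, i.e. $ev(\xi) = q^{-1}-q$. By theorem~\ref{prismenv} the existence of $ev$ reduces to checking that $R[[\xi]] \to R$, $\xi \mapsto q^{-1}-q$, is a morphism of $\delta$-pairs $(R[[\xi]],\xi) \to (R,(p)_q)$. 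The crucial point is that $q^{-1}-q = -q^{-1}(q-1)(q+1)$ lies in $(p)_qR$ \emph{precisely} when $p=2$, since then $(p)_q = q+1$; the $\delta$-compatibility collapses to $\delta(q^{-1}) = 0$, which holds because $q^{-1}$ is of rank one ($\phi(q^{-1}) = q^{-p} = (q^{-1})^p$).

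Next, having the $\mathbb\Delta$-Taylor map $\theta_M : M \to M\widehat\otimes_R \widehat{R\langle\omega\rangle}_{\mathbb\Delta}$ from theorem~\ref{invlem}, I would define
\[
\sigma_M := (\mathrm{Id}_M \otimes ev) \circ \theta_M : M \longrightarrow M.
\]
The $\sigma$-linearity of $\sigma_M$ is immediate from the semilinearity of $\theta_M$ and $ev \circ \theta_R = \sigma$. For the reduction modulo $q-1$, note that $ev(\omega) = q^{-1} - 1 \equiv 0 \pmod{q-1}$, so $ev \equiv e \pmod{q-1}$, where $e$ is the augmentation; combined with the cocycle identity $(\mathrm{Id}_M \otimes e)\circ\theta_M = \mathrm{Id}_M$ from~\eqref{eqcoc}, this gives $\sigma_M \equiv \mathrm{Id}_M \pmod{q-1}$.

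The main technical obstacle is horizontality. I would verify it through the hyperstratification: since $\sigma(q+\xi) = q+\xi$, the extension of $\sigma$ to $\widehat{R\langle\omega\rangle}_{\mathbb\Delta}$ satisfies $\sigma \circ \theta_R = \theta_R$, hence $\sigma \otimes \mathrm{Id}_M$ is a well-defined endomorphism of $\widehat{R\langle\omega\rangle}_{\mathbb\Delta} \widehat\otimes'_R M$. The commutations $\sigma \circ L_{\mathbb\Delta} = L_{\mathbb\Delta}\circ \sigma$ and $\sigma \circ \gamma' = \gamma' \circ \sigma$ on $\widehat{R\langle\omega\rangle}_{\mathbb\Delta}$ follow from the universal property of theorem~\ref{prismenv} applied to the (trivially commuting) corresponding endomorphisms on $R[[\xi]]$. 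These imply that $\sigma \otimes \mathrm{Id}_M$ preserves $\ker L_{M,\mathbb\Delta}$ and acts on it compatibly with the hyperstratification; transporting along the isomorphism $\ker L_{M,\mathbb\Delta} \simeq M$ from lemma~\ref{techlem} identifies the resulting map with $\sigma_M$, which is therefore horizontal.

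Finally, for uniqueness, if $\sigma_M'$ is another such map then $\phi := \sigma_M - \sigma_M'$ is a $\sigma$-linear horizontal endomorphism of $M$ that vanishes modulo $q-1$; by proposition~\ref{foncr} it corresponds to a horizontal $R$-linear map $\widetilde\phi : \sigma^*M \to M$ whose reduction modulo $q-1$ is zero. Since $M$ is finite projective (hence $\sigma^*M$ is inversive) and $M$ is complete and $(q-1)$-torsion-free, lemma~\ref{injmod} forces $\widetilde\phi = 0$; if the $\mathbb Z$-torsion-freeness hypothesis on $M/(q-1)M$ is absent, one substitutes a direct $(q-1)$-adic induction, using horizontality, weak nilpotency, and the fact that for $p=2$ the weak nilpotency is phrased via $\partial_{M,\mathbb\Delta}^2 - \partial_{M,\mathbb\Delta}$ (cf.\ remark~\ref{p2remark}), to propagate $\phi \equiv 0$ from $(q-1)^n$ to $(q-1)^{n+1}$.
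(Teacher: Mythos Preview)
Your core argument—construct the map via $\sigma\otimes\mathrm{Id}_M$ on $L_{\mathbb\Delta}(M)$, restrict to $\ker L_{M,\mathbb\Delta}\simeq M$ (lemma~\ref{techlem}), and invoke lemma~\ref{injmod} for uniqueness—is exactly the paper's proof. The $ev$ construction at the start is a pleasant alternative viewpoint (it amounts to evaluating the hyperstratification at the morphism $\sigma\to\mathrm{Id}$ of the groupoid $G$), but you do not finish that route: you pivot to the kernel construction for horizontality and then assert without proof that the two definitions of $\sigma_M$ coincide. This identification (equivalently, $e\circ\sigma\circ\tau = ev$) is not hard but is not free either; the cleaner fix is to drop the $ev$ detour entirely, since the kernel map is already $\sigma$-linear and reduces to the identity modulo $q-1$ (because $\sigma(\omega)=q\omega+q-1\equiv\omega$).

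Two minor inaccuracies in the horizontality step. First, the commutations $\sigma\circ L_{\mathbb\Delta}=L_{\mathbb\Delta}\circ\sigma$ and $\sigma\circ\gamma'=\gamma'\circ\sigma$ are what make $\sigma\otimes\mathrm{Id}_M$ commute with $L_{M,\mathbb\Delta}$ (hence preserve the kernel); the horizontality of the induced map requires instead that $\sigma$ commute with $\gamma$ on $\widehat{R\langle\omega\rangle}_{\mathbb\Delta}$, which is what the paper checks. Second, $L_{\mathbb\Delta}$ is not a ring homomorphism, so its commutation with $\sigma$ does not fall out of theorem~\ref{prismenv}; you should deduce it from $\sigma\circ\gamma'=\gamma'\circ\sigma$ via the formula $\gamma'=\mathrm{Id}+\theta(q^2-q)L_{\mathbb\Delta}$ and the right-linearity of $\sigma$. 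Your caution about the $\mathbb Z$-torsion-freeness hypothesis in lemma~\ref{injmod} is legitimate in principle, but the proposed $(q-1)$-adic induction is only sketched; in any case the lemma is only applied later with $W=W(k)$, where the hypothesis is satisfied.
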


\begin{proof}
Uniqueness follows from lemma \ref{injmod}.
Now, by construction, $\sigma$ and $\gamma$ commute on $\widehat{R\langle \omega \rangle}_{\mathbb\Delta}$.
In other words, $\sigma$ is an endomorphism of $\gamma$-module or, equivalently, a horizontal map.
On the other hand, since $\sigma$ is linear for the right structure of $\widehat{R\langle \omega \rangle}_{\mathbb\Delta}$, we may consider the endomorphism $\sigma \otimes \mathrm{Id}_M$ of $L_{\mathbb\Delta}(M)$.
By construction, it is horizontal and commutes with $L_{M,\mathbb\Delta}$.
It induces therefore a $\sigma$-linear horizontal endomorphism of $\ker(L_{M,\mathbb\Delta})$ and we conclude with lemma \ref{techlem} which tells us that $\ker(L_{M,\mathbb\Delta}) \simeq M$.
\end{proof}

%%%%%%%%%%%%
\section{Application to prismatic crystals} \label{prissec}

We will prove here our main results that relate absolute prismatic vector bundles to absolute calculus.

We let $k$ be a perfect field of positive characteristic $p$ and denote by $W$ its ring of Witt vectors.
We set $R := W[[q-1]]$ so that $\overline R := R/(p)_q = W[\zeta]$ where $\zeta$ (the class of $q$) is a primitive $p$th root of unity.
The ring $R$, endowed with the unique $\delta$-structure such that $\delta(q) = 0$ and with the ideal generated by $(p)_q$, is a bounded prism.

We recall that the absolute prismatic site\footnote{Unlike Bhatt and Scholze, we follow the classical notations for ``cristalline'' sites and topos.} $\mathbb \Delta$ is the opposite to the category of all bounded prisms $(B,J)$ endowed with the formally flat topology\footnote{A morphism is a covering if it is faithfully flat modulo any ideal of definition.}.
Then, the prismatic site $\mathbb \Delta(\overline R)$ is the fiber over $\overline R$ of the forgetful functor $(B,J) \mapsto \overline B := B/J$ from $\mathbb \Delta$ to the (opposite) category of $p$-adic rings.
In other words, an object of $\mathbb \Delta(\overline R)$ is a prism $(B,J)$ endowed with a morphism $\overline R \to \overline B$ .
We will write $\overline R_{\mathbb \Delta}$ for the corresponding topos.
We shall also consider later the prismatic site $\mathbb \Delta(/(R, (p)_q))$ over $(R, (p)_q)$ made of bounded prisms $(B,J)$ endowed with a morphism of prisms $(R, (p)_q) \to (B,J)$. For simplicity, we will denote it by $\mathbb \Delta(/R)$ and write $/R_{\mathbb \Delta}$ for the corresponding topos.

Recall that if $(B,d)$ is an oriented bounded prism and $q \in B$, then we introduced in definition 3.2 of \cite{GrosLeStumQuiros23b} the ring of prismatic polynomials $B[\omega]^{\delta}_d$, which is universal for the property $\delta(q+d\omega) =0$.
It is important to remember that, in the case $B=R$ and $d = (p)_q$, there is no need to introduce this object because\footnote{This is implicit in the statement of \cite{GrosLeStumQuiros22b}, theorem 5.2.} $R[\omega]^{\delta}_d \simeq R\langle \omega \rangle_{\mathbb\Delta}$.

The next statement is the absolute analog of \cite{GrosLeStumQuiros23b}, corollary 3.6:

%%%%%%%%%%%%%
\begin{prop} \label{cova}
If $(B,d)$ is an oriented bounded prism over $\overline R$ and $q \in B$ is a lifting of the image of $\zeta$ in $\overline B$, then the completion $\widehat{B[\omega]}^{\delta}_d$ of the ring of prismatic polynomials is the product of $(R,(p)_q)$ and $(B,d)$ in $\mathbb \Delta(\overline R)$.
\end{prop}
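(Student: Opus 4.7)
The assertion is that $\widehat{B[\omega]}^{\delta}_d$, equipped with the generator $d$ and with natural structure maps from both $(R,(p)_q)$ and $(B,d)$, is a bounded prism representing the product in the topos $\overline R_{\mathbb\Delta}$. In prism-theoretic terms (reversing arrows), I must exhibit two prism morphisms
\[
(R,(p)_q) \longrightarrow (\widehat{B[\omega]}^{\delta}_d, d) \longleftarrow (B,d)
\]
whose compositions with reduction modulo the distinguished ideals give the same map $\overline R \to \overline{\widehat{B[\omega]}^{\delta}_d}$, and then verify the universal property against any bounded prism $(C,J)$ over $\overline R$ receiving compatible morphisms from both sides.

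\textbf{Step 1: the structure morphisms.} The map from $B$ is the evident inclusion, completed. The map from $R$ sends $q \mapsto q + d\omega$. This is a morphism of $\delta$-rings because $\delta(q) = 0$ in $R$ and $\delta(q+d\omega) = 0$ in $B[\omega]^{\delta}_d$ by the defining universal property. To promote it to a morphism of prisms, observe that $q + d\omega \equiv q \mod d$, so $(p)_{q+d\omega} \equiv (p)_q \mod d$; since the image of $q$ in $\overline B$ is $\zeta$ (a primitive $p$th root of unity) and $(p)_\zeta = 0$ in $\overline B$, we have $(p)_q \in dB$, hence $(p)_{q+d\omega}$ lies in the ideal $(d)$ of $\widehat{B[\omega]}^{\delta}_d$, as required. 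Compatibility over $\overline R$ is automatic: after reducing modulo $d$, both compositions send $\zeta \in \overline R$ to the image of $q$ in $\overline B$.

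\textbf{Step 2: the universal property.} Let $(C,J)$ be a bounded prism over $\overline R$ equipped with prism morphisms $\alpha:(R,(p)_q)\to(C,J)$ and $\beta:(B,d)\to(C,J)$ agreeing modulo the distinguished ideals. Set $q_\alpha := \alpha(q)$, $q_\beta := \beta(q)$ and $d_\beta := \beta(d)$, which generates $J$. Compatibility on $\overline R$ gives $q_\alpha - q_\beta \in J$, and because $d_\beta$ is a non-zero-divisor in a bounded prism, there is a \emph{unique} $\omega_0 \in C$ with $q_\alpha = q_\beta + d_\beta\omega_0$. The $B$-algebra map $B[\omega]\to C$ sending $\omega \mapsto \omega_0$ satisfies $\delta(q_\beta + d_\beta\omega_0) = \delta(q_\alpha) = 0$ (because $\alpha$ is a $\delta$-morphism and $\delta(q)=0$ in $R$), so by the universal property of the prismatic polynomial ring it extends uniquely to a $\delta$-morphism $B[\omega]^{\delta}_d \to C$. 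Since $C$ is $(p,d_\beta)$-adically complete, this extends uniquely to the completion $\widehat{B[\omega]}^{\delta}_d \to C$. Uniqueness at every stage is forced because the image of $\omega$ is determined by the two structure morphisms together with the regularity of $d_\beta$.

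\textbf{Step 3: boundedness.} One must check that $(\widehat{B[\omega]}^{\delta}_d, d)$ is itself a bounded prism, so it actually belongs to $\overline R_{\mathbb\Delta}$. This is the only non-formal input; it is not the universal property but the finiteness/flatness content. I would invoke Corollary~4.5 of \cite{GrosLeStumQuiros23a} (used already in the proof of Theorem~\ref{prismenv}), which gives boundedness of prismatic envelopes in our situation.

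\textbf{Main obstacle.} The formal part of the argument is straightforward, paralleling Corollary~3.6 of \cite{GrosLeStumQuiros23b}; the substantive issue is purely the one in Step~3, namely guaranteeing that passing to the $(p,d)$-adic completion preserves the bounded prism property for $B[\omega]^{\delta}_d$ when $(B,d)$ is an \emph{arbitrary} bounded prism over $\overline R$ (rather than over $(R,(p)_q)$). Everything else is either the universal property of $B[\omega]^{\delta}_d$ or the regularity of distinguished elements in bounded prisms.
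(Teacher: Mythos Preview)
Your approach is essentially the paper's, and Steps~2 and~3 are fine (the paper cites proposition~3.3 of \cite{GrosLeStumQuiros23b} for boundedness rather than the result you name, but this is a citation issue, not a mathematical one; your Step~2 spells out in detail what the paper summarizes as ``the universal property follows from the universal properties of $B[\omega]^{\delta}_d$ and completion'').

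The gap is in Step~1. You assert that ``the map from $R$ sends $q \mapsto q + d\omega$'' without verifying that such a map exists. Two things are missing. First, one needs a $\delta$-ring morphism $W \to B$ to begin with; the paper obtains it from the universal property of Witt vectors, lifting the composite $W \to \overline R \to \overline B$. Second, and more substantively, since $R = W[[q-1]]$ is a power series ring, one must check that the image $Q - 1 := (q + d\omega) - 1$ is topologically nilpotent for the $(p,d)$-adic topology. The paper argues that $(Q-1)^p \equiv Q^p - 1 = (Q-1)(p)_Q \pmod p$, hence $(Q-1)^p \in (p,(p)_Q) \subset (p,d)$, using the inclusion $(p)_Q \in (d)$ that you yourself verify later in Step~1. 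This convergence check is where the arithmetic of $\zeta$ (beyond merely $(p)_\zeta = 0$) actually enters; without it the map $\theta$ is not defined.
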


\begin{proof}
It follows from proposition 3.3 in \cite{GrosLeStumQuiros23b} that $\widehat{B[\omega]}^{\delta}_d$ is a bounded prism.
Now, the universal property of the Witt vectors implies that the composite map $W \to \overline R \to \overline B$ lifts uniquely to a morphism of rings $W \to B$.
We set $Q := q + d \omega \in B[\omega]^{\delta}_d$ so that $Q$ is a lift of $\zeta$ in $B[\omega]^{\delta}_d$.
Since we always have $(Q-1)^p \in (p, (p)_{Q})$, and $(p)_{Q} \in dB[\omega]^{\delta}_d$ by construction, the element $Q-1$ is topologically nilpotent in $B$.
It follows that the composite map $\overline R \to \overline B \to \overline {B[\omega]}^{\delta}_d$ lifts to
\begin{equation} \label{TaylB}
\theta : R \to \widehat{B[\omega]}^{\delta}_d, \quad q \mapsto Q=q+d\omega.
\end{equation}
Since, by definition, $\delta(Q)= 0$, this is a morphism of $\delta$-rings.
Using the fact that $(p)_{Q} \in dB[\omega]^{\delta}_d$ again, we see that this is a morphism of prisms.
The universal property of the product then follows from the universal properties of $B[\omega]^{\delta}_d$ and completion.
\end{proof}

We shall again call the map \eqref{TaylB} a Taylor map and write $\otimes'_R$ when we use this structure to consider $\widehat{B[\omega]}^{\delta}_d$ as an $R$-algebra.
This is compatible with our previous notation when $B = R$.
Note that the above choice of $q \in B$ also defines a map
\[
R \to B \to \widehat{B[\omega]}^{\delta}_d
\]
but this is \emph{not} a morphism of $\delta$-rings.

The next statement is the analog of \cite{GrosLeStumQuiros23b}, proposition 3.7:

%%%%%%%%%%%%%
\begin{cor} \label{covabs}
The bounded prism $(R, (p)_q)$ is a covering of (the final object of) $\mathbb \Delta(\overline R)$.
\end{cor}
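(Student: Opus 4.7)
The plan is to deduce the covering property from proposition \ref{cova}: given any object $(B,J)$ of $\mathbb\Delta(\overline R)$, I want to exhibit a formally flat cover of $(B,J)$ that receives a morphism from $(R,(p)_q)$ in $\mathbb\Delta$. Translated into sheaf language, this shows that the sheaf represented by $(R,(p)_q)$ surjects onto the final sheaf of $\overline R_{\mathbb\Delta}$.

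First I would reduce to the case where $(B,J)=(B,d)$ is oriented, since orientations exist locally (every bounded prism admits a formally flat cover by oriented ones). Then, because the quotient map $B \twoheadrightarrow \overline B$ is surjective, I can pick any set-theoretic lift $q \in B$ of the image of $\zeta$ in $\overline B$; no $\delta$-compatibility is required of this lift since the prismatic polynomial construction $B[\omega]^\delta_d$ is precisely designed to force the $\delta$-condition on $q+d\omega$ afterward. With this data, proposition \ref{cova} identifies the product $(R,(p)_q)\times(B,d)$ in $\mathbb\Delta(\overline R)$ with $(\widehat{B[\omega]}^{\delta}_d,d)$, and the Taylor map \eqref{TaylB} provides the structural morphism $(R,(p)_q) \to (\widehat{B[\omega]}^{\delta}_d,d)$.

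To conclude, I need the projection $(B,d) \to (\widehat{B[\omega]}^{\delta}_d,d)$ to be a covering in $\mathbb\Delta$, i.e.\ formally flat: this is exactly the content of proposition 3.3 of \cite{GrosLeStumQuiros23b}, which shows that prismatic envelopes of this sort are faithfully flat (after $(p,d)$-completion) over the base prism. Assembling the pieces, the composition
\[
(B,d) \longrightarrow (\widehat{B[\omega]}^{\delta}_d,d) \longleftarrow (R,(p)_q)
\]
exhibits $(R,(p)_q)$ as a covering of $(B,d)$ in $\mathbb\Delta(\overline R)$, and since $(B,J)$ was arbitrary (after the reduction to orientations), $(R,(p)_q)$ covers the final object.

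The only non-formal ingredient is the faithful flatness of $B \to \widehat{B[\omega]}^{\delta}_d$, but this has already been isolated in \cite{GrosLeStumQuiros23b}; the rest is a direct application of proposition \ref{cova} together with the standard observation that oriented prisms and lifts of $\zeta$ are available locally in the formally flat topology.
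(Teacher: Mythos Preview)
Your proposal is correct and follows essentially the same approach as the paper: reduce to the oriented case, invoke proposition \ref{cova} to identify the product $(R,(p)_q)\times(B,d)$ with $(\widehat{B[\omega]}^{\delta}_d,d)$, and then appeal to proposition 3.3 of \cite{GrosLeStumQuiros23b} for the required formal faithful flatness of $B \to \widehat{B[\omega]}^{\delta}_d$. The paper's proof is more compressed but uses exactly these three ingredients in the same order.
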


\begin{proof}
First of all, a bounded prism is always locally oriented.
Moreover it is shown in proposition 3.3 of \cite{GrosLeStumQuiros23b} that, if $(B,d)$ is an oriented bounded prism over $\overline R$, then $\widehat{B[\omega]}^{\delta}_d$ is formally faithfully flat over $B$.
Our assertion is therefore a formal consequence of proposition \ref{cova}.
\end{proof}

%%%%%%%%%%%
\begin{cor} \label{prodpr}
In the category $\mathbb \Delta(\overline R)$, we have for all $n \in \mathbb N$,
\[
\underbrace{(R, (p)_q) \times \cdots \times (R, (p)_q)}_{n+1\ \mathrm{times}} = \left(\underbrace{\widehat {R\langle\omega\rangle}_{\mathbb\Delta} \otimes'_R \cdots \otimes'_{R} \widehat {R\langle\omega\rangle}_{\mathbb\Delta}}_{n\ \mathrm{times}}, (p)_q\right)
\]
\end{cor}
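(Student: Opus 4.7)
The plan is to proceed by induction on $n$, with proposition \ref{cova} doing the heavy lifting at each step. The case $n=0$ is tautological and the case $n=1$ is exactly proposition \ref{cova} applied to the prism $(R, (p)_q)$ itself, with the canonical element $q \in R$ as a lift of $\zeta$, combined with the identification $R[\omega]^{\delta}_{(p)_q} \simeq R\langle \omega \rangle_{\mathbb\Delta}$ recalled before proposition \ref{cova}.

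For the inductive step, assume the $(n+1)$-fold product has been identified with $P_n := \widehat{R\langle \omega \rangle}_{\mathbb\Delta} \widehat\otimes'_R \cdots \widehat\otimes'_R \widehat{R\langle \omega \rangle}_{\mathbb\Delta}$ (with $n$ factors, using the Taylor map as the structural map on the left at each tensor product). By associativity of the product in the category $\mathbb\Delta(\overline R)$, the $(n+2)$-fold product equals $P_n \times (R, (p)_q)$. To apply proposition \ref{cova}, I first need to know that $(P_n, (p)_q)$ is itself an oriented bounded prism over $\overline R$. This comes inductively: each application of proposition \ref{cova} produces a bounded prism (its conclusion), and the inductive hypothesis identifies $P_n$ with an object obtained by such iterated products, so $P_n$ is a bounded prism whose orientation is $(p)_q$. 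A lift of $\zeta$ in $\overline{P_n}$ is furnished by the image of $q \in R$ under the Taylor map \eqref{TaylB} into the last tensor factor.

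Proposition \ref{cova} then yields $P_n \times (R, (p)_q) \simeq \widehat{P_n[\omega]}^{\delta}_{(p)_q}$, and it remains to identify this with $P_{n+1} = P_n \widehat\otimes'_R \widehat{R\langle \omega \rangle}_{\mathbb\Delta}$. This is a base-change statement for the prismatic envelope: by theorem \ref{prismenv}, $\widehat{R\langle \omega \rangle}_{\mathbb\Delta}$ is the prismatic envelope of $(R[[\xi]], \xi)$ over $(R, (p)_q)$, and since the Taylor map $R \to P_n$ is a morphism of $\delta$-rings sending $(p)_q$ into the ideal $(p)_q P_n$ (by lemma \ref{pqinv}, applied iteratively as in the proof of lemma \ref{prismdiag}), the universal property of the prismatic envelope is preserved under the base change along $R \to P_n$. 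Boundedness of the resulting tensor product is ensured by \cite{GrosLeStumQuiros23a}, corollary 4.5, exactly as in the proof of lemma \ref{prismdiag}, which is literally the case $n=1$ of this base-change identification.

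The main bookkeeping obstacle is keeping straight, at each inductive step, which $R$-algebra structure (canonical versus Taylor) is used on the two sides of the tensor product, and checking that the chosen lift of $\zeta$ in $P_n$ is compatible with the structural morphism $\overline R \to \overline{P_n}$ imposed by viewing $P_n$ as an object of $\mathbb\Delta(\overline R)$. Once this is carefully arranged, the verification is formal from proposition \ref{cova} and theorem \ref{prismenv}.
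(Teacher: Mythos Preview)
Your argument is correct and follows essentially the same route as the paper: the paper's one-line proof cites theorem \ref{prismenv} and lemma \ref{prismdiag} (equivalently proposition \ref{cova}), and your induction simply unpacks this, using proposition \ref{cova} at each step and the base-change argument of lemma \ref{prismdiag} to identify $\widehat{P_n[\omega]}^{\delta}_{(p)_q}$ with $P_n \widehat\otimes'_R \widehat{R\langle\omega\rangle}_{\mathbb\Delta}$. The bookkeeping with the Taylor versus canonical $R$-structures is exactly the content of lemma \ref{prismdiag} and its proof via lemma \ref{pqinv}, as you note.
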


\begin{proof}
This follows from theorem \ref{prismenv} and lemma \ref{prismdiag} (or equivalently from theorem 5.2 in \cite{GrosLeStumQuiros22b} and proposition \ref{cova}).
\end{proof}

%%%%%%%%%%%%%%%%%%%%%%%%%%
\begin{dfn}
A \emph{prismatic crystal} on $\overline R$ is a cartesian section of the fibered category over $\mathbb \Delta(\overline R)$ whose fiber over $(B,J)$ is the category of derived complete $B$-modules.
\end{dfn}

A locally finite free module on a ringed site is also called a \emph{vector bundle}.
If we endow $\mathbb \Delta(\overline R)$ with the sheaf of rings $\mathcal O_{\mathbb \Delta(\overline R)}$ that sends $(B,J)$ to $B$, then any vector bundle on $\mathbb \Delta(\overline R)$ is a prismatic crystal.

Recall that we introduced in definition \ref{hyperst} the notion of a $\mathbb\Delta$-hyperstratification.

%%%%%%%%%%%%%
\begin{thm} \label{eqTayl}
Let $W$ be the ring of Witt vectors of a perfect field $k$ of characteristic $p >0$ and $\zeta$ a primitive $p$th root of unity.
Then there exists an equivalence between the category of prismatic crystals on $W[\zeta]$ and the category of complete $W[[q-1]]$-modules endowed with a $\mathbb\Delta$-hyperstratification.
\end{thm}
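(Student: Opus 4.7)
The plan is to apply the standard ``crystal $=$ module with descent data'' dictionary to the prismatic site $\mathbb\Delta(\overline R)$, using as covering the prism $(R,(p)_q)$ itself, as provided by Corollary \ref{covabs}. Since a prismatic crystal is determined, up to canonical isomorphism, by its value on any cover together with gluing data on the self-products, the combinatorics of the equivalence is entirely controlled by Corollary \ref{prodpr}.

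More precisely, I would define the functor from prismatic crystals on $\overline R$ to hyperstratified modules by sending a crystal $E$ to $M := E(R,(p)_q)$, a complete $R$-module by definition. The two projections from the double self-product $(R,(p)_q)\times(R,(p)_q)$ correspond, via the identification in Corollary \ref{prodpr}, to the canonical map $\iota_R$ and the Taylor map $\theta_R$ of Definition \ref{defqpt}; the crystal isomorphism between the two pullbacks therefore translates into a $\widehat{R\langle\omega\rangle}_{\mathbb\Delta}$-linear isomorphism
\[
\epsilon_M : \widehat{R\langle\omega\rangle}_{\mathbb\Delta}\widehat\otimes'_R M \simeq M\widehat\otimes_R \widehat{R\langle\omega\rangle}_{\mathbb\Delta}.
\]
Applying Corollary \ref{prodpr} again to the triple self-product (whose three projections to the double self-product correspond to the comultiplication map $\Delta$ of Proposition \ref{extcomult} on the appropriate factors), the usual cocycle identity satisfied by the descent datum is transported into the cocycle condition \eqref{trucoc} of Definition \ref{hyperst}. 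Hence $(M,\epsilon_M)$ is a $\mathbb\Delta$-hyperstratified complete $R$-module.

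For the quasi-inverse, given $(M,\epsilon_M)$, I would build a crystal by the standard recipe: for any $(B,J) \in \mathbb\Delta(\overline R)$, the pair $(B,J) \times (R,(p)_q)$ is a cover of $(B,J)$ (by Corollary \ref{covabs} and stability of coverings under base change), and its underlying ring is described by Proposition \ref{cova}; one sets $E(B,J)$ as the descent of $\widehat{B[\omega]}_d^\delta \widehat\otimes'_R M$ along this cover, with gluing furnished by $\epsilon_M$. The cocycle condition ensures compatibility on triple products, and standard verifications show the two functors are mutually quasi-inverse. The main obstacle is the effectivity of descent for complete modules along $(R,(p)_q)$: one needs the Amitsur-style complex
\[
M \longrightarrow M\widehat\otimes_R \widehat{R\langle\omega\rangle}_{\mathbb\Delta} \rightrightarrows M\widehat\otimes_R \widehat{R\langle\omega\rangle}_{\mathbb\Delta}\widehat\otimes'_R \widehat{R\langle\omega\rangle}_{\mathbb\Delta}
\]
to be exact for any complete $R$-module $M$, which follows from the formal faithful flatness of $R \to \widehat{R\langle\omega\rangle}_{\mathbb\Delta}$ established in Corollary \ref{covabs} together with derived completeness. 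This is entirely parallel to the descent step carried out in the relative setting of \cite{GrosLeStumQuiros23b}, and the argument transports to the present absolute setting without essential change thanks to the identification in Corollary \ref{prodpr}.
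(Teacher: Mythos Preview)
Your proposal is correct and is exactly the approach the paper takes: the paper's proof is the single sentence ``This formally follows from corollaries \ref{covabs} and \ref{prodpr},'' and you have simply unpacked what ``formally'' means here, namely the standard crystal--descent dictionary along the cover $(R,(p)_q)$ of the final object.
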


\begin{proof}
This formally follows from corollaries \ref{covabs} and \ref{prodpr}.
\end{proof}

\begin{xmps}
\begin{enumerate}
\item
If we let $I_{\mathbb \Delta}(B,J) := J$, then $I_{\mathbb \Delta}^n$ corresponds to $F_n \simeq (p)_q^nR$.
\item
The prismatic crystal $I_{\mathbb \Delta}^n/I_{\mathbb \Delta}^{n+1}$ corresponds to $G_n \simeq (p)_q^nR/ (p)_q^{n+1}R$.
\item
The Breuil-Kisin prismatic crystal $\mathcal O_{\mathbb \Delta}\{1\}$ of example 2.3 in \cite{Bhatt23} corresponds to our $R\{1\}$ (and its $n$th power to $R\{n\}$ if $n \in \mathbb Z$).
\end{enumerate}
\end{xmps}

We shall now extend to prismatic crystals the linearization process introduced in section \ref{stratder} for $R$-modules.
This is an absolute version of \cite{GrosLeStumQuiros23b}, section 4.

The prismatic site $\mathbb \Delta(/R)$ of bounded prisms over $(R,(p)_q)$ also comes with a structural ring $\mathcal O_{\mathbb \Delta(/R)}$.
There exists a final morphism of ringed topoi
\[
e_{R} : \left( /R_{\mathbb{\Delta}}, \mathcal O_{\mathbb \Delta(/R)}\right) \to ( \mathbb S\mathrm{ets}, R)
\]
and a localization morphism of topoi 
\[
j_{R} : /R_{\mathbb{\Delta}} \to \overline R_{\mathbb{\Delta}}.
\]

%%%%%%%%%%%%%%%%%%%%%%%%%
\begin{dfn}
If $M$ is an $R$-module, then the \emph{linearization} of $M$ is the $\mathcal O_{\mathbb \Delta(\overline R)}$-module
\[
\mathcal L(M) := j_{R*}e_{R}^*(M).
\]
\end{dfn}

It follows from the second part of the next statement that this teminology is compatible with that of section \ref{stratder} in the sense that $\mathcal L(M)_R \simeq L_{\mathbb\Delta}(M)$.

%%%%%%%%%%%%%%%
\begin{prop} \label{linprop}
Let $M$ be a finite free $R$-module.
Then,
\begin{enumerate}
\item
$\mathrm R\Gamma(\overline R_{\mathbb{\Delta}}, \mathcal L(M)) = M$.
\item \label{tens1}
If $(B,d)$ is an oriented bounded prism on $\overline R$, then
\[
\mathcal L(M)_{B} = \widehat{B[\omega]}^{\delta}_d \otimes_{R}' M.
\]
\item \label{tens2}
 If $B \to C$ is a morphism of bounded prisms over $\overline R$, then
\[
\mathcal L(M)_{C} = C \widehat \otimes_{B}\mathcal L(M)_{B} = C \widehat \otimes^\mathrm{L}_{B}\mathcal L(M)_{B}.
\]
\end{enumerate}
\end{prop}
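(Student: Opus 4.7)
I would tackle the three parts in the order \ref{tens1}, \ref{tens2}, then the global cohomology statement, since the first two are local/computational and feed directly into the third.

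For part \ref{tens1}, I would unwind the definition: by the standard formula for the pushforward along a localization, $(j_{R*}\mathcal F)(B)=\mathcal F(B\times_{\overline R_{\mathbb\Delta}} R)$ where the fiber product is viewed as an object of $/R_{\mathbb\Delta}$ through the second projection. Proposition \ref{cova} identifies this product with $\widehat{B[\omega]}^{\delta}_{d}$, the $R$-algebra structure on the new factor being exactly the Taylor map $R\to \widehat{B[\omega]}^{\delta}_{d}$, $q\mapsto q+d\omega$. Since $M$ is finite free, $e_{R}^{*}M$ sends a prism $(A,J)$ over $R$ to $A\widehat\otimes_{R}M$. Putting these together gives $\mathcal L(M)_{B}=\widehat{B[\omega]}^{\delta}_{d}\otimes'_{R}M$.

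For part \ref{tens2}, the non-derived identity $\widehat{C[\omega]}^{\delta}_{d_{C}}\simeq C\,\widehat\otimes_{B}\widehat{B[\omega]}^{\delta}_{d_{B}}$ would come from the universal property of the prismatic envelope: both sides corepresent the same functor on bounded prisms over $C$, using that $d_{B}$ maps to a unit multiple of $d_{C}$ in $C$. Tensoring with $M$ and combining with part \ref{tens1} for $B$ and $C$ then yields $\mathcal L(M)_{C}=C\,\widehat\otimes_{B}\mathcal L(M)_{B}$. To upgrade to the derived tensor product, I would invoke complete flatness of $\widehat{B[\omega]}^{\delta}_{d_{B}}$ over $B$: the uncompleted $B\langle\omega\rangle_{\mathbb\Delta}$ is free over $B$ on the twisted divided powers, and bounded $p^{\infty}$-torsion of the envelope (used in the proof of theorem \ref{prismenv}) ensures that completion preserves flatness; since $M$ is finite free this flatness is inherited by $\mathcal L(M)_{B}$.

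For part 1, I would use Cech cohomology for the covering $(R,(p)_{q})\to\overline R$, whose covering property is corollary \ref{covabs}. Combining part \ref{tens1}, corollary \ref{prodpr}, and part \ref{tens2}, the Cech complex takes the form
\[
\mathcal L(M)(R^{\times(k+1)}) \;=\; \widehat{R\langle \omega \rangle}_{\mathbb\Delta}^{\widehat\otimes'(k+1)} \otimes'_{R} M,
\]
with coface maps induced by inserting $R$ in the various slots of the product. The degree-$0$ piece of this cosimplicial object is $L_{\mathbb\Delta}(M)$, and the first Cech differential is (up to a flip) the map $L_{\mathbb\Delta}\otimes \mathrm{Id}_{M}$ appearing in the little Poincar\'e lemma (proposition \ref{little}). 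That split exact sequence, tensored with the finite free module $M$, immediately gives $H^{0}$ of the Cech complex equal to $M$.

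The main obstacle is the vanishing of the higher Cech cohomology. The most efficient way to handle this is to observe that the site $/R_{\mathbb\Delta}$ has $(R,(p)_{q})$ as a \emph{final object}, so that $\mathrm R\Gamma(/R_{\mathbb\Delta},e_{R}^{*}M)=M$, and then to reduce part 1 to the vanishing of $R^{i}j_{R*}e_{R}^{*}M$ for $i>0$. This vanishing can be checked on the covering: on each $B$ over $\overline R$ it amounts to the exactness of the Cech complex for the cover $(B\times R,(p)_{q})\to (B,d)$ beyond degree $0$, which follows from the analog of the split exact sequence of proposition \ref{little} over $B$ (produced by base change along $R\to B$ of the split exact sequence over $R$, using complete flatness from part \ref{tens2}). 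Applying this to $B=R$ and using the splitting iteratively contracts the Cech complex onto its augmentation $M$, yielding $\mathrm R\Gamma(\overline R_{\mathbb\Delta},\mathcal L(M))=M$.
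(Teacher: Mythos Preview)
Your treatment of parts \ref{tens1} and \ref{tens2} is correct and in line with the paper's approach: proposition \ref{cova} gives $j_R^{-1}(B)=\widehat{B[\omega]}^\delta_d$, and the complete flatness you need for the derived statement in \ref{tens2} is exactly what is invoked (via \cite{GrosLeStumQuiros23b}, proposition 3.3) in the proof of corollary \ref{covabs}.

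For part 1 your overall plan --- \v{C}ech along the cover $R$, then use that $(R,(p)_q)$ is final in $/R_{\mathbb\Delta}$ --- is the right one, but the appeal to the little Poincar\'e lemma is a genuine misidentification. The first \v{C}ech differential is a map $C^0\to C^1$ between \emph{different} modules, concretely the alternating sum of the face maps $\Delta\otimes\mathrm{Id}_M$ and $p_2\otimes\mathrm{Id}_M$ in the notation of section \ref{padsit}, whereas $L_{\mathbb\Delta}$ is an \emph{endomorphism} of $\widehat{R\langle\omega\rangle}_{\mathbb\Delta}$ arising from a derivation. Proposition \ref{little} computes $\ker L_{\mathbb\Delta}$ and $\mathrm{coker}\,L_{\mathbb\Delta}$; it says nothing about the equalizer of $\Delta$ and $p_2$, and a version of it over $B$ does not produce the higher \v{C}ech acyclicity you need either.

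The mechanism that actually contracts the augmented \v{C}ech complex is simpler and has nothing to do with derivations: since $\mathcal L(M)(R^{\times(n+1)})=(e_R^*M)(R^{\times(n+2)})$, the extra copy of $R$ supplied by $j_R$ furnishes an extra codegeneracy (apply the augmentation $e:\widehat{R\langle\omega\rangle}_{\mathbb\Delta}\to R$ to that factor), and a split augmented cosimplicial object is contractible. Equivalently, $e_R^*M$ is a quasi-coherent crystal on $/R_{\mathbb\Delta}$, so $R^ij_{R*}e_R^*M=0$ for $i>0$ by faithfully flat descent on each slice $/(B\times R)_{\mathbb\Delta}$. This is the route to which the paper defers via \cite{GrosLeStumQuiros23b}, corollary 4.5.
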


\begin{proof}
It follows from proposition \ref{cova} that, if $(B,d)$ is an oriented bounded prism on $\overline R$, then $j_{R}^{-1}(B) =\widehat{B[\omega]}^{\delta}_d$ is the completion of the prismatic polynomial ring.
Our assertions are then shown exactly as in \cite{GrosLeStumQuiros23b}, corollary 4.5 and lemma 4.6.
\end{proof}

One can then state the linearization lemma (see also lemma \ref{maglem}):

%%%%%%%%%%%
\begin{lem} \label{getout}
If $E$ is a vector bundle on $\mathbb \Delta(\overline R)$ and $M$ is a finite free $R$-module, then
\[
\mathcal L(E_{R} \otimes_{R} M) \simeq E \otimes_{\mathcal O_{\mathbb \Delta(\overline R)}} \mathcal L(M).
\]
\end{lem}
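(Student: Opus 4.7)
My plan is to verify the isomorphism pointwise on oriented bounded prisms, which form a basis of $\mathbb\Delta(\overline R)$ after corollary~\ref{covabs} and its proof, and then observe that the constructed isomorphism is natural in the prism. Both functors are sheaves of $\mathcal O_{\mathbb\Delta(\overline R)}$-modules, so checking the isomorphism on a covering family suffices.

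Fix an oriented bounded prism $(B,d)$ over $\overline R$ with a lift $q \in B$ of $\zeta \in \overline B$, and let $S := \widehat{B[\omega]}^\delta_d$. By proposition~\ref{cova}, $S$ comes equipped with two morphisms from the prism $(R,(p)_q)$: the canonical map factoring through $B$, and the Taylor map $\theta : R \to S$, $q \mapsto q + d\omega$ (for which we write $\otimes'_R$). Proposition~\ref{linprop}.\ref{tens1} gives
\[
(E \otimes_{\mathcal O_{\mathbb\Delta(\overline R)}} \mathcal L(M))_B = E_B \otimes_B (S \otimes'_R M)
\quad\text{and}\quad
\mathcal L(E_R \otimes_R M)_B = S \otimes'_R (E_R \otimes_R M).
\]
Using the standard base-change identity $S \otimes'_R (N_1 \otimes_R N_2) \simeq (S \otimes'_R N_1) \otimes_S (S \otimes'_R N_2)$ for the Taylor structure on $S$, the right-hand side rewrites as
\[
\mathcal L(E_R \otimes_R M)_B \simeq (S \otimes'_R E_R) \otimes_S (S \otimes'_R M) = (S \otimes'_R E_R) \otimes_S \mathcal L(M)_B.
\]

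Now the crystal property of $E$ applied to the Taylor map $R \to S$ yields $S \otimes'_R E_R \simeq E_S$, and applied to the canonical map $B \to S$ yields $E_S \simeq S \widehat\otimes_B E_B$; here no derived issues arise because $E$ is a vector bundle (hence locally finite free) and $M$ is finite free, so completed and uncompleted tensors coincide on the pieces in play (cf.\ proposition~\ref{linprop}.\ref{tens2}). Combining these,
\[
\mathcal L(E_R \otimes_R M)_B \simeq E_S \otimes_S \mathcal L(M)_B \simeq (S \widehat\otimes_B E_B) \otimes_S \mathcal L(M)_B \simeq E_B \otimes_B \mathcal L(M)_B,
\]
which is exactly $(E \otimes \mathcal L(M))_B$. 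All maps used (Taylor, canonical, crystal transition) are natural in the prism $(B,d)$, so the isomorphisms assemble into a morphism of sheaves, and this finishes the proof.

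The main obstacle is purely bookkeeping: keeping straight which $R$-algebra structure (Taylor versus canonical) is used at each tensor product, and invoking the crystal property of $E$ in the right direction (one time for the map $R \to S$, once for $B \to S$) so that the two different ``$\otimes_R$'' structures collapse to the same module $E_S$. No genuinely new calculation is required once proposition~\ref{linprop} and the crystal property are in hand.
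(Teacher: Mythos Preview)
Your proof is correct and follows essentially the same approach as the paper's (which simply refers to \cite{GrosLeStumQuiros23b}, lemma 4.7): compute both sides on an oriented bounded prism $(B,d)$ using proposition~\ref{linprop}\eqref{tens1}, then use the crystal property of $E$ along the two maps $R \to \widehat{B[\omega]}^\delta_d \leftarrow B$ to identify the two descriptions. Your bookkeeping of the Taylor versus canonical $R$-structures is exactly the point, and your reduction to oriented prisms is justified since every bounded prism is locally oriented (as noted in the proof of corollary~\ref{covabs}).
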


\begin{proof}
This is shown exactly as in \cite{GrosLeStumQuiros23b}, lemma 4.7.
\end{proof}

If $(B,d)$ is an oriented bounded prism on $\overline R$, then there exists a canonical \emph{comultiplication} map
\[
\Delta_{B} : \widehat{B[\omega]}^{\delta}_d \to \widehat{B[\omega]}^{\delta}_d \widehat \otimes'_{R} \widehat{R\langle\omega\rangle}_{\mathbb\Delta}.
\]
In term of prisms, this is the map $B \times R \times R \to B \times R$ that forgets the middle term.
Of course, this is compatible with our previous notion of comultiplication and we need to generalize a bit definition \ref{deflin}:

%%%%%%%%%%%
\begin{dfn}
The \emph{linearization} on an oriented bounded prism $(B,d)$ over $\overline R$ of a $\mathbb\Delta$-differential operator $u$ between finite free $R$-modules $M$ and $N$ is the $B$-linear map
\[
\xymatrix{\mathcal L(M)_{B} \ar[rr]^{\mathcal L(F)_{B}} \ar@{=}[d]&& \mathcal L(N)_{B}\ar@{=}[d]\\
\widehat{B[\omega]}^{\delta}_d \otimes'_{R} M \ar[rd]^{\Delta_{B} \otimes \mathrm{Id}_{M}} && \widehat{B[\omega]}^{\delta}_d \otimes'_{R} N \\ & \widehat{B[\omega]}^{\delta}_d \widehat \otimes_{R}' \widehat{R\langle\omega\rangle}_{\mathbb\Delta} \otimes'_{R} M \ar[ru]^{\mathrm{Id} \otimes u}
}
\]
\end{dfn}

%%%%%%%%%%%%%%%%%%%%%
\begin{prop} \label{linfunc}
\begin{enumerate}
\item
Linearization provides a functor from the category of finite free $R$-modules and $\mathbb\Delta$-differential operators to the category of vector bundles on $\mathbb \Delta(\overline R)$.
\item
 If $D$ is a $\mathbb\Delta$-derivation, then
\[
\mathrm R\Gamma(\overline R_{\mathbb \Delta}, \mathcal L(\widetilde D)) =D.
\]
\end{enumerate}
\end{prop}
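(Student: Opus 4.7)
The plan is to verify everything locally on oriented bounded prisms of $\mathbb \Delta(\overline R)$ and then glue by naturality. Because such prisms form a basis for the absolute prismatic topology on $\overline R$ (by corollary \ref{covabs}), and because proposition \ref{linprop}.\ref{tens1}--\ref{tens2} describes $\mathcal L(M)$ explicitly on them and under base change, a morphism $\mathcal L(M) \to \mathcal L(N)$ of vector bundles is the same as the data of a $B$-linear map $\mathcal L(u)_B : \mathcal L(M)_B \to \mathcal L(N)_B$ for each oriented prism $(B,d)$ over $\overline R$, natural in $B$. The formula from the definition gives the candidate, and for a morphism $f:(B,d) \to (C,d')$ of oriented prisms over $\overline R$ the required compatibility
\[
\mathcal L(u)_C = \mathrm{Id}_C \widehat \otimes_B \mathcal L(u)_B
\]
reduces, via the explicit description $\mathcal L(u)_B = (\mathrm{Id} \otimes u) \circ (\Delta_B \otimes \mathrm{Id}_M)$, to the naturality of the comultiplication $\Delta_B$ in $B$ --- itself a consequence of the universal property of prismatic envelopes encoded in theorem \ref{prismenv} and proposition \ref{extcomult}, since $\Delta_B$ represents the forgetful map $B \times R \times R \to B \times R$ computed in corollary \ref{prodpr}.

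Functoriality then reduces to two formal identities on the coalgebra $\widehat{R\langle\omega\rangle}_{\mathbb\Delta}$. The identity $\mathcal L(\mathrm{Id}_M) = \mathrm{Id}$ follows from the counit axiom $(e \otimes \mathrm{Id}) \circ \Delta = \mathrm{Id}$ built into the definition of a $\mathbb\Delta$-Taylor map (definition \ref{TaylMod}). For composable $\mathbb\Delta$-differential operators $v : L_{\mathbb\Delta}(P) \to M$ and $u : L_{\mathbb\Delta}(M) \to N$, unfolding both $\mathcal L(u \widetilde\circ v)_B$ and $\mathcal L(u)_B \circ \mathcal L(v)_B$ using the definitions of composition and of linearization turns the desired equality $\mathcal L(u \widetilde\circ v) = \mathcal L(u) \circ \mathcal L(v)$ into the coassociativity $(\Delta \otimes \mathrm{Id}) \circ \Delta = (\mathrm{Id} \otimes \Delta) \circ \Delta$ of the comultiplication on $\widehat{R\langle\omega\rangle}_{\mathbb\Delta}$ (dual to associativity of composition in the groupoid $G$ of section \ref{stratder}).

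For part (2), I would evaluate on the covering prism $(R,(p)_q)$, where $\mathcal L(\widetilde D)_R$ is precisely the $R$-linearization $L_{\mathbb\Delta}(\widetilde D) : L_{\mathbb\Delta}(M) \to L_{\mathbb\Delta}(M)$ of definition \ref{deflin}. Under the identification $\mathrm R\Gamma(\overline R_{\mathbb\Delta}, \mathcal L(M)) = M$ of proposition \ref{linprop}.1 --- in which a global section $s \in M$ corresponds to the constant tensor $1 \otimes s \in L_{\mathbb\Delta}(M)$ (compatible with all restriction maps because the unit $M \to L_{\mathbb\Delta}(M)$ is equalized by the two projections coming from $\Delta$) --- one computes directly, using $\Delta(1) = 1 \otimes 1$ and $\widetilde D(1 \otimes s) = D(s)$:
\[
L_{\mathbb\Delta}(\widetilde D)(1 \otimes s) = (\mathrm{Id} \otimes \widetilde D)\bigl((1 \otimes 1) \otimes s\bigr) = 1 \otimes \widetilde D(1 \otimes s) = 1 \otimes D(s).
\]
Thus the induced endomorphism on $\mathrm R\Gamma$ is exactly $D$.

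The main obstacle lies in the first paragraph: establishing naturality of $\Delta_B$ carefully enough that the local formulas $\mathcal L(u)_B$ really assemble into a morphism of sheaves on the absolute prismatic site, including the passage from oriented prisms (where the formula makes literal sense) back to the general objects of $\mathbb\Delta(\overline R)$. Once this naturality is in hand, all remaining verifications are either direct coalgebraic manipulations or immediate applications of the counit and coassociativity axioms.
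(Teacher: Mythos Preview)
Your proposal is correct and follows essentially the same approach as the paper: both rely on proposition \ref{linprop}, using parts \ref{tens1}--\ref{tens2} to show that the local formulas for $\mathcal L(u)_B$ are compatible with base change and composition (hence glue to a functor), and part 1 to identify the induced map on $\mathrm R\Gamma$. The paper's proof is a terse pointer to these facts, while you have unpacked the coalgebraic content (counit for identities, coassociativity for composition, naturality of $\Delta_B$ for gluing) explicitly.
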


\begin{proof}
It follows from assertions \ref{tens1} and \ref{tens2} of proposition \ref{linprop} that the construction is compatible with completed base extension and composition.
The first assertion follows.
The other one results from the definition and functoriality in proposition \ref{linprop}.1.
\end{proof}

%%%%%%%%%%%%%%
\begin{prop}[Prismatic Poincar\'e lemma] \label{poinc}
If $E$ is a vector bundle on $\mathbb \Delta(\overline R)$ and $M$ denotes the corresponding $\mathbb\Delta$-hyperstratified module on $R$, then the sequence
\[
0 \longrightarrow E \longrightarrow \mathcal L(M) \overset{\mathcal L(\partial_{M,\mathbb\Delta})} \longrightarrow \mathcal L(M) \longrightarrow 0
\]
is exact.
\end{prop}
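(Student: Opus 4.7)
The plan is to reduce the claim to proposition \ref{exacM} by evaluating the sequence on the covering $(R,(p)_q)$ of the final object of $\overline R_{\mathbb\Delta}$ and then extending to all oriented prisms by base change. Since bounded prisms are locally oriented and $(R,(p)_q)$ covers the final object by corollary \ref{covabs}, it suffices to verify exactness of the sequence at each oriented bounded prism $(B,d)$ over $\overline R$.

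Under the equivalence of theorem \ref{eqTayl}, $E$ corresponds to the complete $\mathbb\Delta$-hyperstratified module $M = E_R$, which is finite projective and hence free since $R$ is local. By proposition \ref{linprop}.\ref{tens1}, $\mathcal L(M)_R = \widehat{R\langle\omega\rangle}_{\mathbb\Delta} \otimes'_R M = L_{\mathbb\Delta}(M)$; and by the definition of the linearization of a $\mathbb\Delta$-differential operator together with proposition \ref{linfunc}, the evaluation at $R$ of $\mathcal L(\widetilde{\partial}_{M,\mathbb\Delta})$ is the operator $L_{M,\mathbb\Delta}$ of the linearized de Rham complex. The unit map $E \to \mathcal L(M) = j_{R*}e_R^*M$, evaluated at $R$, factors as the canonical restriction $E(R) \to E(\widehat{R\langle\omega\rangle}_{\mathbb\Delta})$ along the first projection $p_1$, followed by the isomorphism $\epsilon_M^{-1}$ converting from the canonical description $M \widehat\otimes_R \widehat{R\langle\omega\rangle}_{\mathbb\Delta}$ to the Taylor description $L_{\mathbb\Delta}(M)$; as in the proof of proposition \ref{eqHyperTayl}, this composite coincides with $\tau_M \circ \theta_M$.

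With these identifications, the $R$-evaluation of the sequence becomes the split exact sequence
\[
0 \longrightarrow M \xrightarrow{\tau_M \circ \theta_M} L_{\mathbb\Delta}(M) \xrightarrow{L_{M,\mathbb\Delta}} L_{\mathbb\Delta}(M) \longrightarrow 0
\]
supplied by proposition \ref{exacM}. For any oriented bounded prism $(B,d)$ over $\overline R$, proposition \ref{linprop}.\ref{tens2} gives $\mathcal L(M)_B \simeq B \widehat\otimes^{\mathrm{L}}_R \mathcal L(M)_R$, and the maps at $(B,d)$ are the base changes of those at $R$. Since the sequence at $R$ is \emph{split} exact, applying $B \widehat\otimes^{\mathrm{L}}_R -$ preserves it; hence the sequence evaluates to a split exact sequence on each oriented bounded prism. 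As this holds on a generating family for the topos, the original sequence of sheaves is exact.

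The main obstacle is the identification in the second paragraph of the unit map $E \to \mathcal L(M)$ at $R$ with $\tau_M \circ \theta_M$: one must trace the definition $\mathcal L = j_{R*}e_R^*$ through the product formula of corollary \ref{prodpr}, carefully keeping track of which of the two projections provides the $R$-module structure ``on the right'' that defines $\otimes'_R$, and invoke the identity $\epsilon_M^{-1}(s \otimes 1) = (\tau_M \circ \theta_M)(s)$ implicit in the proof of proposition \ref{eqHyperTayl}.
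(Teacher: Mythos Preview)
Your overall strategy---check exactness at every oriented prism, establish it first at $R$ via proposition \ref{exacM}, and propagate by base change---is the right shape and is essentially what the paper's reference to \cite{GrosLeStumQuiros23b}, proposition 6.1, unpacks to. But there is a genuine gap in the base-change step.

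You invoke proposition \ref{linprop}.\ref{tens2} to write $\mathcal L(M)_B \simeq B \widehat\otimes^{\mathrm L}_R \mathcal L(M)_R$. That proposition requires a morphism of bounded prisms from $R$ to $B$, and for a general oriented prism $(B,d)$ over $\overline R$ no such morphism exists: the paper explicitly warns after proposition \ref{cova} that the obvious map $R\to B$ (sending $q$ to a chosen lift of $\zeta$) is \emph{not} a morphism of $\delta$-rings, hence not a morphism of prisms. So the isomorphism you write down is not defined, and the claim ``the maps at $(B,d)$ are the base changes of those at $R$'' is unjustified.

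The repair is short. Given $(B,d)$, pass to $C:=\widehat{B[\omega]}^{\delta}_d$, which is formally faithfully flat over $B$ (this is exactly what makes $(R,(p)_q)$ a covering in corollary \ref{covabs}). By proposition \ref{cova} there \emph{is} a morphism of prisms $\theta:R\to C$ (the Taylor map \eqref{TaylB}), so proposition \ref{linprop}.\ref{tens2} applies along $R\to C$ and yields $\mathcal L(M)_C \simeq C\widehat\otimes'_R L_{\mathbb\Delta}(M)$, with the maps at $C$ obtained from those at $R$ by this base change. The split exactness from proposition \ref{exacM} therefore gives split exactness at $C$, and faithful flatness of $B\to C$ (together with proposition \ref{linprop}.\ref{tens2} along $B\to C$) descends exactness to $B$. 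Alternatively, and closer to the paper's phrasing, you can use lemma \ref{getout} to write $\mathcal L(M)\simeq E\otimes_{\mathcal O_{\mathbb\Delta(\overline R)}}\mathcal L(R)$ and reduce at once to the trivial crystal, where the same two-step base change through $C$ reduces to the little Poincar\'e lemma (proposition \ref{little}) itself.
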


\begin{proof}
Using little Poincar\'e lemma (proposition \ref{little}), this is derived from the previous results exactly as in \cite{GrosLeStumQuiros23b}, proposition 6.1.
\end{proof}

%%%%%%%%%%%
\begin{thm} \label{prisdR}
Let $W$ be the ring of Witt vectors of a perfect field $k$ of characteristic $p >0$ and $\zeta$ a primitive $p$th root of unity.
Then there exists an equivalence between the category of prismatic vector bundles $E$ on $W[\zeta]$ and the category of finite free weakly nilpotent $\nabla_{\mathbb\Delta}$-modules $M$ on $R$ and we have
\[
\mathrm R \Gamma(W[\zeta]_{\mathbb{\Delta}}, E) \simeq \mathrm R\Gamma_{\mathrm{dR},\mathbb\Delta}(M).
\]
\end{thm}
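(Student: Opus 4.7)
The plan is to assemble the theorem from three previously established pillars: Theorem \ref{eqTayl} (prismatic crystals on $W[\zeta]$ correspond to complete $W[[q-1]]$-modules with a $\mathbb\Delta$-hyperstratification), Theorem \ref{invlem} (finite projective $\mathbb\Delta$-hyperstratifications correspond to finite projective weakly nilpotent $\nabla_{\mathbb\Delta}$-modules), and the prismatic Poincaré lemma (Proposition \ref{poinc}) together with the computation of cohomology of linearizations (Proposition \ref{linprop}).

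First I would establish the equivalence of categories by restricting the equivalence of Theorem \ref{eqTayl} to vector bundles. A prismatic vector bundle $E$ on $\overline R = W[\zeta]$ evaluates on the covering prism $(R, (p)_q)$ to a finite projective $R$-module $M := E_R$; since $R = W[[q-1]]$ is local (with maximal ideal $(p, q-1)$), any finite projective $R$-module is in fact free. The hyperstratification from Theorem \ref{eqTayl} restricts to a hyperstratification on $M$ in the sense of definition \ref{hyperst}, and Theorem \ref{invlem} then converts this into a finite free weakly nilpotent $\nabla_{\mathbb\Delta}$-module. Conversely, given a finite free weakly nilpotent $\nabla_{\mathbb\Delta}$-module $(M, \partial_{M,\mathbb\Delta})$, Theorem \ref{invlem} equips it with a $\mathbb\Delta$-hyperstratification (the weak nilpotency is what makes the $\nabla_{\mathbb\Delta}$-module come from a hyperstratification), and Theorem \ref{eqTayl} then produces a prismatic crystal, which is locally free since $M$ is so.

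Next I would derive the cohomology isomorphism by applying $\mathrm R\Gamma(\overline R_{\mathbb\Delta}, -)$ to the exact sequence of the prismatic Poincaré lemma
\[
0 \longrightarrow E \longrightarrow \mathcal L(M) \overset{\mathcal L(\widetilde\partial_{M,\mathbb\Delta})}\longrightarrow \mathcal L(M) \longrightarrow 0.
\]
By Proposition \ref{linprop}.1, each $\mathcal L(M)$ term contributes exactly $M$ in degree $0$ and nothing in higher degrees, and by Proposition \ref{linfunc}.2 the induced map on global sections is precisely $\partial_{M,\mathbb\Delta}$. Hence $\mathrm R\Gamma(\overline R_{\mathbb\Delta}, E)$ is computed by the two-term complex $[M \overset{\partial_{M,\mathbb\Delta}}\longrightarrow M]$, which is by definition $\mathrm R\Gamma_{\mathrm{dR},\mathbb\Delta}(M)$.

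The main obstacle is really nothing new: all the hard work (the linearization lemma \ref{maglem}, the little Poincaré lemma \ref{little}, the reconstruction of hyperstratifications from weakly nilpotent $\nabla_{\mathbb\Delta}$-modules in Theorem \ref{invlem}, and the acyclicity/base change properties of $\mathcal L$ in Proposition \ref{linprop}) has already been done. The only points to be careful about are (i) verifying that ``locally finite free'' on $\mathbb\Delta(\overline R)$ matches ``finite free'' after evaluation at $(R,(p)_q)$, which uses locality of $W[[q-1]]$ and flat descent along the covering from corollary \ref{covabs}, and (ii) checking that the identification $\mathcal L(M)_R \simeq L_{\mathbb\Delta}(M)$ matches the two meanings of the linearized differential so that the cohomology complex truly reads $[M \to M]$ with differential $\partial_{M,\mathbb\Delta}$, which is exactly the content of Proposition \ref{linfunc}.2.
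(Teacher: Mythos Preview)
Your proof is correct and follows the same route as the paper: the equivalence is obtained by combining Theorem \ref{eqTayl} with Theorem \ref{invlem}, and the cohomology comparison comes from the prismatic Poincar\'e lemma (Proposition \ref{poinc}) together with Proposition \ref{linprop} and Proposition \ref{linfunc}. You have spelled out more of the details (locality of $R$ to pass from projective to free, the role of Proposition \ref{linfunc}.2 in identifying the differential) than the paper does, but the structure is identical.
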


\begin{proof}
The equivalence follows from theorem \ref{eqTayl} and theorem \ref{invlem}.
The isomorphism in cohomology follows from the prismatic Poincar\'e lemma (proposition \ref{poinc}) as in \cite{GrosLeStumQuiros23b}, theorem 6.2.
\end{proof}

%%%%%%%%%%%%%%%
\begin{rmk}
We recover theorem 4.10 of \cite{GaoMinWang23} in the case $\mathcal O_K = W[\zeta]$: there is no cohomology in degree $> 1$.
\end{rmk}

In order to later make the link with the theory of ($\varphi$-$\Gamma$)-modules, let us describe how absolute calculus is related to the cyclotomic tower in our situation.
For $n \in \mathbb N \cup \{\infty\}$, we set $K_n := \mathrm{Frac}(W)[\mu_{p^n}]$ and $\Gamma_n = \mathrm{Gal}(K_{\infty}/K_n)$.
We shall simply write $K := K_1$, so that $\mathcal O_K = \overline R$, and $\Gamma := \Gamma_1$.
We let $\Gamma$ act continuously on $R$ via $g(q) = q^{\chi(g)}$ where $\chi$ denotes the cyclotomic character.
A \emph{$\Gamma$-module (on $R$)} is an $R$-module endowed with a continuous semilinear action of $\Gamma$.

A $\Gamma$-module $M$ is in particular a $\gamma$-module.
In the case $M$ is $(q-1)$-torsion free, then, according to proposition \ref{gammeq}, this is the same thing as a $\nabla_{\mathbb\Delta}$-module.
We shall then call $M$ topologically/wealky (quasi-) nilpotent if the corresponding $\nabla_{\mathbb\Delta}$-module is.

%%%%%%%%%%%
\begin{prop} \label{nabGam}
The category of weakly nilpotent finite free $\Gamma$-modules on $R$ that become trivial modulo $q-1$ is isomorphic to the category of weakly nilpotent finite free $\nabla_{\mathbb\Delta}$-modules on $R$. 
\end{prop}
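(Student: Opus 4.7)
By Proposition \ref{gammeq}, there is already an equivalence between $(q-1)$-torsion free $\gamma$-modules (trivial modulo $q-1$) and $(q-1)$-torsion free $\nabla_{\mathbb\Delta}$-modules. Since a finite free $R$-module is automatically $(q-1)$-torsion free, restricting a $\Gamma$-action to the action of the specific element $\gamma \in \Gamma$ with $\chi(\gamma) = p+1$ gives a $\gamma$-module which, by Proposition \ref{gammeq}, corresponds to a $\nabla_{\mathbb\Delta}$-module. By the convention set in the paragraph preceding the statement, weak nilpotency is transported identically through this correspondence. The content of the proposition is therefore that the restriction functor from weakly nilpotent finite free $\Gamma$-modules (trivial mod $q-1$) to weakly nilpotent finite free $\gamma$-modules (trivial mod $q-1$) is an equivalence of categories. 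The plan is to construct a quasi-inverse by extending a given $\gamma$-action to a continuous $\Gamma$-action.

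When $p$ is odd, $\Gamma \cong 1 + p\mathbb Z_p \cong \mathbb Z_p$ is topologically procyclic with $\gamma$ as a topological generator. Given a $\nabla_{\mathbb\Delta}$-module structure on a finite free $M$ with its attached $\gamma_M = \mathrm{Id}_M + (q^2-q)\partial_{M,\mathbb\Delta}$, I would define for every $g = \gamma^s$ with $s \in \mathbb Z_p$
\[
g_M := \sum_{k=0}^{\infty} \binom{s}{k}(\gamma_M - \mathrm{Id}_M)^k = \sum_{k=0}^{\infty} \binom{s}{k}(q^2-q)^k \partial_{M,\mathbb\Delta}^k.
\]
The $k$th term lies in $(q-1)^k \mathrm{End}_W(M)$, so the sum converges in the $(p,q-1)$-adic topology of the finite module $\mathrm{End}_W(M)$. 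The binomial identity provides $g_M \circ h_M = (gh)_M$ for $g,h \in \Gamma$, joint continuity is clear, and $\gamma^s$-semilinearity is verified for $s \in \mathbb N$ by direct composition and then extended to $s \in \mathbb Z_p$ by continuity and density. Triviality modulo $q-1$ is evident from the formula. Note that weak nilpotency is only needed to translate the result back through Proposition \ref{gammeq}; the construction of the $\Gamma$-action itself does not use it.

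When $p = 2$, $\Gamma = 1 + 2\mathbb Z_2 = \{\pm 1\} \times (1 + 4\mathbb Z_2)$ is no longer procyclic, and a second generator is needed. I would use Lemma \ref{p2case}, which (precisely because weak nilpotency in this case uses $\partial_{M,\mathbb\Delta}^2 - \partial_{M,\mathbb\Delta}$) produces a unique $\sigma$-linear horizontal endomorphism $\sigma_M$ of $M$ reducing to the identity modulo $q-1$, where $\sigma$ is the element of $\Gamma$ with $\chi(\sigma) = -1$. Uniqueness immediately yields $\sigma_M^2 = \mathrm{Id}_M$ (applying the lemma to $\sigma_M^2$, which is $\mathrm{Id}$-linear, horizontal, and reduces to the identity). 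The powers $\gamma_M^s$ are defined as above for $s \in \mathbb Z_2$, and the required braid relation $\sigma_M \circ \gamma_M \circ \sigma_M = \gamma_M^{-1}$ (lifting the relation $\sigma\gamma\sigma = \gamma^{-1}$ in $\Gamma$) would again be verified by applying Lemma \ref{p2case} to both sides, since each reduces to the identity modulo $q-1$ and is $\sigma\gamma\sigma = \gamma^{-1}$-semilinear horizontal. Together $\sigma_M$ and the family $\gamma_M^s$ assemble into a continuous $\Gamma$-action.

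Full faithfulness is automatic: an $R$-linear map $f\colon M \to N$ that intertwines $\gamma_M$ and $\gamma_N$ intertwines any continuous function of $\gamma_M$, hence every $g_M$ for $g \in \Gamma$; in the $p=2$ case one additionally uses the uniqueness in Lemma \ref{p2case} to see that $f$ also intertwines $\sigma_M$ and $\sigma_N$. The main obstacle is the $p=2$ case, where the non-procyclicity of $\Gamma$ forces the juggling of two generators and the verification that $\sigma_M$ and $\gamma_M$ satisfy exactly the defining relations of $1 + 2\mathbb Z_2$. Here the delicate point is that the refined weak nilpotency condition of the text (using $\partial^2-\partial$ rather than $\partial$) is not an afterthought but precisely what makes Lemma \ref{p2case} available and thus the entire construction go through.
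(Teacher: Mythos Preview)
Your approach is essentially the paper's own: for $p$ odd, reduce to procyclicity of $\Gamma \simeq 1+p\mathbb Z_p$ with topological generator $\gamma$, and for $p=2$, supplement $\gamma$ with the involution $\sigma$ furnished by Lemma~\ref{p2case}. The explicit binomial-series construction of $g_M$ is a pleasant fleshing-out of what the paper leaves implicit, and your handling of full faithfulness is fine.

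There is, however, a genuine error in the $p=2$ case. The group $\Gamma = 1+2\mathbb Z_2 \subset \mathbb Z_2^\times$ is \emph{abelian}, so the relation between the generators $-1$ and $3$ is simply commutation: $\sigma\gamma = \gamma\sigma$, not the dihedral-type relation $\sigma\gamma\sigma = \gamma^{-1}$ you wrote. (Concretely, $\sigma\gamma\sigma(q) = \sigma\gamma(q^{-1}) = \sigma(q^{-3}) = q^{3} = \gamma(q)$.) Your decomposition $1+2\mathbb Z_2 = \{\pm 1\}\times(1+4\mathbb Z_2) \simeq \mathbb Z/2 \times \mathbb Z_2$ is correct and is exactly what the paper uses, but it is a direct product, not a semidirect one.

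The fix is painless and your own method applies: to check $\sigma_M\gamma_M = \gamma_M\sigma_M$, observe that $\gamma_M^{-1}\sigma_M\gamma_M$ is $\sigma$-semilinear (since $\gamma^{-1}\sigma\gamma = \sigma$), horizontal (being a composite of horizontal maps), and reduces to the identity modulo $q-1$; the uniqueness in Lemma~\ref{p2case} (ultimately Lemma~\ref{injmod}) then forces $\gamma_M^{-1}\sigma_M\gamma_M = \sigma_M$. Your verification of $\sigma_M^2 = \mathrm{Id}_M$ via the same uniqueness is correct as stated. With this correction, your argument matches the paper's.
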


\begin{proof}
Assume first that $p$ is odd.
The cyclotomic character induces an isomorphism $\Gamma \simeq 1 + p \mathbb Z_p$ (as well as $\Gamma_0 \simeq \mathbb Z_p^\times$).
Moreover, the logarithm induces an isomorphism $1 + p \mathbb Z_p \simeq pZ_p$.
We have $\log(1+p) \equiv p \mod p^2$ and it follows that $p+1$ is a topological generator of $1 + p\mathbb Z_p$.
In other words, $\gamma$ is a topological generator of $\Gamma$.
Our statement therefore follows from the second assertion of proposition \ref{gammeq}.

In the case $p=2$, the argument needs some refinement.
We have $K = K_0$ and (with an argument similar to the above)
\[
\Gamma = \Gamma_0 \simeq \mathbb Z_2^\times = \langle 3 \rangle \times \langle -1 \rangle \simeq \mathbb Z_2 \times \mathbb Z/2\mathbb Z.
\]
It follows that a continuous action of $\Gamma$ is equivalent to a continuous action of $\gamma = g_3$ and a commuting action of $\sigma := g_{-1}$ whose square is trivial.
Then, according to proposition \ref{gammeq} again, we see that a continuous action of $\Gamma$ is equivalent to a $\mathbb\Delta$-connection together with a $\sigma$-linear horizontal symmetry.
Our assertion therefore follows from lemma \ref{p2case}.
\end{proof}

%%%%%%%%
\begin{rmk}
\begin{enumerate}
\item
The nilpotence condition is only necessary to deal with the case $p=2$.
When $p$ is odd, the category of $(q-1)$-torsion-free $\Gamma$-modules that become trivial modulo $q-1$ is isomorphic to the category of $(q-1)$-torsion-free $\nabla_{\mathbb\Delta}$-modules.
\item In the case $p=2$, the category of $(q-1)$-torsion-free $\Gamma$-modules that become trivial modulo $q-1$ is equivalent to the category of $(q-1)$-torsion-free $\nabla_{\mathbb\Delta}$-modules endowed with a semi-linear horizontal symmetry that reduces to identiy modulo $q-1$.
\item
Assume $p$ is odd and consider the action of $e \in \mathbb F_p^\times$ on $R$ given by $g_{[e]} : q \mapsto q^{[e]}$ where $[e]$ denotes the Teichmüller character.
Then the category of $(q-1)$-torsion-free $\Gamma_0$-modules that become trivial modulo $q-1$ is equivalent to the category of $(q-1)$-torsion-free $\nabla_{\mathbb\Delta}$-modules with an extra semilinear action of $\mathbb F_p^\times$ which becomes trivial modulo $q-1$.
\item For the last argument of the proof, we can also rely on theorem \ref{prisdR}: since\footnote{This is not true for odd $p$.} $\sigma$ is an automorphism of the object $R$ of the prismatic site of $W[\zeta]$, if we denote by $E$ the prismatic crystal associated to $M$, there is an isomorphism
\[
\mathrm{Hom}_\nabla(\sigma^*M, M) \simeq \mathrm{Hom}(E, E)
\]
and the identity of $E$ therefore provides the expected symmetry.
\end{enumerate}
\end{rmk}

%%%%%%%%%%%%%%
\section{de Rham prismatic crystals} \label{generic}

When $p$ is invertible, one can rely on usual powers instead of twisted powers.
We will describe the process, which is completely analogous to the one we developed so far, but much simpler, and discuss the link with absolute calculus.
We will also indicate how our strategy relates to previous work of other authors.  For example, the main result of this section, proposition \ref{Bdreq}, recovers (in our situation) one of the equivalences in  \cite{GaoMinWang22} , theorem 5.1 (see also corollary 7.17 of \cite{BhattScholze21}). Nevertheless, our approach provides, as a bonus, explicit formulas for the equivalence (proposition \ref{invfor}) between twisted and ``classical'' operators.

We let $k$ be a perfect field of positive characteristic $p$, we denote by $W$ its ring of Witt vectors, we choose a primitive $p$th root of unity $\zeta$, we write $K := \mathrm{Frac}(W)(\zeta)$ and finally, we set $S := K[[q-\zeta]]$.
We shall also write $\mathcal O_K := W[\zeta]$ and $\mathcal O_S := \mathcal O_K[[q-\zeta]]$.

As a byproduct of calculus on log-schemes and on formal schemes, one can develop a theory of calculus on formal log-schemes but we will only need it here in a very specific situation.
With $S$ as above, we are interested in the formal log-scheme $\mathrm{Spf}(S)^{\log}$ where both the topology and the logarithmic structure come from the special fiber $\mathrm{Spec}(K)$. 
Concretely, we first formally blow up $S \widehat \otimes_K S$ along the special fiber and obtain the complete $K$-algebra $P_{\log}$.
There exists a \emph{canonical map} $\iota : S \to P_{\log}$ coming from the action on the left and a \emph{log-Taylor map} $\theta : S \to P_{\log}$ coming from the action on the right.
There also exists a \emph{flip map} $\tau : P_{\log} \simeq P_{\log}$ that exchanges both strutures.
We will use, as we always do, the notation $\otimes'_S$ in order to insist on the fact that $S$ acts via the log-Taylor map on the left.
We shall also consider below the comultiplication map $\Delta : P_{\log} \to P_{\log} \widehat \otimes_S P_{\log}$ which is induced as usual by the map $f \otimes g \mapsto f \otimes 1 \otimes g$.
We denote by $I_{\log}$ the kernel of the augmentation map $e : P_{\log} \to S$ and we define for each $n \in \mathbb N$, the ring of \emph{log-principal parts of order $n$} $P_{\log}^{n} := P_{\log}/I_{\log}^{n+1}$ and the module of \emph{log-differentials} $\Omega_{\log} := I_{\log}/I_{\log}^{2}$.
It comes with its universal \emph{log-derivation}
\[
\left(\mathrm {dlog} : S\setminus \{0\} \to \Omega_{\log},\quad \mathrm {d} : S \to \Omega_{\log}\right).
\]
A \emph{log-connection} on an $S$-module $M$ is a $K$-linear map $\nabla_M : M \to M \otimes_S \Omega_{\log}$ that satisfies the usual Leibniz rule.
We shall then call $M$ a \emph{$\nabla_{\log}$-module}.
A \emph{log-differential operator\footnote{Note that the approach slightly differs from definition \ref{defdifo} since the lift will be unique here.} of order at most $n$} is a $K$-linear map $u : M \to N$ between $S$-modules which extends (automatically uniquely) to an $S$-linear map $\widetilde u : P_{\log}^{n} \otimes' M \to N$.
The composition of two log-differential operators $u$ and $v$ of respective order at most $n$ and $m$ is a log-differential operator of order at most $n+m$.
Actually, the comultiplication map induces for all $m, n \in \mathbb N$
\[
\Delta_{n,m} : P_{\log}^{n+m} \to P_{\log}^{n} \otimes_S' P_{\log}^{m}
\]
and we always have
\[
\widetilde u\ \widetilde \circ\ \widetilde v := \widetilde {u\circ v} = \widetilde u \circ (\mathrm{Id} \otimes \widetilde v) \circ (\Delta_{n,m} \otimes \mathrm{Id}).
\]
In particular, log-differential operators of finite order from $S$ to $S$ form a ring
\[
\mathrm D_{S,\log} \simeq \varinjlim_n \mathrm{Hom}_S(P_{\log}^{n}, S).
\]
There also exists the notions of a \emph{log-stratification} (resp.\ a \emph{log-Taylor structure}) on an $S$-module $M$ which is a compatible family of $S$-linear isomorphisms (resp.\ maps)
\[
\epsilon_{n} : P_{\log}^{n}\otimes'_S M \simeq M \otimes_{S} P_{\log}^{n} \quad (\mathrm{resp.}\ \theta_{n} : M \to M \otimes_{S} P_{\log}^{n})
\]
satisfying the usual cocycle conditions.
It is equivalent to give a log-connection, a $\mathrm D_{S,\log}$-module structure, a log-stratification or a log-Taylor structure on an $S$-module $M$.

In order to describe these objects explicitly, we need to choose a coordinate on $S$ as well as an equation for the divisor.
The standard choice would be $q-\zeta$ for both but we choose $q$ and $(p)_q$ respectively in order to recover the arithmetic in the end.
It is of course necessary to make sure that $(p)_q$ is indeed a uniformizer.
We denote by $(p)_q^{(k)} := \partial^k((p)_q)$ the (usual) $k$th derivative of $(p)_q$ with respect to $q$ and by $(p)_\zeta^{(k)} := (p)_q^{(k)}(\zeta)$ its evaluation at $q=\zeta$.
We shall simply write $(p)_q'$ and $(p)_\zeta'$ when $k=1$, so that
\[
(p)'_\zeta = 1 + 2\zeta + 3\zeta^2 + \cdots + (p-1)\zeta^{p-2}
\]
(we already implicitly used this in our examples of $\mathbb\Delta$-connections in section \ref{secabs}). 
Since $\zeta$ is a primitive $p$th root of unity, a direct calculation gives the following equality:
\begin{lem} $p=(\zeta^2-\zeta) (p)'_\zeta$. \qed
\end{lem}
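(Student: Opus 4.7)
The plan is to exploit the classical identity $(p)_q(q-1) = q^p - 1$, which holds as an equality of polynomials in $\mathbb{Z}[q]$ since $(p)_q = 1 + q + \cdots + q^{p-1}$. This reduces the lemma to a one-line differentiation.

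First, I would differentiate both sides of $(p)_q(q-1) = q^p - 1$ with respect to $q$, obtaining
\[
(p)'_q (q-1) + (p)_q = p\, q^{p-1}.
\]
Next, I would specialize this identity at $q = \zeta$. Since $\zeta$ is a primitive $p$-th root of unity, we have $\zeta^p = 1$, and moreover $(p)_\zeta = (\zeta^p-1)/(\zeta-1) = 0$. The displayed equation therefore becomes
\[
(p)'_\zeta(\zeta - 1) = p\, \zeta^{p-1} = \frac{p}{\zeta},
\]
using $\zeta^p = 1$ once more. Multiplying both sides by $\zeta$ yields $(\zeta^2-\zeta)(p)'_\zeta = p$, which is the desired equality.

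There is no real obstacle here; the only thing to watch is that $\zeta$ is invertible (immediate, since $\zeta^p = 1$), so the final multiplication by $\zeta$ is legitimate and the identity holds in $\mathcal O_K = W[\zeta]$. Note also that this gives a conceptual explanation for why $(\zeta^2-\zeta)$ is the right object to consider: it is precisely the value at $q=\zeta$ of the denominator $q^2-q$ appearing in the definition of $\partial_{\mathbb\Delta}$ in section \ref{secabs}, and the lemma shows that modulo $(p)_q$ this denominator encodes the uniformizer $p$ of $\mathcal O_K$ up to the unit $(p)'_\zeta$.
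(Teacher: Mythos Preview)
Your proof is correct and is exactly the ``direct calculation'' the paper alludes to without spelling out: differentiate the identity $(q-1)(p)_q = q^p-1$ and evaluate at $q=\zeta$. One small inaccuracy in your closing remark: for $p$ odd, $(p)'_\zeta$ is \emph{not} a unit in $\mathcal O_K$ (the paper notes just after the lemma that $v_p((p)'_\zeta) = \tfrac{p-2}{p-1} > 0$), and $p$ is not a uniformizer of $\mathcal O_K$; rather, the lemma expresses $p$ as the product of two non-units $\zeta^2-\zeta$ and $(p)'_\zeta$ whose valuations add up to $1$.
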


As a consequence of this lemma, we see that $\displaystyle v_p((p)'_\zeta) = \frac {p-2}{p-1} > 0$ when $p$ is odd.

%%%%%%%
\begin{lem}
We have $(p)_q = (q-\zeta)\nu$ with $\nu\in S^\times$.
\end{lem}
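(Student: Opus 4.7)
The plan is to use the fact that $(p)_q$ is a polynomial in $q$ with $\zeta$ as a simple root, and then invoke the standard criterion for invertibility in a power-series ring.

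First, I would observe that $(p)_q = (q^p - 1)/(q-1) = \prod_{k=1}^{p-1}(q - \zeta^k)$ in $K[q]$, since $\zeta$ is a primitive $p$th root of unity. In particular, $(p)_\zeta = 0$, so $(q - \zeta)$ divides $(p)_q$ in $K[q]$, and we may write
\[
(p)_q = (q-\zeta)\nu \quad \text{with} \quad \nu := \prod_{k=2}^{p-1}(q-\zeta^k) \in K[q] \subset S.
\]

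Next, I would check that $\nu$ is a unit in $S = K[[q-\zeta]]$. Recall that an element of $S$ is invertible if and only if its image under the augmentation $S \to K$, $q \mapsto \zeta$, is nonzero. Differentiating the relation $(p)_q = (q-\zeta)\nu$ once and evaluating at $q=\zeta$ gives $\nu(\zeta) = (p)'_\zeta$. By the preceding lemma, $(p)'_\zeta = p/(\zeta^2-\zeta)$, which is nonzero in $K$ since $K$ has characteristic zero. Therefore $\nu(\zeta) \neq 0$, so $\nu \in S^\times$.

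This argument is entirely straightforward; there is no real obstacle, the only thing to be careful about is to compute $\nu(\zeta)$ rather than just assert nonvanishing, in order to make the link with the previously introduced quantity $(p)'_\zeta$ that appeared in our formulas for the Hodge-Tate examples. One could also argue directly that $\nu(\zeta) = \prod_{k=2}^{p-1}(\zeta - \zeta^k)$ is a product of differences of distinct roots of unity, hence nonzero, but the Taylor-expansion formulation is more useful for the sequel.
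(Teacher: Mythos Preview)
Your proof is correct and follows essentially the same approach as the paper: both arguments show that $(p)_\zeta = 0$ allows one to factor out $(q-\zeta)$, and that the resulting quotient $\nu$ has constant term $\nu(\zeta) = (p)'_\zeta \neq 0$, hence lies in $S^\times$. The only cosmetic difference is that the paper writes $\nu$ directly via the Taylor expansion $(p)_q = \sum_{k=1}^{p-1}\frac{1}{k!}(p)_\zeta^{(k)}(q-\zeta)^k$, whereas you obtain it from the polynomial factorization $(p)_q = \prod_{k=1}^{p-1}(q-\zeta^k)$ and then recover $\nu(\zeta)=(p)'_\zeta$ by differentiation.
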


\begin{proof}
The element $(p)_q \in K[[q - \zeta]]$ has Taylor expansion
\[
(p)_q = \sum_{k=1}^{p-1} \frac 1{k!} (p)_\zeta^{(k)} (q-\zeta)^{k}
\]
and the above equality therefore holds with
\begin{equation} \label{upsi}
\nu := (p)'_\zeta + (q-\zeta) \sum_{k=2}^{p-1} \frac 1{k!} (p)_\zeta^{(k)} (q-\zeta)^{k-2} \in S^\times.\qedhere
\end{equation}
\end{proof}

We have $S \widehat \otimes_K S \simeq S[[\xi]]$ with $\xi = 1 \otimes q - q \otimes 1$ and therefore $P_{\log} \simeq S[[\omega]]$ with $\xi = (p)_q\omega$.
The log-Taylor map $\theta$ is given by $q \mapsto q + (p)_q\omega$, the flip map $\tau$ by $\omega \mapsto -L(\omega)\omega$ and the comultiplication map $\Delta$ by $\omega \mapsto L(\omega) \otimes \omega + \omega \otimes 1$ with
\[
L(\omega) := 1 + \sum_{k=1}^{p-1} (p)_q^{(k)} (p)_q^{k-1}\omega^{[k]}
\]
(so that $\theta((p)_q) = (p)_q L(\omega)$).
More generally, we have
\begin{enumerate}
\item $\displaystyle \forall n \in \mathbb N, \quad \theta(q^n) = \sum_{k=0}^n k! {n \choose k} (p)_q^kq^{n-k}\omega^{[k]}$,
\item $\displaystyle\forall n \in \mathbb N, \quad \tau(\omega^{[n]}) = (-1)^nL(\omega)^n\omega^{[n]}$,
\item $\displaystyle\forall n \in \mathbb N, \quad \Delta(\omega^{[n]}) = \sum_{k=0}^n L(\omega)^k\omega^{[n-k]} \otimes \omega^{[k]}$.
\end{enumerate}

The $R$-module $\Omega_{\log}$ is a free module of rank one.
We will choose as generator the element
\[
\frac 1{\nu}\mathrm{dlog}(q-\zeta) = \frac 1{(p)'_q} \mathrm{dlog}(p)_q
\]
which is the class of $\omega$.
If we denote by $\partial_{\log}^{\langle k \rangle}$ the log-differential operator of order $k$ on $S$ such that
\[
\widetilde \partial_{\log}^{\langle k \rangle}(\omega^{[l]}) = \left\{\begin{array} l 1 \ \mathrm{if} \ k=l \\ 0\ \mathrm{otherwise},\end{array}\right.
\]
then the ring $\mathrm D_{S,\log}$ is a free $S$-module with basis $\left(\partial_{\log}^{\langle n \rangle}\right)_{n \in \mathbb N}$.
Actually, $\mathrm D_{S,\log}$ is a subring of the usual ring of differential operators $\mathrm D_{S}$ and we have $\partial_{\log}^{\langle n \rangle} = (p)_q^n\partial^n$ if $\partial$ denotes the standard derivative with respect to $q$.
The equivalence between log-connection, $\mathrm D_{S,\log}$-module structure and log-stratification (or log-Taylor structure) is given by
\[
\nabla_M(s) =\nu^{-1}\partial_{M,\log}(s) \otimes \mathrm{dlog}(q-\zeta) \quad \Leftrightarrow \quad \theta_{n}(s) = \sum_{k=0}^n \partial_{\log}^{\langle k \rangle}(s) \otimes \omega^{[k]}
\]
with $\partial_{M,\log}(s) = \partial_{\log}^{\langle 1 \rangle}(s)$.

In order to go further, it is necessary to better understand composition of log-differential operators and we shall first show the following (recall that $\nu$ is defined by equality \eqref{upsi}):

%%%%%%%%%%
\begin{lem} $\displaystyle \forall k \in \mathbb N, \quad \Delta(\nu^k\omega^{[k]}) = \sum_{i+j=k} (1 + \nu\omega)^j\nu^i\omega^{[i]} \otimes \nu^j\omega^{[j]}.$
\end{lem}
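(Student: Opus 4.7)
The plan is to introduce the auxiliary element $\eta := \nu\omega \in P_{\log}$; since $K$ is a $\mathbb Q$-algebra and $\nu\in S$ commutes with $\omega$, we have $\nu^k\omega^{[k]} = \eta^k/k! = \eta^{[k]}$ in $P_{\log}$, and similarly in each factor of $P_{\log} \widehat \otimes'_S P_{\log}$. The claim is therefore equivalent to
\[
\Delta(\eta^{[k]}) = \sum_{i+j=k} (1+\eta)^j\,\eta^{[i]} \otimes \eta^{[j]}.
\]
The shape of this formula is the comultiplication of divided powers for the \emph{multiplicative} formal group law, which suggests deducing it from the group-like identity $\Delta(1+\eta) = (1+\eta)\otimes(1+\eta)$.

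First I would establish this group-like identity. The equality $(p)_q = (q-\zeta)\nu$ together with $\theta(q-\zeta) = (q-\zeta) + (p)_q\omega$ yields $\theta(q-\zeta) = (q-\zeta)(1+\nu\omega) = (q-\zeta)(1+\eta)$, i.e., $1+\eta = \theta(q-\zeta)/(q-\zeta)$. Applying the ring homomorphism $\Delta$, and using that $\Delta(s) = s\otimes 1$ for $s\in S$ together with $\Delta(\theta(s)) = 1\otimes\theta(s)$ (both read off from the three-factor description of $\Delta$ as the projection $p_{13}$), one obtains
\[
\Delta(1+\eta) \;=\; \frac{1\otimes \theta(q-\zeta)}{(q-\zeta)\otimes 1}.
\]
The tensor-product relation $\theta(s)\otimes 1 = 1\otimes s$ for $s\in S$, applied to $s=q-\zeta$, rewrites $1\otimes(q-\zeta) = (q-\zeta)(1+\eta)\otimes 1$, so
\[
1\otimes \theta(q-\zeta) \;=\; \bigl(1\otimes(q-\zeta)\bigr)\bigl(1\otimes(1+\eta)\bigr) \;=\; \bigl((q-\zeta)\otimes 1\bigr)\bigl((1+\eta)\otimes 1\bigr)\bigl(1\otimes(1+\eta)\bigr),
\]
and dividing through by $(q-\zeta)\otimes 1$ gives $\Delta(1+\eta) = (1+\eta)\otimes(1+\eta)$. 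Subtracting $1\otimes 1$ and expanding yields $\Delta(\eta) = \eta\otimes 1 + (1+\eta)\otimes\eta$.

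To conclude, since $P_{\log} \widehat \otimes'_S P_{\log}$ is commutative, the binomial theorem gives
\[
\Delta(\eta)^k \;=\; \sum_{j=0}^{k}\binom{k}{j}\,\eta^{k-j}(1+\eta)^j\otimes\eta^j,
\]
and dividing by $k!$ produces $\Delta(\eta^{[k]}) = \sum_{i+j=k}(1+\eta)^j\,\eta^{[i]}\otimes\eta^{[j]}$, which is the desired identity after reverting $\eta^{[n]} = \nu^n\omega^{[n]}$. The hard part will be the careful use of the tensor-product relation $\theta(s)\otimes 1 = 1\otimes s$ to re-factor $1\otimes\theta(q-\zeta)$ into the triple product $((q-\zeta)\otimes 1)\cdot((1+\eta)\otimes 1)\cdot(1\otimes(1+\eta))$; once this identification is made, the rest is a routine binomial computation.
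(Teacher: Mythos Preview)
Your proof is correct and follows essentially the same approach as the paper: both reduce to the case $k=1$ and then pass to general $k$ by taking divided powers (your binomial expansion divided by $k!$). For the base case, the paper verifies $\Delta(\nu\omega) = (1+\nu\omega)\otimes\nu\omega + \nu\omega\otimes 1$ by a direct computation using $\Delta(\omega) = L(\omega)\otimes\omega + \omega\otimes 1$ and the tensor relation $\theta((p)_q)\otimes 1 = 1\otimes(p)_q$, whereas you obtain the equivalent group-like identity $\Delta(1+\eta) = (1+\eta)\otimes(1+\eta)$ from $1+\eta = \theta(q-\zeta)/(q-\zeta)$ and the tensor relation applied to $q-\zeta$; these are the same argument up to the unit $\nu$, and your packaging is arguably a bit cleaner.
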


\begin{proof}
It is sufficient to show that
\[
\Delta(\nu\omega) = (1 + \nu\omega) \otimes \nu\omega + \nu\omega \otimes 1
\]
and then take divided powers.
Thus, we have to show that
\[
\nu(L(\omega) \otimes \omega + \omega \otimes 1) =(1 + \nu\omega) \otimes \nu\omega + \nu\omega \otimes 1
\]
or, equivalently,
\[
\nu L(\omega) \otimes 1 = (1 + \nu\omega) \otimes \nu.
\]
We know that $\theta((p)_q) = (p)_qL(\omega)$ and $(p)_q = (q-\zeta)\nu$ and we compute
\begin{linenomath}
\begin{align*}
(p)_q\nu L(\omega) \otimes 1 & = \nu\theta((p)_q) \otimes 1 \\ &= \nu \otimes (p)_q \\& = \nu \otimes (q-\zeta)\nu \\ &= \nu(q+ (p)_q\omega-\zeta) \otimes \nu \\ &= (p)_q(1 + \nu\omega) \otimes \nu. \qedhere
\end{align*}
\end{linenomath}
\end{proof}

%%%%%%%%%%%%%%
\begin{prop} \label{ucompplus2}
$\displaystyle \forall n \in \mathbb N, \quad \nu^{-n}\partial_{\log}^{\langle n \rangle} = \prod_{k=0}^{n-1}\left( \nu^{-1} \partial_{\log}- k\right)$.
\end{prop}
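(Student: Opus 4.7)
The plan is to reduce the statement to the classical Euler operator identity by using the explicit formula $\partial_{\log}^{\langle n \rangle} = (p)_q^n \partial^n$ (noted just before the proposition), where $\partial = \partial/\partial q$ is the standard derivation on $S$. Since $(p)_q = (q-\zeta)\nu$ with $\nu \in S^\times$, we immediately get
\[
\nu^{-n}\partial_{\log}^{\langle n \rangle} = (q-\zeta)^n \partial^n \quad \text{and} \quad \nu^{-1}\partial_{\log} = (q-\zeta)\partial,
\]
so the assertion becomes the familiar identity
\[
(q-\zeta)^n \partial^n = \prod_{k=0}^{n-1}\bigl((q-\zeta)\partial - k\bigr)
\]
inside $\mathrm D_{S,\log}$ (or more precisely in the ring of continuous $K$-linear operators on $S$).

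I would then prove this by induction on $n$. The case $n=0$ is trivial. For the inductive step, assuming the formula at level $n$, the Leibniz rule gives
\[
\bigl((q-\zeta)\partial\bigr)\cdot (q-\zeta)^n\partial^n = n(q-\zeta)^n\partial^n + (q-\zeta)^{n+1}\partial^{n+1},
\]
so
\[
\bigl((q-\zeta)\partial - n\bigr)\cdot (q-\zeta)^n\partial^n = (q-\zeta)^{n+1}\partial^{n+1},
\]
which is exactly the relation required to pass from step $n$ to step $n+1$.

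I do not foresee a real obstacle: the core content is just the observation that $\nu^{-1}\partial_{\log}$ is the Euler operator attached to the uniformizer $q-\zeta$ on the discrete valuation ring $S$, and the identity above is precisely the statement that the falling factorial of the Euler operator computes the normalized higher derivatives. An alternative, more intrinsic approach would be to exploit the comultiplication formula from the preceding lemma and the duality between the basis $(\nu^n\omega^{[n]})_n$ of $P_{\log}$ and the operators $(\nu^{-n}\partial_{\log}^{\langle n \rangle})_n$, turning the $n$-fold composition $(\nu^{-1}\partial_{\log})^{\circ n}$ into a telescoping sum over the terms of $\Delta(\nu^n\omega^{[n]})$; this is consistent with the paper's overall framework, but it demands extra bookkeeping about semilinearity and produces no new information here, so the direct induction seems clearly preferable.
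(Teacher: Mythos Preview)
Your argument is correct. The key observation that $\nu^{-1}\partial_{\log} = (q-\zeta)\partial$ and $\nu^{-n}\partial_{\log}^{\langle n\rangle} = (q-\zeta)^n\partial^n$ reduces the claim to the classical Euler identity, and since the paper explicitly records that $\mathrm D_{S,\log}$ is a subring of $\mathrm D_S$ with $\partial_{\log}^{\langle n\rangle} = (p)_q^n\partial^n$, and since composition of log-differential operators is by definition the ordinary composition of the underlying $K$-linear maps (the formula $\widetilde{u}\,\widetilde\circ\,\widetilde{v} = \widetilde{u\circ v}$ says exactly this), your induction in the Weyl algebra is legitimate.

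The paper takes the other route you sketch at the end: it uses the preceding lemma on $\Delta(\nu^k\omega^{[k]})$ to compute $\widetilde\partial_{\log}^{\langle n\rangle}\,\widetilde\circ\,\nu^{-1}\widetilde\partial_{\log}$ on the basis $(\omega^{[k]})_k$ directly, obtaining the recursion $\widetilde\partial_{\log}^{\langle n+1\rangle} = \nu\,\widetilde\partial_{\log}^{\langle n\rangle}\,\widetilde\circ\,(\nu^{-1}\widetilde\partial_{\log} - n)$ without ever leaving the log-framework or invoking the explicit formula $\partial_{\log}^{\langle n\rangle} = (p)_q^n\partial^n$. Your approach is shorter and more elementary; the paper's is more intrinsic and, importantly, it is the template reused verbatim for the analogous computation modulo $(p)_q$ in the Hodge--Tate section (Proposition~\ref{ucomp2}), where the embedding into the usual Weyl algebra is less natural.
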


\begin{proof}
By definition, we have for all $k \in \mathbb N$,
\begin{linenomath}
\begin{align*}
(\widetilde \partial_{\log}^{\langle n \rangle} \widetilde \circ\ \nu^{-1} \widetilde \partial_{\log})(\omega^{[k]})
& = \nu^{-k}(\widetilde \partial_{\log}^{\langle n \rangle} \widetilde \circ\ \nu^{-1} \widetilde \partial_{\log})(\nu^k\omega^{[k]}) \\
& = \nu^{-k}\widetilde \partial_{\log}^{\langle n \rangle} ((\mathrm{Id} \otimes \nu^{-1} \widetilde \partial_{\log})(\Delta(\nu^k\omega^{[k]})))
\\ & = \nu^{-k}\widetilde \partial_{\log}^{\langle n \rangle} \left((\mathrm{Id} \otimes \nu^{-1} \widetilde \partial_{\log})\left(\sum_{i+j=k} (1 + \nu\omega)^j\nu^i\omega^{[i]} \otimes \nu^j\omega^{[j]} \right)\right)
\\ & = \nu^{-k}\widetilde \partial_{\log}^{\langle n \rangle} ((1 + \nu\omega)\nu^{k-1}\omega^{[k-1]})
\\ & = \nu^{-1} \widetilde \partial_{\log}^{\langle n \rangle} (\omega^{[k-1]}) + k\widetilde \partial_{\log}^{\langle n \rangle} (\omega^{[k]}).
\end{align*}
\end{linenomath}
It follows that
\[
\widetilde \partial_{\log}^{\langle n \rangle} \widetilde \circ\ \nu^{-1}\widetilde \partial_{\log} = \nu^{-1} \widetilde \partial_{\log}^{\langle n+1 \rangle} + n\widetilde \partial_{\log}^{\langle n \rangle}
\]
or, equivalently,
\[
\widetilde \partial_{\log}^{\langle n+1 \rangle} = \nu \widetilde \partial_{\log}^{\langle n \rangle} \widetilde \circ (\nu^{-1}\widetilde \partial_{\log} - n).
\]
Our formula is then obtained by induction.
\end{proof}

%%%%%%%%%%
\begin{rmk}
As a consequence of the proposition, we see that
\[
\partial_{\log}^{\langle n \rangle} = \sum_{k=1}^n s(n,k) \nu^{n}\left( \nu^{-1} \partial_{\log}\right)^k
\]
and
\[
\left( \nu^{-1} \partial_{\log}\right)^n = \sum_{k=1}^n S(n,k) \nu^{-k}\partial_{\log}^{\langle k\rangle} 
\]
where $s(n,k)$ and $S(n,k)$ denote the Stirling numbers of the first and second kind respectively.
\end{rmk}

From now on, we endow $K$ with the $p$-adic valuation and $S$ with the product topology (or equivalently the inverse limit topology).
This is a $W$-linear topology with $p^n\mathcal O_S + (q-\zeta)^mS$ as basis of neighborhoods of zero.
Then, $S$ is a complete topological ring\footnote{This is not an adic ring and not even an $f$-adic ring in the sense of Huber.} with both $p$ and $q-1$ as topologically nilpotent units.
We endow the polynomial ring $S[\omega]$ (resp.\ the divided polynomial ring $S\langle \omega \rangle$) with the direct sum topology with respect to the standard basis.
Be careful that the isomorphism of $S$-algebras $S[\omega] \simeq S\langle \omega \rangle$ is \emph{not} a homeomorphism because we use powers on the one hand and divided powers on the other in order to define the topology.
The log-Taylor (resp.\ flip, resp.\ comultiplication) map extends uniquely to a continuous $K$-linear ring homomorphism
\[
\theta : S \mapsto \widehat {S\langle \omega \rangle}, \quad q \mapsto q + (p)_q\omega
\]
\[
\left(\mathrm{resp.} \quad \tau : \widehat {S\langle \omega \rangle}\to \widehat {S\langle \omega \rangle},
\quad q \mapsto q + (p)_q\omega, \quad \omega \mapsto -L(\omega)\omega \right.,
\]
\[
\left.\mathrm{resp.} \quad \Delta : \widehat {S\langle \omega \rangle}\to \widehat {S\langle \omega \rangle} \widehat \otimes'_S \widehat {S\langle \omega \rangle},
\quad q \mapsto q, \quad \omega \mapsto L(\omega) \otimes \omega + \omega \otimes 1\right).
\]
One may then define a \emph{log-hyperstratification} (resp.\ a \emph{log-Taylor map}) on an $S$-module $M$ as a an isomorphism (resp.\ a map)
\[
\epsilon : \widehat {S\langle \omega \rangle} \widehat\otimes'_S M \simeq M \widehat\otimes_{S} \widehat {S\langle \omega \rangle} \quad (\mathrm{resp.}\ \theta : M \to M \widehat\otimes_{S} \widehat {S\langle \omega \rangle})
\]
satisfying the usual cocycle condition and show that these two notions are equivalent.
A log-hyperstratification induces a log-connection on $M$ and we have
\[
\forall s \in M, \quad \theta(s) = \sum_{k=0}^\infty \partial_{\log}^{\langle k \rangle}(s) \otimes \omega^{[k]}.
\]
This provides a fully faithful functor on finite $S$-modules.
Comultiplication $\Delta$ induces a log-Taylor map on $\widehat {S\langle \omega \rangle}$ for the \emph{right} structure and, after conjugation through the flip map, a log-Taylor map $\theta$ for the left structure.
We shall denote by $L_{\log}^{\langle k \rangle}$ and $\partial_{\log}^{\langle k \rangle}$ the corresponding higher derivatives.
We have
\[
\forall k \leq n \in \mathbb N, \quad L_{\log}^{\langle k \rangle}(\omega^{[n]}) = L(\omega)^k\omega^{[n-k]}
\]
but it is much harder to find an explicit formula for $\partial_{\log}^{\langle k \rangle}$ and we shall rely on its definition:
\[
\forall k \in \mathbb N, \quad \partial_{\log}^{\langle k \rangle} = \tau \circ L_{\log}^{\langle k \rangle} \circ \tau.
\]
However, one can show that $\partial_{\log}$ is a log-derivation of the \emph{ring} $ \widehat {S\langle \omega \rangle}$.
If $M$ is an $S$-module, we may then consider its log-linearization $L_{\log}(M) := \widehat {S\langle \omega \rangle} \widehat\otimes'_S M$.
Both the log-hyperstratification and the augmentation map $e$ of $\widehat {S\langle \omega \rangle}$ extend to $L_{\log}(M)$.
A \emph{log-differential operator} is a $K$-linear map $u : M \to N$ between $S$-modules that extends (automatically uniquely) to an $S$-linear map $\widetilde u : L_{\log}(M) \to N$.
It has a log-linearization
\[
L_{\log}(u) = (\mathrm{Id} \otimes \widetilde u) \circ (\Delta \otimes \mathrm{Id}): L_{\log}(M) \to L_{\log}(N)
\]
which is compatible with the log-hyperstratifications and such that $e \circ L_{\log}(u) = \widetilde u$.
If $M$ is a $\nabla_{\log}$-module, then the corresponding log-derivation $\partial_{M,\log}$ is a log-differential operator of order one (with $\widetilde \partial_{M,\log}(s \otimes \omega) = s$) and we can consider $L_M := L_{\log}(\partial_{M,\log})$.
Then, $\ker L_M$ comes with a log-hyperstratification.
One can show (with the help of a variant of little Poincar\'e lemma) that $\widehat M \simeq \ker L_M$ when the log-connection comes from a log-hyperstratification.

%%%%%%%%%
\begin{prop}
The functor $M \mapsto \ker L_M$ is adjoint to the forgetful functor from the category of finite $S$-modules endowed with a log-hyperstratification to the category of finite $\nabla_{\log}$-modules on $S$.
\end{prop}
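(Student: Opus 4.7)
The plan is to show that $K := \ker L_{(-)}$ is right adjoint to the forgetful functor $U$, by exhibiting a natural bijection
\[
\mathrm{Hom}_{\nabla_{\log}}(U(M), N) \simeq \mathrm{Hom}_{\mathrm{strat}}(M, K(N))
\]
between horizontal $S$-linear maps and $S$-linear maps respecting the log-hyperstratifications. From a horizontal $f : U(M) \to N$, I would build its $\widehat{S\langle\omega\rangle}$-linear extension $\mathrm{Id} \widehat\otimes'_S f : L_{\log}(M) \to L_{\log}(N)$, note via the log counterpart of lemma \ref{linext} that it intertwines the left-regular log-hyperstratifications, and check that horizontality of $f$ forces it to commute with $L_{U(M)}$ and $L_N$, hence to restrict to a stratified map $\ker L_{U(M)} \to \ker L_N$. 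Precomposing with the canonical stratified isomorphism $M \simeq \ker L_{U(M)}$, available because $M$ already carries a hyperstratification (this is the fact $\widehat M \simeq \ker L_M$ stated in the text just above, the log analog of proposition \ref{exacM}), produces the desired morphism $M \to K(N)$.

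In the reverse direction, from a stratified $g : M \to \ker L_N$, I would compose with the inclusion $\ker L_N \hookrightarrow L_{\log}(N)$ and the augmentation $e$ to obtain a map $M \to N$. Horizontality of the result is the crucial point to verify: using the identity $e \circ L_N = \widetilde{\partial_{N,\log}}$ (the special case of $e \circ L_{\log}(u) = \widetilde{u}$ applied to $u = \partial_{N,\log}$) together with the description of the log-connection on $\ker L_N$ as the restriction of the left-regular one on $L_{\log}(N)$, the augmentation $e$ intertwines the induced log-derivation on $\ker L_N$ with $\partial_{N,\log}$ on $N$.

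Mutual inverseness reduces to two essentially formal identities. The round trip starting from $f$ collapses, after augmentation, to $f \circ (e \circ \tau_M \circ \theta_M) = f \circ \mathrm{Id}_M = f$, using $e \circ \tau_M = e$ (both agree modulo the augmentation ideal, since the flip is the identity on $S$) and $e \circ \theta_M = \mathrm{Id}_M$ (cocycle condition on the Taylor map). The other composition uses that $\mathrm{Id} \widehat\otimes'_S f$ is determined by its restriction to $1 \otimes M$ combined with the isomorphism $M \simeq \ker L_{U(M)}$. Naturality is immediate from the naturality of $\theta$, $\tau$, $e$ and of the linearization functor.

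The main obstacle is verifying, in the first construction, that $\mathrm{Id} \widehat\otimes'_S f$ actually commutes with the linearized log-derivations $L_{U(M)}$ and $L_N$. Via the log counterpart of equality \eqref{LLM}, namely $L_M(\varphi \otimes s) = L_{\log}(\varphi) \otimes s + \theta'(\varphi) \otimes \partial_{M,\log}(s)$, this commutation reduces exactly to horizontality of $f$. The chief subtlety is carefully tracking the right-linear derivation $L_{\log}$ on $\widehat{S\langle\omega\rangle}$ and the auxiliary map $\theta'$ coming from the left-regular comultiplication; once these are in hand, the verification is routine.
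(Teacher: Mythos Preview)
Your overall architecture is the same as the paper's: both arguments come down to showing that the counit
\[
\ker L_N \hookrightarrow L_{\log}(N) \xrightarrow{\,e\,} N
\]
is horizontal. The paper simply states that this is the only thing left to check (the unit $M \simeq \ker L_{U(M)}$ being the compatibility with hyperstratifications already recorded just above the proposition), and then does the computation.

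There is, however, a genuine gap in your verification of this horizontality. From $e \circ L_N = \widetilde{\partial_{N,\log}}$ applied to $\varphi = \sum_k \omega^{[k]}\otimes s_k \in \ker L_N$, you correctly obtain $\widetilde{\partial_{N,\log}}(\varphi)=0$, hence $\partial_{N,\log}(e(\varphi)) = \partial_{N,\log}(s_0) = -s_1$. But that is only one side of the equation: you still have to show
\[
e\bigl(\partial_{\log}(\varphi)\bigr) = -s_1,
\]
and nothing in the identity $e\circ L_N = \widetilde{\partial_{N,\log}}$ gives this. Since the left-regular $\partial_{\log}$ acts only on the $\widehat{S\langle\omega\rangle}$-factor, this reduces to the pointwise statement $e(\partial_{\log}(\omega^{[k]})) = -\delta_{k,1}$. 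The paper proves it by unwinding $\partial_{\log} = \tau\circ L_{\log}\circ\tau$ together with $e\circ\tau = e$, the formula $L_{\log}(\omega^{[n]}) = L(\omega)\,\omega^{[n-1]}$, and the explicit expression $\tau(\omega) = -L(\omega)^{-1}\omega$; the case $k=1$ is computed directly and the cases $k\neq 1$ follow from $\partial_{\log}$ being a derivation. Your sketch does not supply this step.

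A minor remark on your forward direction: in the log setting there is no twist, so the analog of \eqref{LLM} is simply $L_M(\varphi\otimes s) = L_{\log}(\varphi)\otimes s + \varphi\otimes\partial_{M,\log}(s)$; the ``$\theta'$'' you introduce is the identity. With that correction, your claim that commutation of $\mathrm{Id}\,\widehat\otimes'_S f$ with $L_{U(M)}$ and $L_N$ is equivalent to horizontality of $f$ is correct.
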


\begin{proof}
According to the previous discussion, it is sufficient to check that the canonical map
\begin{equation} \label{canmap2}
\ker L_M \hookrightarrow L_{\log}(M) \overset {e} \twoheadrightarrow M
\end{equation}
is horizontal.
First of all, if $\varphi \in \ker L_M$, then $\widetilde \partial_{M,\log}(\varphi) = e(L_M(\varphi)) = 0$.
If we write $\varphi = \sum_k \omega^{[k]} \otimes s_k$, then $\widetilde \partial_{M,\log}(\varphi) = \partial_{M,\log}(s_0) + s_1$ and it follows that $\partial_{M,\log}(e(\varphi)) = \partial_{M,\log}(s_0) = -s_1$.
Thus, we need to show that $e(\partial_{\log}(\varphi)) = -s_1$, or equivalently that $e(\partial_{\log}(\omega^{[k]})) = -1$ when $k=1$ and $0$ otherwise.
We have
\begin{linenomath}
\begin{align*}
L_{\log}(\tau(\omega)) &= L_{\log}(-L(\omega)\omega) 
\\ &= -L_{\log}\left(\sum_{k=0}^{p-1} (p)_q^{(k)} (p)_q^{k-1}\omega^{[k]}\omega\right)
\\ &= -\sum_{k=0}^{p-1} (k+1)(p)_q^{(k)} (p)_q^{k-1} L_{\log}(\omega^{[k+1]})
\\ &= -\sum_{k=0}^{p-1} (k+1)(p)_q^{(k)} (p)_q^{k-1} L(\omega)\omega^{[k]}
\end{align*}
\end{linenomath}
Since $e \circ \tau = e$, we obtain for $k=1$
\[
e(\partial_{\log}(\omega)) =(e \circ \tau)(L_{\log}(\tau(\omega))) = -1.
\]
The case $k=0$ is clear, and for $k>1$ it follows from the fact that $\partial_{\log}$ is a log-derivation.
\end{proof}

%%%%%%%%%%%
\begin{dfn}
A $\nabla_{\log}$-module $M$ on $S$ is said to have \emph{topologically nilpotent monodromy} if  $\partial_{\log}$ when $p$ is odd,
and $\partial_{\log}^2 - \partial_{\log}$ when $p=2$, is topologically nilpotent modulo $(p)_q$.
\end{dfn}

%%%%%%%%%
\begin{lem} \label{nilpeq}
A log-connection on a finite free $S$-module $M$ extends to a log-hyperstratification if and only $M$ has topologically nilpotent monodromy.
\end{lem}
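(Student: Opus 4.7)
My plan is to mirror the proof of theorem \ref{invlem} (via lemma \ref{techlem}) in the log-setting, where the analysis is considerably simplified by the fact that $p$ is invertible in $K$.

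Given a log-connection $\partial_{M,\log}$, I would first lift the universal formula of proposition \ref{ucompplus2} to $M$ by setting
$$\partial^{\langle n\rangle}_{M,\log} := \nu^n \prod_{k=0}^{n-1}\bigl(\nu^{-1}\partial_{M,\log} - k\bigr), \quad n \geq 0.$$
An inductive argument, using only the Leibniz rule for $\partial_{M,\log}$ and the identities already established on $S$, shows these are $K$-linear operators satisfying the semilinearity and cocycle relations (the log-analog of the conditions in lemma \ref{locTayl}). Consequently, the formal sum
$$\theta_M(s) := \sum_{n\geq 0} \partial^{\langle n\rangle}_{M,\log}(s) \otimes \omega^{[n]}$$
defines a log-hyperstratification on $M$ precisely when it actually converges in $M \widehat\otimes_S \widehat{S\langle\omega\rangle}$, and the lemma reduces to a convergence criterion.

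The $(q-\zeta)$-adic part of the convergence is automatic by Griffiths transversality for $\partial_{M,\log}$ combined with the factor $\nu^n$ in the definition above; the essential content is therefore convergence modulo $(p)_q$ in the $p$-adic topology. Writing $T := \nu^{-1}\partial_{M,\log}$ for the induced $K$-linear endomorphism of $\overline M := M/(p)_q M$, the Fermat identity
$$\prod_{k=0}^{p-1}(X-k) \equiv X^p - X \pmod{p},$$
valid in any commutative ring, gives
$$\overline{\nu}^{-pm}\,\overline{\partial^{\langle pm\rangle}_{M,\log}} \equiv (T^p - T)^m \pmod{p}$$
in $\mathrm{End}_K(\overline M)$ for every $m \geq 0$. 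Thus convergence amounts to topological nilpotency of $T^p - T$, equivalently of $\partial_{M,\log}^p - \partial_{M,\log}$ modulo $(p)_q$ since $\nu$ is a unit in $S$.

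When $p = 2$, this is literally the condition stated in the lemma. When $p$ is odd, identifying topological nilpotency of $\partial_{M,\log}^p - \partial_{M,\log}$ with that of $\partial_{M,\log}$ itself is the main obstacle; it reflects the additive-versus-multiplicative dichotomy of the fundamental group of $(S,(p)_q)$ recalled in remark \ref{p2remark}, and I would handle it by a spectral analysis of $\partial_{M,\log}$ on the finite-dimensional $K$-vector space $\overline M$, ruling out the nonzero Teichm\"uller eigenvalues through integrality constraints inherited from the $S$-module structure.
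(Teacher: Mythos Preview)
Your overall architecture is close to the paper's: both reduce the existence of a log-hyperstratification to a convergence statement modulo $(p)_q$, and both express the higher operators on $\overline M$ as products $\prod_{k=0}^{n-1}(N - k(p)'_\zeta)$ with $N = \overline{\partial_{M,\log}}$.  For $p=2$ your Fermat-identity argument is fine, since then $\overline\nu = (2)'_\zeta = 1$ and $T=N$.

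For $p$ odd, however, the Fermat step breaks down.  The point is that $\overline\nu = (p)'_\zeta$ has strictly positive $p$-adic valuation $(p-2)/(p-1)$, so $T=\overline\nu^{-1}N$ need not preserve any $\mathcal O_K$-lattice, and the congruence $\prod_{k=0}^{p-1}(T-k)\equiv T^p-T\pmod p$ has no meaning in $\mathrm{End}_K(\overline M)$: the ``error term'' $p\,g(T)$ can dominate.  Concretely, your asserted equivalence between topological nilpotency of $T^p-T$ and of $N^p-N$ is false (take $\overline M=K$, $N=\mathrm{id}$: then $N^p-N=0$ but $T^p-T=\overline\nu^{-p}-\overline\nu^{-1}$ has negative valuation), and so is the equivalence between convergence and topological nilpotency of $T^p-T$ (for $p=5$ and $N=$ multiplication by $\zeta-1$, one has $v_p(N)=1/4>0$ so the products $\prod(N-k(p)'_\zeta)$ do converge, yet $v_p(T)=-1/2$ and $T^p-T$ is not topologically nilpotent).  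The ``integrality constraints'' you invoke to rule out nonzero Teichm\"uller eigenvalues do not exist: $M$ is only an $S$-module, not an $\mathcal O_S$-module.

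The paper's argument avoids all of this by working with $N$ rather than $T$ and using the Stirling relations
\[
\prod_{k=0}^{n-1}(N+k(p)'_\zeta)=\sum_{k}s(n,k)((p)'_\zeta)^{n-k}N^k,\qquad
N^n=\sum_{k}S(n,k)((p)'_\zeta)^{n-k}\prod_{j=0}^{k-1}(N+j(p)'_\zeta),
\]
together with the single fact $v_p((p)'_\zeta)>0$.  These immediately give that $\prod_{k=0}^{n-1}(N\pm k(p)'_\zeta)\to 0$ if and only if $N^n\to 0$, which is exactly the stated condition.  You should replace the Fermat-identity detour by this direct Stirling estimate.
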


\begin{proof}
This is similar to the proof of lemma \ref{techlem} and we only do the case $p$ odd.
If $\varphi = \sum_k \omega^{[k]} \otimes s_k \in L_{\log}(M)$, then
\begin{linenomath}
\begin{align*} 
L_{M} (\varphi) &= \sum_{k=0}^\infty \left(\omega^{[k]} \otimes \partial_{M,\log}( s_k) + L(\omega)\omega^{[k-1]}\otimes s_k \right)
\\ & \equiv \sum_{k=0}^\infty \left(\omega^{[k]} \otimes \partial_{M,\log}( s_k) + (1 + (p)'_\zeta\omega) \omega^{[k-1]}\otimes s_k \right) \mod q-\zeta
\\ & \equiv \sum_{k=0}^\infty \omega^{[k]} \otimes \left(\partial_{M,\log}(s_k) + k(p)'_\zeta s_k + s_{k+1} \right) \mod q-\zeta.
\end{align*}
\end{linenomath}
Therefore,
\begin{linenomath}
\begin{align*} 
&L_{M}(\varphi) \equiv 0 \mod q-\zeta
\\&\Leftrightarrow \forall k \geq 0, s_{k+1} \equiv -(\partial_{M,\log} + k(p)'_\zeta )(s_k) \mod q-\zeta
\\&\Leftrightarrow \forall n >0 , s_{n} \equiv (-1)^n \prod_{k=0}^ {n-1} (\partial_{M,\log} + k(p)'_\zeta) (s_0) \mod q-\zeta
\\&\Leftrightarrow \forall n >0 , s_{n} \equiv \sum_{k=1}^ {n} (-1)^ks(n,k)((p)'_\zeta)^{n-k} \partial_{M,\log}^k(s_0) \mod q-\zeta.
\end{align*}
\end{linenomath}
The last equation is also equivalent to
\[
\partial_{M,\log}^n(s_0) \equiv (-1)^n\sum_{k=1}^ {n} S(n,k)((p)'_\zeta)^{n-k} s_k \mod q-\zeta.
\]
Since we assumed that $p$ is odd, we have $v_p((p')_\zeta) >0$.
Since $s_n \to 0$, we see that, if $L_{M}(\varphi) = 0$, then $\partial_{M,\log}^n(s_0) \to 0 \mod q-\zeta$.

Conversely, when $M$ has topologically nilpotent monodromy it follows from Nakayama lemma that the map \eqref{canmap2} is surjective.
Another application of Nakayama lemma (see the end of the proof of lemma \ref{techlem}) will then show that the map is indeed bijective.
Being horizontal, it is an isomorphism.
\end{proof}

%%%%%%%%%%%%
\begin{rmks}
\begin{enumerate}
\item
Proposition \ref{ucompplus2}, provides another equivalent condition for a log-connection on a finite free $S$-module $M$ to extend to a log-hyperstratification:
\begin{equation} \label{prodform}
\forall s \in S, \quad \lim_{n} \nu^n\prod_{k=0}^{n}\left( \nu^{-1} \partial_{\log}- k\right)(s) = 0.
\end{equation}
\item
Condition in \eqref{prodform} is exactly the same as the one in definition 1.15.2 of \cite{GaoMinWang22} with $a = (p)'_\zeta$.
This follows from the identities
\[
\nabla_M(s) = \nu^{-1}\partial_{\log}(s) \otimes \mathrm {dlog}(q- \zeta) \quad \mathrm{and} \quad \nu \equiv (p)'_\zeta \mod q - \zeta.
\]
\end{enumerate}
\end{rmks}

%%%%%%%%%%%%%
\begin{xmps}
\begin{enumerate}
\item
We can consider the free $S$-module $F_n$ on one generator $s$ such that $\partial_{\log}(s) = n\nu s$ for some $n \in \mathbb Z$.
Then, we have
\[
\partial_{\log}^{\langle k \rangle}(s) = k!{n \choose k}\nu^ks
\]
and therefore
\[
\theta(s) = \sum_{k=0}^nk!{n \choose k}\nu^ks \otimes \omega^{[k]} = s \otimes (1 + \nu\omega)^n.
\]
Of course, $F_n \simeq (p)_q^nS$.
\item
We can also consider the $K$-vector space $G_n$ on one generator $s$ such that $\partial_{\log}(s) = n(p)'_\zeta s$ with $n \in \mathbb Z$.
We have $G_n \simeq \overline F_n \simeq F_n/F_{n+1} = (p)_q^nS/ (p)_q^{n+1}S$.
\item The Breuil-Kisin prism corresponds (see proposition \ref{Bdreq} below) to the free $S$-module $S\{1\}$ on one generator $e_S$ with
\[
\partial_{\log}((q-1)e_S) = \frac {(p)_q}{q\log(q)} (q-1)e_S.
\]
Actually, using the coming development, one can deduce from proposition \ref{BKthet} that
\[
\theta((q-1)e_S) = \left(1 + \frac {(p)_q}{\log(q)} \sum_{k=1}^\infty (-1)^{k-1} (k-1)! (p)_q^{k-1} q^{-k} \omega^{[k]}\right)(q-1)e_S.
\]
\item In fact, as was already expected by Scholze in \cite{Scholze17}, example 6.3, there exists an isomorphism
\[
S\{1\} \simeq (p)_qS, \quad e_S \leftrightarrow \frac {\log(q)}{q-1}
\]
of $\nabla_{\log}$-modules on $S$.
\end{enumerate}
\end{xmps}

%%%%%%%%%%%%
\begin{prop} \label{nogap}
There exists a natural isomorphism $\widehat{S\langle \omega \rangle} \simeq \widehat{S\langle \omega \rangle}_{\mathbb\Delta}$.
\end{prop}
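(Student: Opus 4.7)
The strategy is first to produce an abstract $S$-algebra isomorphism $S\langle\omega\rangle \simeq S\langle\omega\rangle_{\mathbb\Delta}$ from the general results of section \ref{twispow}, and then to promote it to a homeomorphism for the direct-sum topologies so that it extends to the completions.

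For the abstract step, I plan to invoke proposition \ref{poweg2} with $R = S$ and $x = q$. The ring $S = K[[q-\zeta]]$ is a $\mathbb Q$-algebra, and as a local ring with maximal ideal $(q-\zeta)S$ its Jacobson radical equals $(q-\zeta)S$. Since $q^p - 1 = (p)_q(q-1) = (q-\zeta)\,\nu\,(q-1)$ lies in $(q-\zeta)S$, the hypotheses of proposition \ref{poweg2} are satisfied and yield an $S$-algebra isomorphism $S[\omega] \simeq S\langle\omega\rangle_{\mathbb\Delta}$. Combined with the tautological identification $S\langle\omega\rangle \simeq S[\omega]$ via $n!\,\omega^{[n]} \leftrightarrow \omega^n$ (valid since $n!$ is invertible in the $\mathbb Q$-algebra $S$), this yields the desired abstract $S$-algebra isomorphism $\Phi$, given explicitly by the conversion formulas of the third remark following proposition \ref{poweg2}.

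Next I will verify that $\Phi$ is a homeomorphism for the direct-sum topologies. The pivotal observation is that $(n)_{q^p}!/n!$ is a unit in $S$ for every $n \geq 1$: since $\zeta^p = 1$, we have $(k)_{q^p} \equiv k \pmod{q-\zeta}$ for each $k \geq 1$, and as $k$ is already a unit in the $\mathbb Q$-algebra $S$, each ratio $(k)_{q^p}/k$ is a unit of the local ring $S$. Consequently the transition coefficients relating $\omega^{[n]}$ and $\omega^{\{n\}_{\mathbb\Delta}}$ differ from their classical divided-power counterparts only by units of $S$, so summability in the direct-sum topology should be preserved under both $\Phi$ and $\Phi^{-1}$.

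The main obstacle will be to ensure that this formal continuity really yields an isomorphism of completions rather than of the underlying algebras alone. I plan to address it by reducing modulo $(p)_q^m$ for each $m \geq 1$: in $S/(p)_q^m$ the element $(p)_q$ is genuinely nilpotent, so proposition \ref{fromMin} directly supplies continuity of the canonical map $(S/(p)_q^m)\langle\omega\rangle \to (S/(p)_q^m)\langle\omega\rangle_{\mathbb\Delta}$, while the unit estimate above handles the reverse direction. Passing to the inverse limit over $m$, exploiting that $S$ is complete and that the direct-sum completion commutes with reduction modulo $(p)_q^m$, will then produce the required natural isomorphism $\widehat{S\langle\omega\rangle} \simeq \widehat{S\langle\omega\rangle}_{\mathbb\Delta}$.
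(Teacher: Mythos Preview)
Your overall strategy aligns with the paper's: both start from the abstract isomorphism of proposition \ref{poweg2} and both invoke proposition \ref{fromMin} (necessarily after reducing modulo $(p)_q^m$, since $(p)_q$ is not nilpotent in $S$) to obtain a continuous map in one direction. The paper agrees with you up to this point.

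Where the approaches diverge is in establishing that the resulting map on completions is an isomorphism. You attempt to show it is a homeomorphism by bounding the reverse transition coefficients, relying on the observation that $(n)_{q^p}!/n! \in S^\times$. This observation, while correct, does not by itself yield the uniform bound you need: the coefficient of $\omega^{[k]}$ in $\omega^{\{n\}_{\mathbb\Delta}}$ is $\tfrac{k!}{(n)_{q^p}!}\,s_{q^p}(n,k)\,(q-1)^{n-k}q^{n-k}$, and after absorbing your unit this becomes $\tfrac{k!}{n!}\,s_{q^p}(n,k)\,(q-1)^{n-k}q^{n-k}$ up to a unit of $S$. The factor $k!/n!$ has $p$-adic valuation tending to $-\infty$ with $n$, and you have not explained why the remaining factors compensate uniformly in $n$ and $k$ modulo each $(p)_q^m$. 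Modulo $(p)_q$ this compensation does occur---precisely by proposition \ref{poweg}---but the lift to higher $m$ is exactly what is in question, so your phrase ``the unit estimate above handles the reverse direction'' is a genuine gap.

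The paper bypasses this difficulty entirely. Once the forward map $\widehat{S\langle\omega\rangle} \to \widehat{S\langle\omega\rangle}_{\mathbb\Delta}$ exists, bijectivity is checked by the derived Nakayama lemma: one reduces modulo $(p)_q$ and invokes proposition \ref{poweg}, which furnishes an integral isomorphism $\mathcal O_K\langle\omega\rangle \simeq \mathcal O_K\langle\omega\rangle_{\mathbb\Delta}$ (both transition matrices have entries in $\mathcal O_K$), hence a topological isomorphism over $K$ and an isomorphism of the completed reductions. No direct argument for reverse continuity is ever needed.
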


\begin{proof}
It follows from proposition \ref{fromMin} that the equality $S\langle \omega \rangle = S\langle \omega \rangle_{\mathbb\Delta}$ of proposition \ref{poweg2} extends to the completions and provides a map $\widehat{S\langle \omega \rangle} \to \widehat{S\langle \omega \rangle}_{\mathbb\Delta}$.
Thanks to derived Nakayama lemma and proposition \ref{poweg}, this is an isomorphism.
\end{proof}

We recall now that $\mathbb B^+_{\mathrm{dR}}$ is the sheaf on the prismatic site $\mathbb \Delta$ given by
\[
\mathbb B^+_{\mathrm{dR}}(B,J) = \widehat{B[1/p]}
\]
where completion is meant with respect to $J$.
A $\mathbb B^+_{\mathrm{dR}}$-vector bundle will be called a \emph{de Rham prismatic crystal}.

We write $R := W[[ q-1]]$ as usual and shall also consider $\overline R := R/(p)_q = W[\zeta] = \mathcal O_K$.

%%%%%%%%%%
\begin{lem} \label{Bdr}
We have 
\[
\mathbb B^+_{\mathrm{dR}}((R,(p)_q)) = S \quad \mathrm{and} \quad \mathbb B^+_{\mathrm{dR}}\left(\widehat {R\langle \omega \rangle}_{\mathbb\Delta},(p)_q\right) = \widehat {S\langle \omega \rangle}.
\]
\end{lem}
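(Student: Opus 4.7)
The plan is to treat the two assertions separately, the first being an exercise in the commutative algebra of $R[1/p]$ at the prime $(p)_q$, and the second a matter of base-changing the ring of $\mathbb\Delta$-divided powers from $R$ to $S$ and invoking proposition~\ref{nogap}.

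For the first identification, I would start by recalling that $(p)_q$ is the $p$-th cyclotomic polynomial, so viewed in $R = W[[q-1]]$ it is a distinguished polynomial in $q-1$ of degree $p-1$ with constant term $p$. Weierstrass preparation then shows that $R/(p)_q^n$ is $W$-free of rank $n(p-1)$, and after inverting $p$ we obtain that $R[1/p]/(p)_q^n$ is an $n$-dimensional $K$-vector space. The natural $W$-algebra map $R \to S$ sending $q \mapsto q$ is well defined, since writing $q-1 = (\zeta-1) + (q-\zeta)$ with $\zeta-1$ topologically nilpotent in $\mathcal{O}_K$ makes any series $\sum a_n(q-1)^n$ with $a_n \in W$ converge in $S$; moreover it sends $(p)_q$ to $(q-\zeta)\nu$ with $\nu \in S^\times$ (by the factorization established just above the statement of the lemma). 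An induction on $n$ using the short exact sequences $0 \to (p)_q^n/(p)_q^{n+1} \to \cdot/(p)_q^{n+1} \to \cdot/(p)_q^n \to 0$ on both sides (each kernel being isomorphic to $K$) together with the five-lemma shows $R[1/p]/(p)_q^n \simeq S/(p)_q^n$ at every level, and taking the inverse limit yields the desired isomorphism $\widehat{R[1/p]}_{(p)_q} \simeq S$.

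For the second identification, the decisive ingredient is proposition~\ref{nogap}, giving $\widehat{S\langle \omega \rangle} \simeq \widehat{S\langle \omega \rangle}_{\mathbb\Delta}$, so it is enough to exhibit an isomorphism $\mathbb B^+_{\mathrm{dR}}(\widehat{R\langle\omega\rangle}_{\mathbb\Delta},(p)_q) \simeq \widehat{S\langle\omega\rangle}_{\mathbb\Delta}$. Since the ring of $\mathbb\Delta$-divided powers is functorial in the pair (base ring, parameter $q$) by definition~\ref{defdtw}, the map $R \to S$ of Part~1 induces a map $R\langle\omega\rangle_{\mathbb\Delta} \to S\langle\omega\rangle_{\mathbb\Delta}$ and, after completion, $\widehat{R\langle\omega\rangle}_{\mathbb\Delta} \to \widehat{S\langle\omega\rangle}_{\mathbb\Delta}$. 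Because $(p)_q = (q-\zeta)\nu$ with $\nu \in S^\times$ and the target is already $(q-\zeta)$-adically complete, the target is $(p)_q$-adically complete; inverting $p$ on the source and completing at $(p)_q$, I obtain a continuous map
\begin{equation*}
\mathbb B^+_{\mathrm{dR}}(\widehat{R\langle\omega\rangle}_{\mathbb\Delta},(p)_q) \longrightarrow \widehat{S\langle\omega\rangle}_{\mathbb\Delta}.
\end{equation*}

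To prove this map is an isomorphism I will reduce modulo $(p)_q^n$. Since $R\langle\omega\rangle_{\mathbb\Delta}$ is $R$-free on the basis $(\omega^{\{k\}_{\mathbb\Delta}})_k$ (definition~\ref{defdtw}) and the multiplication law there involves only $q$ and the base ring, Part~1 gives isomorphisms $R\langle\omega\rangle_{\mathbb\Delta}[1/p]/(p)_q^n \simeq S\langle\omega\rangle_{\mathbb\Delta}/(p)_q^n$ at each finite level, and these are compatible in $n$. Taking the inverse limit identifies both sides with the ring of formal series $\sum_k s_k \omega^{\{k\}_{\mathbb\Delta}}$ with $s_k \in S$ satisfying the appropriate convergence. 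The main obstacle will be reconciling the two completions in play: the $(p,q-1)$-adic completion used to define $\widehat{R\langle\omega\rangle}_{\mathbb\Delta}$, and the direct-sum topology used to define $\widehat{S\langle\omega\rangle}$. After inverting $p$ and completing at $(p)_q$ these are expected to collapse to the same ``$(q-\zeta,\omega)$-adic'' convergence, but confirming this rigorously requires tracking Cauchy sequences through Part~1 and showing that the levelwise isomorphisms stitch together to a bijection on completions.
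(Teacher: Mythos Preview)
Your approach is essentially the same as the paper's, which also proceeds via the chain
\[
\mathbb B^+_{\mathrm{dR}}(R,(p)_q) = \widehat{R[1/p]} = \varprojlim K[q]/(p)_q^{n+1} = K[[q-\zeta]] = S
\]
for the first part and
\[
\mathbb B^+_{\mathrm{dR}}\left(\widehat{R\langle\omega\rangle}_{\mathbb\Delta},(p)_q\right) = \widehat{R\langle\omega\rangle_{\mathbb\Delta}[1/p]} = \widehat{S\langle\omega\rangle}_{\mathbb\Delta} \simeq \widehat{S\langle\omega\rangle}
\]
for the second, invoking proposition~\ref{nogap} only at the very last step. Your argument is simply a more detailed unpacking of these identifications: the Weierstrass-preparation and five-lemma argument for Part~1 is exactly what underlies the equality $\varprojlim K[q]/(p)_q^{n+1} = K[[q-\zeta]]$, and your levelwise base-change argument for Part~2 is what justifies $\widehat{R\langle\omega\rangle_{\mathbb\Delta}[1/p]} = \widehat{S\langle\omega\rangle}_{\mathbb\Delta}$.

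The paper does not pause over the topological compatibility you flag at the end (passing from the $(p,q-1)$-adic completion on the $R$-side to the direct-sum topology on the $S$-side through the $(p)_q$-adic completion). Your caution there is reasonable, but the verification is routine once one observes that $R\langle\omega\rangle_{\mathbb\Delta}/(p)_q^n$ is free over the finite $W$-module $R/(p)_q^n$, so that its $(p,q-1)$-adic completion is just its $p$-adic completion, and inverting $p$ then recovers exactly the condition ``coefficients $\to 0$ in $S/(p)_q^n$'' defining the direct-sum topology modulo $(p)_q^n$.
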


\begin{proof}
One easily checks that
\[
\mathbb B^+_{\mathrm{dR}}(R,(p)_q) = \widehat{R[1/p]} = \varprojlim K[q]/(p)_q^{n+1} = K[[q - \zeta]] = S
\]
and, thanks to proposition \ref{nogap},
\[
 \mathbb B^+_{\mathrm{dR}}\left(\widehat {R\langle \omega \rangle}_{\mathbb\Delta} ,(p)_q \right) = \widehat{R\langle \omega \rangle_{\mathbb\Delta}[1/p]} = \widehat{S\langle \omega \rangle}_{\mathbb\Delta} \simeq \widehat{S\langle \omega \rangle}.\qedhere
\]
\end{proof}

Note that the analogous results also hold for higher powers of $(R, (p)_q)$ in the prismatic site $\mathbb \Delta(\overline R)$.

We recover the left vertical equivalence of \cite{GaoMinWang22}, theorem 5.1 in our situation (see also the last statement -- corollary 7.17 -- of \cite{BhattScholze21}):

%%%%%%%%%%%
\begin{prop} \label{Bdreq}
There exists an equivalence between the category of de Rham prismatic crystals $E$ on $\mathcal O_K$ and the category of finite free $\nabla_{\log}$-modules $M$ on $S$ with topologically nilpotent monodromy.
\end{prop}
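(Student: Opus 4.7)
The plan is to imitate the proof of theorem \ref{prisdR} verbatim, replacing the structural prismatic sheaf $\mathcal{O}_{\mathbb\Delta}$ by $\mathbb B^+_{\mathrm{dR}}$ and twisted calculus by the logarithmic calculus developed at the start of this section.

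First, I would descend along the covering $(R,(p)_q) \to *$ of the final object of $\mathbb\Delta(\overline R)$ provided by corollary \ref{covabs}. Combined with the explicit description of iterated self-products in corollary \ref{prodpr} and with lemma \ref{Bdr} (and its obvious extension to higher products, since $\mathbb B^+_{\mathrm{dR}}$ simply inverts $p$ and then $(p)_q$-adically completes), a de Rham prismatic crystal $E$ on $\mathcal O_K$ is the same as a complete $S$-module $M := \mathbb B^+_{\mathrm{dR}}(E)(R,(p)_q)$ equipped with a $\widehat{S\langle\omega\rangle}$-linear isomorphism
\[
\epsilon_M : \widehat{S\langle\omega\rangle} \widehat\otimes'_S M \simeq M \widehat\otimes_S \widehat{S\langle\omega\rangle}
\]
satisfying the cocycle condition with respect to the comultiplication of $\widehat{S\langle\omega\rangle}$. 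This is precisely a log-hyperstratification on $M$ in the sense introduced earlier in this section. When $E$ is a vector bundle, $M$ is finite free.

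Second, I would apply lemma \ref{nilpeq}, which states that on a finite free $S$-module a log-hyperstratification is equivalent to a log-connection whose monodromy (meaning $\partial_{\log}$ if $p$ is odd, and $\partial_{\log}^2 - \partial_{\log}$ if $p=2$) is topologically nilpotent modulo $(p)_q$. Composing the two equivalences yields the desired description.

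The main technical point is the identification $\mathbb B^+_{\mathrm{dR}}(R,(p)_q) = S$ and the analogous identification on the double self-product, together with the fact that divided powers and $\mathbb\Delta$-divided powers coincide after inverting $p$ (proposition \ref{nogap}); these are already recorded in lemma \ref{Bdr} and make the transition from prismatic descent data to log-hyperstratifications painless. The only genuinely subtle step is lemma \ref{nilpeq}, which has already been proved; once it is invoked, the $p=2$ distinction is absorbed into the definition of topologically nilpotent monodromy, exactly parallel to remark \ref{p2remark} in the $\mathbb\Delta$-setting.
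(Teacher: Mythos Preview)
Your proposal is correct and follows essentially the same approach as the paper: descend along the covering of corollary \ref{covabs}, use lemma \ref{Bdr} (together with corollary \ref{prodpr}) to identify the resulting descent data with a log-hyperstratification on a finite free $S$-module, and then invoke lemma \ref{nilpeq} to pass to a log-connection with topologically nilpotent monodromy. The paper's proof is a single sentence citing exactly these three ingredients, so your expanded version is a faithful unpacking of it.
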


\begin{proof}
Using lemma \ref{nilpeq}, this formally follows from corollary \ref{covabs} and lemma \ref{Bdr}.
\end{proof}

\begin{rmk}
As a consequence, we also obtain
\[
\mathrm H^{0}(\mathcal O_{K\mathbb \Delta}, E) \simeq \mathrm{Hom}(\mathbb B^+_{\mathrm{dR}}, E) \simeq \mathrm{Hom}_{\nabla}(S, M) \simeq \mathrm H_{\mathrm{dR,log}}^{0}(M) 
\]
and
\[
\mathrm H^{1}(\mathcal O_{K\mathbb \Delta}, E) \simeq \mathrm{Ext}(\mathbb B^+_{\mathrm{dR}}, E) \simeq \mathrm{Ext}_{\nabla}(S, M) \simeq \mathrm H_{\mathrm{dR,log}}^{1}(M) 
\]
because the category of vector bundles (resp.\ of finite free modules) is stable under extensions.
\end{rmk}

There also exists a theory of absolute calculus on $S := K[[q-\zeta]]$ (similar to the one developed on $R := W[q-1]]$ in the present paper) and we have:

%%%%%%%%%%%
\begin{prop} \label{Bdreq_qp}
There exists an equivalence between the category of de Rham prismatic crystals $E$ on $\mathcal O_K$ and the category of finite free $\nabla_{\mathbb\Delta}$-modules $M$ on $S$ with topologically nilpotent monodromy and we have
\[
\mathrm R \Gamma(\mathcal O_{K\mathbb\Delta}, E) \simeq \mathrm R\Gamma_{\mathrm{dR},\mathbb\Delta}(M).
\]
\end{prop}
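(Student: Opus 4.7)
The plan is to mimic the proof of theorem \ref{prisdR}, working over $S$ in place of $R$ and with $\mathbb B^+_{\mathrm{dR}}$ in place of $\mathcal O_{\mathbb\Delta}$. The argument naturally splits into three steps: first identify de Rham prismatic crystals with complete $S$-modules carrying a $\mathbb\Delta$-hyperstratification over $S$; second identify finite free such modules with topologically nilpotent $\nabla_{\mathbb\Delta}$-modules; third obtain the cohomology comparison via a Poincar\'e lemma over $S$.

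For the first step, I would invoke the covering $(R,(p)_q) \twoheadrightarrow \ast$ in $\mathbb\Delta(\mathcal O_K)$ from corollary \ref{covabs} together with corollary \ref{prodpr} and lemma \ref{Bdr}: the sections of $\mathbb B^+_{\mathrm{dR}}$ on the $(n+1)$-fold self-product of $(R,(p)_q)$ are the $n$-fold completed tensor products of $\widehat{S\langle\omega\rangle}_{\mathbb\Delta}$ over $S$ (via the respective Taylor maps), using proposition \ref{nogap} to inflate the log-side computation to the $\mathbb\Delta$-side. Faithfully flat descent along this covering then yields, exactly as in the proof of theorem \ref{eqTayl}, an equivalence between de Rham prismatic crystals on $\mathcal O_K$ and complete $S$-modules endowed with a $\mathbb\Delta$-hyperstratification (defined over $S$ by the same formula as in definition \ref{hyperst}, using $\widehat{S\langle\omega\rangle}_{\mathbb\Delta}$).

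For the second step, I would develop absolute $\mathbb\Delta$-calculus over $S$ in parallel to sections \ref{secabs}--\ref{diffop}. All the formal constructions---Taylor map, flip, comultiplication, linearization, little Poincar\'e lemma, linearization of $\mathbb\Delta$-differential operators---go through verbatim because $\widehat{S\langle\omega\rangle}_{\mathbb\Delta}$ and its twofold tensor power are again prismatic envelopes, now of $(S[[\xi]], \xi)$, by the same universal-property argument as theorem \ref{prismenv}. The resulting $S$-analog of theorem \ref{invlem} then asserts that the forgetful functor from finite free $S$-modules with a $\mathbb\Delta$-hyperstratification to $\nabla_{\mathbb\Delta}$-modules on $S$ is an equivalence onto those whose connection has $\partial_{M,\mathbb\Delta}$ (resp.\ $\partial^2_{M,\mathbb\Delta}-\partial_{M,\mathbb\Delta}$ when $p=2$) topologically nilpotent modulo $(p)_q = (q-\zeta)$; here ``modulo $(q-\zeta)$'' replaces ``modulo $(p,q-1)$'' because, with $p$ inverted, the unique residual ideal of definition of $S$ is $(q-\zeta)$. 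Combining this with step one gives the desired equivalence. For the cohomology comparison, I would extend the prismatic Poincar\'e lemma \ref{poinc} to the $\mathbb B^+_{\mathrm{dR}}$-setting: the linearization complex $[\mathcal L(M) \to \mathcal L(M)]$ built from $\widetilde\partial_{M,\mathbb\Delta}$ resolves the crystal $E$ as a sheaf of $\mathbb B^+_{\mathrm{dR}}$-modules, and taking $\mathrm R\Gamma(\mathcal O_{K\mathbb\Delta}, -)$ yields the two-term complex $[M \overset{\partial_{M,\mathbb\Delta}}\to M]$ thanks to proposition \ref{linprop} (version over $S$).

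The main obstacle is pinning down the correct nilpotence condition and proving the $S$-analog of lemma \ref{techlem}. Over $R$ the argument crucially used the explicit formula of proposition \ref{modp} for $L_{\mathbb\Delta}(\omega^{\{n\}_{\mathbb\Delta}}) \bmod (p,q-1)$. Over $S$, where $p$ is a unit and the $\mathbb\Delta$-divided powers agree with usual divided powers up to unit (proposition \ref{nogap}), the corresponding reduction must be carried out modulo $(q-\zeta)$, and one finds that the leading term of $L_{\mathbb\Delta}(\omega^{\{n\}_{\mathbb\Delta}})$ modulo $(q-\zeta)$ is governed by $(p)'_\zeta$ instead of by integers, so the recursion $s_{k+1} \equiv -(\partial_{M,\mathbb\Delta}+k(p)'_\zeta) s_k$ of lemma \ref{nilpeq} reappears. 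Since $v_p((p)'_\zeta) > 0$ for odd $p$, the condition $s_n \to 0$ in the $(q-\zeta)$-adic topology of $L_{\mathbb\Delta}(M)$ translates precisely into topological nilpotence of $\partial_{M,\mathbb\Delta}$ modulo $(q-\zeta)$ (with the usual modification for $p=2$, compatible with remark \ref{p2remark}). Once this technical lemma is in place, the remainder of the argument is a routine transcription of sections \ref{stratder}--\ref{prissec} to the $S$-setting.
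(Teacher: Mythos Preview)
Your overall architecture is correct and matches the paper's: both transport the machinery of theorem~\ref{prisdR} to $S$ via lemma~\ref{Bdr} and proposition~\ref{nogap}, and both recognize that the only nontrivial point is the $S$-analog of lemma~\ref{techlem}. Where you diverge is in how you prove that analog.

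You propose to recompute $L_{\mathbb\Delta}(\omega^{\{n\}_{\mathbb\Delta}})$ modulo $(q-\zeta)$ directly, obtaining the recursion $s_{k+1}\equiv -(\partial_{M,\mathbb\Delta}+k(p)'_\zeta)s_k$ over $K$ and then invoking $v_p((p)'_\zeta)>0$ (for $p$ odd) to convert this into topological nilpotence, in the spirit of lemma~\ref{nilpeq}. This is legitimate, but it duplicates work: the estimates of lemma~\ref{estimates} are carried out modulo $q-1$, not modulo $q-\zeta$, so you would need to redo that induction. The paper instead takes a shortcut that avoids any new computation: reduce modulo $(q-\zeta)$ by Nakayama to land on a finite-dimensional $K$-vector space; topological nilpotence then guarantees a $\partial_{M,\mathbb\Delta}$-stable $\mathcal O_K$-lattice; on this lattice the $\mathbb\Delta$-connection is weakly nilpotent, so a second application of Nakayama reduces to $k$, where the \emph{original} mod-$(p,q-1)$ computation of proposition~\ref{modp} and lemma~\ref{techlem} applies verbatim. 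Your route is more self-contained (it stays entirely over $S$), but the paper's lattice trick is more economical because it recycles the integral argument already in hand.
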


\begin{proof}
This is shown as in theorem \ref{prisdR} with the same strategy.
For example, the analog to lemma \ref{techlem} is proved as follows.
Using Nakayama lemma, we can work on $K$.
Then the connection is topologically nilpotent and there exists therefore a stable lattice.
The $\mathbb\Delta$-connection is weakly nilpotent on this lattice and we can use Nakayama again to work now over $k$ so that the same arguments as in lemma \ref{techlem} apply.
\end{proof}

Given a de Rham prismatic crystal $E$ on $\mathcal O_K$, there exists both a $\mathbb\Delta$-connection and a log-connection on $M$.
This is an ultrametric confluence phenomenon:

%%%%%%%%%%%%
\begin{cor}
The category of $\mathbb\Delta$-hyperstratified modules on $S$ is isomorphic to the category of log-hyperstratified modules.
\end{cor}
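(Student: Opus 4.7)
The plan is to derive the corollary as a direct consequence of proposition \ref{nogap}, by observing that, once $p$ is invertible, all the auxiliary data (Taylor maps, comultiplication maps) needed to formulate the two notions of hyperstratification coincide on the nose under that isomorphism, so there is no real work to do beyond bookkeeping.

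First I would invoke proposition \ref{nogap} to obtain a natural $S$-algebra isomorphism $\widehat{S\langle \omega \rangle} \simeq \widehat{S\langle \omega \rangle}_{\mathbb\Delta}$, where $S$ acts via the canonical (left) map on both sides. This identifies the rings that support the two types of hyperstratification as the same underlying complete topological $S$-algebra (concretely $S[[\omega]]$, after converting twisted divided powers to ordinary powers in the presence of $p^{-1}$).

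Next I would verify two compatibilities under this identification. For the Taylor maps: both the $\mathbb\Delta$-Taylor map (definition \ref{defqpt}) and the log-Taylor map $\theta : S \to \widehat{S\langle \omega \rangle}$ are continuous $K$-algebra morphisms sending $q \mapsto q + (p)_q\omega$; since $S = K[[q-\zeta]]$ is topologically generated over $K$ by $q$, uniqueness forces them to agree once the target rings are identified. In particular, the right $S$-module structures on the two completions match. For the comultiplications: in both settings $\Delta$ is the morphism of $S$-algebras (for the left structure) determined by $\omega \mapsto L(\omega) \otimes \omega + \omega \otimes 1$, where $L(\omega)$ is the invertible element characterized by $\theta((p)_q) = L(\omega)\,(p)_q$ (lemma \ref{pqinv} on the one hand, the explicit log-Taylor expansion of $(p)_q$ on the other). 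Since the Taylor maps coincide, so do the elements $L(\omega)$, hence the two comultiplications.

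With these compatibilities in hand the definitions unwind tautologically: a $\mathbb\Delta$-hyperstratification on an $S$-module $M$ is an isomorphism
\[
\epsilon_M : \widehat{S\langle \omega \rangle}_{\mathbb\Delta} \widehat\otimes'_S M \xrightarrow{\sim} M \widehat\otimes_S \widehat{S\langle \omega \rangle}_{\mathbb\Delta}
\]
satisfying the cocycle condition for the $\mathbb\Delta$-comultiplication, and transporting through proposition \ref{nogap} this is literally the same data as a log-hyperstratification on $M$. Morphisms correspond as well for the same reason, yielding the claimed isomorphism of categories. The only genuinely technical step has already been carried out in proposition \ref{nogap} (and, upstream, in proposition \ref{fromMin} and proposition \ref{poweg2}), which controls the transition between ordinary, $\mathbb\Delta$-divided, and classical divided powers at the level of completions; beyond that, no obstacle remains.
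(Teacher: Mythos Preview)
Your proof is correct and follows essentially the same approach as the paper, which simply says ``This follows from proposition \ref{nogap}.'' You have unpacked in detail the compatibilities (Taylor maps and comultiplications) that the paper leaves implicit, but the core idea is identical.
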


\begin{proof}
This follows from proposition \ref{nogap}.
\end{proof}

There exists explicit formulas to go back and forth:

%%%%%%%%%
\begin{prop} \label{invfor}
Let $E$ be a de Rham prismatic crystal on $\mathcal O_K$ and $M$ the corresponding $S$-module.
Then for all $s \in M$, we have
\[
\partial_{\log}^{\langle k \rangle}(s) = \sum_{n=k}^{\infty} \frac {k!}{(n)_{q^p}!}s_{q^p}(n,k)(q-1)^{n-k} q^{n-k} \partial^{\langle n \rangle}_{\mathbb\Delta}(s)
\]
and
\[
 \partial^{\langle k \rangle}_{\mathbb\Delta}(s) = \sum_{n=k}^{\infty} \frac{(k)_{q^p}!}{n!} S_{q^p}(n,k) (q-1)^{n-k} q^{n-k} \partial_{\log}^{\langle n \rangle}(s)
\]
where $s_{q^p}(n,k)$ (resp.\ $S_{q^p}(n,k)$) denotes the $q^p$-Stirling number of the first (resp.\ second) kind.
\end{prop}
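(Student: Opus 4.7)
The plan is to derive both formulas directly from the equality of the two Taylor expansions of $s \in M$, using the explicit change-of-basis between $\omega^{[k]}$ and $\omega^{\{k\}_{\mathbb\Delta}}$ provided in the remark following proposition \ref{poweg2}. The key structural input is proposition \ref{nogap}, which identifies $\widehat{S\langle \omega \rangle} \simeq \widehat{S\langle \omega \rangle}_{\mathbb\Delta}$ canonically. Under this identification, a log-hyperstratification and a $\mathbb\Delta$-hyperstratification on $M$ are the \emph{same} datum, and therefore the log-Taylor map $\theta_{\log}$ and the $\mathbb\Delta$-Taylor map $\theta_{\mathbb\Delta}$ of $M$ agree as maps $M \to M \widehat \otimes_S \widehat{S\langle \omega \rangle}$. (This compatibility should itself be traced back to the fact that, by lemma \ref{Bdr} and the discussion preceding proposition \ref{Bdreq_qp}, both Taylor maps arise from evaluating the de Rham prismatic crystal $E$ on the canonical prism product over $\mathcal{O}_K$; no separate verification is needed beyond checking that on $S$ both restrict to the Taylor map $q \mapsto q + (p)_q\omega$.)

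First I would write the common Taylor expansion in both bases:
\[
\sum_{k=0}^\infty \partial_{\log}^{\langle k \rangle}(s) \otimes \omega^{[k]} \;=\; \theta(s) \;=\; \sum_{n=0}^\infty \partial_{\mathbb\Delta}^{\langle n \rangle}(s) \otimes \omega^{\{n\}_{\mathbb\Delta}}.
\]
Next, applying the first expression (with $x=q$) of the remark after proposition \ref{poweg2},
\[
\omega^{\{n\}_{\mathbb\Delta}} = \sum_{k=1}^{n} \frac{k!}{(n)_{q^p}!}\, s_{q^p}(n,k)\, (q-1)^{n-k} q^{n-k}\, \omega^{[k]},
\]
to the right-hand side and collecting the coefficient of each basis element $\omega^{[k]}$ yields the first formula of the proposition; the convergence of the resulting series for each fixed $k$ is guaranteed because $\theta_{\mathbb\Delta}$ takes values in the completed tensor product, so $\partial_{\mathbb\Delta}^{\langle n \rangle}(s)$ tends to $0$ as $n \to \infty$. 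Symmetrically, substituting
\[
\omega^{[n]} = \sum_{k=1}^{n} \frac{(k)_{q^p}!}{n!}\, S_{q^p}(n,k)\, (q-1)^{n-k} q^{n-k}\, \omega^{\{k\}_{\mathbb\Delta}}
\]
into the left-hand side and comparing coefficients of $\omega^{\{k\}_{\mathbb\Delta}}$ gives the second formula.

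The only delicate point is justifying that the change-of-basis identities of the remark are indeed applicable over $S$: one must check that $S$ satisfies the hypotheses of proposition \ref{poweg2}, namely that it is a $\mathbb Q$-algebra and that $q^p-1$ lies in its Jacobson radical. The first is clear since $S \supseteq K \supseteq \mathbb Q$, and the second follows from $q^p - 1 = (q-1)(p)_q = (q-1)(q-\zeta)\nu$ with $\nu \in S^\times$, which lies in the maximal ideal $(q-\zeta)S$. I do not expect any real obstacle; the crux is the conceptual identification via proposition \ref{nogap}, after which the two formulas are simply the two matrix-inversion statements packaged in proposition \ref{poweg2} applied termwise to the $\theta(s)$ expansion.
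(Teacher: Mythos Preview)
Your proposal is correct and follows essentially the same approach as the paper: identify $\widehat{S\langle \omega \rangle} \simeq \widehat{S\langle \omega \rangle}_{\mathbb\Delta}$ via proposition \ref{nogap}, equate the two Taylor expansions of $\theta(s)$, and read off the formulas by substituting the change-of-basis identities from proposition \ref{poweg2} (with $x=q$) and comparing coefficients. Your additional remarks on convergence and on verifying the hypotheses of proposition \ref{poweg2} for $S$ are sound and make explicit what the paper leaves implicit.
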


\begin{proof}
Since $ \widehat{S\langle \omega \rangle}_{\mathbb\Delta} = \widehat{S\langle \omega \rangle}$, we can write
\[
\theta(s) = \sum_{k=0}^\infty \partial^{\langle k \rangle}_{\mathbb\Delta}(s) \otimes \omega^{\{k\}_{\mathbb\Delta}} = \sum_{k=0}^\infty \partial_{\log}^{\langle k \rangle}(s) \otimes \omega^{[k]}.
\]

Moreover, we know from proposition \ref{poweg2} that
\[
\omega^{\{n\}_{\mathbb\Delta}} = \sum_{k=1}^{n} \frac {k!}{(n)_{q^p}!}s_{q^p}(n,k)(q-1)^{n-k} q^{n-k}\omega^{[k]}
\]
and
\[
\omega^{[n]} = \sum_{k=1}^{n} \frac{(k)_{q^p}!}{n!} S_{q^p}(n,k) (q-1)^{n-k} q^{n-k} \omega^{\{k\}_{\mathbb\Delta}}.
\]
Our formulas are then obtained by duality.
\end{proof}

%%%%%%%%%%%%%%%%%%%%%
\section{Hodge-Tate prismatic crystals} \label{reduced}

When $(p)_q =0$, one may also rely on usual powers instead of twisted powers.
We will indicate how our former strategy can be applied to recover already known descriptions from \cite{GaoMinWang23} and  \cite{AnschuetzHeuerLeBras22}. In particular, propositions \ref{Sen} and \ref{HTeq} below correspond respectively (in our situation) to one of the equivalences in theorem 4.2 (see also theorem 2.5 in  \cite{AnschuetzHeuerLeBras22}) and to theorem 4.5 in \cite{GaoMinWang23}. Once more, our approach provides, as a bonus, explicit formulas for the equivalence (proposition \ref{invforSen})  between twisted and ``classical'' operators

We let $k$ be a perfect field of positive characteristic $p$ and denote by $W$ its ring of Witt vectors.
We set $R := W[[q-1]]$ and we write $\overline M := M/(p)_qM$ for the reduction modulo $(p)_q$ of an $R$-module $M$.
In particular, $\overline R = W[\zeta]$ where $\zeta$ denotes a primitive $p$th root of unity and we shall denote by $K$ its fraction field so that $\mathcal O_K = \overline R$.

As already mentioned in remark \ref{classd}.2, there exists a theory of log-calculus on $R$ with respect to the divisor of $(p)_q$ and we will systematically use $q$ as a coordinate and $(p)_q$ as a generator for the divisor.
This is completely similar to the situation in section \ref{generic} and we will use the same notation.
When we reduce modulo the divisor, the log-theory becomes a monodromy theory.
More precisely, a \emph{monodromy operator} on an $\mathcal O_K$-module $M$ is simply an $\mathcal O_K$-linear map
\[
\nabla_M : M \to M \otimes_{\mathcal O_K} \overline \Omega_{R,\log}.
\]
This is equivalent to giving an $\mathcal O_K$-linear map $N : M \to M$ via
\[
\nabla_M(s) =N(s) \otimes \overline{\mathrm dq/(p)_q}
\]
and we may call $N$ the \emph{Sen operator} associated to $\nabla_M$ (adopting in this context the terminology of \cite{AnschuetzHeuerLeBras22}, beginning of section 1.1).
Of course, if $M$ is a $\nabla_{\log}$-module on $R$, then $\overline M$ inherits a monodromy operator, or equivalently a Sen operator $N := \overline \partial_{M,\log}$.
If we call \emph{reduced} an $R$-module $M$ such that $(p)_qM = 0$, then the subcategory of reduced $R$-modules is equivalent to the category of $\mathcal O_K$-modules and a log-connection on $M$ corresponds to a monodromy operator.
It follows that a Sen operator on an $\mathcal O_K$-module $M$ is also equivalent to a $\mathrm D_{R,\log}$-module structure, a log-stratification or a log-Taylor structure.
However, the situation simplifies a lot here because the \emph{reduced} log-Taylor map $\theta : \mathcal O_K \to \mathcal O_K \langle \omega \rangle/\omega^{n+1}$ is \emph{identical} to the canonical map $\iota$.
In particular, a differential operator of order at most $n$ between two reduced $R$-modules is the same thing as an $\mathcal O_K$-linear map
\[
\mathcal O_K \langle \omega \rangle/\omega^{n+1} \otimes_{\mathcal O_K} M \to M'
\]
where we use the canonical structure everywhere.
Also, a log-stratification (resp.\ a log-Taylor structure) on a reduced $R$-module $M$ is the same thing as a compatible family of $\mathcal O_K$-linear isomorphisms (resp.\ maps)
\[
\epsilon_{n} : \mathcal O_K \langle \omega \rangle/\omega^{n+1} \otimes_{\mathcal O_K} M \simeq M \otimes_{\mathcal O_K} \mathcal O_K \langle \omega \rangle/\omega^{n+1} \quad (\mathrm{resp.}\ \theta_{n} : M \to M \otimes_{\mathcal O_K} \mathcal O_K \langle \omega \rangle/\omega^{n+1})
\]
satisfying the usual cocycle conditions.
We use again the canonical structure everywhere.

In order to compose explicitly log-differential operators on $\mathcal O_K$-modules, we shall remark that comultiplication satisfies
\begin{equation} \label{comult}
\Delta(\omega) \equiv 1 \otimes \omega + \omega \otimes 1+ (p)'_\zeta\omega \otimes \omega \mod (p)_q.
\end{equation}

%%%%%%%%%%%%%%
\begin{prop} \label{ucomp2}
In $\mathrm D_{R,\log}$, we have
\[
\forall n \in \mathbb N, \quad \partial_{\log}^{\langle n \rangle} \equiv \prod_{k=0}^{n-1}( \partial_{\log}- k(p)'_\zeta) \equiv \sum_{k=1}^{n} s(n,k) ((p)'_\zeta) ^{n-k} \partial_{\log}^k \mod (p)_q
\]
where $s(n,k)$ denotes the Stirling number of the first kind.
\end{prop}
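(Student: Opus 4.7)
The plan is to establish the first congruence by induction on $n$, using the composition formula for log-differential operators together with \eqref{comult} to derive a simple recursion; the second congruence will then follow from proposition \ref{FunStir} specialized to the untwisted case $q=1$.

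First I will compute $\Delta(\omega^{[k]}) \mod (p)_q$. Since the two summands $(1 + (p)'_\zeta \omega) \otimes \omega$ and $\omega \otimes 1$ of \eqref{comult} commute, raising to the $k$th power via the binomial theorem and then replacing powers by divided powers (using $\omega^i = i!\,\omega^{[i]}$) yields
\[
\Delta(\omega^{[k]}) \equiv \sum_{i+j=k}(1 + (p)'_\zeta \omega)^j\, \omega^{[i]} \otimes \omega^{[j]} \mod (p)_q.
\]
Applying the composition formula $\widetilde{u \widetilde\circ v} = \widetilde u \circ (\mathrm{Id} \otimes \widetilde v) \circ (\Delta \otimes \mathrm{Id})$ with $u = \partial_{\log}^{\langle n \rangle}$ and $v = \partial_{\log}$, and using $\widetilde \partial_{\log}(\omega^{[j]}) = \delta_{j,1}$ to isolate the $j=1$ term, I find $(\mathrm{Id} \otimes \widetilde \partial_{\log})(\Delta(\omega^{[k]})) \equiv (1 + (p)'_\zeta \omega)\omega^{[k-1]} = \omega^{[k-1]} + k(p)'_\zeta\omega^{[k]}$ modulo $(p)_q$ (using $\omega\cdot\omega^{[k-1]} = k\omega^{[k]}$). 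A further application of $\widetilde \partial_{\log}^{\langle n \rangle}$ yields $\delta_{k,n+1} + n(p)'_\zeta\,\delta_{k,n}$, which identifies the composition as
\[
\partial_{\log}^{\langle n \rangle}\partial_{\log} \equiv \partial_{\log}^{\langle n+1 \rangle} + n(p)'_\zeta \partial_{\log}^{\langle n \rangle} \mod (p)_q.
\]

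To rewrite this as the product formula I first observe that modulo $(p)_q$ the operator $\partial_{\log}$ commutes with every scalar $\alpha \in R$: the commutator in $\mathrm D_{R,\log}$ equals $\partial_{\log}(\alpha) = (p)_q\partial(\alpha)$, which vanishes mod $(p)_q$. By induction each $\partial_{\log}^{\langle n \rangle}$ also commutes with scalars modulo $(p)_q$, so the recursion rewrites as $\partial_{\log}^{\langle n+1 \rangle} \equiv \partial_{\log}^{\langle n \rangle}(\partial_{\log} - n(p)'_\zeta) \mod (p)_q$. Induction starting from $\partial_{\log}^{\langle 0 \rangle} = 1$ then gives the first congruence. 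The second congruence follows by applying proposition \ref{FunStir} with $q = 1$ (so $s_1(n,k) = s(n,k)$), $X = \partial_{\log}$ and $Y = (p)'_\zeta$: the vanishing $s(n,0)=0$ for $n\geq 1$ truncates the sum at $k=1$, and the commutativity of scalars mod $(p)_q$ permits the rearrangement into the stated order.

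The main subtlety is precisely this commutativity claim in the noncommutative ring $\mathrm D_{R,\log}/(p)_q$: it is what makes the product $\prod_k(\partial_{\log} - k(p)'_\zeta)$ and the linear combination $\sum_k s(n,k)((p)'_\zeta)^{n-k}\partial_{\log}^k$ well-defined without ordering conventions, and it rests on the simple observation that $\partial_{\log}(q)=(p)_q$ is divisible by $(p)_q$, so that the reduction $\overline{\partial_{\log}}$ acts $\overline R$-linearly on $\overline R$.
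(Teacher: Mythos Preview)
Your proof is correct and follows essentially the same route as the paper: both compute $\widetilde\partial_{\log}^{\langle n\rangle}\widetilde\circ\widetilde\partial_{\log}$ on $\omega^{[k]}$ via the comultiplication formula $\Delta(\omega^{[k]})\equiv\sum_{i+j=k}(1+(p)'_\zeta\omega)^j\omega^{[i]}\otimes\omega^{[j]}$ modulo $(p)_q$, obtain the recursion $\partial_{\log}^{\langle n+1\rangle}\equiv\partial_{\log}^{\langle n\rangle}(\partial_{\log}-n(p)'_\zeta)$, and conclude by induction and the Stirling identity. Your explicit remark that $\partial_{\log}$ commutes with scalars modulo $(p)_q$ is a useful clarification; the paper leaves this implicit, having noted earlier in the section that the reduced log-Taylor map coincides with the canonical map (so reduced log-differential operators are $\mathcal O_K$-linear).
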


\begin{proof}
This is proved similarly to proposition \ref{ucompplus2}.
By definition, we have for all $k \in \mathbb N$,
\begin{linenomath}
\begin{align*}
(\widetilde \partial_{\log}^{\langle n \rangle} \widetilde \circ \widetilde \partial_{\log})(\omega^{[k]}) & \equiv \widetilde \partial_{\log}^{\langle n \rangle} ((\mathrm{Id} \otimes \widetilde \partial_{\log})(\Delta(\omega^{[k]})))  \mod (p)_q
\\ & \equiv \widetilde \partial_{\log}^{\langle n \rangle} \left((\mathrm{Id} \otimes \widetilde \partial_{\log})\left( \sum_{i+j=k} (1 + (p)'_\zeta\omega)^j \omega^{[i]} \otimes \omega^{[j]}\right)\right)  \mod (p)_q
\\ & \equiv \widetilde \partial_{\log}^{\langle n \rangle} \left( 1 + (p)'_\zeta\omega) \omega^{[k-1]} \right)  \mod (p)_q
\\ & \equiv \widetilde \partial_{\log}^{\langle n \rangle}( \omega^{[k-1]}) + k(p)'_\zeta \widetilde \partial_{\log}^{\langle n \rangle} (\omega^{[k]})  \mod (p)_q.
\end{align*}
\end{linenomath}
We easily deduce that
\[\widetilde \partial_{\log}^{\langle n+1 \rangle} \equiv \widetilde \partial_{\log}^{\langle n \rangle} \widetilde \circ (\widetilde \partial_{\log} - n (p)'_\zeta)  \mod (p)_q
\]
and get the first congruence by induction.
The second congruence follows from the definition of Stirling numbers.
\end{proof}

\begin{rmks} \phantomsection \label{HTrmks}
\begin{enumerate}
\item 
We will also have
\[
\partial_{\log}^n \equiv \sum_{k=1}^{n} S(n,k) ((p)_\zeta')^{n-k} \partial_{\log}^{\langle k \rangle} \mod (p)_q
\]
using this time Stirling numbers of the second kind.
\item
Another computation shows that composition satisfies
\[
\forall i,j \in \mathbb N, \quad \partial_{\log}^{\langle i \rangle} \circ \partial_{\log}^{\langle j \rangle} \equiv \sum_{k=0}^j k!{i \choose k}{j \choose k} ((p)'_\zeta)^k \partial_{\log}^{\langle i+j-k \rangle} \mod (p)_q.
\]
\item If an $\mathcal O_K$-module $M$ is endowed with a Sen operator $N$, then we have 
\[
\forall m \in \mathbb N, \forall s \in M, \quad \theta_m(s) = \sum_{n=0}^m \prod_{k=0}^{n-1}(N - k(p)'_\zeta)(s) \otimes \omega^{[n]}.
\]
\item
We recover in our situation the formal group of \cite{BhattLurie22b}, proposition 9.5 and \cite{AnschuetzHeuerLeBras22}, proposition 2.3. Indeed, one immediately verifies that the formal groupoid $G := \mathrm{Spf}(R\langle\omega\rangle_{\mathbb\Delta})$ introduced at the beginning of section \ref{stratder} reduces modulo $(p)_q$ to a usual formal group,
\[
\overline{G} := \mathrm{Spf}(\mathcal O_K\langle\omega\rangle).
\]
When $p$ is odd, $\overline{G}$ is isomorphic to the divided power additive group $\mathbb{G}^{\sharp}_a := \mathrm{Spf}(\mathcal O_K\langle X \rangle)$ (with $\Delta(X)= 1 \otimes X + X \otimes 1$), while, when $p=2$, $\overline{G}$ is exactly the divided power multiplicative group $\mathbb{G}^{\sharp}_m$.
For $p=2$ this is immediately deduced from the comultiplication formula \eqref{comult} by simply noting that $(2)'_\zeta =1$. For $p$ odd, it follows from \eqref{comult} that
\[
\log(1+ (p)'_\zeta \Delta(\omega)) = 1 \otimes \log(1+ (p)'_\zeta \omega) + \log(1+ (p)'_\zeta \omega) \otimes 1,
\]
and therefore $X \longmapsto \log(1+ (p)'_\zeta \omega)$ defines an isomorphism $\mathbb{G}^{\sharp}_a \simeq \overline{G}$.
\end{enumerate}
\end{rmks}

There exists the notion of a \emph{log-differential operator}
\[
\widehat{\mathcal O_K \langle \omega \rangle} \widehat \otimes_{\mathcal O_K} M \to M'
\]
on an $\mathcal O_K$-module $M$ which is simply an $\mathcal O_K$-linear map.
They may be composed in the usual way using $\Delta$ (and finite order is preserved).
A \emph{log-hyperstratification} (resp.\ a \emph{log-Taylor map}) on an $\mathcal O_K$-module $M$ is an $\mathcal O_K$-linear isomorphism (resp.\ map)
\[
\epsilon : \widehat{\mathcal O_K \langle \omega \rangle} \widehat\otimes_{\mathcal O_K} M \simeq M \widehat\otimes_{\mathcal O_K} \widehat{\mathcal O_K \langle \omega \rangle} \quad (\mathrm{resp.}\ \theta : M \to M \widehat\otimes_{\mathcal O_K} \widehat{\mathcal O_K \langle \omega \rangle} )
\]
satisfying the usual cocycle conditions.
It then follows from proposition \ref{ucomp2} that if $M$ is a finite free $\mathcal O_K$-module, then a log-Taylor map reads
\begin{equation} \label{thetaN}
\theta(s) = \sum_{n=0}^\infty \prod_{k=0}^{n-1}(N - k(p)'_\zeta)(s) \otimes \omega^{[n]}.
\end{equation}

We recover in our setting the condition in definition 4.1 in \cite{GaoMinWang23}:

%%%%%%%%%%%
\begin{lem} \label{MWcond}
If $p$ is odd, then a Sen operator $N$ on an $\mathcal O_K$-module $M$ is weakly quasi-nilpotent if and only if
\[
\forall s \in M, \quad\prod_{k=0}^{\infty}( N - k(p)'_\zeta)(s) = 0.
\]
When $p=2$ this last condition is equivalent to $N^2-N$ being weakly quasi-nilpotent.
\end{lem}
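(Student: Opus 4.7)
The plan is to rewrite each partial product of the operators $N - k(p)'_\zeta$ as a polynomial in $N$. Since $N$ is $\mathcal{O}_K$-linear, it commutes with multiplication by the scalar $(p)'_\zeta$, and the defining recursion for the Stirling numbers of the first kind yields, in $\mathrm{End}_{\mathcal{O}_K}(M)$,
\[
\prod_{k=0}^{n-1}(N - k(p)'_\zeta) = \sum_{k=1}^{n} s(n,k)\,((p)'_\zeta)^{n-k}\,N^k,
\]
in analogy with proposition \ref{ucomp2}. The other ingredient is the $(\zeta-1)$-adic estimate $v_{\zeta-1}((p)'_\zeta) = p-2$, which follows from the lemma $p = (\zeta^2-\zeta)(p)'_\zeta$ stated just above and which is positive precisely when $p$ is odd.

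Assume $p$ is odd. Since $N$ is $\mathcal{O}_K$-linear, weak quasi-nilpotence agrees with topological quasi-nilpotence by the remark following the definition, so the hypothesis is equivalent to $N^k(s) \to 0$ in the $(\zeta-1)$-adic topology for every $s \in M$. To establish the direct implication, fix $s$ and an integer $m \geq 1$, pick $k_0$ with $N^{k_0}(s) \in (\zeta-1)^m M$, and take $n \geq k_0 + \lceil m/(p-2) \rceil$: in the Stirling expansion, the terms with $k \geq k_0$ lie in $(\zeta-1)^m M$ via $N^k(s)$, while those with $k < k_0$ lie in $(\zeta-1)^m M$ via $((p)'_\zeta)^{n-k}$. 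Conversely, reducing the same expansion modulo $(\zeta-1)$ collapses it to $N^n(s)$, so a single partial product landing in $(\zeta-1)M$ already forces $N^n(s) \in (\zeta-1)M$, which is precisely weak quasi-nilpotence with $I = (\zeta-1)\mathcal{O}_K$.

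For $p = 2$ one has $(p)'_\zeta = 1$ and the criterion reads $\prod_{k\geq 0}(N-k)(s) = 0$. The key observation is the parity congruence
\[
\prod_{k=0}^{2m-1}(N-k) \equiv N^m(N-1)^m = (N^2 - N)^m \pmod{2},
\]
an odd-length partial product differing by a single factor $N - 2m$. The reverse implication is then immediate: grouping factors pairwise in any partial product landing in $2M$ exhibits some $(N^2-N)^m(s)$ in $2M$. For the forward direction I would induct on $r \geq 0$, showing that for every $s$ there exists an even $n_r$ with $\prod_{k=0}^{n-1}(N-k)(s) \in 2^r M$ for all $n \geq n_r$. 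Writing such a partial product as $2^r t$ and extracting the tail $\prod_{k=n_r}^{n_r+2m-1}(N-k)$, which is congruent to $(N^2-N)^m$ modulo $2$ thanks to $n_r$ being even, reduces the inductive step to applying weak quasi-nilpotence of $N^2-N$ to the element $t$.

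The main obstacle is the bookkeeping in the $p=2$ induction: one must keep $n_r$ even so that the parity congruence still identifies the tail with a power of $N^2 - N$, and one has to feed each freshly produced intermediate vector $t$ (not just the original $s$) into the weak quasi-nilpotence hypothesis in order to climb from $2^r M$ to $2^{r+1}M$.
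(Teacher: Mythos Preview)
Your argument is correct. For $p$ odd it follows the paper's route: the Stirling expansion $\prod_{k=0}^{n-1}(N-k(p)'_\zeta)=\sum_k s(n,k)((p)'_\zeta)^{n-k}N^k$ together with $(p)'_\zeta\in(\zeta-1)\mathcal O_K$. The paper handles the converse by writing down the inverse formula with Stirling numbers of the second kind, whereas you simply reduce modulo $\zeta-1$; this is a cosmetic difference.

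For $p=2$ the two arguments diverge. The paper proves the refined identity
\[
\partial_{\log}^{\langle 2k\rangle}\equiv(\partial_{\log}^{\langle 2\rangle})^k+\sum_{i=0}^{k-1}2^{k-i}(a_{ki}\partial_{\log}+b_{ki})(\partial_{\log}^{\langle 2\rangle})^i \pmod{(p)_q}
\]
by induction, which expresses $\prod_{k=0}^{2k-1}(N-k)$ as $(N^2-N)^k$ plus an explicit $2$-adically small correction, and then argues as in the odd case. Your approach is more bare-handed: you only use the congruence $\prod_{k=0}^{2m-1}(N-k)\equiv(N^2-N)^m\bmod 2$ and climb the $2$-adic filtration by induction on $r$, at each step writing the partial product as $2^rt$ and re-applying the weak quasi-nilpotence of $N^2-N$ to the new vector $t$. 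This is shorter and avoids computing the coefficients $a_{ki},b_{ki}$; the paper's identity, on the other hand, is a reusable structural statement that would also control higher powers of $2$ in one shot. Both are valid; your version trades a single algebraic identity for an iterative argument.
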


\begin{proof}
Assume first that $p$ is odd.
It then follows from proposition \ref{ucomp2} that
\[
\partial_{\log}^{\langle n \rangle}(s) = \prod_{k=0}^{n-1}(N - k(p)'_\zeta)(s) = \sum_{k=1}^{n} s(n,k) ((p)_\zeta')^{n-k} N^k(s)
\] 
and
\[
N^n(s) = \sum_{k=1}^{n} S(n,k) ((p)_\zeta')^{n-k} \partial_{\log}^{\langle k \rangle}(s).
\]
Moreover, $(p)'_\zeta \in (p,\zeta-1)$ because $(p)'_q \equiv \frac {p(p-1)}2 \mod q-1$ and we assumed $p$ odd.
It follows that
\[
N^n(s) \to 0\quad \Leftrightarrow \quad \partial_{\log}^{\langle n \rangle} (s) \to 0.
\]
The condition is thus equivalent to $N$ being topologically quasi-nilpotent.
Since $N$ is $\mathcal O_K$-linear, this is also equivalent to being weakly quasi-nilpotent.

In the case $p=2$, we will show by induction on $k \in \mathbb N$ that
\begin{equation} \label{indp2}
\partial_{\log}^{\langle 2k \rangle} \equiv (\partial_{\log}^{\langle 2 \rangle})^k + \sum_{i=0}^{k-1} 2^{k-i}(a_{ki} \partial_{\log} + b_{ki})(\partial_{\log}^{\langle 2 \rangle})^i \mod (p)_q
\end{equation}
with $a_{ki}, b_{ki} \in \mathcal O_K$ and then one can proceed exactly as before using $(N^2 -N)(s) = \partial_{\log}^{\langle 2 \rangle}(s)$ for $s \in M$.
In order to prove equivalence \eqref{indp2}, let us first notice that if $n \in \mathbb N$, then
\begin{align*}
\partial_{\log}^{\langle n+2 \rangle} & \equiv (\partial_{\log}- n-1)(\partial_{\log}- n)\partial_{\log}^{\langle n \rangle}  \mod (p)_q \\
& \equiv \left(\partial_{\log}^{\langle 2 \rangle} -2\left( n \partial_{\log} - {n \choose 2}\right)\right)\partial_{\log}^{\langle n \rangle}  \mod (p)_q.
\end{align*}
In particular, we can write $\partial_{\log}^{\langle 2k+2 \rangle} \equiv \left(\partial_{\log}^{\langle 2 \rangle} +2\left( a_k \partial_{\log} + b_k\right)\right) \partial_{\log}^{\langle 2k \rangle}$ with $a_k, b_k \in \mathcal O_K$.
Since $\partial_{\log}^2 =  \partial_{\log} + \partial_{\log}^{\langle 2 \rangle}$, it will follow that
\begin{align*}
\partial_{\log}^{\langle 2k+2 \rangle} &\equiv \left(\partial_{\log}^{\langle 2 \rangle} +2\left( a_k \partial_{\log} + b_k\right)\right) \left((\partial_{\log}^{\langle 2 \rangle})^k + \sum_{i=0}^{k-1} 2^{k-i}(a_{ki} \partial_{\log} + b_{ki})(\partial_{\log}^{\langle 2 \rangle})^i\right) \\
&\equiv (\partial_{\log}^{\langle 2 \rangle})^{k+1} + 2\left( a_k \partial_{\log} +b_k\right) (\partial_{\log}^{\langle 2 \rangle})^k + \sum_{i=0}^{k-1} 2^{k-i}(a_{ki} \partial_{\log} + b_{ki})(\partial_{\log}^{\langle 2 \rangle})^{i+1}\\
&+ \sum_{i=0}^{k-1}2^{k-i+1}((a_kb_{ki} + b_ka_{ki}) \partial_{\log} +b_kb_{ki} + a_ka_{ki})(\partial_{\log}^{\langle 2 \rangle})^i + 2^{k-i}(2a_ka_{ki})(\partial_{\log}^{\langle 2 \rangle})^{i+1}. \qedhere
\end{align*}

\end{proof}

%%%%%%%%%%%%%%%%%%%%%%%
\begin{prop} \label{Sen_eq}
If $p$ is odd, then it is equivalent to give a log-hyperstratification on an $\mathcal O_K$-module $M$ or a weakly quasi-nilpotent Sen operator.
When $p=2$, a log-hyperstratification is equivalent to a Sen operator $N$ such that $N^2-N$ is weakly quasi-nilpotent.
\end{prop}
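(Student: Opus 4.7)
The plan is to identify log-hyperstratifications on $M$ with log-Taylor maps $\theta \colon M \to M\widehat\otimes_{\mathcal O_K}\widehat{\mathcal O_K\langle\omega\rangle}$ (the reduced analogue of proposition \ref{eqHyperTayl}), and then to parametrize such $\theta$ by a single endomorphism $N$ via formula \eqref{thetaN}. Because $M$ is an $\mathcal O_K$-module and all Taylor/canonical maps on the reduced log side coincide, no semilinearity complication appears, and the equivalence reduces to a classical coaction-of-a-formal-group classification for $\overline G = \mathrm{Spf}(\mathcal O_K\langle\omega\rangle)$.

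First I would show that the tower $(\partial_{\log}^{\langle n\rangle})_{n\geq 0}$ attached to a log-Taylor map is determined by its degree-one part $N := \partial_{\log}^{\langle 1\rangle}$: expanding $\theta(s) = \sum_n \partial_{\log}^{\langle n\rangle}(s) \otimes \omega^{[n]}$ and applying proposition \ref{ucomp2} to any reduced $\mathcal O_K$-module yields
\[
\partial_{\log}^{\langle n\rangle}(s) = \prod_{k=0}^{n-1}(N - k(p)'_\zeta)(s),
\]
so that $\theta$ coincides with the series \eqref{thetaN} (compare remark \ref{HTrmks}.3). For $\theta(s)$ to lie in the completion, the terms in this series must tend to zero for every $s$; by lemma \ref{MWcond}, this convergence is equivalent to weak quasi-nilpotence of $N$ when $p$ is odd and of $N^2-N$ when $p=2$.

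For the converse, given a Sen operator $N$ with the appropriate quasi-nilpotence, lemma \ref{MWcond} guarantees that formula \eqref{thetaN} defines a continuous $\mathcal O_K$-linear map $\theta$ landing in $M\widehat\otimes_{\mathcal O_K}\widehat{\mathcal O_K\langle\omega\rangle}$, and the counit axiom $(\mathrm{Id}\otimes e)\circ\theta = \mathrm{Id}$ is immediate. The main obstacle is coassociativity $(\theta\otimes\mathrm{Id})\circ\theta = (\mathrm{Id}\otimes\Delta)\circ\theta$. I would verify it by passing to the coordinate change on $\overline G$ furnished by remark \ref{HTrmks}.4: when $p$ is odd, setting $X = \log(1+(p)'_\zeta\omega)$ identifies $\overline G$ with the additive divided-power group $\mathbb G_a^\sharp$, and in the variable $X$ the series \eqref{thetaN} becomes the manifestly coassociative exponential coaction $\theta(s) = \sum_n ((p)'_\zeta)^{-n} N^n(s) \otimes X^{[n]}$, whose coassociativity reduces to the additivity of $\Delta(X)$; when $p=2$, the coordinate $Y = 1+\omega$ identifies $\overline G$ with $\mathbb G_m^\sharp$, formula \eqref{thetaN} transforms into $\theta(s) = \sum_n \binom{N}{n}(s)\otimes (Y-1)^{[n]}\cdot n!$, and coassociativity reduces to the multiplicativity $\Delta(Y)=Y\otimes Y$. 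As a fallback (but considerably more painful), one can verify coassociativity directly term-by-term by combining the comultiplication formula \eqref{comult} with the composition identity of remark \ref{HTrmks}.2.
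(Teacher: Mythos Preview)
Your proposal is correct, but the paper takes a shorter route that sidesteps the coassociativity verification entirely. Earlier in the section the paper establishes (as a specialization of the general log-calculus) that a Sen operator on an $\mathcal O_K$-module is already equivalent to a log-Taylor \emph{structure}, i.e.\ a compatible family of maps $\theta_n : M \to M\otimes_{\mathcal O_K}\mathcal O_K\langle\omega\rangle/\omega^{[>n]}$ satisfying the cocycle condition at every finite level. Passing to the inverse limit gives a map $M \to M\otimes\varprojlim\mathcal O_K\langle\omega\rangle/\omega^{[>n]}$ which automatically satisfies the cocycle condition; the only question is whether it factors through the smaller $M\otimes\widehat{\mathcal O_K\langle\omega\rangle}$, and this is precisely the condition $\partial_{\log}^{\langle k\rangle}(s)\to 0$, which lemma \ref{MWcond} translates into the stated nilpotence hypothesis. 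So in the paper's argument coassociativity is inherited from the finite levels and never needs a separate check.

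Your direct verification via the coordinate $X=\log(1+(p)'_\zeta\omega)$ is appealing, but be aware of a wrinkle: for odd $p$ the element $(p)'_\zeta$ has positive valuation, so the map $X\mapsto\log(1+(p)'_\zeta\omega)$ has leading coefficient $(p)'_\zeta$ on $\omega$ and is therefore \emph{not} an isomorphism of Hopf algebras over $\mathcal O_K$ (only over $K$); correspondingly your rewritten series $\sum_n((p)'_\zeta)^{-n}N^n(s)\otimes X^{[n]}$ is not an integral expression. The fix is easy---coassociativity is a formal identity in $N$ with coefficients in $\widehat{\mathcal O_K\langle\omega\rangle}^{\otimes 2}$, so it can be checked after inverting $p$ or universally---but this step should be made explicit. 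Your fallback via remark \ref{HTrmks}.2 avoids the issue at the cost of a longer computation.
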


\begin{proof}
We know that a Sen operator is equivalent to a log-Taylor structure and we may then consider the map
\[
\varprojlim \theta_{n} : M \to M \otimes_{\mathcal O_K} \varprojlim \mathcal O_K\langle \omega \rangle/ \omega^{[>n]}, \quad s \mapsto \sum_{k=0}^\infty \partial_{\log}^{\langle k \rangle}(s) \otimes \omega^{[k]}.
\]
We have to show that it takes values inside $M \otimes_{\mathcal O_K} \widehat {\mathcal O_K\langle \omega \rangle}$.
It simply means that
\[
\forall s \in M, \quad \partial_{\log}^{\langle k \rangle}(s) \to 0
\]
and this is equivalent to our nilpotence condition thanks to equality \eqref{thetaN} and lemma \ref{MWcond}.
\end{proof}

We consider now the sheaf of rings $\overline {\mathcal O}_{\mathbb \Delta(\mathcal O_K)}$ on the prismatic site of $\mathcal O_K$ that sends a prism $(B,J)$ to $\overline B := B/J$.
An $\overline {\mathcal O}_{\mathbb \Delta(\mathcal O_K)}$-vector bundle is also called a \emph{Hodge-Tate crystal} on $\mathcal O_K$.

We recover the first equivalence in theorem 4.2 of \cite{GaoMinWang23} (see also theorem 2.5 in \cite{AnschuetzHeuerLeBras22}) in our setting:

%%%%%%%%%%%
\begin{prop} \label{Sen}
If $p$ is odd, then there exists an equivalence between the category of Hodge-Tate crystals on $\mathcal O_K$ and the category of finite free $\mathcal O_K$-modules endowed with a weakly nilpotent Sen operator.
When $p=2$, Hodge-Tate crystals are equivalent to finite free $\mathcal O_K$-modules endowed with a Sen operator $N$ such that $N^2-N$ is weakly nilpotent.
\end{prop}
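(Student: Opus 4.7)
The plan is to mimic the two-step strategy used for theorem \ref{prisdR}: first identify the category of Hodge-Tate crystals with $\mathcal O_K$-modules equipped with a log-hyperstratification, and then invoke proposition \ref{Sen_eq} to translate this datum into a Sen operator with the required nilpotency property.

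More precisely, I would first reduce everything modulo $(p)_q$. By corollary \ref{covabs}, the object $(R,(p)_q)$ is a covering of the final object of $\mathcal O_{K\mathbb\Delta}$, and reducing corollary \ref{prodpr} modulo $(p)_q$ identifies the iterated self-intersections of this covering with powers of $\widehat{\mathcal O_K\langle \omega\rangle}_{\mathbb\Delta}$, which by proposition \ref{poweg} coincides with $\widehat{\mathcal O_K\langle\omega\rangle}$ (the usual divided power completion). Together with the fact that on $(R,(p)_q)$ the sheaf $\overline{\mathcal O}_{\mathbb\Delta(\mathcal O_K)}$ evaluates to $\mathcal O_K$, faithfully flat descent along this covering gives an equivalence between Hodge-Tate crystals on $\mathcal O_K$ and complete $\mathcal O_K$-modules endowed with a log-hyperstratification (exactly parallel to theorem \ref{eqTayl}). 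Since $\mathcal O_K$ is local, a vector bundle descends to a finite free module.

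Next, I would apply proposition \ref{Sen_eq} to get an equivalence between (complete) $\mathcal O_K$-modules with a log-hyperstratification and (complete) $\mathcal O_K$-modules endowed with a Sen operator $N$ that is weakly quasi-nilpotent when $p$ is odd, respectively with $N^2-N$ weakly quasi-nilpotent when $p=2$. For a \emph{finite} $\mathcal O_K$-module, an $\mathcal O_K$-linear endomorphism is weakly quasi-nilpotent if and only if it is weakly nilpotent, which upgrades the conclusion of proposition \ref{Sen_eq} to the form stated in proposition \ref{Sen}.

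The only delicate point I expect is the matching between the $(p,q-1)$-adic topology used on $\widehat{R\langle\omega\rangle}_{\mathbb\Delta}$ and the topology used on $\widehat{\mathcal O_K\langle\omega\rangle}$ in proposition \ref{Sen_eq}: modulo $(p)_q$, one has $q=\zeta$ and $(p)=((\zeta-1)^{p-2})$ up to a unit, so both $p$ and $q-1$ generate the same topology on $\mathcal O_K$, and the direct-sum topology on $\mathcal O_K\langle\omega\rangle$ with respect to the basis $(\omega^{[n]})$ matches the reduction of the direct-sum topology on $R\langle\omega\rangle_{\mathbb\Delta}$ via the isomorphism of proposition \ref{poweg}. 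Once this compatibility is verified, the chain of equivalences composes without friction, and functoriality together with the identification of the structural sheaf on each prism yields the statement of proposition \ref{Sen}.
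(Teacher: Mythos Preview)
Your proposal is correct and follows essentially the same route as the paper: reduce the covering and self-products of corollaries \ref{covabs} and \ref{prodpr} modulo $(p)_q$, identify $\widehat{\mathcal O_K\langle\omega\rangle}_{\mathbb\Delta}$ with $\widehat{\mathcal O_K\langle\omega\rangle}$ via proposition \ref{poweg}, and then apply proposition \ref{Sen_eq}. One small slip: in $\mathcal O_K$ one has $p = u(\zeta-1)^{p-1}$ for a unit $u$, not exponent $p-2$, but this does not affect your topology argument since either way $p$ and $\zeta-1$ generate the same maximal ideal.
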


With some extra work, one can show that this is compatible with cohomology (see proposition 2.7 in \cite{AnschuetzHeuerLeBras22}, theorem 4.5 of \cite{GaoMinWang23} or example 9.6 in \cite{BhattLurie22b}).

\begin{proof}
It follows from proposition \ref{poweg} that $\widehat{\mathcal O_K\langle\omega\rangle}_{\mathbb\Delta} = \widehat{\mathcal O_K\langle\omega\rangle}$.
Using proposition \ref{Sen_eq}, our assertion is then a formal consequence of corollary \ref{covabs} (and corollary \ref{prodpr}).
\end{proof}

We shall now describe the relationship with absolute calculus.
A \emph{$\mathbb\Delta$-monodromy operator} on an $\mathcal O_K$-module $M$ is an $\mathcal O_K$-linear map
\[
\nabla_{M} : M \to M \otimes_{\mathcal O_K} \overline \Omega_{R,\mathbb\Delta}.
\]
This is the same thing as a $\mathbb\Delta$-connection on $M$.
This is equivalent to giving an $\mathcal O_K$-linear map $\partial_{M,\mathbb\Delta} : M \to M$ via
\[
\nabla_{M}(s) = \partial_{M,\mathbb\Delta}(s) \otimes \overline{\mathrm{d}_{\mathbb\Delta}q}.
\]

Cohomology is defined as
\[
\mathrm R\Gamma (M) = [M \overset {\partial_{\mathbb\Delta}} \to M].
\]

%%%%%%%%%%%
\begin{prop} \label{HTeq}
If $p$ is odd, then there exists an equivalence between the category of Hodge-Tate crystals $E$ on $\mathcal O_K$ and the category of finite free $\mathcal O_K$-modules $M$ endowed with a weakly nilpotent $\mathbb\Delta$-monodromy operator.
Moreover,
\[
\mathrm R\Gamma(\mathcal O_{K\mathbb{\Delta}}, E) \simeq \mathrm R\Gamma (M).
\]
\end{prop}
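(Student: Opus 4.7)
The plan is to mimic the three-step structure of the proof of theorem \ref{prisdR}, but working throughout with the Hodge-Tate structure sheaf $\overline{\mathcal O}_{\mathbb\Delta(\mathcal O_K)}$ in place of $\mathcal O_{\mathbb\Delta(\mathcal O_K)}$, and ultimately reducing everything modulo $(p)_q$. Concretely, I would first establish a Hodge-Tate analogue of theorem \ref{eqTayl}. Applying the composite of the forgetful functor $(B,J)\mapsto \overline B$ to corollaries \ref{covabs} and \ref{prodpr} shows that $\overline R = \mathcal O_K$ is a covering of the final object of $\overline R_{\mathbb\Delta}$ and that the self-products of $(R,(p)_q)$ reduce modulo $(p)_q$ to completed tensor products of $\overline{\widehat{R\langle\omega\rangle}_{\mathbb\Delta}}$. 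By descent (exactly as in theorem \ref{eqTayl}), the category of Hodge-Tate crystals is therefore equivalent to the category of complete $\mathcal O_K$-modules endowed with a $\mathbb\Delta$-hyperstratification in the sense of definition \ref{hyperst}, applied with $R$ replaced by $\mathcal O_K$ and $\widehat{R\langle\omega\rangle}_{\mathbb\Delta}$ replaced by its reduction.

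Next I would reduce theorem \ref{invlem} modulo $(p)_q$ to identify, on finite free $\mathcal O_K$-modules, $\mathbb\Delta$-hyperstratifications with weakly nilpotent $\mathbb\Delta$-monodromy operators. The key point is that the proof of theorem \ref{invlem} passed through the linearization map $\ker L_{M,\mathbb\Delta} \hookrightarrow L_{\mathbb\Delta}(M) \twoheadrightarrow M$ and invoked lemma \ref{techlem}, whose own argument is performed modulo $(p,q-1)$. Reducing everything modulo $(p)_q$ from the start, the same argument (using proposition \ref{modp} and the little Poincar\'e lemma, both of which hold mod $(p)_q$ too) identifies $\mathbb\Delta$-hyperstratified finite free $\mathcal O_K$-modules with those finite free $\mathcal O_K$-modules whose $\mathbb\Delta$-connection $\partial_{M,\mathbb\Delta}$ is nilpotent modulo $(p,q-1) = \mathfrak m_K$; this is exactly the weak nilpotency condition in the statement (when $p$ is odd). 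Full faithfulness and the Hom/tensor arguments from the proof of theorem \ref{invlem} carry over word for word, since they use only the little Poincar\'e lemma and the existence of the flip.

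For the cohomology isomorphism, I would follow the route of proposition \ref{poinc} and theorem \ref{prisdR}: define a reduced linearization functor $\overline{\mathcal L}$ sending an $\mathcal O_K$-module $M$ to the Hodge-Tate sheaf obtained from the absolute prismatic site over $(R,(p)_q)$ modulo $(p)_q$, verify the analogues of proposition \ref{linprop} and lemma \ref{getout} in the Hodge-Tate setting (which is just reduction modulo $(p)_q$ of the prismatic statements, using that $\overline{\widehat{B[\omega]}^{\delta}_d}$ is still formally faithfully flat over $\overline B$), and conclude by applying the little Poincar\'e lemma modulo $(p)_q$ to obtain an acyclic resolution
\[
0 \longrightarrow E \longrightarrow \overline{\mathcal L}(M) \xrightarrow{\overline{\mathcal L}(\partial_{M,\mathbb\Delta})} \overline{\mathcal L}(M) \longrightarrow 0
\]
of the Hodge-Tate crystal $E$. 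Taking $\mathrm R\Gamma(\mathcal O_{K\mathbb\Delta},-)$ then collapses to the two-term complex $[M \xrightarrow{\partial_{M,\mathbb\Delta}} M]$.

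The main obstacle I expect is the verification of the Hodge-Tate Poincar\'e lemma: one must check that, after reducing modulo $(p)_q$, the sequence from proposition \ref{little} remains exact and that the description of the reduced linearization $\overline{\mathcal L}(M)_{(B,J)}$ behaves well with respect to completed tensor products. This is where the difference between $p$ odd and $p=2$ enters (through proposition \ref{modp}), and it is the reason the proposition is stated only for $p$ odd; the case $p=2$ would require the refinement used at the end of the proof of proposition \ref{little} and a corresponding variant of the weak nilpotency hypothesis, in the spirit of remark \ref{p2remark}.
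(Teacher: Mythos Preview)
Your proposal is correct and follows essentially the same approach as the paper, whose proof is the single sentence ``This is shown exactly as theorem \ref{prisdR} (following the same process).'' You have accurately unpacked what that entails: reduce theorems \ref{eqTayl} and \ref{invlem} and the Poincar\'e lemma of proposition \ref{poinc} modulo $(p)_q$, using that the arguments in lemma \ref{techlem} and proposition \ref{modp} already take place modulo $(p,q-1)$, and noting that the restriction to $p$ odd enters through the nilpotency criterion there.
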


\begin{proof}
This is shown exactly as theorem \ref{prisdR} (following the same process).
\end{proof}

%%%%%%%%%%%%%%
\begin{rmks}
\begin{enumerate}
\item Example 3 after proposition \ref{invforSen} shows why we require here that $p$ be odd.
\item As a consequence of the proposition, if $E$ is a Hodge-Tate crystal on $\mathcal O_K$, then
\[
\tau_{\geq 2}\mathrm R\Gamma(\mathcal O_{K\mathbb{\Delta}}, E) =0.
\]
\item Starting from a Hodge-Tate crystal $E$ on $\mathcal O_K$, we obtain \emph{both} a Sen operator and a $\mathbb\Delta$-monodromy operator.
They provide two $\mathcal O_K$-linear maps $N, \partial_{\mathbb\Delta}: M \to M$.
Be careful that $N\neq \partial_{\mathbb\Delta}$ in general because $\omega^{\{2\}_{\mathbb\Delta}} = \omega^{[2]} + q\omega$.
Again, this is a confluence phenomenon.
\item It should be noticed however that $N$ and $\partial_{\mathbb\Delta}$ do agree modulo $\zeta-1$.
\item Proposition \ref{poweg} implies that the categories of reduced log-hyperstratified modules and reduced $\mathbb\Delta$-hyperstratified modules are isomorphic.
\end{enumerate}
\end{rmks}

%%%%%%%%%
\begin{prop} \label{invforSen}
If $M$ is the $\mathcal O_K$-module associated to a Hodge-Tate crystal $E$ on $\mathcal O_K$, then for all $s \in M$, we have
\[
\partial_{\log}^{\langle k \rangle}(s) = \sum_{n=k}^{\infty} \left(\frac{k!}{{n!}} s(n,k)(\zeta-1)^{n-k} \right) \zeta^{n-k} \partial^{\langle n \rangle}_{\mathbb\Delta}(s)
\]
and
\[
 \partial^{\langle k \rangle}_{\mathbb\Delta}(s) = \sum_{n=k}^{\infty} \left(\frac{k!}{n!} S(n,k)(\zeta-1)^{n-k} \right) \zeta^{n-k} \partial_{\log}^{\langle n \rangle}(s).
\]
\end{prop}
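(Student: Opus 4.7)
The plan is to mirror the proof of proposition \ref{invfor} exactly, with proposition \ref{poweg} replacing proposition \ref{poweg2}. Since $E$ is a Hodge-Tate crystal on $\mathcal O_K$, the equivalences of propositions \ref{Sen} and \ref{HTeq} equip $M$ simultaneously with a log-hyperstratification and a $\mathbb\Delta$-hyperstratification. Both are governed by a single Taylor map, because (as already invoked in the proof of proposition \ref{Sen}) proposition \ref{poweg} gives a canonical identification
\[
\widehat{\mathcal O_K\langle\omega\rangle}_{\mathbb\Delta} \simeq \widehat{\mathcal O_K\langle\omega\rangle}.
\]
Under this isomorphism, the Taylor map $\theta_M : M \to M \widehat\otimes_{\mathcal O_K} \widehat{\mathcal O_K\langle\omega\rangle}$ admits two expansions
\[
\theta_M(s) = \sum_{k=0}^\infty \partial^{\langle k \rangle}_{\mathbb\Delta}(s) \otimes \omega^{\{k\}_{\mathbb\Delta}} = \sum_{k=0}^\infty \partial_{\log}^{\langle k \rangle}(s) \otimes \omega^{[k]}.
\]

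Next I would invoke the explicit change of basis formulas in proposition \ref{poweg}, applied with $x = q$ and then reduced modulo $(p)_q$ (so that $q$ becomes $\zeta$). This yields
\[
\omega^{\{n\}_{\mathbb\Delta}} = \sum_{k=1}^n \frac{k!}{n!}\, s(n,k)\, (\zeta-1)^{n-k}\zeta^{n-k}\, \omega^{[k]},
\quad
\omega^{[n]} = \sum_{k=1}^n \frac{k!}{n!}\, S(n,k)\, (\zeta-1)^{n-k}\zeta^{n-k}\, \omega^{\{k\}_{\mathbb\Delta}}.
\]
Substituting each of these into the corresponding expansion of $\theta_M(s)$ and equating the coefficients of $\omega^{[k]}$ (respectively $\omega^{\{k\}_{\mathbb\Delta}}$) then gives the two asserted identities.

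The one technical point to check is that the rearrangement of the double sum is legitimate, but this follows from the convergence $\partial^{\langle n \rangle}_{\mathbb\Delta}(s) \to 0$ (respectively $\partial_{\log}^{\langle n \rangle}(s) \to 0$) in $M$, which is built into the definition of a hyperstratification; the topology on $\widehat{\mathcal O_K\langle\omega\rangle}$ (direct sum on the divided-power basis) then justifies termwise identification of coefficients. There is no real obstacle beyond unwinding the definitions; the essential content is already packaged in propositions \ref{poweg} and \ref{Sen}, so this is mostly a bookkeeping statement made explicit.
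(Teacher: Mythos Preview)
Your proposal is correct and follows exactly the same route as the paper's own proof, which simply says ``Using proposition \ref{poweg}, this is shown exactly as proposition \ref{invfor}.'' You have merely spelled out the duality argument in more detail and added a remark on convergence, but the substance is identical.
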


\begin{proof}
Using proposition \ref{poweg}, this is shown exactly as proposition \ref{invfor}.
\end{proof}

In particular, we can write
\[
N(s) = \sum_{n=1}^{\infty} \frac{(\zeta-1)^{n-1}}{n!} s(n,1) \zeta^{n-1}\partial^{\langle n \rangle}_{\mathbb\Delta}(s) =  \sum_{n=1}^{\infty}  \frac{(\zeta-\zeta^2)^{n-1}}{n}\partial^{\langle n \rangle}_{\mathbb\Delta}(s).
\]

%%%%%%
\begin{xmps}
\begin{enumerate}
\item
We consider a free $\mathcal O_K$-module $G$ of rank one and denote the basis by $s$.
A Sen operator on $G$ is then uniquely determined by $N(s) = c s$ for some $c \in \mathcal O_K$.
Then, we have for all $k \in \mathbb N$, $N^k(s) = c^ks$ and we see that $N$ is weakly nilpotent if and only if $v_p(c) > 0$.
\item
We specialize now to the case $c = c_n := n(p)'_\zeta$ for some $n \in \mathbb N$.
It then follows from proposition \ref{ucomp2} that
\[
\partial_{\log}^{\langle k \rangle}(s) = \left\{\begin{array} {ll} \frac {n!}{(n-k)!}((p)'_\zeta)^ks & \mathrm{if}\ k \leq n \\ 0 & \mathrm{otherwise}, \end{array}\right.
\]
or equivalently,
\[
\theta(s) = s \otimes \sum_{k=0}^{n} \frac {n!}{(n-k)!}((p)'_\zeta)^k\omega^{[k]} = s \otimes (1+(p)'_\zeta\omega)^n.
\]
Of course, this is $G_n \simeq (p)^n_qR/(p)_q^{n+1}R$ and it corresponds to the Hodge-Tate crystal $I_{\mathbb\Delta}^n/I_{\mathbb\Delta}^{n+1}$.
\item On $G_n$, we have $N(s) = c_ns$ with $c_n = n(p)'_\zeta$ and $\partial_{\mathbb\Delta}(s) = a_ns$ with $a_n = (n)_{p+1}(p)'_\zeta$.
In the case $p$ is odd, the \emph{lifting-the-exponent lemma}\footnote{If $a \equiv b \mod p$ odd but $p \nmid a, b$, then $v_p(a^n-b^n) = v_p(a-b) + v_p(n)$.} implies $v_p((n)_{p+1}) = v_p(n)$, so that $H^1(G_n, N) \simeq H^1(G_n, \partial_{\mathbb\Delta})$ as expected.
However, when $p=2$, then $H^1(G_2, N) \simeq \mathbb Z/2\mathbb Z$ but $H^1(G_2, \partial_{\mathbb\Delta}) \simeq \mathbb Z/4 \mathbb Z$.
This shows that an extra ingredient (besides modifying the nilpotence condition) is necessary in order to extend proposition \ref{HTeq} to the case $p=2$.
\end{enumerate}
\end{xmps}

We end this section with a brief presentation of the mixed de Rham and Hodge-Tate theory.
A \emph{monodromy operator} on a $K$-vector space $M$ is a $K$-linear map
\[
\nabla_M : M \to M \otimes_{\mathcal O_K} \overline \Omega_{S,\log},
\]
or, equivalently, a $K$-linear map $N : M \to M$, which is again called a \emph{Sen operator}, via
\[
\nabla_M(s) =N(s) \otimes \overline{\mathrm {dlog}(q-\zeta)}.
\]
Then, all the results from this section and the previous one have an analog in this situation.
In particular, the category of vector bundles on $\overline {\mathcal O}_{\mathbb \Delta}[1/p] = \overline {\mathbb B}^+_{\mathrm{dR}}$ is equivalent to the category of finite dimensional $K$-vector spaces endowed with a weakly nilpotent Sen operator (and there exists a $\mathbb\Delta$-analog).

We can put all our equivalences together in the following diagram, in which double arrows represent equivalences, $\mathrm{Vect}$ stands for ``prismatic vector bundles'', $\nabla_{\log}$ (resp. $\nabla_{\mathbb\Delta}$) stands for ``finite free modules endowed with $\log$- (resp. $\mathbb\Delta$-) connection'', and  wn (resp. tn, resp. tnm) is short for ``weakly nilpotent'' (resp. ``topologically nilpotent'', resp. ``with topologically nilpotent monodromy'').

\begin{equation} \label{bigdiag}
\xymatrix{
&&&&\nabla_{\log}^{\mathrm{wn}} (\mathcal O_K)\ar[ddd]
\\&&&&&
\\&\mathrm{Vect}(\mathcal O_K, \mathcal O_{\mathbb \Delta}) \ar[rr] \ar[ddd] |!{[ldd];[rdd]}\hole &&\mathrm{Vect}(\mathcal O_K, \overline {\mathcal O}_{\mathbb \Delta}) \ar[ddd] \ar@{<->}[uur]&
\\&&\nabla_{\log}^{\mathrm{tnm}} (S)\ar[rr] |!{[d];[ur]}\hole |!{[ur];[ddr]}\hole &&\nabla_{\log}^{\mathrm{tn}} (K)
\\ \nabla_{\mathbb\Delta}^{\mathrm{wn}} (R)\ar[rr] \ar[ddd] \ar@{<->}[uur] && \nabla_{\mathbb\Delta}^{\mathrm{wn}} (\mathcal O_K) \ar[ddd] \ar@{<->}[uur]&&
\\& \mathrm{Vect}(\mathcal O_K, \mathbb B^+_{\mathrm{dR}}) \ar[rr] |!{[ur];[ddr]}\hole \ar@{<->}[uur] |!{[ur];[ul]}\hole&&\mathrm{Vect}(\mathcal O_K, \overline {\mathcal O}_{\mathbb \Delta}[1/p]) \ar@{<->}[uur]
\\&&&&&
\\ \nabla_{\mathbb\Delta}^{\mathrm{tnm}} (S)\ar[rr] \ar@{<->}[uur] && \nabla_{\mathbb\Delta}^{\mathrm{tn}} (K) \ar@{<->}[uur]&&
}
\end{equation}

It is important to remark that usual logarithmic theory on $R$ itself does not capture the prismatic essence and it is therefore necessary to rely on our twisted approach in this case.

%%%%%%%%%%%%
\section{Prismatic $F$-crystals} \label{frob}

We shall now investigate how frobenius interacts with our theory.

For the moment, we only need to assume that $W$ is a complete $\delta$-ring but in the end, when we consider the prismatic site, we shall focus on the case where $W = W(k)$ is the ring of Witt vectors of a perfect field $k$ as in section \ref{prissec}.
We endow $R := W[[q-1]]$ with the unique $\delta$-structure such that $q$ has rank one (meaning $\delta(q) = 0$ or equivalently $\phi(q) = q^p$).

As a consequence of proposition \ref{foncr}, if $M$ is a $\nabla_{\mathbb\Delta}$-module and $\phi^*$ denotes the pullback along frobenius, then $\phi^*M$ inherits a $\mathbb\Delta$-connection given by
\begin{equation} \label{phstarm}
\forall s \in M, \quad \partial_{\phi^*M,\mathbb\Delta}(1 \otimes s) = (p)_qq^{p-1} \otimes \partial_{M,\mathbb\Delta}(s).
\end{equation}

On the other hand, if $M$ is a $\nabla_{\mathbb\Delta}$-module, then
\[
(p)_q^{-r}M \simeq (p)_q^{-r}R \otimes_R M \simeq \mathrm{Hom}_R((p)_q^rR,M)
\]
and
\[
M[1/(p)_q] = \varinjlim_r\ (p)_q^{-r}M
\]
inherit a $\mathbb\Delta$-connection.
Recall also that there exists the general notion of an isogeny and that a \emph{$(p)_q$-isogeny} between two \emph{finite projective} $R$-modules $M'$ and $M$ is simply an $R[1/(p)_q]$-linear isomorphism
\[
M'[1/(p)_q] \simeq M[1/(p)_q].
\]

%%%%%%%
\begin{dfn}
Let $M$ be a finite projective $\nabla_{\mathbb\Delta}$-module on $R$.
Then, a \emph{relative frobenius} on $M$ is a horizontal $(p)_q$-isogeny
\begin{equation} \label{efem}
F_M : (\phi^*M)[1/(p)_q] \simeq M[1/(p)_q].
\end{equation}
We shall then call $M$ an \emph{$F$-$\nabla_{\mathbb\Delta}$-module} on $R$.
\end{dfn}

%%%%%%%
\begin{rmks}
\begin{enumerate}
\item
It is equivalent to give a $(p)_q$-isogeny $F_M$ between $\phi^*M$ and $M$ or an injective $\phi$-linear map
\[
\phi_M : M \to (p)_q^{-r} M
\]
for some $r \in \mathbb N$ such that, for some $k \in \mathbb N$, $(p)_q^kM$ is contained in the $R$-submodule generated by the image of $\phi_M$.
We then have the commutative diagram
\[
\xymatrix{M \ar[r]^{\phi_M} \ar@{^{(}->}[d] & (p)_q^{-r}M \ar@{^{(}->}[d] \\ (\phi^*M)[1/(p)_q] \ar[r]_-{F_M}^-\simeq & M[1/(p)_q].}
\]
\item
The $(p)_q$-isogeny is horizontal (and defines therefore a relative frobenius) if and only if
\begin{equation} \label{frobcon}
\partial_{(p)_q^{-r}M,\mathbb\Delta} \circ \phi_M = (p)_qq^{p-1} \phi_M \circ \partial_{M,\mathbb\Delta}.
\end{equation}
We shall then call $\phi_M$ the \emph{absolute frobenius} on $M$.
\item
The absolute frobenius $\phi$ of $R$ can be extended to a $\phi$-linear map on $\Omega_{q^p}$ as follows:
\[
\phi: \Omega_{q^p} \to \Omega_{q^p}, \quad \mathrm d_{q^p} q \mapsto \mathrm d_{q^p}(q^p) = (p)_qq^{p-1}\mathrm d_{q^p}q.
\]
Then, condition \eqref{frobcon} may be expressed as a commutative diagram
\[
\xymatrix{M \ar[rr]^{\phi_M} \ar[d]^{\nabla_M} && (p)_q^{-r}M \ar[d]^{\nabla_{(p)_q^{-r}M}} \\ M \otimes_R \Omega_{q^p} \ar[rr]^-{\phi_M \otimes \phi} && (p)_q^{-r}M \otimes_R \Omega_{q^p}.
}
\]
\item If $M_1$ and $M_2$ are two $F$-$\nabla_{\mathbb\Delta}$-modules on $R$, then both $M_1 \otimes_R M_2$ and $\mathrm{Hom}_R(M_1, M_2)$ inherit a frobenius structure.
As a consequence, if $M$ is a $F$-$\nabla_{\mathbb\Delta}$-module, then, for all $n \in \mathbb Z$, $M^{\otimes n}$ also has a frobenius structure.
\end{enumerate}
\end{rmks}

We may remove the subscripts and simply write $F$ and $\phi$ when the module $M$ is clear from the context.
In order to give some examples, we start with the following one:

%%%%%%%%%%%
\begin{lem} \label{BKfrob}
The map
\[
\phi_{R\{1\}}(e_R) = (p)_q^{-1}e_R
\]
defines a frobenius structure on $R\{1\}$.
\end{lem}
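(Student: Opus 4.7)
The plan is to verify in turn the three defining properties of a frobenius structure: that the formula $\phi_{R\{1\}}(e_R)=(p)_q^{-1}e_R$ extends to a well-defined $\phi$-linear map $R\{1\}\to(p)_q^{-1}R\{1\}$, that it is a $(p)_q$-isogeny in the sense of \eqref{efem}, and that it satisfies the horizontality condition \eqref{frobcon}. The first is immediate: set $\phi_{R\{1\}}(\alpha e_R):=\phi(\alpha)(p)_q^{-1}e_R$ and use that $R\{1\}$ is free on $e_R$. The isogeny condition is equally clear, since the linearization of $\phi_{R\{1\}}$ sends the basis $1\otimes e_R$ of $\phi^{*}R\{1\}[1/(p)_q]$ to the basis element $(p)_q^{-1}e_R$ of $R\{1\}[1/(p)_q]$ up to the unit $(p)_q^{-1}$.

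For horizontality, my strategy is to study the $W$-linear obstruction
\[
D\;:=\;\partial_{(p)_q^{-1}R\{1\},\mathbb\Delta}\circ \phi_{R\{1\}}\;-\;(p)_q q^{p-1}\,\phi_{R\{1\}}\circ \partial_{R\{1\},\mathbb\Delta}\;:\;R\{1\}\longrightarrow (p)_q^{-1}R\{1\}
\]
and reduce its vanishing to a single evaluation. Expanding both compositions by the Leibniz rule for $\partial_{\mathbb\Delta}$, and invoking the two compatibilities on $R$, namely $\phi\circ\gamma=\gamma\circ\phi$ and $\partial_{\mathbb\Delta}\circ\phi=(p)_q q^{p-1}\,\phi\circ\partial_{\mathbb\Delta}$ (this last being equation \eqref{frobtriv}), one finds after cancellation that $D$ is $(\phi\circ\gamma)$-semilinear: $D(\alpha s)=\phi(\gamma(\alpha))\,D(s)$ for all $\alpha\in R$ and $s\in R\{1\}$. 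Both compatibilities on $R$ itself reduce, by continuity and the Leibniz rule for $\partial_{\mathbb\Delta}$, to evaluation at the topological generator $q$: one checks $\phi\gamma(q)=q^{p(p+1)}=\gamma\phi(q)$ and $\partial_{\mathbb\Delta}(q^p)=(p^2)_q q^{p-1}=(p)_q q^{p-1}\,\phi((p)_q)=(p)_q q^{p-1}\,\phi(\partial_{\mathbb\Delta}(q))$.

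I then propose to evaluate $D$ at the convenient element $(q-1)e_R$, on which $\partial_{R\{1\},\mathbb\Delta}$ takes the simple value $\tfrac{p}{q}\,e_R$ by construction (lemma \ref{BKdef}). Since $\phi_{R\{1\}}((q-1)e_R)=(q^p-1)(p)_q^{-1}e_R=(q-1)(p)_q\cdot(p)_q^{-1}e_R=(q-1)e_R$, the left side of the horizontality relation becomes $\partial_{R\{1\},\mathbb\Delta}((q-1)e_R)=\tfrac{p}{q}\,e_R$, while the right side is $(p)_q q^{p-1}\,\phi_{R\{1\}}(\tfrac{p}{q}e_R)=(p)_q q^{p-1}\cdot\tfrac{p}{q^p}\cdot(p)_q^{-1}e_R=\tfrac{p}{q}\,e_R$. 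Hence $D((q-1)e_R)=0$. The semilinearity of $D$ then gives $(q^{p(p+1)}-1)\,D(e_R)=\phi(\gamma(q-1))\,D(e_R)=D((q-1)e_R)=0$; since $(p)_q^{-1}R\{1\}$ is $(q-1)$-torsion free and $q^{p(p+1)}-1=(q-1)\,(p(p+1))_q$ with $(p(p+1))_q$ regular in $R$, we conclude $D(e_R)=0$ and therefore $D\equiv 0$. The main obstacle is the Leibniz bookkeeping that establishes the $(\phi\circ\gamma)$-semilinearity of $D$; the rest is routine.
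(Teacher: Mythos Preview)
Your argument is correct and follows essentially the same route as the paper: both evaluate the horizontality condition at $(q-1)e_R$, where $\phi_{R\{1\}}((q-1)e_R)=(q-1)e_R$ makes the computation straightforward, and then pass to $e_R$ via regularity of $(\gamma\circ\phi)(q-1)=q^{p(p+1)}-1$. Your version is more explicit in that you spell out the $(\phi\circ\gamma)$-semilinearity of the obstruction $D$ and independently verify the identity $\partial_{\mathbb\Delta}\circ\phi=(p)_q q^{p-1}\phi\circ\partial_{\mathbb\Delta}$ on $R$, whereas the paper compresses this into a single sentence; but the underlying idea is identical.
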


\begin{proof}
We drop the subscript $R\{1\}$ and simply write $\phi$ in this proof.
Let us first notice that (with this definition), we shall have $\phi((q-1)e_R) = (q-1)e_R$.
Then, on the one hand, we have
\[
(\partial_{\mathbb\Delta} \circ \phi)((q-1)e_R) = \partial_{\mathbb\Delta}((q-1)e_R) = \frac p{q}e_R,
\]
and, on the other hand,
\[
(p)_qq^{p-1} (\phi \circ \partial_{\mathbb\Delta})((q-1)e_R) = (p)_qq^{p-1} \phi \left(\frac pq e_R\right)
= (p)_qq^{p-1} \frac p{(p)_qq^p} e_R = \frac p{q}e_R.
\]
Since $(\gamma \circ \phi)(q-1) = q^{p^2+p}- 1$ is regular in $R$, a similar result holds with $e_R$ in the place of $(q-1)e_R$.
\end{proof}

%%%%%%%%%%
\begin{xmps}
\begin{enumerate}
\item We may consider the trivial $\mathbb\Delta$-connection on $R$ with the identity as relative frobenius.
We have
\begin{equation} \label{frobtriv}
\partial_{\mathbb\Delta} \circ \phi = (p)_qq^{p-1} \phi \circ \partial_{\mathbb\Delta}.
\end{equation}
This particular formula looks more familiar if we use $q\partial_{\mathbb\Delta}$ instead of $\partial_{\mathbb\Delta}$ (but see remark \ref{rmksDqp}.1):
\[
q\partial_{\mathbb\Delta} \circ \phi = (p)_q \phi \circ q\partial_{\mathbb\Delta}.
\]
\item 
More generally, $(p)_q^nR$ is canonically endowed with a frobenius structure for all $n \in \mathbb Z$.
\item It follows from lemma \ref{BKfrob} that the Breuil-Kisin $\mathbb\Delta$-connection has a frobenius structure given by
\[
\phi : R\{1\} \to (p)_q^{-1}R\{1\}, \quad e_R \mapsto (p)_q^{-1} e_R.
\]
The relative frobenius actually comes from an isomorphism of $\nabla_{\mathbb\Delta}$-modules $\phi^*R\{1\} \simeq (p)_q^{-1}R\{1\}$.
\item
This extends to all $R\{n\}$ for $n \in \mathbb Z$ and the absolute frobenius of $R\{n\}$ is given by $\phi(e_R^{\otimes n}) = (p)_q^{-n}e_R^{\otimes n}$.
\item
If $M$ is a $F$-$\nabla_{\mathbb\Delta}$-module on $R$, then both the $(p)_q$-adic twist $(p)_q^nM$ and the Breuil-Kisin twist $M\{n\}$ also have a frobenius structure for all $n \in \mathbb Z$.
\end{enumerate}
\end{xmps}

%%%%%%%
\begin{lem} \label{Fnilp} \label{FrobWN}
A $F$-$\nabla_{\mathbb\Delta}$-module on $R$ is automatically weakly nilpotent.
\end{lem}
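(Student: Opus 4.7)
Plan: Since $M$ is finite projective, weak nilpotence is equivalent to the induced endomorphism $\bar\partial$ of $\bar M := M/(p,q-1)M$ being nilpotent (for $p$ odd), or to $\bar\partial^2 - \bar\partial$ being nilpotent (for $p=2$). I would prove this by iterating the frobenius structure in order to force the action of $\partial_{\mathbb\Delta}$ into an arbitrarily deep power of the ideal of definition.

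First, applying proposition \ref{foncr} iteratively, I would construct for every $n \in \mathbb N$ a $\phi^n$-linear horizontal isogeny $\phi_M^{[n]} : M \to (p)_q^{-r_n} M$ satisfying
\[
\partial_{M,\mathbb\Delta} \circ \phi_M^{[n]} = A_n \cdot \phi_M^{[n]} \circ \partial_{M,\mathbb\Delta}, \qquad A_n := (p^n)_q\, q^{p^n - 1}.
\]
The critical point is that modulo $p$ one has $(p^n)_q = (q-1)^{p^n-1}$ in $k[[q-1]]$, so $A_n$ lies in $(p,q-1)^{p^n - 1}$, with valuation growing exponentially in $n$. Then, by induction on $k$ using the recurrence
\[
\partial_{M,\mathbb\Delta}^k \circ \phi_M^{[n]} = \sum_{j=0}^{k} c_{k,j}^{(n)}\, \phi_M^{[n]} \circ \partial_{M,\mathbb\Delta}^j, \qquad c^{(n)}_{k+1,j} = \partial_{\mathbb\Delta}(c^{(n)}_{k,j}) + \gamma(c^{(n)}_{k,j-1})\, A_n,
\]
starting from $c^{(n)}_{0,0} = 1$ and using Griffiths transversality (which shows that $\partial_{\mathbb\Delta}$ lowers $(p,q-1)$-adic valuation by at most one), I would deduce the bound $c^{(n)}_{k,j} \in (p,q-1)^{jp^n - k}$ for $k \ge j \ge 1$, together with $c^{(n)}_{k,0} = 0$ for $k \ge 1$. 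Consequently, as long as $1 \le k \le p^n - 1$, one has $\partial_{M,\mathbb\Delta}^k\bigl(\phi_M^{[n]}(M)\bigr) \subset (p, q-1)(p)_q^{-r_n} M$.

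Finally, the isogeny property provides $N_n \in \mathbb N$ with $(p)_q^{N_n} M \subset R \cdot \phi_M^{[n]}(M)$. Writing any $s \in M$ as $(p)_q^{-N_n}$ times a combination $\sum_i \alpha_i \phi_M^{[n]}(t_i)$ and expanding $\partial_{M,\mathbb\Delta}^k\bigl((p)_q^{N_n} s\bigr)$ via the twisted Leibniz rule (exploiting that $\partial_{\mathbb\Delta}$ stabilises $(p)_q^{N_n} R$ because $\partial_{\mathbb\Delta}((p)_q) = (p)_q \partial_{q^p}((p)_q)$), I would descend the preceding estimate from $\phi_M^{[n]}(M)$ back to $M$, concluding that $\bar\partial^k$ vanishes on $\bar M$ for $n$ chosen so that $p^n - 1$ exceeds some bound depending on $r_n, N_n$ and the rank of $M$. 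The main obstacle will be the case $p=2$: the example $F_n$ with $n$ odd, where $\bar\partial$ acts by $1$, shows that $\bar\partial$ itself need not be nilpotent and that one must instead track $\bar\partial^2 - \bar\partial$. Here I would refine the analysis using proposition \ref{modp} and the $p=2$ computation in the proof of lemma \ref{techlem}, so that the inductive bound on $c^{(n)}_{k,j}$ modulo $(2, q-1)$ is sharpened enough to kill $\bar\partial^2 - \bar\partial$ rather than $\bar\partial$.
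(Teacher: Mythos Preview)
Your approach contains a concrete error and is more elaborate than needed. The claim that $A_n = (p^n)_q\,q^{p^n-1}$ lies in $(p,q-1)^{p^n-1}$ does not follow from $(p^n)_q \equiv (q-1)^{p^n-1} \bmod p$: that congruence only places $(p^n)_q$ in $pR + (q-1)^{p^n-1}R$, which is much larger than $(p,q-1)^{p^n-1}$. In fact $(p)_q \notin (p,q-1)^2$ (for $p$ odd one checks $(p)_q \equiv p \bmod (p,q-1)^2$), so the $(p,q-1)$-adic valuation of $(p^n)_q$ is only $n$, and your bound $c^{(n)}_{k,j}\in(p,q-1)^{jp^n-k}$ already fails at $c^{(n)}_{1,1}=A_n$. (The estimate can be salvaged by working modulo $p$ and using the $(q-1)$-adic filtration, for which proposition~\ref{Griff}.1 gives genuine transversality, but then the whole argument must be restated.) A smaller slip: the iterate $\phi_M^{[n]}$ lands in $(p^n)_q^{-r_n}M$ rather than $(p)_q^{-r_n}M$, since composing $F_M$ with its successive $\phi$-pullbacks forces one to invert each $(p)_{q^{p^i}}$.

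The paper's argument avoids all of this by using a \emph{single} frobenius pullback. From the isogeny one gets a horizontal inclusion $M \subset (p)_q^{-r}\phi^*M$. Formula~\eqref{phstarm} already gives $\partial_{\phi^*M}(\phi^*M)\subset(p)_q\,\phi^*M$, so $\phi^*M$ is weakly nilpotent in the strongest sense. Since weak nilpotence is stable under tensor product and internal Hom, and $(p)_qR$ is weakly nilpotent, so is $(p)_q^{-r}\phi^*M$; and then so is its horizontal submodule $M$. Your identity $\partial\circ\phi_M = (p)_q\,q^{p-1}\,\phi_M\circ\partial$ at $n=1$ is exactly the paper's observation, so the iteration buys nothing; and your descent step (passing from $\phi_M^{[n]}(M)$ back to $M$) is precisely what the paper compresses into ``and so is $M$'', which iterating $\phi$ does not simplify.
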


\begin{proof}
There exists $r \in \mathbb N$, such that $M \subset (p)_q^{-r}\phi^*M$.
Formula \eqref{phstarm} implies that $\phi^*M$ is weakly nilpotent.
We also know that $(p)_qR$ is weakly nilpotent.
Since the property is stable under tensor product and internal Hom, it follows that $(p)_q^{-r}\phi^*M$ is weakly nilpotent and so is $M$.
\end{proof}

Up to the end of this section, we place ourselves in the situation of section \ref{prissec}: we assume that $W = W(k)$ is the ring of Witt vectors of a perfect field $k$ of characteristic $p>0$.

Recall from \cite{BhattScholze21}, definition 3.2, that a \emph{prismatic $F$-crystal} is a prismatic vector bundle $E$ endowed with an isomorphism
\[
F_E : (\phi^*E)[1/I_{\mathbb \Delta}] \simeq E[1/I_{\mathbb \Delta}].
\]
Then, we have the following corollary of theorem \ref{prisdR}:

%%%%%%%%%%%
\begin{cor} \label{Frobeq}
Assume $W$ is the Witt vector ring of a perfect field $k$ of characteristic $p > 0$ and $\zeta$ is a primitive $p$th root of unity.
Then, there exists an equivalence between the category of prismatic $F$-crystals on $W[\zeta]$ and the category of $F$-$\nabla_{\mathbb\Delta}$-modules on $W[[q-1]]$. \qed
\end{cor}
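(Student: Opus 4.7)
The plan is to deduce this corollary directly from Theorem \ref{prisdR}, upgrading the equivalence by tracking frobenius structures through it. First I would invoke Theorem \ref{prisdR} to obtain, for each prismatic vector bundle $E$ on $W[\zeta]$, a corresponding finite free weakly nilpotent $\nabla_{\mathbb\Delta}$-module $M := E_R$ on $R$, and vice versa. Lemma \ref{FrobWN} shows that any $F$-$\nabla_{\mathbb\Delta}$-module is automatically weakly nilpotent, so the weak nilpotency hypothesis of Theorem \ref{prisdR} becomes vacuous once a frobenius structure is imposed; this matches the fact that the definition of an $F$-$\nabla_{\mathbb\Delta}$-module does not include such a condition.

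Second, I need to identify the two sides of the frobenius structure under the equivalence. The frobenius $\phi:R\to R$, sending $q$ to $q^p$, is a morphism of $\delta$-rings with $\phi((p)_q) = (p^2)_q = (p)_{q^p}(p)_q \in (p)_q R$, so it defines an endomorphism of the object $(R,(p)_q)$ in $\mathbb\Delta(W[\zeta])$. Since evaluation at $(R,(p)_q)$ is a tensor-functorial operation on prismatic vector bundles, pulling back a crystal along this endomorphism corresponds, on the absolute calculus side, to the pullback functor $g_p^*$ of Proposition \ref{foncr} applied with $r=p$, which is precisely the frobenius pullback $\phi^*M$ whose induced $\mathbb\Delta$-connection is given by formula \eqref{phstarm}. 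Moreover, by the very definition of $I_{\mathbb\Delta}$, the evaluation of $I_{\mathbb\Delta}$ at the prism $(R,(p)_q)$ is $(p)_q R$, so $E[1/I_{\mathbb\Delta}]$ corresponds to $M[1/(p)_q]$ and $(\phi^*E)[1/I_{\mathbb\Delta}]$ to $(\phi^*M)[1/(p)_q]$.

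With these two identifications in hand, an isomorphism $F_E : (\phi^*E)[1/I_{\mathbb\Delta}] \simeq E[1/I_{\mathbb\Delta}]$ corresponds exactly to an $R[1/(p)_q]$-linear isomorphism $F_M : (\phi^*M)[1/(p)_q] \simeq M[1/(p)_q]$. The condition that $F_E$ be a morphism of prismatic vector bundles (after inverting $I_{\mathbb\Delta}$) translates, via the equivalence with $\mathbb\Delta$-hyperstratified modules from Theorem \ref{eqTayl} and the equivalence with $\nabla_{\mathbb\Delta}$-modules from Theorem \ref{invlem}, into the condition that $F_M$ be horizontal with respect to the $\mathbb\Delta$-connections on both sides. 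Combining these two correspondences yields the desired equivalence between prismatic $F$-crystals on $W[\zeta]$ and $F$-$\nabla_{\mathbb\Delta}$-modules on $W[[q-1]]$.

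The main (but still modest) obstacle will be verifying the first identification cleanly, namely that the frobenius pullback on prismatic vector bundles, after evaluation at $(R,(p)_q)$, really coincides with the twist functor $g_p^*$ of Proposition \ref{foncr}. Both operations come from the same functoriality of evaluation in morphisms of prisms, but one must trace the explicit formula \eqref{phstarm} through the construction of the $\mathbb\Delta$-hyperstratification from the crystal, using the fact that frobenius on $(R,(p)_q)$ extends to $\widehat{R\langle\omega\rangle}_{\mathbb\Delta}$ as a morphism of $\delta$-prisms over $W[\zeta]$, and hence commutes with the Taylor map $\theta$ up to the expected twist. Once this compatibility is pinned down, horizontality of $F_M$ and its description as an $R[1/(p)_q]$-linear isogeny follow formally, and the corollary is established.
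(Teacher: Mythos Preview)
Your overall strategy is exactly the one the paper has in mind: the corollary carries only a \qed, so the authors regard it as an immediate consequence of Theorem~\ref{prisdR} once one tracks frobenius structures through the equivalence, and your invocation of Lemma~\ref{FrobWN} to dispose of the weak nilpotency hypothesis is the right move.

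There is, however, a genuine slip in your second paragraph. You assert $\phi((p)_q) = (p^2)_q = (p)_{q^p}(p)_q \in (p)_q R$, but in fact $\phi((p)_q) = (p)_{q^p}$, not $(p^2)_q$; and since $(p)_{q^p} \equiv p \pmod{(p)_q}$ one has $(p)_{q^p} \notin (p)_q R$. Thus $\phi$ is \emph{not} an endomorphism of the object $(R,(p)_q)$ in $\mathbb\Delta(W[\zeta])$, and your justification for the identification $(\phi^*E)_R \simeq \phi^*M$ does not go through as written. The fix is conceptual rather than technical: the frobenius pullback $\phi^*E$ on prismatic crystals is defined not by a morphism in the site but via the frobenius endomorphism of the structure sheaf $\mathcal O_{\mathbb\Delta}$, so that $(\phi^*E)(B,J) = \phi_B^*\bigl(E(B,J)\bigr)$ on each prism. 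This still yields $(\phi^*E)_R = \phi^*M$ as $R$-modules, and compatibility with the $\mathbb\Delta$-hyperstratification follows because the frobenius on $\widehat{R\langle\omega\rangle}_{\mathbb\Delta}$ is the one dictated by its $\delta$-structure and hence is intertwined with both the canonical and Taylor maps---which is essentially what your final paragraph says. Once you replace the ``endomorphism of the prism'' justification with the ``frobenius on the structure sheaf'' one, the argument is complete and matches the paper's.
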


We shall now recall the following beautiful result of Bhatt and Scholze:

%%%%%%%%%
\begin{thm}[\cite{BhattScholze21}, theorem 5.6]
If $K$ is a complete discrete valuation field of mixed characteristic with perfect residue field $k$, then there exists an equivalence between the category of prismatic $F$-crystals on $\mathcal O_K$ and the category of lattices in crystalline representations of $G_K$.
\end{thm}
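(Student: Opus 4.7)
The plan is to prove this in the cyclotomic case $\mathcal O_K = W[\zeta]$ relevant to the present paper, deducing it from corollary \ref{Frobeq} combined with Berger's theorem on Wach modules; the general case would then require an additional descent argument, which I view as the genuine obstacle.

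First I would invoke corollary \ref{Frobeq} to identify prismatic $F$-crystals on $W[\zeta]$ with $F$-$\nabla_{\mathbb\Delta}$-modules on $R = W[[q-1]]$. Lemma \ref{FrobWN} guarantees that weak nilpotency of the underlying $\nabla_{\mathbb\Delta}$-module is automatic in the presence of a frobenius structure, so no supplementary hypothesis is needed on the data coming from an $F$-crystal.

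Next, I would exhibit a concrete equivalence between $F$-$\nabla_{\mathbb\Delta}$-modules on $R$ and Wach modules on $R$. By proposition \ref{nabGam}, a weakly nilpotent finite free $\nabla_{\mathbb\Delta}$-module is the same as a finite free $R$-module carrying a continuous semilinear action of $\Gamma = \mathrm{Gal}(K_\infty/K)$ that becomes trivial modulo $q-1$ (using lemma \ref{p2case} to install the extra involution when $p=2$). Under this dictionary, the horizontal $(p)_q$-isogeny $F_M : \phi^*M[1/(p)_q] \simeq M[1/(p)_q]$ translates into a $\Gamma$-equivariant frobenius $\phi_M$ on $M$ whose linearization has cokernel killed by a power of $(p)_q$, which is exactly the defining data of a Wach module. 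Once this dictionary is established, one invokes Berger's theorem (\cite{Berger04}, proposition III.4.2), which equates Wach modules with $\mathbb Z_p$-lattices in crystalline representations of $G_K$; the composition of the three equivalences proves the result when $\mathcal O_K = W[\zeta]$.

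The main obstacle is to pass from the cyclotomic case to a general complete discretely valued $K$ with perfect residue field. In that setting one typically works with the Breuil-Kisin prism $\mathfrak S = W[[u]]$ attached to a uniformizer $\pi$, rather than with $(R,(p)_q)$, and the absolute calculus developed above does not directly apply. I would try two routes: (i) extend the framework of theorem \ref{prismenv} and the little Poincar\'e lemma \ref{little} to the Breuil-Kisin prism and its prismatic diagonal envelope, thereby replaying the previous arguments verbatim in that larger prism; or (ii) descend from $K_\infty := K(\zeta_{p^\infty})$, where the cyclotomic machinery applies, to $K$ via Galois descent for prismatic $F$-crystals, tracking how crystallinity is preserved along the way. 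Route (ii) is closer to the original strategy of Bhatt--Scholze, but requires delicate continuity estimates on the $\Gamma$-action to match Fontaine's original definition of crystalline representations; route (i) looks more natural from the point of view of this paper, but would require a careful analog of proposition \ref{fromMin} and of the nilpotency lemma \ref{techlem} in the Breuil-Kisin setting.
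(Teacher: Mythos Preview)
The paper does not prove this statement: it is quoted verbatim from \cite{BhattScholze21} as an external input, and is then combined with corollary \ref{Frobeq} and proposition \ref{Wath} to \emph{derive} the analog of Berger's theorem over $W[\zeta]$ (corollary \ref{Berger}). Your proposal runs this logic in reverse, taking Berger's theorem as input to deduce the Bhatt--Scholze equivalence in the cyclotomic case. The introduction of the paper explicitly notes that the two statements are equivalent from this point of view, so the strategy is sound in principle.

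There is, however, a circularity you should address. Berger's proposition III.4.2 in \cite{Berger04} is stated for Wach modules over the \emph{unramified} base, with Galois group $\Gamma_0 = \mathrm{Gal}(K_\infty/K_0)$, whereas you need the statement for $K = K_0(\zeta)$ and the smaller group $\Gamma = \mathrm{Gal}(K_\infty/K)$. The paper itself remarks that Wach modules are ``usually only defined in the unramified case'' and introduces definition \ref{Wach} as an ad hoc extension; the corresponding comparison with crystalline lattices over $W[\zeta]$ is not supplied independently in the paper --- it is obtained \emph{as a corollary} of Bhatt--Scholze. So unless you either point to an independent reference for Berger's theorem in the ramified cyclotomic setting, or insert a descent/restriction step relating Wach modules over $W$ and over $W[\zeta]$ together with the corresponding crystalline representations, the cyclotomic argument as written is circular. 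Your discussion of the general case correctly identifies the obstacles, but since the paper makes no attempt at a proof there is nothing to compare against.
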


Combining this with corollary \ref{Frobeq}, we obtain:

%%%%%%%%%%%
\begin{cor} \label{corBS}
Let $k$ be a perfect field of characteristic $p >0$, $W$ its ring of Witt vectors, $\zeta$ a primite $p$th root of unity and $K$ the fraction field of $W[\zeta]$.
Then, there exists an equivalence between the category of $F$-$\nabla_{\mathbb\Delta}$-modules on $W[[q-1]]$ and the category of lattices in crystalline representations of $G_K$. \qed
\end{cor}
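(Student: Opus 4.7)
The proof will be a short composition of two equivalences already at our disposal, so the strategy is simply to line them up carefully and check that the intermediate category (prismatic $F$-crystals on $\mathcal O_K$) is genuinely the common bridge.

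First I would observe that, because $\zeta$ is a primitive $p$th root of unity, the extension $K = \mathrm{Frac}(W[\zeta])$ of $\mathrm{Frac}(W)$ is totally ramified of degree $p-1$, hence $K$ is a complete discretely valued field of mixed characteristic with perfect residue field $k$ and ring of integers $\mathcal O_K = W[\zeta]$. This puts us squarely in the hypotheses of Bhatt--Scholze's theorem~5.6 of \cite{BhattScholze21}, which produces an equivalence
\[
\mathrm{Vect}^{\phi}(\mathcal O_K, \mathcal O_{\mathbb\Delta}) \simeq \mathrm{Rep}^{\mathrm{crys}}_{\mathbb Z_p}(G_K)
\]
between prismatic $F$-crystals on $\mathcal O_K$ and lattices in crystalline representations of $G_K$.

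Next, I would invoke corollary~\ref{Frobeq}, whose statement gives an equivalence between prismatic $F$-crystals on $W[\zeta]$ and $F$-$\nabla_{\mathbb\Delta}$-modules on $R := W[[q-1]]$. That corollary itself was obtained by endowing the equivalence of theorem~\ref{prisdR} (between prismatic vector bundles and weakly nilpotent $\nabla_{\mathbb\Delta}$-modules) with frobenius structures: the functor sends a prismatic $F$-crystal $(E,F_E)$ to the $\nabla_{\mathbb\Delta}$-module $M = E_R$ together with the horizontal $(p)_q$-isogeny $F_M : (\phi^*M)[1/(p)_q] \simeq M[1/(p)_q]$ induced on the $R$-fibre. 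Here one uses the fact (lemma~\ref{FrobWN}) that weak nilpotency is automatic once a frobenius structure is present, so that the definition of $F$-$\nabla_{\mathbb\Delta}$-module does not need to carry a nilpotency hypothesis and matches precisely the essential image on the other side.

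Composing the two equivalences yields the desired equivalence
\[
\nabla^{F}_{\mathbb\Delta}(W[[q-1]]) \;\simeq\; \mathrm{Vect}^{\phi}(\mathcal O_K, \mathcal O_{\mathbb\Delta}) \;\simeq\; \mathrm{Rep}^{\mathrm{crys}}_{\mathbb Z_p}(G_K).
\]
Since both equivalences have already been established in the paper (or cited from \cite{BhattScholze21}), there is no real obstacle: the only point requiring a moment of care is the identification $\mathcal O_K = W[\zeta]$ and the verification that the prismatic $F$-crystal on the two sides is literally the same object, so that the composite equivalence is well-defined as stated. I would finish by remarking, for the reader's convenience, that the $q^p$-compatibility condition~\eqref{frobcon} on $\phi_M$ is precisely the translation, through the dictionary of theorem~\ref{prisdR}, of the isogeny condition $F_E$ on the prismatic side, so no additional compatibility needs to be checked by hand.
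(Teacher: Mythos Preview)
Your proposal is correct and follows exactly the same approach as the paper: the corollary is stated with a \qed and no explicit proof, being introduced by the phrase ``Combining this with corollary~\ref{Frobeq}, we obtain'', so it is simply the composition of the Bhatt--Scholze equivalence with corollary~\ref{Frobeq}. Your write-up is more detailed than the paper's one-line justification, but the argument is the same.
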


Usually, Wach modules are only defined in the unramified case but this can easily be extended to our situation (recall that we write $\Gamma := \mathrm{Gal}(K_{\infty}/K)$):

%%%%%%
\begin{dfn} \label{Wach}
A \emph{Wach module} over $W[\zeta]$ is finite free $R$-modules $M$ endowed with a continuous semilinear action of $\Gamma$ that becomes trivial modulo $q-1$ and a $(p)_q$-isogeny
\[
F_M : (\phi^*M)[1/(p)_q] \simeq M[1/(p)_q]
\]
which is compatible with the actions of $\Gamma$.
\end{dfn}

%%%%%%%%%%%
\begin{prop} \label{Wath}
The category of Wach modules over $W[\zeta]$ is isomorphic to the category of $F$-$\nabla_{\mathbb\Delta}$-modules on $W[[q-1]]$. 
\end{prop}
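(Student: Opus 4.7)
The strategy is to observe that both sides encode essentially the same data: a finite free $R$-module equipped with a twisted semilinear structure together with a compatible $(p)_q$-isogeny. The equivalence at the level of underlying modules is already provided by proposition \ref{nabGam}, so the task reduces to checking that the two notions of frobenius compatibility match up.

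For the forward functor, given a Wach module $M$, I would extract from the $\Gamma$-action the endomorphism $\gamma_M$ attached to the topological generator $\gamma = g_{p+1}$. Since $M$ is finite free and $\gamma_M$ becomes trivial modulo $q-1$, proposition \ref{gammeq} converts this into a $\nabla_{\mathbb\Delta}$-module structure via $\partial_{M,\mathbb\Delta} = (\gamma_M - \mathrm{Id}_M)/(q^2 - q)$. The hypothesis that $F_M$ is $\Gamma$-equivariant implies in particular that it is $\gamma$-equivariant, which is precisely the horizontality condition needed to make $(M, F_M)$ an $F$-$\nabla_{\mathbb\Delta}$-module.

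Conversely, starting from an $F$-$\nabla_{\mathbb\Delta}$-module $M$, lemma \ref{FrobWN} ensures it is weakly nilpotent, so proposition \ref{nabGam} endows $M$ with a continuous semilinear $\Gamma$-action that becomes trivial modulo $q-1$. The horizontality of $F_M$ translates back via proposition \ref{gammeq} into $\gamma$-equivariance. When $p$ is odd, $\gamma$ topologically generates $\Gamma$, so $\gamma$-equivariance combined with continuity of $F_M$ on $M[1/(p)_q]$ upgrades to full $\Gamma$-equivariance. When $p=2$, the extra generator is $\sigma = g_{-1}$, whose action on $M$ is produced by lemma \ref{p2case} as the unique $\sigma$-linear horizontal endomorphism reducing to the identity modulo $q-1$; here I would check that $\sigma_M \circ F_M$ and $F_M \circ \sigma_{\phi^*M}$ coincide by noting that they are both $\sigma$-semilinear horizontal $(p)_q$-isogenies $(\phi^*M)[1/(p)_q] \to M[1/(p)_q]$ that reduce to $F_M$ modulo $q-1$, and then invoking a uniqueness statement in the spirit of lemma \ref{injmod}.

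The main obstacle is precisely this uniqueness in the case $p=2$: one must transport lemma \ref{injmod} to the $(p)_q$-inverted Hom-module $\mathrm{Hom}_R(\phi^*M, M)[1/(p)_q]$ with its natural $\mathbb\Delta$-structure, and for this one needs to check that the relevant completeness and $\mathbb Z$-torsion-freeness hypotheses survive the inversion of $(p)_q$ (using that $(p)_q \equiv p \bmod q-1$ and that $M/(q-1)M$ is finite free over $W = W(k)$). Once both functors are defined and well-defined, the mutual inverse property is immediate from proposition \ref{nabGam}, since the frobenius datum is preserved unchanged in both directions.
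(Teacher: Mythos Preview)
Your approach is the paper's: the proof there is simply ``Immediate consequence of proposition \ref{nabGam}'', and what you have written is an honest unpacking of that sentence, including the $p=2$ subtlety that the paper leaves implicit.

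One simplification for the point you flag as the main obstacle: rather than inverting $(p)_q$ and then worrying whether the hypotheses of lemma \ref{injmod} survive localization, observe that the isogeny $F_M$ is determined by its restriction to a horizontal map $\phi^*M \to (p)_q^{-r}M$ between finite free $R$-modules. Both source and target are weakly nilpotent (the source by formula \eqref{phstarm}, the target because $(p)_q^{-r}R$ is the dual of $(p)_q^{r}R$ and weak nilpotence is stable under internal Hom and tensor product), so proposition \ref{nabGam} applies to them as objects and the isomorphism of categories directly identifies horizontal maps with $\Gamma$-equivariant maps---no separate uniqueness argument is needed. The one thing you should still make explicit is that the $\Gamma$-structure on $\phi^*M$ obtained by pulling back along $\phi$ agrees with the one manufactured by proposition \ref{nabGam} from the $\mathbb\Delta$-connection on $\phi^*M$; for $\gamma$ this is the remark after proposition \ref{foncr} ($\gamma_{M_r}(1\otimes s)=1\otimes\gamma_M(s)$), and for $\sigma$ when $p=2$ it follows from the uniqueness clause of lemma \ref{p2case}.
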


\begin{proof}
Immediate consequence of proposition \ref{nabGam}.
\end{proof} 

We recover the following analog over $W[\zeta]$ of \cite{Berger04}, proposition III.4.2 (see also \cite{Colmez99}):

%%%%%%%%%%%
\begin{cor}[Berger] \label{Berger}
There exists an equivalence between the category of Wach modules over $W[\zeta]$ and the category of lattices in crystalline representations of $G_K$. \qed
\end{cor}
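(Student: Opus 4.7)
The plan is to read the statement as the straightforward composition of two equivalences that have already been established. Proposition \ref{Wath} provides an isomorphism between the category of Wach modules over $W[\zeta]$ and the category of $F$-$\nabla_{\mathbb\Delta}$-modules on $R = W[[q-1]]$ (this is essentially a frobenius-equivariant version of proposition \ref{nabGam}, together with the observation from lemma \ref{FrobWN} that frobenius structures automatically force weak nilpotency, so that the weakly-nilpotent hypothesis may be dropped from definition \ref{Wach}). On the other hand, corollary \ref{corBS} identifies the category of $F$-$\nabla_{\mathbb\Delta}$-modules on $R$ with the category of lattices in crystalline representations of $G_K$, via corollary \ref{Frobeq} and theorem 5.6 of \cite{BhattScholze21}. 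Composing these two equivalences gives the asserted one, recovering Berger's theorem in this ramified setting.

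The only thing worth spelling out is the compatibility ensuring that the $\Gamma$-action encoded in a Wach module matches the Galois action on the crystalline lattice. Under proposition \ref{nabGam}, the $\mathbb\Delta$-connection is converted into the action of the topological generator $\gamma = g_{p+1}$ of $\Gamma$ (completed, when $p=2$, by the involution $\sigma = g_{-1}$ furnished by lemma \ref{p2case}), which is precisely the action of $\Gamma$ on the object $(R,(p)_q)$ of the absolute prismatic site of $W[\zeta]$ given by $g\cdot q = q^{\chi(g)}$. Consequently, the frobenius and $\Gamma$-action on a Wach module correspond, through the equivalence of corollary \ref{Frobeq} followed by the Bhatt--Scholze theorem, to the frobenius and the restriction of the $G_K$-action on the associated crystalline lattice to the inertia subgroup that fixes $K_\infty$; the full $G_K$-action is then reconstructed from the crystalline side of the Bhatt--Scholze equivalence. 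No genuinely new argument is required beyond assembling these pieces, and there is no real obstacle: the whole content of the corollary lies in the theorems already cited.
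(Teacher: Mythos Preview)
Your proof is correct and matches the paper's approach exactly: the corollary is obtained by composing proposition~\ref{Wath} with corollary~\ref{corBS}, which is precisely why the paper gives no proof beyond a \qed. Your second paragraph is unnecessary (and its description of the $\Gamma$-action as a ``restriction of the $G_K$-action to the inertia subgroup that fixes $K_\infty$'' is muddled, since $\Gamma$ is a quotient of $G_K$, not a subgroup), but the first paragraph already constitutes a complete argument.
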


%%%%%%%%%%%%%%%%%%%%%%%%%%%
\addcontentsline{toc}{section}{References}

\printbibliography

\Addresses

\end{document}